\documentclass[11pt]{article}

\usepackage{amsmath,amsthm,amssymb,mathtools,bm,mathrsfs,microtype}
\usepackage{graphicx}
\usepackage{enumitem}
\usepackage{stmaryrd}
\usepackage{aop_like_compact} 
\usepackage[colorlinks,citecolor=blue,linkcolor=blue,urlcolor=blue]{hyperref}
\theoremstyle{plain}
\newtheorem{theorem}{Theorem}[section]
\newtheorem{lemma}[theorem]{Lemma}
\newtheorem{prop}[theorem]{Proposition}
\newtheorem{coro}[theorem]{Corollary}
\theoremstyle{definition}
\newtheorem{definition}[theorem]{Definition}

\numberwithin{equation}{section}

\DeclareMathOperator{\dive}{div}

\DeclareMathOperator{\curl}{curl}

\DeclareMathOperator{\loc}{loc}

\DeclareMathOperator{\realpart}{Re}
\DeclareMathOperator{\imaginarypart}{Im}

\DeclareMathOperator{\Det}{det}
\DeclareMathOperator{\diag}{diag}
\DeclareMathOperator{\row}{Row}

\DeclareMathOperator{\rem}{rem}
\DeclareMathOperator{\eff}{eff}

\DeclareMathOperator{\ini}{in}
\DeclareMathOperator{\cut}{cut}
\DeclareMathOperator{\bd}{bd}

\DeclareMathOperator{\bulk}{bulk}
\DeclareMathOperator{\even}{even}

\newcommand{\fa}{{\mathfrak a}}
\newcommand{\fq}{{\mathfrak q}}
\newcommand{\pp}{{\mathfrak p}}
\newcommand{\PP}{{\mathbb P}}
\newcommand{\PPP}{{\mathrm P}}
\newcommand{\PPPP}{{\mathcal P}}
\newcommand{\Z}{{\mathbb Z}}
\newcommand{\z}{{\mathfrak z}}
\newcommand{\V}{\mathcal{V}}
\newcommand{\rr}{\mathfrak{r}}
\newcommand{\R}{{\mathbb R}}
\newcommand{\RR}{{\mathcal R}}
\newcommand{\fd}{{\mathfrak d}}
\newcommand{\ddd}{{\mathrm d}}
\newcommand{\DDD}{{\mathcal D}}
\newcommand{\DDDD}{{\mathscr D}}
\newcommand{\FD}{{\mathfrak D}}
\newcommand{\C}{{\mathbb C}}
\newcommand{\CC}{{\mathcal C}}
\newcommand{\N}{{\mathbb N}}
\newcommand{\NN}{{\mathcal N}}
\newcommand{\fn}{{\mathfrak n}}
\newcommand{\fm}{{\mathfrak m}}
\newcommand{\MM}{{\mathcal M}}
\newcommand{\HH}{{\mathcal H}}
\newcommand{\fh}{{\mathfrak h}}
\newcommand{\I}{{\mathbb I}}

\newcommand{\tX}{{\tilde X}}

\newcommand{\XX}{{\mathcal X}}
\newcommand{\XXX}{{\mathfrak X}}
\newcommand{\YYY}{{\mathcal Y}}
\newcommand{\UU}{{\mathcal U}}
\newcommand{\fu}{{\mathfrak u}}
\newcommand{\ttt}{{\mathfrak t}}
\newcommand{\T}{{\mathbb T}}
\newcommand{\TT}{{\mathcal T}}
\newcommand{\FT}{{\mathfrak T}}
\newcommand{\SSSSS}{{\mathscr S}}
\newcommand{\SSSS}{{\mathcal S}}
\newcommand{\FS}{{\mathfrak S}}

\newcommand{\s}{{\mathfrak s}}
\newcommand{\E}{{\mathbb E}}
\newcommand{\EE}{{\mathcal E}}
\newcommand{\F}{{\mathbb F}}

\newcommand{\FFF}{{\mathscr F}}
\newcommand{\hf}{{\hat f}}
\newcommand{\tf}{{\tilde f}}
\newcommand{\ff}{{\mathfrak f}}
\newcommand{\LL}{{\mathcal L}}
\newcommand{\LLL}{{\mathfrak L}}
\newcommand{\llll}{{\mathfrak l}}

\newcommand{\FJ}{{\mathfrak J}}

\newcommand{\bg}{{\bar g}}

\newcommand{\FG}{{\mathfrak G}}

\newcommand{\BB}{{\mathcal B}}
\newcommand{\fb}{{\mathfrak b}}
\newcommand{\A}{{\mathcal A}}
\newcommand{\FA}{{\mathfrak A}}
\newcommand{\tw}{{\tilde w}}
\newcommand{\bw}{{\bar w}}
\newcommand{\hw}{{\hat w}}

\newcommand{\htw}{{\hat{\tilde w}}}
\newcommand{\thw}{{\tilde{\hat w}}}

\newcommand{\TIH}{{\widetilde H}}

\newcommand{\fc}{{\mathfrak c}}
\newcommand{\hk}{{\hat k}}
\newcommand{\ssum}{\mathop{\textstyle\sum}\nolimits}

\title{Delayed blow-up by transport noise for the 3D Navier--Stokes equation with Navier-slip boundary conditions}

\begin{aug}
\author[A]{\fnms{Meng}~\snm{Zhao}\ead[label=e1]{mzhao1009@gmail.com}}
\address[A]{Department of Mathematics, CY Cergy Paris University, CNRS UMR 8088, 2 avenue Adolphe Chauvin, 95302 Cergy-Pontoise, France\printead[presep={,\ }]{e1}}
\end{aug}

\date{\today}

\begin{document}
\maketitle

\begin{abstract}
We study the vorticity formulation of the 3D Navier--Stokes equation driven by transport noise in a periodic channel with Navier-slip boundary conditions. We consider both non-degenerate transport noise and degenerate tangential transport noise. For any prescribed $T>0$ and $\epsilon>0$, we prove that, by choosing the noise intensity sufficiently large and concentrating the noise on sufficiently high modes, the solution exists up to $T$ with probability at least $1-\epsilon$.

A main contribution of this work is to identify and analyze the interaction between enhanced dissipation induced by transport noise and physical boundary effects. The no-flux condition breaks the isotropy of the noise and changes the scaling limit of the Itô--Stratonovich corrector. In the non-degenerate case, a boundary feedback term appears in the limiting effective operator; in the degenerate case, the limiting operator is a nonlocal anisotropic tangential dissipation. The proof is based on a combination of a boundary correction operator, a Meyers-type estimate, a scaling-limit analysis of the Itô--Stratonovich corrector, and resolvent estimates for the deterministic limiting equations.

\medskip
\noindent\textbf{AMS subject classifications:} 60H15, 35Q30, 35B40, 60H30

\medskip
\noindent\textbf{Keywords:} stochastic Navier--Stokes equations, transport noise, Navier-slip boundary conditions, enhanced dissipation.
\end{abstract}

\tableofcontents

\section{Introduction}

In this paper, we study the vorticity formulation of the 3D Navier--Stokes (NS, for short) equation driven by transport noise in the periodic channel $D:=\{(x,y)|x\in\T^2, y\in(0,1)\}$ with Navier-slip boundary conditions
    \begin{align}
    \label{INTRO1}
    \begin{cases}
    \partial_tw-\nu\Delta w+(u\cdot \nabla)w-(w\cdot\nabla)u=\sqrt{2\kappa}\Pi (\dot{W}(t,x,y)\circ \nabla)w,\\u=\curl^{-1}w,\\(\partial_y w_3-\alpha w_3)|_{y=0}=(w_h-\alpha 
    u_h^\perp)|_{y=0}=0,\\(\partial_y w_3+\alpha w_3)|_{y=1}=(w_h+\alpha 
    u_h^\perp)|_{y=1}=0,\\
    w|_{t=0}=w_{\ini}.
    \end{cases}
    \end{align} 
    Here, $\nu>0$ is the viscosity, $\alpha>0$ is the friction parameter of the boundary, $\curl^{-1}$ is the Biot--Savart operator, and $w,u$ denote the vorticity and the corresponding velocity field, respectively, with $w_h,u_h$ being the tangential components and $u^\perp_h:=(-u_2,u_1)$. The driving noise $W(t,x,y)$ is a divergence-free Gaussian random field, whose precise definition will be given below. The parameter $\kappa>0$ characterizes the noise intensity, the symbol $\circ$ denotes the Stratonovich integral, and $\Pi:L^2 \to H_{\curl}$ is a projection operator, where
    \begin{align}\label{INTRO1-1}
        H_{\curl}:=\bigg\{w\in L^2\bigg| \dive w=0,\ \int_{\T^2} w_3(x,y)\ddd x=0\mbox{ a.e. $y\in(0,1)$}\bigg\}
    \end{align}
    is the curl-admissible class endowed with the usual $L^2$-norm $\|\cdot\|$.
    
    \subsection{The noise structure}
    We consider two types of Gaussian random fields $W$. The first is the non-degenerate case, in which each component of $W$ is not identically zero and $W$ depends on both the tangential and vertical variables $(x,y)$. The second is the degenerate case, in which the vertical component vanishes and $W$ depends only on the tangential variables $x$, namely
    \[W:=(W_1(t,x),W_2(t,x),0).\]
    In this case, the stochastic transport reduces to
    \[(\dot{W}\circ \nabla)w=(\dot{W}_h\circ \nabla_x)w,\]
    and therefore acts only in the tangential directions. Such a stochastic transport is naturally relevant in situations where the dominant motion is tangential, for instance in oceanic and atmospheric circulations.
    \subsubsection*{The non-degenerate transport noise}
    Let
    \[k:=(k_h,m/2)=(k_1,k_2,m/2),\qquad  \mbox{ $k_h\in\mathbb Z^2$, $\ m\in\mathbb Z$}\]
    be a wave number. For $k\neq0$, define $a_{k,1}, a_{k,2}$ as unit vectors in $\R^3$ satisfying 
    \begin{align}
        \label{INTRO1-2}a_{k,1}\cdot a_{k,2}=0,\qquad a_{k,j}\cdot k=0,\qquad a_{k,j}=a_{-k,j},\qquad j=1,2.
    \end{align}
    In particular, for $m=0$, we define 
    \[a_{k,1}:=(a_{k,1,h},0),\qquad a_{k,2}:=(0,0,1),\]
    where $a_{k,1,h}\cdot k_h=0$ and $a_{k,1,h}=a_{-k,1,h}$. By definition, one has
    \begin{align}
        \label{INTRO2}
        \ssum_{j=1,2} a_{k,j}\otimes a_{k,j}= I-\frac{k\otimes k}{|k|^2},\qquad k\neq0,
    \end{align}
    where $ k\otimes k:= kk^T$ is the usual tensor product. Introduce the solenoidal Fourier sine-cosine modes
    \begin{align}
        \label{INTRO3}\sigma_{k,j}(x,y):=c_{k}e^{2\pi i k_h\cdot x}\left(\begin{matrix}
        a_{k,j,1}\cos m\pi y\\
        a_{k,j,2}\cos m\pi y\\
        i a_{k,j,3} \sin m\pi y
    \end{matrix}\right)
    \end{align}
    with $c_k=\sqrt{2}$ if $m\neq0$ and $c_k=1$ if $m=0$. Let $(\Omega,\FFF,\PP)$ be a probability space with a filtration $\F:=\{\FFF_t\}_{t\ge 0}$ satisfying the usual conditions. Suppose that $\{B^{k,j}_\cdot\}_{k,j}$ is a sequence of complex Brownian motions defined on $\Omega$, which satisfies
    \begin{align}
        \label{INTRO4-2}\overline{B_t^{-k,j}}=B_{t}^{k,j},\qquad \llbracket B_{\cdot}^{k,j},B_{\cdot}^{k',j'}\rrbracket_t=2t\delta_{k+k'=0}\delta_{j=j'}.
    \end{align} 
    Here, $\llbracket \cdot, \cdot \rrbracket$ denotes the quadratic covariation. We define the non-degenerate Gaussian random field by
    \begin{align}
        \label{INTRO4}W(t,x,y):=\ssum_{k\neq 0}\ssum_{j=1,2}\theta_k \sigma_{k,j}B_t^{k,j},
    \end{align}
    where $\{\theta_k\}_{k\neq 0}$ is a sequence of real numbers satisfying
    \begin{align}\label{INTRO4-1}
        \|\theta_\cdot\|_{\ell^2}=1\qquad\mbox{and}\qquad \theta_{k}=\theta_{k'}, \qquad\mbox{if $|k|=|k'|$}. 
    \end{align}
    Under the above notation, the transport noise takes the form
    \begin{align*}\sqrt{2\kappa}\Pi (\dot{W}\circ \nabla)w=\sqrt{2\kappa}\ssum_{k\neq0}\ssum_{j=1,2}\theta_{k}\Pi(\sigma_{k,j}\cdot\nabla)w\circ \dot{B}_{t}^{k,j}.
    \end{align*}
    
    \subsubsection*{The degenerate tangential transport noise}
    Next, we introduce the degenerate tangential transport noise. Let $\Z^2_0:= \Z^2-\{0\}$. Define the solenoidal tangential Fourier modes
    \begin{align}\label{INTRO5}
    \sigma_k(x):=\begin{cases}\frac{k^\perp}{|k|}e^{2\pi ik\cdot x},\qquad k\in \Z_{0,+}^2:=\{k\in\Z_0^2| \mbox{$k_2>0$ or $k_2=0, k_1>0$}\},\\
    -\frac{k^\perp}{|k|}e^{2\pi ik\cdot x},\qquad k\in \Z_{0,-}^2:=\Z_0^2-\Z_{0,+}^2.
    \end{cases}
    \end{align}
    Fix a filtered probability space $(\Omega,\FFF,\F,\PP)$ with $\F:=\{\FFF_t\}_{t\ge 0}$ satisfying the usual conditions. Suppose that $\{B^k_\cdot\}_{k}$ is a sequence of complex Brownian motions defined on $\Omega$, which satisfies
    \[\overline{B_t^{-k}}=B_{t}^{k},\qquad \llbracket B_{\cdot}^{k},B_{\cdot}^{k'}\rrbracket_t=2t\delta_{k+k'=0}.\]
    The degenerate random field $W$ is defined by
    \begin{align}
    \label{INTRO6}
    W(t,x,y):=(W_h(t,x),0
    ),\qquad W_h(t,x):=\ssum_{k\neq 0}\theta_k\sigma_k(x)B^k_t,
    \end{align}
    where $\{\theta_k\}_{k\neq 0}$ satisfies \eqref{INTRO4-1}. Therefore, the transport noise reduces to
    \begin{align}
        \label{INTRO6-1}\sqrt{2\kappa}\Pi (\dot{W}_h\circ \nabla_x)w=\sqrt{2\kappa}\ssum_{k\neq0}\theta_k\Pi(\sigma_k\cdot\nabla_x)w\circ \dot{B}_{t}^{k}.
    \end{align}
    \subsection{The main result and literature review}
    To present our main result, we first introduce the notion of solutions used in this paper.
    \begin{definition}\label{definition1}Let $\tau:\Omega\to [0,\infty)$ and $w:\Omega\times[0,\tau)\times D\to \R^3$ be a stopping time and a progressively measurable process, respectively. We call $(w,\tau)$ a solution of \eqref{INTRO1}, if the following conditions are satisfied.
    \begin{enumerate}
        \item $w(\cdot\wedge\tau)\in L^\infty(0,\infty;H_{\curl})\cap L^2(0,\infty;H^1\cap H_{\curl})$ $\PP$-almost surely.
        \item The boundary conditions of $w_h$ are satisfied in the sense of traces, namely,
        \[(w_h-\alpha 
    u_h^\perp)|_{y=0}=(w_h+\alpha 
    u_h^\perp)|_{y=1}=0\]
    in $L^2(0,\tau;H^{1/2}(\T^2))$ $\PP$-almost surely. Here, $u:=\curl^{-1}w$.
    \item For any test function 
    \begin{align}
        \label{INTRO4-3}
        \varphi:=(\varphi_h,\varphi_3)\in C^\infty\cap H_{\curl}\qquad \mbox{with}\qquad \varphi_h\in C_{c}^\infty(D),
    \end{align}
    the following identity holds $\PP$-almost surely for any $t\ge0$:
    \begin{align}
        \label{INTRO4-4}
        \langle w(t\wedge\tau),&\varphi\rangle+\int_0^{t\wedge\tau}\!\!\left(\nu\langle \nabla w,\nabla \varphi\rangle+\nu\alpha\langle w_3,\varphi_3\rangle_{L^2(\partial D)}+\langle (u\cdot\nabla)w-(w\cdot \nabla u),\varphi\rangle\right)\ddd s\notag\\&= \langle w_{\ini},\varphi\rangle+\sqrt{2\kappa}\ssum_{k\neq 0}\ssum_{j=1,2}\int_0^{t\wedge\tau}\theta_k\langle \Pi(\sigma_{k,j}\cdot\nabla)w,\varphi\rangle\circ \ddd B_{s}^{k,j}.
    \end{align}
    \end{enumerate}
    In the degenerate case, the solution is defined as above, with the noise term in \eqref{INTRO4-4} replaced by the right-hand side of \eqref{INTRO6-1}.
    \end{definition}
    Define the anisotropic Sobolev space 
    \begin{align}
        \label{INTRO7}\XX^m:=\Big\{f\in L^2\Big|\|f\|_{m,0}^2:=\ssum_{|l|\le m}\|\partial_x^l f\|^2<\infty\Big\},\qquad m\in\N.
    \end{align}
    The main result is presented below.
    \begin{theorem}\label{MT}
        Let $\alpha,\nu>0$, $m\ge 2$. Suppose that $W$ is either the non-degenerate random field defined in \eqref{INTRO4}, or the degenerate random field defined in \eqref{INTRO6}. Then, for any $\epsilon,T>0$ and $\FFF_0$-measurable initial data 
        \[w_{\ini}\in L^\infty(\Omega;H_{\curl}\cap H^1\cap \XX^m),\]
        there exist $\kappa>0$ independent of $T$ and $\theta\in \ell^2$ supported in sufficiently high modes and satisfying \eqref{INTRO4-1} such that the stochastic NS equation \eqref{INTRO1} has a solution $(w,\tau)$, which exists up to time $T$ with high probability:
        \[\PP\{\tau\ge T\}\ge 1-\epsilon.\]
    \end{theorem}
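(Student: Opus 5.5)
We follow the now-standard strategy of Flandoli--Galeati--Luo for delayed blow-up by transport noise, adapted to the boundary setting.

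\textbf{Step 1: It\^o form of the equation.} Rewrite \eqref{INTRO1} in It\^o form as
\[
\ddd w=\big(\nu\Delta w+\kappa\, \mathcal S_\theta w+(w\cdot\nabla)u-(u\cdot\nabla)w\big)\ddd t+\sqrt{2\kappa}\,\ddd M_t .
\]
Here $\mathcal S_\theta w:=\ssum_{k,j}\theta_k^2\Pi(\sigma_{k,j}\cdot\nabla)\Pi(\sigma_{-k,j}\cdot\nabla)w$ is the It\^o--Stratonovich corrector, and $M_t$ is the martingale part. Because $\Pi$ does not commute with transport and the modes \eqref{INTRO3} respect the no-flux condition only through their sine/cosine structure, $\mathcal S_\theta$ is not simply a multiple of $\Delta$.

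\textbf{Step 2: local well-posedness.} Construct a local solution by a Galerkin/cut-off argument. Introduce a cut-off $\chi_R(\|w\|_{\XX^m}\ \text{or}\ \|w\|_{H^1})$ in front of the nonlinearity, and prove global existence for the truncated system together with energy estimates in $H_{\curl}\cap H^1\cap\XX^m$. The anisotropic space $\XX^m$ is used because tangential derivatives commute with the boundary conditions. The Navier-slip conditions produce boundary terms $\nu\alpha\|w_3\|^2_{L^2(\partial D)}$ and terms involving $u_h^\perp$. These are controlled by introducing a boundary correction (lifting) operator: we subtract from $w_h$ a function matching $\alpha u_h^\perp$ on $\partial D$, reducing to homogeneous data. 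A Meyers-type estimate gives slightly-better-than-$L^2$ integrability of $\nabla w$, uniformly in the noise, which is needed to handle the corrector near the boundary. The solution of \eqref{INTRO1} is then the truncated solution stopped at $\tau_R:=\inf\{t:\|w\|\ge R\}$.

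\textbf{Step 3: scaling limit of the corrector.} Take $\theta=\theta^N$ supported on $N\le|k|\le 2N$, and show that $\mathcal S_{\theta^N}\to\mathcal S_\infty$ in a suitable resolvent or weak sense as $N\to\infty$.
\begin{itemize}
\item In the non-degenerate case, using \eqref{INTRO2} and averaging over spheres, $\mathcal S_\infty=c\Delta+\mathcal B$, where $\mathcal B$ is a boundary feedback term coming from the projection and the sine/cosine reflection.
\item In the degenerate case, $\mathcal S_\infty$ is a nonlocal anisotropic tangential operator, roughly $c\,\Pi\Delta_x$.
\end{itemize}
The martingale part has covariance bounded by $\|\theta^N\|_{\ell^\infty}^2\to0$. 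Hence it vanishes in the limit, in $C([0,T];H^{-\delta})$ by an It\^o/Burkholder--Davis--Gundy estimate.

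\textbf{Step 4: the deterministic limit and conclusion.} Consider the limit system $\partial_t v=\nu\Delta v+\kappa\mathcal S_\infty v+\chi_R(\cdot)\,\mathrm{NL}(v)$. Resolvent estimates for $\nu\Delta+\kappa\mathcal S_\infty$ give decay rates of order $\kappa$, or of order $\kappa|k_h|^2$ for the tangential modes in the degenerate case, which is where $\XX^m$ and tangential regularity enter. Choose $\kappa$ large, depending on $R$ and the $L^\infty(\Omega)$ bound of $w_{\ini}$ but not on $T$. Enhanced dissipation then forces $\|v(t)\|$ to decay below $R/2$ before the nonlinearity can grow, and global smallness afterwards prevents the cut-off from ever being active. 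This yields global existence for the untruncated limit equation. By tightness of the laws of $w^N$ and uniqueness of the limit, $w^N\to v$ in probability in $C([0,T];H^{-\delta})\cap L^2(0,T;L^2)$. Upgrading to the norm used in $\tau_R$ requires interpolation with the uniform $\XX^m\cap H^1$ bounds. Then $\PP\{\tau_R^N\ge T\}\ge 1-\epsilon$ for $N$ large.

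\textbf{Main obstacle.} I expect the main obstacle to be Step 3 in the non-degenerate case: identifying the limit of $\mathcal S_{\theta^N}$ including the boundary feedback, and proving convergence strong enough that the limit operator remains dissipative with constant proportional to $\kappa$. I would first compute $\mathcal S_{\theta^N}$ mode-by-mode on the sine/cosine basis, then control the projection commutator via the boundary correction operator and the Meyers estimate.
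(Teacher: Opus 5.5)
\textbf{The gap: the truncation norm does not match the convergence you can prove.} You truncate with $\|w\|_{\XX^m}$ or $\|w\|_{H^1}$, but stop at $\|w\|\ge R$, which does not even make an $H^1$ cut-off inactive. You then obtain convergence only in $C([0,T];H^{-\delta})\cap L^2(0,T;L^2)$, and propose to upgrade using ``uniform $\XX^m\cap H^1$ bounds''.

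No such bounds are available uniformly in $N$. Derivatives do not commute with the noise: $[\partial_x,\sigma_{k,j}\cdot\nabla]=(\partial_x\sigma_{k,j})\cdot\nabla$ carries a factor $|k|\sim N$. The energy estimate for $\partial_x w$ therefore leaves an uncompensated It\^o term of size $\kappa\ssum_k(\theta^N_k)^2|k|^2\|\nabla w\|^2\sim\kappa N^2\|\nabla w\|^2$. The It\^o--Stratonovich cancellation closes only at the $L^2$ level, and here only because the commutators with $\BB$ are of lower order. More fundamentally, enhanced dissipation means part of $\nu\int_0^T\|\nabla w^N\|^2\,\ddd t$ lives at scales $\sim 1/N$ and is lost in the limit. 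So you cannot expect convergence in $C([0,T];H^1)$, let alone in $\XX^m$.

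\textbf{What the paper does instead.} It truncates and stops with the $L^2$ norm: $\eta_R(\|w\|)$ and $\tau^N=\inf\{t:\|w^N_{\cut}(t)\|\ge R\}$. The truncated forcing is then $\lesssim_R 1+\|\nabla w\|^{1/2}$. It then proves tightness in a topology that controls $\sup_t\|w\|$ using only uniform-in-$\theta$ information, and this is what the Meyers-type estimate is for.
\begin{itemize}
\item Sneiberg's theorem lifts the $L^2$ solvability of the $\BB$-corrected problem to $p$ slightly above $2$.
\item This gives uniform bounds in $L^p(\Omega;H^{s,p}(0,T;H^{1-2s}))$ with $1/p<s<1/2$, hence tightness in $C([0,T];H^\delta)\cap L^2(0,T;H^{1-\delta})$. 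With $p=2$ one would need $s>1/2$, which is excluded.
\item Convergence in probability in $C([0,T];H^\delta)$ then bounds $\sup_{t\le T}\|w^N_{\cut}(t)\|$ directly once $R\ge 2\sup_t\|w^{\det}(t)\|$.
\end{itemize}
The space $\XX^m$ is used only for the deterministic limit. If you want to work with $C([0,T];H^{-\delta})$ convergence alone, you would have to truncate and stop with $\|w\|_{H^{-\delta}}$, $\delta<1/2$, as in the periodic work of Flandoli--Luo, never with a stronger norm.

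\textbf{Two further points.} First, $L^2(0,T;L^2)$ convergence does not suffice to pass to the limit in $\int_0^t\langle w^N,S_{\theta^N}\varphi\rangle\,\ddd s$. For admissible test functions, $S_{\theta^N}\varphi$ converges only in $H^{-s}$, $s\in(1/2,1]$, to $\frac{4\kappa}{5}\Delta\varphi+\FT\big(\frac{2\kappa}{15}\nabla_x\varphi_3|_{\partial D}\big)$. The second term is a distribution carried by $\partial D$, so you need strong convergence in $L^2(0,T;H^{1-\delta})$ with $\delta<1/2$. Pairing this trace term with $w_h|_{\partial D}$ and using the tangential Navier condition is exactly what produces the feedback term $\frac{14\alpha\kappa}{15}\nabla\HH[n_3w_3]$.

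Second, in the degenerate case your claim that enhanced dissipation makes $\|v(t)\|$ decay is false for the zero tangential mode. $\FA_{\eff}$ annihilates shear flows, so $\tw_=$ feels only $\nu$, and your rate $\kappa|k_h|^2$ says nothing at $k_h=0$. The paper uses that neither the nonlinearity nor the remainder $\RR[w]$ contains zero--zero interactions. It then estimates $u_{=,h}$ from its own $\nu$-parabolic equation, forced by $\PPP_=\partial_y(u_{\neq,3}u_{\neq,h})$, which is quadratic in the non-zero modes that decay at rate $\kappa$. This is why the small-data condition there reads $\|w_{\ini}\|_{m,0}\le\sqrt{\kappa}/C_{\alpha,\nu,m}$ with $\kappa$ large, rather than a decay statement for the whole solution.
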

    To the best of our knowledge, this is the first result that addresses the interaction between the enhanced dissipation induced by transport noise and physical boundary effects in nonlinear fluid equations. The key new feature is that, due to the presence of the Navier boundary conditions, the stabilizing mechanism generated by transport noise does not merely add bulk dissipation; it also interacts with the boundary vorticity generation mechanism. Consequently, the structure of the effective dissipation operator arising in the scaling limit differs from its boundaryless counterpart. Understanding this interaction is one of the central themes of the present work.

    The stabilizing effect of transport noise has attracted considerable attention in recent years. A central mechanism behind this phenomenon is that, under a suitable high-frequency scaling, the Itô--Stratonovich corrector converges to an effective dissipation operator. In many boundaryless settings, this limiting operator acts as an additional viscosity and thus produces enhanced dissipation. This mechanism was first identified by Galeati \cite{Gal20} for transport equations with high-frequency transport noise, where solutions were shown to converge to solutions of a deterministic parabolic equation. It was later applied by Flandoli, Galeati, and Luo \cite{FGL21} to nonlinear PDEs with possible finite-time blow-up, leading to delayed blow-up and global existence with high probability. For fluid equations, this mechanism was first applied by Flandoli and Luo \cite{FL21} to the NS equation on $\T^3$. More recently, by using maximal regularity and Meyers-type estimates, Agresti further developed this mechanism, obtaining anomalous dissipation for the NS equation on $\T^2$, and a delayed blow-up result for the hyperviscous NS equation on $\T^3$; see \cite{Agr25} and \cite{Agr26}.

    Let us also mention a related research topic on Kraichnan-type noise. In the Kraichnan model, a passive scalar is transported by a Gaussian velocity field which is white-in-time and correlated-in-space. This model provides a classical paradigm for turbulent mixing and anomalous dissipation. Recent mathematical developments include anomalous dissipation and regularization for fluid equations; see, e.g., \cite{CM23,Row24,BGM25,GL25,GGM26} and the references therein.

    However, much less is known in the presence of physical boundaries. Existing results in this direction mainly concern linear passive scalar or heat-type equations, where transport noise still generates enhanced dissipation despite boundary effects; see \cite{FGL22,FL22}. To the best of our knowledge, delayed blow-up or global existence results by transport noise for nonlinear fluid equations with physical boundary conditions have not been available so far.

    The presence of a physical boundary creates several difficulties which are absent in the periodic setting. First, the vorticity formulation is coupled with a nonhomogeneous mixed Dirichlet--Robin boundary condition, which is nonlocal through the Biot--Savart law. Thus, the vorticity Laplacian is not directly self-adjoint, and the basic energy estimate has to be recovered through a boundary correction. Second, the projection $\Pi$ onto the curl-admissible class $H_{\curl}$ is more delicate than the usual Leray projection on the torus. More precisely, the gradient part in the Helmholtz decomposition \eqref{A0} solves an elliptic boundary value problem, and its boundary condition produces boundary layer terms in the scaling limit of the Itô--Stratonovich corrector. Third, the random field $W$ has to satisfy the no-flux condition at the boundary, and is therefore not isotropic as in the periodic case. This boundary-induced anisotropy affects the limiting operator in different ways depending on the structure of the noise. In the non-degenerate case, a nontrivial boundary layer contribution survives in the scaling limit and produces an additional boundary feedback term; see \eqref{LE6}. In the degenerate case, the effective operator becomes a nonlocal anisotropic tangential dissipation; see \eqref{LE7}. Together with the nonhomogeneous vorticity boundary conditions, these boundary effects also make the limiting deterministic equation harder to analyze. In particular, the enhanced dissipation cannot be read off directly from energy estimates, and we instead prove resolvent estimates for the corresponding effective dissipation operators and convert them into exponential decay by a semigroup stability criterion.
    \subsection{The scheme of the proof}
    We reformulate equation \eqref{INTRO1} into the following It\^o form
    \begin{align}
    \label{INTRO8}
    \begin{cases}
    \partial_tw-\nu\Delta w+(u\cdot \nabla)w-(w\cdot\nabla)u=S_\theta w+\sqrt{2\kappa}\Pi (\dot{W}\cdot \nabla)w,\\ u=\curl^{-1}w,\\(\partial_y w_3-\alpha w_3)|_{y=0}=(w_h-\alpha 
    u_h^\perp)|_{y=0}=0,\\(\partial_y w_3+\alpha w_3)|_{y=1}=(w_h+\alpha 
    u_h^\perp)|_{y=1}=0,\\
    w|_{t=0}=w_{\ini},
    \end{cases}
    \end{align} 
    where
    \begin{align}
    \label{INTRO9}
    S_{\theta}w:=
    \left\{
    \begin{array}{@{}l@{\quad}r@{}}
    2\kappa \ssum_{k\neq 0}\ssum_{j=1,2}\theta_k^2
    \Pi(\sigma_{k,j}\cdot\nabla)\Pi(\sigma_{-k,j}\cdot \nabla)w,&\text{in the non-degenerate case}
    \\[2mm]
    2\kappa \ssum_{k\neq 0}\theta_k^2
    \Pi(\sigma_k\cdot\nabla_x)\Pi(\sigma_{-k}\cdot \nabla_x)w,&\text{in the degenerate case}
    \end{array}\right. 
    \end{align}
    is the It\^o--Stratonovich corrector. It is important to note that $S_\theta$ does not yield a net dissipation at the stochastic level: in the energy estimate, its contribution is compensated by the quadratic variation of the transport noise. The actual stabilization appears only after passing to the high-frequency scaling limit, where $S_\theta$ converges to an effective deterministic dissipation operator.

    We now describe the main ideas of the proof. The argument consists of three steps, each of which is designed to overcome one of the boundary difficulties discussed above.

    The first step is a uniform-in-$\theta$ estimate for the stochastic linear Stokes equation \eqref{LS1}. This requires two ingredients. The first is a boundary correction. Due to the nonhomogeneous vorticity boundary conditions, the Laplacian $\Delta$ is not self-adjoint at the vorticity level. We therefore introduce a boundary correction operator $\BB$ and define the corrected vorticity $\tw:=(I-\BB)w$. Using the compactness of the operator $\BB$ combined with a Fredholm argument, we prove that $I-\BB$ is invertible. Thus, one may equivalently work with the corrected unknown $\tw$, which satisfies homogeneous boundary conditions and admits a uniform energy estimate.

    The second ingredient is a Meyers-type estimate in the periodic channel $D$. Here, a new difficulty appears because of the anisotropy induced by the physical boundary. In the periodic setting, the Itô--Stratonovich corrector can be decomposed into a principal Laplacian $\Delta$ plus a perturbative remainder. Then, one may absorb the remainder by a continuity argument combined with the maximal regularity of the Laplacian $\Delta$, as in the boundaryless theory; see \cite{Agr25}. In our case, this decomposition is no longer available in a useful form. Indeed, the leading part of the corrector behaves like a second-order operator with coefficients depending on $y$, rather than a constant multiple of the Laplacian $\Delta$. To avoid analyzing the maximal regularity of this variable-coefficient principal operator, we use the Sneiberg principle, see Theorem \ref{theoremLS4}, to directly lift the $L^2$-solvability to $L^p$-solvability with $p$ sufficiently close to $2$. This yields a uniform-in-$\theta$ Meyers estimate.

    In the second step, our goal is to pass to the high-frequency scaling limit of the stochastic NS equation \eqref{SL1} with cut-off. The tightness of the cut-off solutions follows directly from the uniform-in-$\theta$ estimate obtained in the first step, so the main task is to identify the limit of the Itô--Stratonovich corrector $S_\theta$. Since the random field $W$ has to satisfy the no-flux condition, the transport vector fields $\sigma$ entering the definition \eqref{INTRO9} of $S_\theta$ are tangent to the boundary. Consequently, the corrector does not by itself select any normal boundary condition. On the other hand, in the non-degenerate case, the bulk scaling limit of $S_\theta$ is a full Laplace operator $\Delta$, which has to be realized as a mixed Dirichlet--Robin Laplacian. This creates an unavoidable mismatch in boundary behavior between the corrector $S_\theta$ and its bulk scaling limit $\Delta$. The additional boundary feedback term in the limiting effective operator compensates precisely for this mismatch. By contrast, in the degenerate case, the limiting operator is a nonlocal anisotropic tangential dissipation. Since it involves no normal derivatives, the above boundary mismatch is absent, and no boundary feedback term appears.

    The third step is the analysis of the limiting deterministic equations \eqref{NDGWP1} and \eqref{DGWP1}. A direct energy estimate for the corrected unknown is not sufficient for our purpose, since the commutator between the boundary correction operator $\BB$ and the linear part may produce growth terms, which would lead to a smallness restriction on the friction parameter $\alpha$. To obtain a result for arbitrary $\alpha>0$, we instead use a resolvent approach. In the non-degenerate case, this requires studying the nonlocal resolvent equation~\eqref{NDGWP4}. A key observation is that the equation for the third component is closed, while the tangential components solve a local resolvent equation with a forcing term depending on the third component. We therefore first construct the third component by a superposition of two elementary one-dimensional boundary value problems, which yields the required estimate for $w_3$. Based on this estimate, we then prove the tangential resolvent bound by a contradiction argument. Finally, the resolvent estimate is converted into exponential decay of the linear semigroup by an exponential stability criterion; see Theorem~\ref{TheoremES}. In the degenerate case, since no nonlocal boundary feedback term appears, the resolvent estimate and the corresponding linear decay can be obtained more directly by contradiction.

    Combining the three steps, choosing $\kappa$ sufficiently large, and taking the noise coefficients $\theta$ supported on sufficiently high modes, the solution of the stochastic NS equation \eqref{INTRO1} remains close to that of the controlled deterministic limiting equations with high probability. This completes the proof of Theorem~\ref{MT}.

    \subsection{Further comments}
    We close the introduction with two comments. First, Theorem~\ref{MT} is stated on an arbitrary but fixed interval $[0,T]$, rather than on $[0,\infty)$. The obstruction is the lack of a small-data infinite-lifespan theory for the stochastic NS equation \eqref{INTRO1}. In the periodic case, since the Itô--Stratonovich corrector is exactly cancelled by the quadratic variation, and no boundary contribution appears, the $L^2$ estimate is essentially pathwise. Thus, the small-data global theory is close to the deterministic one. In our situation, however, as the boundary correction operator $\BB$ does not commute with the stochastic transport terms, the resulting commutators make the $L^2$-theory genuinely stochastic.

    Existing small-data theories do not seem to cover this regime. The results of Kukavica--Xu \cite{KX24} and Aydin--Kukavica--Xu \cite{AKX25} yield infinite-lifespan solutions with high probability, but the noise under consideration is of order $0$ and does not involve the gradient of the unknown. On the other hand, Agresti--Veraar \cite{AV24} allow transport-type noise, but the solution exists up to any fixed time $T$ with high probability. None of these results provides an infinite-lifespan theory applicable to our setting.
    
    Second, the present method is specific to the Navier boundary conditions and does not directly extend to the no-slip case. The reason is that at the level of estimates, transport noise behaves like a second-order operator. In the case of Navier boundary conditions, the boundary correction operator $\BB$ is of order $-1$, and therefore the commutators between $\BB$ and the stochastic transport terms are lower-order contributions. This is crucial for the uniform-in-$\theta$ energy estimate. For the no-slip boundary conditions, however, the corresponding boundary correction is expected to be of order $0$. Its commutator with the transport noise would then remain a leading-order term, and the cancellation between the Itô--Stratonovich corrector and the quadratic variation would no longer close the energy estimate uniformly in the noise coefficients. For this reason, the no-slip case would require a substantially different approach.

    \subsection*{Organization of the paper}
    The paper is organized as follows. Section~\ref{sec2} is devoted to the uniform-in-$\theta$ estimates for the stochastic linear Stokes equation. In Section~\ref{SLofNS}, we study the scaling limit of the stochastic NS equation with cut-off. Then, in Section~\ref{GWPofDNS}, we prove small-data global well-posedness for the limiting NS-type equations. Finally, in Section~\ref{sec5}, we combine all the results obtained in the previous sections to complete the proof of Theorem~\ref{MT}. The appendices collect the auxiliary results on the curl-admissible class, the Biot--Savart operator, the boundary correction operator, and the linear estimate used in Section~\ref{GWPofDNS}.

    \subsection*{Acknowledgements}
    The author was supported by the ANR project DYNACQUS through the grant ANR-24-CE40-5714-01, and by the National Natural Science Foundation of China under Grant Nos. 12331008, 12571156.
    \subsection*{Notation}
    We shall use the following standard notations.
	\begin{itemize}
    \item For $q\in [1,\infty]$, $L^q$ is the usual Lebesgue space endowed with the norm $\|\cdot\|_q$. For simplicity, we write $\|\cdot\|:=\|\cdot\|_2$. For $s\ge 0$, $H^{s,q}$ denotes the Bessel potential space with the conventions $H^{0,q}:=L^q$ and $H^{s,2}:=H^s$. It is known that $H^{m,q}=W^{m,q}$ for all $m\in\N$ and $q\in (1,\infty)$, where $W^{m,q}$ denotes the classical Sobolev space. For $p\in [1,\infty]$, $B^s_{q,p}$ denotes the Besov space.
    \item For $q\in(1,\infty)$ and integer $m\ge 1$, we define
    \begin{align}\label{notation0}H^{-m,q}:=(H^{m,q'})^*,\qquad H_0^{-m,q}:=(H^{m,q'}_0)^*,
    \end{align}
    where $\frac{1}{q}+\frac{1}{q'}=1$. We shall simply write $H^{-m}$ and $H^{-m}_0$, if $q=2$. Define the spaces
    \begin{align}
        \label{notation0-1}X^{1,q}:=\{f=(f_1,f_2,f_3)| f_1,f_2\in H^{1,q}_0, f_3\in H^{1,q}\}
    \end{align}
    and
    \begin{align}
        \label{notation1}X^{-1,q}:=(X^{1,q'})^*=\{f=(f_1,f_2,f_3)| f_1,f_2\in H^{-1,q}_0, f_3\in H^{-1,q}\}
    \end{align}
    endowed with the natural product norms, respectively.
    \item $H_{\curl}$ is the curl-admissible class defined in \eqref{INTRO1-1}. $\XX_m$ is the anisotropic Sobolev space given in \eqref{INTRO7}.
    \item By $e_j$, we denote the unit vector in the Euclidean space with the $j$-th component being $1$. For a three-dimensional vector $w$, we denote by $w_h$ its horizontal components, and by $w_3$ its vertical component. For a matrix $\MM$, $\row_j \MM$ denotes its $j$-th row.
    \item Define $\nabla_x:=(\partial_{x_1},\partial_{x_2})$ and $\Delta_x:=\partial_{x_1}^2+\partial_{x_2}^2$. For a vector field $f=(f_1,f_2,f_3)$, $f_h$ denotes its horizontal component $(f_1,f_2)$, and $\dive_hf_h:= \partial_{x_1}f_1+\partial_{x_2}f_2.$
    \item By $\I_\cdot$, we denote the usual indicator function.
    \item We use the notations $\lesssim$, $\lesssim_\alpha$, $\lesssim_\nu$, etc., to indicate inequalities that hold up to an inessential multiplicative constant, such as  $C$, $C_\alpha$, $C_\nu$, and so on. By $A\sim_\alpha B$, we denote the relation $B\lesssim_\alpha A\lesssim_\alpha B$.
    
    \item For any Banach spaces $X,Y$, we use the notation $\TT\in\LL(X,Y)$ to indicate that $\TT$ is a bounded linear operator from $X$ to $Y$. For any two operators $\TT_1,\TT_2$, we define the commutator $[\TT_1,\TT_2]:=\TT_1\TT_2-\TT_2\TT_1$.
    \item For stochastic processes $X(t),Y(t)$, we use the notation $\llbracket X,Y\rrbracket_t$ to denote their quadratic covariation. Define the quadratic variation of $X(t)$ by $\llbracket X\rrbracket_t:=\llbracket X,X\rrbracket_t$.
    \end{itemize}

    \section{Uniform-in-\texorpdfstring{$\theta$}{theta} estimate for the linear Stokes equation with transport noise}\label{sec2}
    In this section, we establish estimates for the linear Stokes equation \eqref{LS1} with transport noise, which are uniform with respect to the noise coefficients. These estimates will be used in Section \ref{SLofNS} to prove the tightness of the cut-off solutions under the high-frequency scaling. We consider
    \begin{align}
    \label{LS1}
    \begin{cases}
    \partial_tw-\nu\Delta w=\curl f+S_\theta w+\sqrt{2\kappa}\Pi (\dot{W}\cdot \nabla)w,\qquad u=\curl^{-1} w,\\(\partial_y w_3-\alpha w_3)|_{y=0}=(w_h-\alpha 
    u_h^\perp)|_{y=0}=0,\\(\partial_y w_3+\alpha w_3)|_{y=1}=(w_h+\alpha 
    u_h^\perp)|_{y=1}=0,\\
    w|_{t=0}=w_{\ini},
    \end{cases}
    \end{align} 
    where $W$ is given by either \eqref{INTRO4} or \eqref{INTRO6}, and $S_\theta$ is defined accordingly in \eqref{INTRO9}. The main difficulty is that the vorticity boundary conditions are nonhomogeneous and nonlocal through the Biot--Savart law. We remove this difficulty by introducing a boundary correction operator, which gives a uniform energy estimate. Then, by establishing a Meyers-type estimate, we upgrade this energy estimate to an estimate in $H^{s,p}_tH_x^{1-2s,q}$, with $p,q$ sufficiently close to $2$ and $s\in(0,\frac{1}{2})$.

    \subsection{Uniform energy estimate}
    The main result of this subsection is stated as follows.
    \begin{prop}\label{propLS1}Suppose that $w_{\ini}$ is $\FFF_0$-measurable, $f$ is progressively measurable, and 
    \[w_{\ini}\in L^2(\Omega;H_{\curl}),\qquad f\in L^2_{\loc}(\Omega\times (0,\infty);L^2).\]
    Then, there exists a unique global solution $w$ of \eqref{LS1}. Moreover, for any $T>0$, 
    \begin{align}
        \label{LS2}
        \E\sup_{t\le T}\|w(t)\|^2+\E\int_0^T\|\nabla w\|^2\ddd t\lesssim_{\alpha,\nu,\kappa,T}\E\left(\|w_{\ini}\|^2+\int_0^T\| f\|^2\ddd t\right).
    \end{align}
    \end{prop}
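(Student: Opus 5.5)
The plan follows the boundary-correction strategy outlined in the introduction. I will introduce the boundary correction operator $\BB$, defined so that for $w\in H_{\curl}\cap H^1$ the function $\BB w$ solves a harmonic-type problem in $D$ carrying the nonhomogeneous part of the vorticity boundary data. Concretely, $(\BB w)_h$ has trace $\mp\alpha u_h^\perp$ on $y=0,1$ with $u=\curl^{-1}w$, and $(\BB w)_3=0$. Then $\tw:=(I-\BB)w$ satisfies homogeneous mixed Dirichlet ($\tw_h=0$) and Robin ($\partial_y\tw_3\mp\alpha\tw_3=0$) conditions. The operator $-\Delta$ with these conditions is self-adjoint and coercive on $X^{1,2}$, with form $\|\nabla\tw\|^2+\alpha\|\tw_3\|^2_{L^2(\partial D)}$.

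Since $u=\curl^{-1}w$ gains one derivative and the trace loses half a derivative, $\BB$ maps $L^2\to H^{1/2}$, and $H^1\to H^{3/2}$. It is therefore compact on $L^2$ and of order $-1$. By a Fredholm argument, $I-\BB$ is invertible as soon as it is injective. For injectivity, suppose $w=\BB w$. Then $w$ is harmonic with Navier-type data, and an integration by parts argument forces $w=0$. This uses $\alpha>0$ together with the Biot--Savart identity $\|\nabla u\|\sim\|w\|$ on $H_{\curl}$. Hence $\|w\|_{H^s}\sim\|\tw\|_{H^s}$ for $s\in[0,1]$.

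Existence and uniqueness will come from a Galerkin scheme, or equivalently from the variational framework of stochastic evolution equations (Krylov--Rozovskii), applied to the equation for $\tw$. Since the system is linear, uniqueness follows from the a priori estimate applied to the difference of two solutions. So the core task is the estimate \eqref{LS2}.

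I will apply Itô's formula to $\|\tw\|^2$. The equation for $\tw$ reads
\begin{align*}
\partial_t\tw-\nu\Delta\tw=(I-\BB)\big(\curl f+S_\theta w+\sqrt{2\kappa}\,\Pi(\dot W\cdot\nabla)w\big)+\nu[\BB,\Delta]\text{-type terms}.
\end{align*}
The Laplacian of the harmonic correction vanishes, and time derivatives commute with $\BB$, so the only genuinely new terms come from $\BB$ applied to the forcing. The Laplacian term gives $-2\nu(\|\nabla\tw\|^2+\alpha\|\tw_3\|^2_{\partial D})$. The forcing term is handled by integrating $\curl$ by parts and using Young's inequality: $\langle(I-\BB)\curl f,\tw\rangle\lesssim\|f\|\,\|\tw\|_{H^1}$. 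This is valid because $\BB\curl$ is of order $0$, so no boundary term is lost.

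The key step is the cancellation between the Itô--Stratonovich corrector and the quadratic variation. Write $\tw=w-\BB w$. The leading contribution
\begin{align*}
2\langle S_\theta w,w\rangle+2\kappa\ssum\theta_k^2\|\Pi(\sigma_{k,j}\cdot\nabla)w\|^2
\end{align*}
vanishes exactly. This holds because $\sigma_{k,j}$ is divergence free and tangent to $\partial D$, so $\Pi(\sigma_{-k,j}\cdot\nabla)$ is skew-adjoint on $H_{\curl}$, where we also use $\sigma_{-k,j}=\overline{\sigma_{k,j}}$ together with the covariance structure \eqref{INTRO4-2}. What remains are cross terms involving $\BB$: for instance $\langle\BB S_\theta w,\tw\rangle$, $\langle S_\theta w,\BB w\rangle$, and $\|\BB\Pi(\sigma\cdot\nabla)w\|\,\|\Pi(\sigma\cdot\nabla)w\|$. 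Each must be bounded by $\delta\|\nabla w\|^2+C_\delta\|w\|^2$ with constants depending only on $\kappa$ and $\|\theta\|_{\ell^2}=1$, not on the support of $\theta$.

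For this I will use two facts. First, $\BB\Pi(\sigma\cdot\nabla)$ is of order $0$, with norm controlled by $\|\sigma\|_{L^\infty}\lesssim1$ rather than by $|k|$; this requires writing $\sigma\cdot\nabla w=\dive(\sigma\otimes w)$ and letting $\BB$ absorb the derivative. Second, we can move one derivative onto $\BB w$ and control $\|\BB w\|_{H^1}\lesssim\|w\|$. Summing over $k$ with $\ssum\theta_k^2=1$ then yields bounds uniform in $\theta$. I expect this step to be the main obstacle. The commutator $[\BB,\Pi(\sigma_{k,j}\cdot\nabla)]$ must be estimated uniformly in $|k|$, which needs the order $-1$ property of $\BB$ in a form insensitive to oscillation, since $\BB$ acts on $w$ through traces of $u$. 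If this fails, I would instead use the $H^{-1}$-type bound $\|\BB g\|\lesssim\|g\|_{X^{-1,2}}$ and $\|(\sigma\cdot\nabla)w\|_{X^{-1,2}}\lesssim\|w\|$.

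Finally, the martingale term is handled by the Burkholder--Davis--Gundy inequality. Its quadratic variation is bounded by
\begin{align*}
\kappa\ssum\theta_k^2\langle\Pi(\sigma\cdot\nabla)w,\tw\rangle^2\lesssim\kappa\|\tw\|^2\|w\|^2,
\end{align*}
after the same skew-adjointness and $\BB$-bounds are applied. Gronwall's inequality, combined with the norm equivalence from the invertibility of $I-\BB$, then yields \eqref{LS2}.
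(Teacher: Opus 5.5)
There is a genuine gap. It lies in your choice of $\BB$ and in the term you set aside.

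You write that, because $\Delta\BB w=0$, ``the only genuinely new terms come from $\BB$ applied to the forcing.'' This is false. Applying $I-\BB$ to $\partial_t w-\nu\Delta w=F$ gives $\partial_t\tw-\nu\Delta\tw=(I-\BB)F-\nu[\BB,\Delta]w$, and for a harmonic lifting $[\BB,\Delta]w=\BB\Delta w\neq0$. This commutator is exactly where the Navier conditions must enter the energy estimate, and your plan never bounds it.

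With a trace-then-harmonically-extend $\BB$, this term cannot be bounded at the $H^1$ level. Indeed, $\BB\Delta w$ is the harmonic extension of the wall trace of $(\curl^{-1}\Delta w)_h$. Since $\curl^{-1}\Delta w=-\curl w+(\text{harmonic gradient plus constants})$, you would need $\partial_y w_h|_{\partial D}$, which $\|\nabla w\|$ does not control.

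The same defect breaks your ``order $0$'' claims. The terms $\BB\curl f$, $\BB S_\theta w$ and $\BB\Pi(\sigma_{k,j}\cdot\nabla)w$ all require traces of $\curl^{-1}$ applied to $H^{-1}$ data, i.e.\ traces of $L^2$ functions. For example, take $f=(b(y),0,0)$ with mean-zero $b$. Then $\curl^{-1}\curl f=(b,0,0)$, and the harmonic $\BB\curl f$ is the affine interpolation of $b(0),b(1)$, which $\|f\|=\|b\|$ does not control. In particular, your fallback bound $\|\BB g\|\lesssim\|g\|_{X^{-1,2}}$ fails for the harmonic $\BB$. (Your signs are also reversed: $w_h=+\alpha u_h^\perp$ at $y=0$.)

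The missing idea is an interior lifting rather than a trace-based one, namely the paper's $\BB w=\big(\alpha(1-2y)[(\curl^{-1}w)_h]^\perp,0\big)$. Since $1-2y=\pm1$ at $y=0,1$, it carries the correct boundary data. It commutes with $\partial_x$ and never takes a trace. Hence, whenever $\int_D g_3=0$,
\[
\|\BB\Pi g\|\lesssim_\alpha\|\curl^{-1}\Pi g\|\lesssim\|g\|_{H^{-1}},
\qquad\text{and also}\qquad \|\BB w\|_{H^1}\lesssim_\alpha\|w\|.
\]
These give $\|\BB\Pi(\sigma_{k,j}\cdot\nabla)w\|\lesssim\|w\|$ uniformly in $k$, $\|\BB\curl f\|\lesssim\|f\|$, and $\|\BB S_\theta w\|\lesssim\|\nabla w\|$.

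The commutator becomes $[\BB,\partial_y^2]w=\alpha(1-2y)\big([\curl^{-1},\partial_y^2]w\big)_h^\perp+4\alpha\partial_yu_h^\perp$. The field $[\curl^{-1},\partial_y^2]w$ is div- and curl-free, hence a harmonic gradient plus constants. The Navier conditions fix its normal trace to be $\pm\alpha\dive_h u_h$, and this yields $\|[\BB,\partial_y^2]w\|\lesssim_\alpha\|w\|$ (Proposition~\ref{propB2}).

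Invertibility of $I-\BB$ must then be proved as an isomorphism from $H_{\curl}$ onto $\TIH_{\curl}$, via a compact perturbation of an explicit isomorphism. A Fredholm argument ``on $L^2$'' does not work, because $I-\BB$ does not preserve $H_{\curl}$.

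With this $\BB$, the rest of your plan is essentially the paper's argument: the exact cancellation of $2\langle w,S_\theta w\rangle$ against the quadratic variation, the cross terms bounded by $\|w\|\|w\|_{H^1}$, then BDG and Gronwall.
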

    Due to the inhomogeneous Dirichlet boundary conditions satisfied by the horizontal components $w_h$, the Laplacian is no longer self-adjoint, and hence direct energy estimates are not available. To overcome this difficulty, we introduce the boundary correction operator
    \begin{align}\label{LS3}
    \BB:\ &H_{\curl}\cap H^{s,q}\to H^{s,q},\qquad 
    w\mapsto 
    \begin{pmatrix}
        \alpha(1-2y)\big[(\curl^{-1}w)_h\big]^\perp\\
        0
    \end{pmatrix},
    \end{align}
    and the corrected vorticity 
     \[\tw:=(I-\BB)w.\]
    Basic properties of $\BB$ are gathered in Appendix \ref{BCO}.
    \begin{proof}[Proof of Proposition \ref{propLS1}]
    We prove the estimate in the non-degenerate case. The degenerate case follows from the same argument, with minor modifications indicated at the end of the proof.
    
    \noindent\textbf{Step I.} Applying $I-\BB$ to both sides of \eqref{LS1} and Proposition~\ref{propB2}, one has 
    \begin{align}
    \label{LS4}
    \begin{cases}
    \partial_t\tw-\nu\Delta \tw=\curl f+S_\theta w+h+\sqrt{2\kappa}\ssum_{k\neq 0}\ssum_{j=1,2}\theta_k(\Pi (\sigma_{k,j}\cdot\nabla)w +g_{k,j})\dot{B}_t^{k,j},\\(\partial_y \tw_3-\alpha \tw_3)|_{y=0}=(\partial_y \tw_3+\alpha\tw_3)|_{y=1}=\tw_h|_{y=0,1}=0,\\
    \tw|_{t=0}=\tw_{\ini},
    \end{cases}
    \end{align} 
    where 
    \[h:=-\nu[\BB,\partial_y^2]w-\BB S_\theta w-\BB\curl f,\qquad  g_{k,j}:=-\BB\Pi (\sigma_{k,j}\cdot\nabla)w.\]
    By Corollary \ref{coroA4} and Propositions \ref{propB1} and \ref{propB2}, it follows that 
    \begin{align}\label{LS4-2}
         \nu \|[\BB,\partial_y^2]w\|\lesssim_{\alpha,\nu}\|w\|,
         \qquad \|\BB S_\theta w\|\lesssim_{\alpha}\|S_\theta w\|_{H^{-1}}.
    \end{align}
    We estimate the It\^o--Stratonovich corrector $S_\theta w$ as follows. Let $\psi\in H^1$ be a test function. Since $\sigma_{k,j}$ is divergence-free and $\sigma_{k,j,3}|_{y=0,1}=0$, by integrating by parts and using \eqref{INTRO4-1} and Corollary \ref{coroA2}, one gets
    \begin{align*}
        \langle S_\theta w,\psi\rangle&= 2\kappa \ssum_{k\neq 0}\ssum_{j=1,2}\theta_k^2\langle
    \Pi(\sigma_{k,j}\cdot\nabla)\Pi(\sigma_{-k,j}\cdot \nabla)w,\psi\rangle\\&= -2\kappa \ssum_{k\neq 0}\ssum_{j=1,2}\theta_k^2\langle
    \Pi(\sigma_{-k,j}\cdot \nabla)w,(\sigma_{-k,j}\cdot\nabla)\Pi\psi\rangle\\&\lesssim_\kappa \|\nabla w\|\|\nabla \Pi\psi\|\lesssim_\kappa \|\nabla w\|\|\psi\|_{H^1},
    \end{align*}
    which gives 
    \begin{align}
        \label{LS5}
        \|\BB S_\theta w\|\lesssim_{\alpha}\|S_\theta w\|_{H^{-1}}\lesssim_{\alpha,\kappa} \|\nabla w\|.
    \end{align}
    Next, we turn to the estimate of $\BB\curl f$. By the definition \eqref{A13} of $\curl^{-1}$, one has
    \[\curl^{-1}\curl f= \PPP f-\left(\int_D(\PPP f)_1\ddd x\ddd y,\int_D(\PPP f)_2\ddd x\ddd y,0\right),\]
    where $\PPP$ is the usual Leray projection. This implies
    \begin{align}\label{LS4-1}
        \|\BB\curl f\|\lesssim_\alpha \|\curl^{-1}\curl f\|\lesssim_\alpha\|\PPP f\|\lesssim_\alpha\|f\|,
    \end{align}
    which, combined with \eqref{LS4-2} and \eqref{LS5}, yields
    \begin{align}
        \label{LS6}
        \|h\|\lesssim_{\alpha,\nu,\kappa}\|w\|_{H^1}+\|f\|.
    \end{align}
    Finally, we estimate $g_{k,j}$. Notice that
    \[\int_{D}(\sigma_{k,j}\cdot\nabla)w_3\ddd x\ddd y=0,\]
    by Corollaries \ref{coroA2} and \ref{coroA4}, one has 
    \begin{align}\label{LS7}
        \|g_{k,j}\|\lesssim_\alpha\|\Pi (\sigma_{k,j}\cdot\nabla)w\|_{H^{-1}}\lesssim_\alpha\|(\sigma_{k,j}\cdot\nabla)w\|_{H^{-1}}\lesssim_\alpha \|w\|.
    \end{align}

    \noindent\textbf{Step I\!I.} Throughout the energy estimates below, all $L^2$ pairings involving complex Fourier modes are understood as the real parts of the corresponding complex $L^2$ inner products. Applying the It\^o formula and using \eqref{INTRO4-2}, it follows that
    \begin{align}\label{LS8}
        \ddd \|\tw \|^2&=2\langle \tw, \nu\Delta\tw +\curl f+S_\theta w+h\rangle\ddd t\notag\\&\quad + 2\sqrt{2\kappa}\ssum_{k\neq 0}\ssum_{j=1,2}\theta_k\langle\tw, \Pi (\sigma_{k,j}\cdot\nabla)w +g_{k,j}\rangle \ddd B_t^{k,j}\notag\\&\quad+4\kappa\ssum_{k\neq 0}\ssum_{j=1,2}\theta_k^2\|\Pi (\sigma_{k,j}\cdot\nabla)w +g_{k,j}\|^2\ddd t.
    \end{align}
    The quadratic variation is estimated in the following way. Notice that
    \begin{align*}
        I_1:&=2\langle \tw,S_\theta w\rangle+4\kappa\ssum_{k\neq 0}\ssum_{j=1,2}\theta_k^2\|\Pi (\sigma_{k,j}\cdot\nabla)w +g_{k,j}\|^2\\
        &=2\langle w,S_\theta w\rangle-2\langle\BB w,S_\theta w\rangle+4\kappa\ssum_{k\neq 0}\ssum_{j=1,2}\theta_k^2\|\Pi (\sigma_{k,j}\cdot\nabla)w +g_{k,j}\|^2\\
        &=4\kappa\ssum_{k\neq 0}\ssum_{j=1,2}\theta_k^2\left(\|\Pi (\sigma_{k,j}\cdot\nabla)w +g_{k,j}\|^2-\|\Pi (\sigma_{k,j}\cdot\nabla)w\|^2\right)-2\langle\BB w,S_\theta w\rangle,
    \end{align*}
    where, by using \eqref{LS5}, \eqref{LS7}, and Corollary \ref{coroA4}, one has 
    \begin{align*}
        \|\Pi (\sigma_{k,j}\cdot\nabla)w +g_{k,j}\|^2-\|\Pi (\sigma_{k,j}\cdot\nabla)w\|^2&=2\langle \Pi (\sigma_{k,j}\cdot\nabla)w,g_{k,j}\rangle+\|g_{k,j}\|^2\\&\lesssim_\alpha \|w\|\|w\|_{H^1},
    \end{align*}
    and 
    \begin{align*}
        -2\langle\BB w,S_\theta w\rangle\lesssim \|\BB w\|_{H^1}\|S_\theta w\|_{H^{-1}}\lesssim_{\alpha,\kappa}\|w\|\|\nabla w\|.
    \end{align*}
    Therefore, it follows that
    \begin{align}
        \label{LS9}
        I_1\lesssim_{\alpha,\kappa} \|w\|\|w\|_{H^1}.
    \end{align}
    Now, by first integrating from $0$ to $T$ and taking expectations on both sides of \eqref{LS8}, and then using \eqref{LS6}, \eqref{LS9}, and Proposition \ref{propB1}, one gets 
    \begin{align*}
        \E\|\tw (T)\|^2-\E\|\tw _0\|^2&+2\nu\E\int_0^T\left(\|\nabla \tw\|^2+\alpha\|\tw_3\|^2_{L^2(\partial D)}\right)\ddd t
        \\&\lesssim_{\alpha,\nu,\kappa}\E \int_0^T\left(\|\tw\|_{H^1}\|f\|+\|\tw\|(\|w\|_{H^1}+\|f\|)+\|w\|\|w\|_{H^1}\right)\ddd t
         \\&\lesssim_{\alpha,\nu,\kappa} \E\int_0^T\left(\|\tw\|_{H^1}\|f\|+\|\tw\|\|\tw\|_{H^1}\right)\ddd t.
    \end{align*}
    Since the forcing terms of \eqref{LS1} belong to $H_{\curl}$, the horizontal average $\int_{\T^2}w_3\ddd x$ solves the homogeneous 1D heat equation with Robin boundary conditions and zero initial data:
    \[\int_{\T^2}w_{\ini,3}\ddd x=0,\qquad \mbox{ a.e. $y\in(0,1)$}.\]
    Therefore, one has
    \[\int_{\T^2}\tw_3(t,x,y)\ddd x=\int_{\T^2}w_3(t,x,y)\ddd x=0,\qquad \mbox{for any $t\ge 0$ and a.e. $y\in(0,1)$,}\]
    which, together with the homogeneous Dirichlet boundary conditions for the horizontal components, ensures the Poincar\'e inequality
    \begin{align}
        \label{LS10}\|\tw \|_{H^{1,q}}\lesssim \|\nabla \tw\|_q,\qquad q\in(1,\infty).
    \end{align}
    Combining this with the Gronwall inequality, one gets
    \begin{align}
        \label{LS11}\E\|\tw (T)\|^2+\nu\E\int_0^T\left(\|\nabla \tw\|^2+\alpha\|\tw_3\|^2_{L^2(\partial D)}\right)\ddd t\lesssim_{\alpha,\nu,\kappa,T} \E\|\tw _0\|^2+\E\int_0^T\|f\|^2\ddd t.
    \end{align} 

    \noindent\textbf{Step I\!I\!I.} From \eqref{INTRO4-1}, \eqref{LS6}--\eqref{LS9}, Proposition \ref{propB1}, and the Burkholder--Davis--Gundy inequality, it follows that 
    \begin{align*}
        \E\sup_{t\le T}\|\tw (t)\|^2-&\E\|\tw_{\ini}\|^2\lesssim_{\alpha,\nu,\kappa}\E\int_0^T\left(\|\tw\|_{H^1}\|f\|+\|\tw\|\|\tw\|_{H^1}\right)\ddd t\\&\qquad\qquad+\E\sup_{t\le T}\left|\int_0^t\ssum_{k\neq 0}\ssum_{j=1,2}\theta_k\langle\tw, \Pi (\sigma_{k,j}\cdot\nabla)w +g_{k,j}\rangle \ddd B_t^{k,j}\right|\\&\quad\lesssim _{\alpha,\nu,\kappa}\E\int_0^T\left(\|\tw\|_{H^1}\|f\|+\|\tw\|^2_{H^1}\right)\ddd t +\E\left(\int_0^T\|\tw\|^2\|\tw\|^2_{H^1}\ddd t\right)^{\frac{1}{2}},
    \end{align*}   
    where, by using the Cauchy--Schwarz inequality, the Poincar\'e inequality \eqref{LS10}, and \eqref{LS11}, one has 
    \begin{align*}
        \E\int_0^T\left(\|\tw\|_{H^1}\|f\|+\|\tw\|^2_{H^1}\right)\ddd t \lesssim_{\alpha,\nu,\kappa,T} \E\|\tw _0\|^2+\E\int_0^T\|f\|^2\ddd t,
    \end{align*}
    and 
    \begin{align*}
        \E\left(\int_0^T\|\tw\|^2\|\tw\|^2_{H^1}\ddd t\right)^{\frac{1}{2}}\le \epsilon\E\sup_{t\le T}\|\tw (t)\|^2+C_{\epsilon,\alpha,\nu,\kappa,T}\left( \E\|\tw _0\|^2+\E\int_0^T\|f\|^2\ddd t\right)
    \end{align*}
    with $\epsilon$ being arbitrarily small. Therefore, 
    \begin{align*}\E\sup_{t\le T}\|\tw (t)\|^2\lesssim_{\alpha,\nu,\kappa,T} \E\|\tw _0\|^2+\E\int_0^T\|f\|^2\ddd t,
    \end{align*} 
    which, together with \eqref{LS11} and Proposition \ref{propB1}, implies \eqref{LS2}.

    \noindent\textbf{Step I\!V.} The existence of global solutions to \eqref{LS1} follows from standard iteration arguments applied to the equivalent formulation \eqref{LS4}; we therefore omit the details. The degenerate case is treated in the same way. Indeed, the noise coefficients $\theta_\cdot$ satisfy \eqref{INTRO4-1}, and each solenoidal tangential Fourier mode $\sigma_k$, viewed as a three-dimensional vector field with zero vertical component, satisfies the divergence-free and no-flux conditions.
    \end{proof}

     \subsection{Improved regularity via a Meyers-type estimate}
    In this subsection, our goal is to improve the uniform energy estimate \eqref{LS2} to the fractional space-time regularity $H^{s,p}(0,T;H^{1-2s,q})$ with $p,q$ close to $2$ and $s\in[0,1/2)$.
    \begin{prop}\label{propLS2}
    Suppose that $w_{\ini}$ is $\FFF_0$-measurable, $f$ is progressively measurable, and 
    \[w_{\ini}\in L^p(\Omega;B^{1-2/p}_{q,p}\cap H_{\curl}),\qquad f\in L^p(\Omega\times (0,T);L^q).\]
    Then, there exists $p_0>2$ depending on $\alpha, \nu,\kappa$ such that for any $2\le q\le p< p_0$ and $s\in [0,1/2)$, the unique global solution $w$ of \eqref{LS1} satisfies
        \begin{align}\label{LS12}
            \|w\|_{L^p(\Omega;H^{s,p}(0,T;H^{1-2s,q}))}\lesssim_{s,p,q,\alpha,\nu,\kappa,T}\|f\|_{L^p(\Omega\times (0,T);L^q)}+\|w_{\ini}\|_{L^p(\Omega;B^{1-2/p}_{q,p})}.
        \end{align}
    \end{prop}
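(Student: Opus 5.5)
We work with the corrected unknown $\tw=(I-\BB)w$, which solves \eqref{LS4} with homogeneous mixed Dirichlet--Robin boundary conditions, and recover $w$ at the end from the invertibility of $I-\BB$ (Appendix \ref{BCO}). Since $\BB$ is of order $-1$, the terms $h$ and $g_{k,j}$ in \eqref{LS4} are lower order. By \eqref{LS6}--\eqref{LS7}, in the $L^q$ scale they are controlled by $\|w\|_{H^{1,q}}+\|f\|_q$ and by $\|w\|_{L^q}$, respectively. The leading part of the problem is then the linear stochastic system
\begin{align*}
\ddd\tw-\big(\nu\Delta \tw+S_\theta \tw\big)\ddd t=\curl f\,\ddd t+\sqrt{2\kappa}\ssum_{k,j}\theta_k\Pi(\sigma_{k,j}\cdot\nabla)\tw\,\ddd B^{k,j}_t+(\text{l.o.t.}),
\end{align*}
posed in $X^{-1,q}$ with $\tw$ valued in $X^{1,q}$. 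The target is stochastic maximal $L^p$-regularity in the weak setting, namely an estimate in
\[L^p(\Omega;L^p(0,T;X^{1,q}))\cap L^p(\Omega;C([0,T];B^{1-2/p}_{q,p})).\]
The fractional space-time bound \eqref{LS12} then follows from the standard trace/interpolation embedding of the maximal regularity space into $H^{s,p}(0,T;H^{1-2s,q})$ for $s\in[0,1/2)$.

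\textbf{Step 1 (the $L^2$ case).} For $p=q=2$, Proposition~\ref{propLS1} already gives the $L^2(\Omega;L^2(0,T;H^1))\cap L^2(\Omega;C([0,T];L^2))$ bound with constants depending only on $\alpha,\nu,\kappa,T$, and uniform in $\theta$ because only \eqref{INTRO4-1} was used. I would restate this as an isomorphism. Consider the solution map $\mathcal{S}_{p,q}$ sending data $(\tw_{\ini},\curl f)$, and more generally any forcing $F\in L^p(\Omega\times(0,T);X^{-1,q})$ together with noise forcing $G\in L^p(\Omega\times(0,T);L^q(\ell^2))$, to the solution in the maximal regularity space. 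For $(p,q)=(2,2)$ this map is a bounded isomorphism. Obtaining this requires rerunning the energy argument of Proposition~\ref{propLS1} with general $F\in X^{-1}$ and $G\in L^2(\ell^2)$. That is routine, since $\langle\tw,F\rangle\le\|\tw\|_{H^1}\|F\|_{X^{-1}}$ and the It\^o correction from $G$ is absorbed by Cauchy--Schwarz against the cancellation in $I_1$.

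\textbf{Step 2 (Sneiberg extrapolation).} I would view the operator
\[\mathcal{L}_{p,q}:\tw\mapsto\big(\tw(0),\ \partial_t\tw-\nu\Delta\tw-S_\theta\tw,\ \text{noise part}\big)\]
as a bounded map on a complex interpolation scale $\{\mathcal{E}_{p,q}\}\to\{\mathcal{F}_{p,q}\}$, indexed by $(1/p,1/q)$ near $(1/2,1/2)$. Its norms are uniform in $\theta$, because $\|\Pi(\sigma_{k,j}\cdot\nabla)\|_{H^{1,q}\to L^q}\lesssim 1$ and $\sum\theta_k^2=1$, which controls both $S_\theta:X^{1,q}\to X^{-1,q}$ and the $\gamma$-radonifying/$\ell^2$-valued noise map. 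The spaces form complex interpolation scales: weighted $L^p(\Omega\times(0,T);X^{\pm1,q})$, and the trace space $B^{1-2/p}_{q,p}$ after one fixes the standard choice of scale. Since $\mathcal{L}_{2,2}$ is invertible with inverse norm bounded by $C(\alpha,\nu,\kappa,T)$, Theorem~\ref{theoremLS4} (the Sneiberg principle) gives an open neighbourhood $|1/p-1/2|,|1/q-1/2|<\delta$ on which $\mathcal{L}_{p,q}$ remains invertible with bounded inverse. Here $\delta$ depends only on the bounds, hence only on $\alpha,\nu,\kappa$, which defines $p_0$. The lower order terms $h,g_{k,j}$ are treated the same way: they are included in the operator, remain bounded on the scale, and the $L^2$ invertibility of the full operator is exactly Proposition~\ref{propLS1}. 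The restriction $q\le p$ is used so that Burkholder/It\^o-type bounds for $L^q$-valued stochastic integrals fit the interpolation scale.

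\textbf{Main obstacle.} The delicate step is checking that the stochastic maximal-regularity data and solution spaces truly form complex interpolation scales, with $\mathcal{L}$ uniformly bounded across the scale. Two points need care. First, $\Pi$ and the Biot--Savart law entering $\BB$ must be bounded on $H^{\pm1,q}$ for $q$ near $2$ (Corollaries~\ref{coroA2}, \ref{coroA4}). Second, the initial trace spaces $B^{1-2/p}_{q,p}$ interpolate correctly. If the initial data cause trouble, I would first subtract a deterministic heat extension $e^{t\nu\Delta}\tw_{\ini}$ with the mixed boundary conditions, which lies in the maximal regularity class by classical parabolic theory. This reduces the problem to zero initial data and moves the remainder into $F$, so that Sneiberg only needs to be applied with vanishing initial data. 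Finally, $w=(I-\BB)^{-1}\tw$, and $(I-\BB)^{-1}$ is bounded on $H^{1-2s,q}$, which transfers \eqref{LS12} from $\tw$ to $w$.
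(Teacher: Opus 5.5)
\textbf{There is a genuine gap in your Step 2.}

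Theorem~\ref{theoremLS4} propagates invertibility only from an \emph{interior} parameter $\gamma_0\in(0,1)$ of a scale on which the operator is bounded at both ends. It says nothing at an endpoint.

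For your full stochastic operator, with the transport noise inside, boundedness at $(p,q)$ requires stochastic maximal regularity of the reference problem. That theory is available only for $q\ge 2$ and $p>2$ (or $p=q=2$):
\begin{itemize}
\item For $p<2$, a noise integrand in $L^p(0,T;L^q(\ell^2))$ need not be stochastically integrable at all, since $L^p(0,T)\not\subset L^2(0,T)$.
\item For $q<2$, you are outside the $L^q$ stochastic maximal regularity theory, which needs $q\ge2$.
\end{itemize}
So $(1/2,1/2)$ is a corner of the admissible region. Every segment having it as an interior point has an endpoint with $p<2$ or $q<2$. The neighbourhood $|1/p-1/2|,|1/q-1/2|<\delta$ you claim is therefore not available, and Sneiberg cannot be invoked at the stochastic level.

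Nor can you fall back on a perturbative treatment of the noise. The term $\theta_k\Pi(\sigma_{k,j}\cdot\nabla)\tw$ in the $\ddd B$ part is of the same order as $\nu\Delta\tw$ in the drift. The $L^2$ theory closes only because of the exact cancellation between $S_\theta$ and the quadratic variation.

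\textbf{How the paper handles this.} Lemma~\ref{lemmaLS3} separates three ingredients.
\begin{itemize}
\item Sneiberg is applied only to the deterministic operator $\partial_t-\nu\Delta-(I-\BB)S_\theta+\lambda_0:\XXX_{p,q}\to\YYY_{p,q}$. This is bounded for all $p,q\in(1,\infty)$, so $(2,2)$ is interior.
\item Additive noise is removed by subtracting the stochastic heat convolution, via van Neerven--Veraar--Weis.
\item The critical multiplicative noise is handled by a sharp-constant argument. With the normalisation in \eqref{LS21}, the optimal constant for the problem with only additive noise satisfies $C_{2,2}\le 1$, by the It\^o formula and $\lambda_0$ large. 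The norm of a fixed bounded linear map, unlike invertibility, does interpolate from an endpoint, so $\limsup_{p\downarrow 2}C_p\le C_{2,2}$. Feeding $G_{k,j}+\theta_k\Pi(\sigma_{k,j}\cdot\nabla)\tw$ back in then costs only $(C_p^2(1+\delta)-1)B^2_{p,q,\theta,\kappa}(\tw)$. For $p$ close to $2$, this is absorbed by the strictly positive $\frac{\nu}{4}\|\nabla\tw\|^2$ term.
\end{itemize}

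On $(0,T)$ the paper then proceeds in two further steps. It controls the lower-order commutator terms by Gagliardo--Nirenberg together with an It\^o estimate for $\|\tw\|^p$, again for $p$ close to $2$. It then obtains the $H^{s,p}(0,T;H^{1-2s,q})$ bound from stochastic maximal regularity of the heat equation, now that the right-hand side is controlled.

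Your reduction of the initial data by a heat extension and the final transfer through $(I-\BB)^{-1}$ are fine. What is missing is this absorption mechanism in place of Sneiberg applied to the full stochastic operator.
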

    As before, we focus on the non-degenerate case and indicate the minor modifications needed for the degenerate case at the end of this subsection. The key ingredient is the following Meyers-type estimate for the associated model problem
    \begin{equation}
    \label{LS13}
    \left\{
    \begin{aligned}
    &\begin{aligned}
    \partial_t\tw-\nu\Delta \tw+\lambda_0 \tw&= F+(I-\BB)S_\theta \tw  
    \\&\qquad\quad +\sqrt{2\kappa}\ssum_{k\neq 0}\ssum_{j=1,2}\big(\theta_k\Pi(\sigma_{k,j}\cdot\nabla)\tw +G_{k,j}\big)\dot{B}_t^{k,j},
    \end{aligned}
    \\&(\partial_y \tw_3-\alpha \tw_3)|_{y=0}=(\partial_y \tw_3+\alpha\tw_3)|_{y=1}=\tw_h|_{y=0,1}=0,
    \\&\tw|_{t=0}=\tw_{\ini},
    \end{aligned}
    \right.
    \end{equation}
    where $\lambda_0>0$ is a parameter to be chosen below, $S_\theta$ is defined in \eqref{INTRO9}$_1$, $\BB$ is defined in \eqref{LS3}, the coefficients $\theta_\cdot$ satisfy \eqref{INTRO4-1}, and
    \[F\in L^p(\Omega\times\R_+;X^{-1,q}),\qquad G:=\{G_{k,j}\}_{k,j}\in L^p(\Omega\times\R_+;L^q(\ell^2)),\]
    with $X^{-1,q}$ given in \eqref{notation1}.
    \begin{lemma} \label{lemmaLS3}Let $\tw$ be a solution of \eqref{LS13}. Then, there exist $p_0>2$ and $\lambda_0>0$ depending on $\alpha, \nu,\kappa$ such that for any $2\le q\le p< p_0$, 
    \begin{align}\label{LS14}
            \|\nabla\tw\|_{L^p(\Omega\times\R_+;L^{q})}&\lesssim_{p,q,\alpha,\nu,\kappa}\|\tw_{\ini}\|_{L^p(\Omega;B^{1-2/p}_{q,p})}\notag\\&\qquad\qquad+\|F\|_{L^p(\Omega\times\R_+;X^{-1,q})}+\|G\|_{L^p(\Omega\times\R_+;L^q(\ell^2))}.
        \end{align}
    \end{lemma}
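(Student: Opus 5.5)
The plan follows the strategy announced in the introduction. We establish the estimate \eqref{LS14} for $p=q=2$ with constants uniform in $\theta$, and then extend it to $p,q$ slightly above $2$ by the Sneiberg principle (Theorem~\ref{theoremLS4}), applied to a complex interpolation scale of solution spaces.

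\emph{Solution and data spaces.} For $(p,q)$ we let
\[
\mathbb{E}_{p,q}:=L^p(\Omega\times\R_+;X^{1,q})\cap\{\text{$\F$-adapted, with time regularity from the equation}\},
\]
and take as data space
\[
\mathbb{F}_{p,q}:=L^p(\Omega;B^{1-2/p}_{q,p})\times L^p(\Omega\times\R_+;X^{-1,q})\times L^p(\Omega\times\R_+;L^q(\ell^2)).
\]
The solution map of \eqref{LS13} is written as the inverse of $\mathcal{L}_\theta:=\mathcal{L}_0-\mathcal{K}_\theta$. Here $\mathcal{L}_0$ is the stochastic heat operator $\partial_t-\nu\Delta+\lambda_0$ with the mixed Dirichlet/Robin conditions and additive noise, and $\mathcal{K}_\theta$ collects the corrector $(I-\BB)S_\theta$ and the multiplicative transport term. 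Both $\{\mathbb{E}_{p,q}\}$ and $\{\mathbb{F}_{p,q}\}$ form complex interpolation scales along a segment of exponents; this is standard for vector-valued $L^p$, Bessel potential and Besov spaces on the channel with these boundary conditions.

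\emph{Uniform boundedness of the operators.} First, $\mathcal{L}_0$ is an isomorphism for all $1<q\le p<\infty$, by stochastic maximal $L^p$-regularity of the Dirichlet–Robin Laplacian; this step does not involve $\theta$. Second, $\mathcal{K}_\theta:\mathbb{E}_{p,q}\to\mathbb{F}_{p,q}$ is bounded uniformly in $\theta$. In the weak form, $S_\theta\tw$ is the divergence of $-2\kappa\sum\theta_k^2(\sigma_{k,j}\cdot\nabla)\Pi\ldots$, as in the computation leading to \eqref{LS5}. Since $\|\theta\|_{\ell^2}=1$ and $\sup|\sigma_{k,j}|\lesssim1$, this gives
\[
\|S_\theta\tw\|_{X^{-1,q}}\lesssim_\kappa\|\nabla\tw\|_q,
\]
using $L^q$-boundedness of $\Pi$ on $H^{\pm1,q}$ (Corollary~\ref{coroA2}) and the fact that $\BB$ is of order $-1$ (Proposition~\ref{propB1}). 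Similarly,
\[
\|\{\theta_k\Pi(\sigma_{k,j}\cdot\nabla)\tw\}\|_{L^q(\ell^2)}\lesssim\|\nabla\tw\|_q,
\]
by a square-function bound from the orthonormality structure $\sum_j a_{k,j}\otimes a_{k,j}\le I$ together with $\theta_k^2$ summability. Consequently $\mathcal{L}_\theta$ is bounded $\mathbb{E}_{p,q}\to\mathbb{F}_{p,q}$ with a $\theta$-independent norm on the whole scale.

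\emph{The $L^2$ isomorphism.} At $p=q=2$ we need $\mathcal{L}_\theta^{-1}$ bounded uniformly in $\theta$, which we obtain from an energy estimate. Applying Itô to $\|\tw\|^2$, the corrector term $2\langle\tw,S_\theta\tw\rangle$ cancels against the quadratic variation $4\kappa\sum\theta_k^2\|\Pi(\sigma_{k,j}\cdot\nabla)\tw\|^2$. The only $\theta$-dependent leftovers are $-2\langle\tw,\BB S_\theta\tw\rangle$ and the cross terms with $G$. The first is bounded by $C\|\tw\|\|\nabla\tw\|$ exactly as in Step~I\!I of Proposition~\ref{propLS1}. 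The cross terms with $G$ are estimated by $\|\nabla\tw\|\|G\|+\|G\|^2$. The $F$-term is controlled through the duality $X^{-1,2}$–$X^{1,2}$. Choosing $\lambda_0$ large absorbs $C\|\tw\|\|\nabla\tw\|\le\frac\nu4\|\nabla\tw\|^2+C\|\tw\|^2$, and this yields \eqref{LS14} for $p=q=2$ on $\R_+$ with constants independent of $\theta$, and hence injectivity and surjectivity of $\mathcal{L}_\theta$ at $(2,2)$.

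\emph{Extrapolation and the main difficulty.} The Sneiberg principle then gives $\delta>0$, depending only on the uniform bounds and the interpolation constants and therefore not on $\theta$, such that $\mathcal{L}_\theta$ is an isomorphism at every $(p,q)$ in a $\delta$-neighbourhood of $(2,2)$. This gives \eqref{LS14} for $2\le q\le p<p_0$. The part needing the most care is the interpolation step when $p\ne q$. The natural scale moves along a segment, for example $q=p$ or $1/q=\vartheta/p+\ldots$, and covering all $2\le q\le p<p_0$ requires a two-parameter neighbourhood or several segments, possibly combined with Rubio de Francia-type extrapolation in time. We would first try two interpolation lines, $(2,2)\to(p_1,2)$ and $(2,2)\to(p_1,p_1)$, and then interpolate between them. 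It must also be checked that the adapted, trace-regular $\mathbb{E}_{p,q}$ really forms a complex interpolation scale; we would rely on the stochastic maximal-regularity theory of van Neerven–Veraar–Weis, which identifies $\mathbb{E}_{p,q}$ as the image of the scale $\mathbb{F}_{p,q}$ under $\mathcal{L}_0^{-1}$, so that interpolation passes through the isomorphism $\mathcal{L}_0$.
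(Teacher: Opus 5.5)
There is a genuine gap in the extrapolation step, and it is not the $p\neq q$ bookkeeping you flag. Sneiberg's theorem (Theorem~\ref{theoremLS4}) only propagates invertibility from an \emph{interior} point $\gamma_0\in(0,1)$ of a scale on which the operator is bounded at both ends.

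Your scale is built as $\mathbb{E}_{p,q}=\mathcal{L}_0^{-1}\mathbb{F}_{p,q}$, which rests on the claim that $\mathcal{L}_0$ is an isomorphism for all $1<q\le p<\infty$. That claim is false. Stochastic maximal regularity in the sense of van Neerven--Veraar--Weis is available for $p>2$, $q\ge 2$, and for $p=q=2$; the paper uses it only with $p\ge q\ge 2$.

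\begin{itemize}
\item For $p<2$ it fails already for one Brownian motion in $L^2$. With $G=g\,\I_{[0,\epsilon]}$, the stochastic convolution $z$ satisfies $\|\nabla z\|_{L^p(\Omega\times(1,2);L^2)}\gtrsim_g\epsilon^{1/2}$, whereas $\|G\|_{L^p(\Omega\times\R_+;L^2)}\sim_g\epsilon^{1/p}\ll\epsilon^{1/2}$.
\item For $p<2$ or $q<2$, a general element of $L^p(\Omega\times\R_+;L^q(\ell^2))$ is not even stochastically integrable. In the first case local square integrability in time is lost; in the second, $L^q$ only has type $q<2$.
\end{itemize}

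So $(2,2)$ is a corner of the admissible region in the $(1/p,1/q)$-plane. Every segment having $(2,2)$ in its interior runs into exponents with $p<2$ or $q<2$, where your $\mathbb{E}_{p,q}$, $\mathbb{F}_{p,q}$ and $\mathcal{L}_\theta$ are not defined. At an endpoint, Sneiberg gives nothing.

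A Neumann-series perturbation of $\mathcal{L}_0$ is not available either. By \eqref{LS20} the transport term has size $\sim\sqrt{\kappa}\,\|\nabla\tw\|_q$, which is not small against $\nu$. Invertibility at $(2,2)$ comes solely from the exact cancellation in It\^o's formula. You need an argument that keeps this cancellation quantitatively for $p$ slightly above $2$.

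The paper proceeds in three steps.
\begin{enumerate}
\item Sneiberg is applied only to the deterministic operator $\partial_t-\nu\Delta-(I-\BB)S_\theta+\lambda_0:\XXX_{p,q}\to\YYY_{p,q}$. Its spaces exist for all $p,q\in(1,\infty)$, so there $(2,2)$ is genuinely interior.
\item The initial datum and the additive noise are removed by subtracting the solution $z$ of the stochastic heat equation, which is estimated by stochastic maximal regularity in the admissible range.
\item The multiplicative transport noise is absorbed by a sharp-constant argument, not by Sneiberg.
\begin{itemize}
\item The additive-noise estimate is normalized as in \eqref{LS21}, with $B^2_{p,q,\theta,\kappa}(\tw)$ on the left, so that It\^o's formula gives the optimal constant $C_{2,2}\le 1$.
\item Interpolating the fixed linear solution map between $(2,2)$ and an admissible $(p_1,q_1)$ needs $(2,2)$ only as an \emph{endpoint}, and it yields $\limsup_{p\downarrow 2}C_p\le 1$.
\item After replacing $G_{k,j}$ by $\theta_k\Pi(\sigma_{k,j}\cdot\nabla)\tw+G_{k,j}$, the quantity $B^2_{p,q,\theta,\kappa}(\tw)$ appears with coefficient $C_{p,q}^2(1+\delta)$ on the right against $1$ on the left.
\item The excess is absorbed into $\frac{\nu}{4}\|\nabla\tw\|^2_{L^p(\Omega\times\R_+;L^q)}$, using \eqref{LS20}, $\|\Pi\|_{\LL(L^q)}\to 1$ and $\delta<\nu/(160\kappa)$.
\end{itemize}
\end{enumerate}

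Your $L^2$ energy computation is the right input, but it must be carried past $p=2$ in this quantitative form.
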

    To establish the above result, we need the following Sneiberg theorem; see \cite{DtER17} and references therein for a proof.
    \begin{theorem}[Sneiberg]\label{theoremLS4}
        Let $(X_0,X_1)$ and $(Y_0,Y_1)$ be interpolation couples of Banach spaces. Assume $\TT\in\LL(X_0,Y_0)\cap \LL(X_1,Y_1)$ and put
        \[c_1:=\max\{\|\TT\|_{\LL(X_0,Y_0)},\|\TT\|_{\LL(X_1,Y_1)}\}.\]
        Suppose that for a given $\gamma_0\in(0,1)$, $\TT$ is an isomorphism from $[X_0,X_1]_{\gamma_0}$ to $[Y_0,Y_1]_{\gamma_0}$. Let $c_2$ satisfy
        \[c_2\ge \|\TT^{-1}\|_{\LL([Y_0,Y_1]_{\gamma_0},[X_0,X_1]_{\gamma_0})}.\]
        Then, if  
        \[\gamma\in(0,1),\qquad |\gamma-\gamma_0|\le \frac{\min\{\gamma_0,1-\gamma_0\}}{6(1+2c_1c_2)},\]
        $\TT$ remains an isomorphism from $[X_0,X_1]_{\gamma}$ to $[Y_0,Y_1]_{\gamma}$, and
        \[\|\TT^{-1}\|_{\LL([Y_0,Y_1]_{\gamma},[X_0,X_1]_{\gamma})}\le 8c_2.\]
    \end{theorem}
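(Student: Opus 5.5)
The plan is to prove Theorem~\ref{theoremLS4} entirely in terms of the Calder\'on space $\mathcal F(X_0,X_1)$ of bounded holomorphic functions on the strip $\mathbb S:=\{0<\realpart z<1\}$, continuous up to the boundary, with
\[\|f\|_{\mathcal F}:=\max_{j=0,1}\sup_{t}\|f(j+it)\|_{X_j},\]
and the quotient norm $\|x\|_\gamma:=\inf\{\|f\|_{\mathcal F}: f(\gamma)=x\}$. Since $\TT\in\LL(X_0,Y_0)\cap\LL(X_1,Y_1)$, composition $f\mapsto \TT f$ maps $\mathcal F(X)$ to $\mathcal F(Y)$ with norm at most $c_1$. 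The basic tool is a Schwarz-type lemma. For $\gamma'\in(0,1)$, let $\beta_{\gamma'}:\mathbb S\to\mathbb D$ be the conformal map with $\beta_{\gamma'}(\gamma')=0$; it has modulus one on $\partial\mathbb S$. If $g\in\mathcal F$ and $g(\gamma')=0$, then $g=\beta_{\gamma'}\varphi$ with $\varphi\in\mathcal F$ and $\|\varphi\|_{\mathcal F}=\|g\|_{\mathcal F}$. Consequently
\[\|g(\gamma)\|_\gamma\le|\beta_{\gamma'}(\gamma)|\,\|g\|_{\mathcal F}.\]
An explicit computation gives $|\beta_{\gamma_0}(\gamma)|\le C|\gamma-\gamma_0|/\min\{\gamma_0,1-\gamma_0\}$. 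This is the origin of the factor $\min\{\gamma_0,1-\gamma_0\}$ in the admissible range.

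\textbf{A priori lower bound.} I first prove $\|x\|_\gamma\le 8c_2\|\TT x\|_\gamma$ for $x\in X_\gamma$ in the stated range. Following Kalton--Mitrea, I use an almost-extremal selection. For $y\in Y_{\gamma_0}$, choose $g_y\in\mathcal F(Y)$ with $g_y(\gamma_0)=y$ and $\|g_y\|\le 2\|y\|_{\gamma_0}$, and set $\Omega y:=g_y'(\gamma_0)$. The Schwarz lemma applied to differences of two such selections shows that $\Omega$ is well defined modulo bounded errors. Given $x$, take $f$ with $f(\gamma)=x$ and $\|f\|\le(1+\delta)\|x\|_\gamma$. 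Take $g$ with $g(\gamma)=\TT x$ and $\|g\|\le(1+\delta)\|\TT x\|_\gamma$. Then $\TT f-g$ vanishes at $\gamma$, so
\[\|\TT f(\gamma_0)-g(\gamma_0)\|_{\gamma_0}\le|\beta_\gamma(\gamma_0)|(c_1\|f\|+\|g\|).\]
Applying $\TT^{-1}$ at $\gamma_0$ yields $\|f(\gamma_0)\|_{\gamma_0}\le c_2\bigl(\|g\|+|\beta|(c_1\|f\|+\|g\|)\bigr)$.

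To go back from $\gamma_0$ to $\gamma$, I replace $f$ by the holomorphic function $\tilde f:=\tilde h+\beta_{\gamma_0}\varphi$. Here $\tilde h$ is an almost-extremal $\mathcal F(X)$-extension of $f(\gamma_0)$, and $\varphi$ is defined by $f-\tilde h=\beta_{\gamma_0}\varphi$. Evaluating at $\gamma$ then gives
\[\|x\|_\gamma\le 2\|f(\gamma_0)\|_{\gamma_0}+|\beta_{\gamma_0}(\gamma)|\bigl(\|f\|+2\|f(\gamma_0)\|_{\gamma_0}\bigr).\]
Inserting the previous bound, the terms involving $\|f\|\approx\|x\|_\gamma$ carry the factor $(1+2c_1c_2)|\beta|$. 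Under $|\gamma-\gamma_0|\le \min\{\gamma_0,1-\gamma_0\}/(6(1+2c_1c_2))$ this factor is at most a fixed fraction (e.g.\ $1/2$), so these terms can be absorbed into the left-hand side. Tracking the constants, with $\delta\downarrow0$, should give the bound $8c_2$.

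\textbf{Surjectivity.} I would prove surjectivity by a continuity argument in $\gamma$, rather than by a direct construction. The lower bound holds uniformly on the whole interval $I$ of admissible $\gamma$, so $\TT$ has closed range on each $[Y_0,Y_1]_\gamma$, $\gamma\in I$. The set of $\gamma\in I$ where $\TT$ is onto contains $\gamma_0$. This set is open: near an onto point, the uniform bound gives an isomorphism, and the same Schwarz-lemma perturbation, now applied to the inverses with extensions of a target $y$, yields approximate preimages with a contraction. Iterating these produces exact preimages via a Neumann series. The set is also closed, by the uniform bound $8c_2$ together with density of $Y_0\cap Y_1$ in each $Y_\gamma$. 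Since $I$ is connected, $\TT$ is onto for every $\gamma\in I$.

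\textbf{Main obstacle.} The delicate step is the passage from $\gamma_0$ back to $\gamma$. $\TT^{-1}$ is only available on $Y_{\gamma_0}$ and does not act on $\mathcal F(Y)$, so one cannot simply compose $\TT^{-1}$ with holomorphic families. This step has to be handled through almost-extremal extensions and the quantitative Schwarz lemma, and the explicit constants $6(1+2c_1c_2)$ and $8c_2$ will come out of this bookkeeping.
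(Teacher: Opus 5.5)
The paper does not prove this theorem; it quotes it from \cite{DtER17}. I therefore compare your proposal with the standard Schwarz-lemma proof.

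\textbf{Lower bound.} This part is that argument and is correct. Since
$|\beta_{\gamma_0}(\gamma)|=\sin(\pi|\gamma-\gamma_0|/2)/\sin(\pi(\gamma+\gamma_0)/2)\le \frac{3\pi}{11}\,|\gamma-\gamma_0|/\min\{\gamma_0,1-\gamma_0\}$,
on the admissible range the coefficient in front of $\|f\|$ is $|\beta|\bigl(1+2(1+|\beta|)c_1c_2\bigr)\le 7/36$. After absorption you land comfortably below $8c_2$. The selection $\Omega y=g_y'(\gamma_0)$ is never used and can be dropped.

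\textbf{The gap: the ``closed'' half of your open--closed argument for surjectivity.} Suppose $T$ is onto at $\gamma_n\to\gamma_*$ with $\|T^{-1}\|\le 8c_2$ there. The preimages $x_n\in X_{\gamma_n}$ of a fixed $y\in Y_0\cap Y_1$ lie in different spaces. A uniform bound together with density of $Y_0\cap Y_1$ gives no compactness and no mechanism for producing an element of $X_{\gamma_*}$ that is mapped to $y$. The step can only be rescued by your own openness construction, noting that its radius is uniform on the compact admissible interval.

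Once you have that construction, the continuity argument is superfluous, because the construction already works at $\gamma_0$ with constant $c_2$. Given $y\in Y_\gamma$, choose $g\in\mathcal F(Y)$ with $g(\gamma)=y$ and $\|g\|\le(1+\delta)\|y\|_\gamma$. Then choose $f\in\mathcal F(X)$ with $f(\gamma_0)=T^{-1}g(\gamma_0)$ and $\|f\|\le(1+\delta)c_2\|g\|$. Since $Tf-g$ vanishes at $\gamma_0$,
\[
\|Tf(\gamma)-y\|_\gamma\le|\beta_{\gamma_0}(\gamma)|\bigl(1+(1+\delta)c_1c_2\bigr)(1+\delta)\|y\|_\gamma\le\tfrac15\|y\|_\gamma,\qquad \|f(\gamma)\|_\gamma\le(1+\delta)^2c_2\|y\|_\gamma ,
\]
for every admissible $\gamma$ and small $\delta$. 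Iterating on the residual, i.e.\ a Neumann series in the complete space $X_\gamma$, gives an exact preimage. Combined with your lower bound, this yields the isomorphism with $\|T^{-1}\|\le 8c_2$.
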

    \begin{proof}[Proof of Lemma \ref{lemmaLS3}]
        \textbf{Step I.} In this step, we assume that both the random forcing and the initial data vanish, namely, $\tw$ satisfies 
        \begin{align}
            \label{LS15}
            \partial_t\tw-\nu\Delta \tw-(I-\BB)S_\theta \tw+\lambda_0 \tw&= F
        \end{align}
        with zero initial data and boundary conditions \eqref{LS13}$_2$. Define the operator $T:\XXX_{p,q}\to \YYY_{p,q}$ by
        \[\TT\tw:=\partial_t\tw-\nu\Delta \tw-(I-\BB)S_\theta \tw+\lambda_0 \tw,\]
        and the spaces
        \[\XXX_{p,q}:=\{\tw\in L^p(\R_+;X^{1,q})|\partial_t\tw\in L^p(\R_+;X^{-1,q}),\ \tw(0)=0\},\]
        \[\YYY_{p,q}:=L^p(\R_+;X^{-1,q}),\]
        where $X^{1,q}$ is given in \eqref{notation0-1}. We claim that for any $p,q\in (1,\infty)$, $\TT\in \LL(\XXX_{p,q},\YYY_{p,q})$. Indeed, for any test function $\psi\in X^{1,q'}$, by integrating by parts, it follows that 
        \begin{align*}
            \langle\Delta \tw,\psi\rangle\le \alpha|\langle \tw_3,\psi_3\rangle_{L^2(\partial D)}|+\|\nabla \tw\|_q\|\nabla\psi\|_{q'}\lesssim_\alpha \|\tw\|_{X^{1,q}}\|\psi\|_{X^{1,q'}}.
        \end{align*}
        Similarly, by integrating by parts again and applying Corollary \ref{coroA2}, for any $\varphi\in H^{1,q'}$, one has
        \begin{align*}
                \langle S_\theta \tw,\varphi\rangle\lesssim_{q,\kappa}\ssum_{k\neq 0}\ssum_{j=1,2}\theta_k^2\|(\sigma_{-k,j}\cdot\nabla)\tw\|_q\|(\sigma_{-k,j}\cdot\nabla)\Pi\varphi\|_{q'}\lesssim_{q,\kappa}\|\nabla\tw\|_{q}\|\varphi\|_{H^{1,q'}}.
        \end{align*}
        These estimates, together with Corollary \ref{coroA4}, imply 
        \begin{align}
            \label{LS16}
            \|\Delta \tw\|_{X^{-1,q}}\lesssim_\alpha \|\tw\|_{X^{1,q}},\quad \|(I-\BB)S_\theta \tw\|_{X^{-1,q}}\lesssim_{\alpha,q} \|S_\theta \tw\|_{H^{-1,q}}\lesssim_{q,\alpha,\kappa} \|\nabla \tw\|_{q},
        \end{align}
        and thus $\TT\in \LL(\XXX_{p,q},\YYY_{p,q})$ as claimed.

        Next, we show that $\TT$ is an isomorphism when $p=q=2$ and $\lambda_0$ is sufficiently large. Indeed, for any $F\in \YYY_{2,2}$, if $\tw\in \XXX_{2,2}$ satisfies $\TT\tw=F$, then a direct energy estimate gives
        \begin{align*}
           \frac{1}{2}\frac{\ddd}{\ddd t}\|\tw\|^2+\nu&(\|\nabla\tw\|^2+\alpha\|\tw_3\|^2_{L^2(\partial D)})+\lambda_0\|\tw\|^2\\&\qquad\qquad\quad+2\kappa\ssum_{k\neq 0}\ssum_{j=1,2}\theta_k^2\|\Pi(\sigma_{k,j}\cdot \nabla )\tw\|^2=\langle F-\BB S_\theta\tw,\tw\rangle,
        \end{align*}
        where, by using Corollary \ref{coroA4}, \eqref{LS5}, and the Poincar\'e-type inequality 
        \[\|\tw\|\lesssim\|\nabla \tw\|+\|w_3\|_{L^2(\partial D)},\]
        one has
        \begin{align*}
           \langle F-\BB S_\theta\tw,\tw\rangle\lesssim_{\alpha,\kappa} \|F\|_{X^{-1,2}}(\|\nabla\tw\|^2+\alpha\|\tw_3\|^2_{L^2(\partial D)})^{\frac{1}{2}}+\|\nabla \tw\|\|\tw\|.
        \end{align*} 
        This yields
        \begin{align*}
            \frac{1}{2}\frac{\ddd}{\ddd t}\|\tw\|^2+\frac{\nu}{2}&(\|\nabla\tw\|^2+\alpha\|\tw_3\|^2_{L^2(\partial D)})+(\lambda_0-C_{\alpha,\nu,\kappa})\|\tw\|^2\\&\qquad\qquad\quad+2\kappa\ssum_{k\neq 0}\ssum_{j=1,2}\theta_k^2\|\Pi(\sigma_{k,j}\cdot \nabla )\tw\|^2\lesssim_{\alpha,\nu,\kappa}\|F\|_{X^{-1,2}}^2.
        \end{align*}
        Therefore, by choosing $\lambda_0>C_{\alpha,\nu,\kappa}$, it follows that
        \[\int_0^\infty\|\tw\|^2_{X^{1,2}}\ddd t\lesssim_{\alpha,\nu,\kappa}\|F\|^2_{\YYY_{2,2}},\]
        which, together with \eqref{LS16}, implies 
        \[\|\partial_t\tw\|_{L^2(\R_+;X^{-1,2})}=\|\nu\Delta \tw+(I-\BB)S_\theta \tw-\lambda_0\tw+F\|_{L^2(\R_+;X^{-1,2})}\lesssim_{\alpha,\nu,\kappa}\|F\|_{\YYY_{2,2}}.\]
        The above a priori estimates ensure the unique solvability of the equation $\TT\tw=F$. Therefore, $\TT$ is an isomorphism.

        Finally, we prove \eqref{LS14} for this special case by applying Theorem \ref{theoremLS4}. Notice that the families $\{\XXX_{p,q}\}_{p,q}$ and $\{\YYY_{p,q}\}_{p,q}$ form the required interpolation scales. Moreover, for any $p\in(1,\infty)$, $\TT\in \LL(\XXX_{p,2},\YYY_{p,2})$, and $\TT$ is an isomorphism when $p=2$, with relevant constants depending only on $\alpha,\nu,\kappa$. Then, there exists $p_1=p_1(\alpha,\nu,\kappa)>2$ such that for any $2\le p<p_1$, $\TT$ remains an isomorphism from $\XXX_{p,2}$ to $\YYY_{p,2}$. By repeating this argument on the index $q$ with fixed $p\in[2,p_1)$, we conclude that there is $p_0=p_0(\alpha,\nu,\kappa)>2$ such that for any $2\le q\le p<p_0$, $\TT$ is an isomorphism from $\XXX_{p,q}$ to $\YYY_{p,q}$. In particular, 
        \begin{align}
            \label{LS17}
            \|\tw\|_{L^p(\R_+;X^{1,q})}\lesssim_{p,q,\alpha,\nu,\kappa}\|F\|_{L^p(\R_+;X^{-1,q})}.
        \end{align}

        \noindent\textbf{Step I\!I.} In this step, we consider the case in which the multiplicative stochastic transport term is absent. More precisely, we assume that $\tw$ satisfies
        \begin{align}
            \label{LS18}\partial_t\tw-\nu\Delta \tw-(I-\BB)S_\theta \tw+\lambda_0 \tw=F+\sqrt{2\kappa}\ssum_{k\neq 0}\ssum_{j=1,2}G_{k,j}\dot{B}_t^{k,j}
        \end{align}
        with the initial data $\tw_{\ini}$ and boundary conditions \eqref{LS13}$_2$. Let $z$ be the solution of 
        \begin{align*}
            \begin{cases}
                \partial_t z-\nu\Delta z+\lambda_0z=\sqrt{2\kappa}\ssum_{k\neq 0}\ssum_{j=1,2}G_{k,j}\dot{B}_t^{k,j}\\
                (\partial_y z_3-\alpha z_3)|_{y=0}=(\partial_y z_3+\alpha z_3)|_{y=1}=z_h|_{y=0,1}=0,\\z|_{t=0}=\tw_{\ini}.
             \end{cases}
        \end{align*}
        It is known that the operator $-\Delta+\lambda_0$, endowed with the Dirichlet boundary conditions on the horizontal components and the Robin boundary conditions on the vertical component, admits a bounded $H^\infty$-calculus on $L^q$ with angle less than $\pi/2$, see Section 2 of \cite{VNVW12}. Therefore, Theorem 1.1 therein, together with the standard heat semigroup estimate for the initial-value part, yields
        \begin{align}
            \label{LS19}\|\nabla z\|_{L^p(\Omega\times\R_+;L^{q})}\lesssim_{p,q,\alpha,\nu,\kappa}\|G\|_{L^p(\Omega\times\R_+;L^q(\ell^2))}+\|\tw_{\ini}\|_{L^p(\Omega;B^{1-2/p}_{q,p})}.
        \end{align}
        Let $\bw:=\tw-z$. Notice that $\bw$ satisfies \eqref{LS15} with $F$ replaced by $F+(I-\BB)S_\theta z$. Then, an application of \eqref{LS16} and \eqref{LS17} gives 
        \begin{align*}
            \|\bw\|_{L^p(\Omega\times\R_+;X^{1,q})}&\lesssim_{p,q,\alpha,\nu,\kappa}\|F+(I-\BB)S_\theta z\|_{L^p(\Omega\times\R_+;X^{-1,q})}\\&\lesssim_{p,q,\alpha,\nu,\kappa}\|F\|_{L^p(\Omega\times\R_+;X^{-1,q})}+\|\nabla z\|_{L^p(\Omega\times\R_+;L^{q})},
        \end{align*}
        which, together with \eqref{LS19}, yields \eqref{LS14} in this case.

        \noindent\textbf{Step I\!I\!I.} We finally consider the general case, in which the stochastic transport term, the additive random forcing, and the initial data are all present. Introduce the following equivalent norm of $X^{1,q}$
        \[\|\tw\|_{ \tX^{1,q}_{\alpha,\nu}}:=\left(\nu\|\nabla\tw\|^2_{q}+\nu\alpha\|\tw_3\|^2_{L^q(\partial D)}\right)^{\frac{1}{2}}.\]
        Define $\|\cdot\|_{ \tX^{-1,q}_{\alpha,\nu}}$ as the dual norm of $\tX^{1,q}_{\alpha,\nu}$ and
        \[B_{p,q,\theta,\kappa}(\tw):=\sqrt{2\kappa}\left\|\left(\ssum_{k\neq 0}\ssum_{j=1,2}\theta_k^2|\Pi(\sigma_{k,j}\cdot\nabla )\tw|^2\right)^{\frac{1}{2}}\right\|_{L^p(\Omega\times \R_+;L^q)}.\]
        Since $q\ge2$, by the Minkowski inequality, it follows that 
        \begin{align}\label{LS20}
            B_{p,q,\theta,\kappa}(\tw)&\le \sqrt{2\kappa} \left\|\left(\ssum_{k\neq 0}\ssum_{j=1,2}\theta_k^2\|\Pi(\sigma_{k,j}\cdot\nabla)\tw\|^2_q\right)^{\frac{1}{2}}\right\|_{L^p(\Omega\times \R_+)}\notag\\&\le 4\sqrt{\kappa} \|\Pi\|_{\LL(L^q)}\|\nabla \tw\|_{L^p(\Omega\times\R_+;L^{q})},
        \end{align}
        which, together with the estimate obtained in Step I\!I for equation \eqref{LS18}, ensures the existence of an optimal constant $C_{p,q}$ depending only on $p,q,\alpha,\nu,\kappa$ such that  
        \begin{align}\label{LS21}
            \frac{\nu}{4}&\|\nabla\tw\|^2_{L^p(\Omega\times \R_+;L^q)}+B_{p,q,\theta,\kappa}^2(\tw)\notag\\&\ \le C^2_{p,q}\left(\frac{1}{2}\|F\|^2_{L^p(\Omega\times\R_+;\tX^{-1,q}_{\alpha,\nu})}+2\kappa\|G\|_{L^p(\Omega\times\R_+;L^q(\ell^2))}^2+\frac{1}{2}\|\tw_{\ini}\|_{L^p(\Omega;B^{1-2/p}_{q,p})}^2\right)
        \end{align}
        for all solutions $\tw$ of \eqref{LS18}. In particular, for $p=q=2$, by the It\^o formula, it follows that 
        \begin{align*}
            \E\int_0^\infty&\left(\nu\|\nabla\tw\|^2+\nu\alpha\|\tw_3\|^2_{L^2(\partial D)}\right)\ddd t+B_{2,2,\theta,\kappa}^2(\tw)+\lambda_0\E\int_0^\infty\|\tw\|^2 \ddd t\\&\qquad\qquad\le \E\int_0^\infty |\langle F-\BB S_\theta\tw,\tw\rangle|\ddd t+2\kappa\|G\|_{L^2(\Omega\times\R_+;L^2(\ell^2))}^2+\frac{1}{2}\|\tw_{\ini}\|_{L^2(\Omega;L^2)}^2,
        \end{align*}
        where, by using \eqref{LS5} and Corollary \ref{coroA4}, one has
        \[|\langle F-\BB S_\theta\tw,\tw\rangle|\le \frac{1}{2}\|F\|^2_{\tX^{-1,2}_{\alpha,\nu}}+\frac{3}{4}\|\tw\|^2_{\tX^{1,2}_{\alpha,\nu}}+C_{\alpha,\nu,\kappa}\|\tw\|^2.\]
        Therefore, by choosing $\lambda_0>C_{\alpha,\nu,\kappa}$, one gets $C_{2,2}\le 1$. Combining this with the same interpolation argument for the optimal constants as in Appendix~A of \cite{Agr25}, one has
        \[\limsup_{p\downarrow 2} C_p\le C_{2,2}\le 1,
        \qquad C_p:=\sup_{2\le q\le p}C_{p,q}.\]
        
        Now, let $\tw$ be a solution of \eqref{LS13}. Notice that $\tw$ solves \eqref{LS18} with $G_{k,j}$ replaced by $\theta_k\Pi (\sigma_{k,j}\cdot\nabla)\tw +G_{k,j}$. By applying \eqref{LS21}, it follows
        \begin{align*}
             \frac{\nu}{4}\|\nabla\tw&\|^2_{L^p(\Omega\times \R_+;L^q)}+B_{p,q,\theta,\kappa}^2(\tw)\\&\le C^2_{p,q}\left(\frac{1}{2}\|F\|^2_{L^p(\Omega\times\R_+;\tX^{-1,q}_{\alpha,\nu})}+\frac{1}{2}\|\tw_{\ini}\|_{L^p(\Omega;B^{1-2/p}_{q,p})}^2\right)\\
             &\quad +C^2_{p,q}\left(\sqrt{2\kappa}\|G\|_{L^p(\Omega\times\R_+;L^q(\ell^2))}+\sqrt{2\kappa}\|\{\theta_k\Pi (\sigma_{k,j}\cdot\nabla)\tw\}_{k,j}\|_{L^p(\Omega\times\R_+;L^q(\ell^2))}\right)^2 \\&\le C^2_{p,q}\left(\frac{1}{2}\|F\|^2_{L^p(\Omega\times\R_+;\tX^{-1,q}_{\alpha,\nu})}+\frac{1}{2}\|\tw_{\ini}\|_{L^p(\Omega;B^{1-2/p}_{q,p})}^2\right)\\
             &\quad +C^2_{p,q}(1+\delta^{-1})2\kappa\|G\|_{L^p(\Omega\times\R_+;L^q(\ell^2))}^2+C^2_{p,q}(1+\delta)B_{p,q,\theta,\kappa}^2(\tw),
        \end{align*}
        which, together with \eqref{LS20}, yields
        \begin{align*}
            \frac{\nu}{4}\|\nabla\tw\|^2_{L^p(\Omega\times \R_+;L^q)}&\le \frac{C^2_{p,q}}{2}\left(\|F\|^2_{L^p(\Omega\times\R_+;\tX^{-1,q}_{\alpha,\nu})}+\|\tw_{\ini}\|_{L^p(\Omega;B^{1-2/p}_{q,p})}^2\right)\\&\quad+2\kappa C^2_{p,q}(1+\delta^{-1})\|G\|_{L^p(\Omega\times\R_+;L^q(\ell^2))}^2\\&\quad+16(C^2_{p}(1+\delta)-1)\kappa\|\Pi\|_{\LL(L^q)}^2\|\nabla\tw\|^2_{L^p(\Omega\times \R_+;L^q)}.
        \end{align*}
        Since $\|\Pi\|_{\LL(L^2)}=1$ and by interpolation, $\|\Pi\|_{\LL(L^q)}\to 1$ as $q\to2$, one has
        \[\limsup_{p\to2} 16(C^2_{p}(1+\delta)-1)\kappa\|\Pi\|_{\LL(L^q)}^2\le 16\delta\kappa. \]
        By choosing $p_0$ sufficiently close to $2$ such that for any $2\le q\le p<p_0$, 
        \[16(C^2_{p}(1+\delta)-1)\kappa\|\Pi\|_{\LL(L^q)}^2\le 20\delta\kappa,\]
        and $\delta$ sufficiently small so that $\delta<\frac{\nu}{160\kappa}$, we obtain \eqref{LS14}.
    \end{proof}
    The following result follows immediately by applying Lemma \ref{lemmaLS3} to $\widehat w(t):=e^{-\lambda_0 t}\tw(t)$.
    \begin{coro}
         \label{coroLS5}Let $\tw$ be a solution of \eqref{LS13} with $\lambda_0=0$. Then, there is $p_0>2$ depending on $\alpha, \nu,\kappa$ such that for any $2\le q\le p< p_0$,   
    \begin{align}\label{LS22}
            \|\nabla\tw\|_{L^p(\Omega\times(0,T);L^{q})}&\lesssim_{p,q,\alpha,\nu,\kappa,T}\|\tw_{\ini}\|_{L^p(\Omega;B^{1-2/p}_{q,p})}\notag\\&\qquad+\|F\|_{L^p(\Omega\times(0,T);X^{-1,q})}+\|G\|_{L^p(\Omega\times(0,T);L^q(\ell^2))}.
        \end{align}
    \end{coro}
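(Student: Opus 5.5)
The plan is to reduce Corollary~\ref{coroLS5} to Lemma~\ref{lemmaLS3} by an exponential weight. Fix the $\lambda_0=\lambda_0(\alpha,\nu,\kappa)$ and $p_0$ given by Lemma~\ref{lemmaLS3}, and let $\tw$ solve \eqref{LS13} with $\lambda_0=0$ on $(0,T)$. Extend the data by zero beyond $T$: set $F\I_{[0,T]}$ and $G\I_{[0,T]}$. Call $\tw^{e}$ the solution of the resulting problem on $\R_+$. By uniqueness (Proposition~\ref{propLS1}, applied to the equivalent corrected formulation), $\tw^e=\tw$ on $[0,T]$. It therefore suffices to estimate $\tw^e$ on $(0,T)$. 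We then set
\[\widehat w(t):=e^{-\lambda_0 t}\tw^e(t).\]
Since all coefficients of \eqref{LS13} are time-independent and the equation is linear, the product rule (Itô's formula with the deterministic factor $e^{-\lambda_0 t}$, which produces no covariation term) shows that $\widehat w$ solves \eqref{LS13} with the given $\lambda_0$. The boundary conditions and the initial datum $\widehat w(0)=\tw_{\ini}$ are unchanged. The new data are $\widehat F:=e^{-\lambda_0 t}F\I_{[0,T]}$ and $\widehat G:=e^{-\lambda_0 t}G\I_{[0,T]}$. The multiplicative term keeps its form, because $e^{-\lambda_0 t}\theta_k\Pi(\sigma_{k,j}\cdot\nabla)\tw^e=\theta_k\Pi(\sigma_{k,j}\cdot\nabla)\widehat w$.

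Applying \eqref{LS14} to $\widehat w$ gives, for $2\le q\le p<p_0$,
\[\|\nabla\widehat w\|_{L^p(\Omega\times\R_+;L^q)}\lesssim \|\tw_{\ini}\|_{L^p(\Omega;B^{1-2/p}_{q,p})}+\|F\|_{L^p(\Omega\times(0,T);X^{-1,q})}+\|G\|_{L^p(\Omega\times(0,T);L^q(\ell^2))}.\]
Here we used $e^{-\lambda_0 t}\le 1$ to remove the weight on the data. On $(0,T)$ we have $\nabla\tw=e^{\lambda_0 t}\nabla\widehat w$ with $e^{\lambda_0 t}\le e^{\lambda_0 T}$. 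This yields \eqref{LS22} with implicit constant $e^{\lambda_0 T}C_{p,q,\alpha,\nu,\kappa}$, and since $\lambda_0$ depends only on $\alpha,\nu,\kappa$, this constant depends on $p,q,\alpha,\nu,\kappa,T$ as claimed.

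The main point requiring care is that Lemma~\ref{lemmaLS3} is stated on $\R_+$, so the zero extension must be admissible. The truncated data still lie in the required spaces: $\widehat F\in L^p(\Omega\times\R_+;X^{-1,q})$, and $\widehat G$ is progressively measurable in $L^p(\Omega\times\R_+;L^q(\ell^2))$. We also need $\tw^e$ to exist globally on $\R_+$, so that $\widehat w$ is a genuine solution to which \eqref{LS14} applies. The a priori estimate itself is not an issue; this step concerns only existence. Existence follows from the global well-posedness of the linear problem: Proposition~\ref{propLS1} in the $L^2$ setting, combined with the isomorphism property established in the proof of Lemma~\ref{lemmaLS3} for the $L^p$ setting. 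Given these, the remaining steps are routine.
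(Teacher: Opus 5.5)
Your proposal is correct and is the paper's argument: apply Lemma~\ref{lemmaLS3} to $e^{-\lambda_0 t}\tw$, use $e^{-\lambda_0 t}\le 1$ on the data, and lose a factor $e^{\lambda_0 T}$ when returning to $\tw$ on $(0,T)$. The zero extension of $F,G$ past $T$ spells out a step the paper leaves implicit; one caveat is that the existence and uniqueness you need for the extended problem should come from the linear theory of \eqref{LS13} itself (the estimates in the proof of Lemma~\ref{lemmaLS3}), not from Proposition~\ref{propLS1}, because \eqref{LS13}, where $S_\theta$ and the transport noise act on $\tw$, is not literally the corrected form of \eqref{LS1}.
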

    Now, we are in a position to prove Proposition \ref{propLS2}.
    \begin{proof}[Proof of Proposition \ref{propLS2}]\textbf{Step I.} Let $w$ be a solution of \eqref{LS1} and set $\tw:=(I-\BB)w$. Notice that
    \begin{align*}
    \partial_t\tw-\nu\Delta \tw&=F+(I-\BB)S_\theta \tw+\sqrt{2\kappa}\ssum_{k\neq 0}\ssum_{j=1,2}(\theta_k\Pi (\sigma_{k,j}\cdot\nabla)\tw +G_{k,j})\dot{B}_t^{k,j},
    \end{align*} 
    where
    \[G_{k,j}:=\theta_k\big(\Pi(\sigma_{k,j}\cdot\nabla)\BB w-\BB\Pi (\sigma_{k,j}\cdot\nabla)w\big),\]
    and 
    \[F:=-\nu[\BB,\partial_y^2]w+(I-\BB) S_\theta\BB w+(I-\BB)\curl f.\]
    Therefore, by Corollary \ref{coroLS5}, there exists $p_0>2$ depending on $\alpha, \nu,\kappa$ such that for any $2\le q\le p< p_0$, it follows
     \begin{align}
            \label{LS23}\|\nabla\tw\|_{L^p(\Omega\times(0,T);L^{q})}&\lesssim_{p,q,\alpha,\nu,\kappa,T}\|\tw_{\ini}\|_{L^p(\Omega;B^{1-2/p}_{q,p})}\notag\\&\qquad+\|F\|_{L^p(\Omega\times(0,T);X^{-1,q})}+\|\{G_{k,j}\}_{k,j}\|_{L^p(\Omega\times(0,T);L^q(\ell^2))}.
        \end{align}
    Shrinking $p_0$ if necessary and applying the embedding $L^2\hookrightarrow H^{-1,q}$, Corollary~\ref{coroA4}, Proposition \ref{propB2}, and \eqref{LS16}, one has
    \begin{align}\label{LS25}
        \nu\|[\BB,\partial_y^2]w\|_{X^{-1,q}}\lesssim_\nu \|[\BB,\partial_y^2]w\|\lesssim_{\alpha,\nu} \|w\|\lesssim_{\alpha,\nu} \|w\|_q,
    \end{align}
    and 
    \begin{align}
        \label{LS26}
        \|(I-\BB) S_\theta\BB w\|_{X^{-1,q}}\lesssim_\alpha\|S_\theta\BB w\|_{H^{-1,q}}\lesssim_{q,\alpha,\kappa}\|\nabla \BB w\|_q\lesssim_{q,\alpha,\kappa}\| w\|_q.
    \end{align}
    Moreover, by repeating the derivation of \eqref{LS4-1}, one gets 
    \begin{align}\label{LS26-1}
        \|(I-\BB) \curl f\|_{X^{-1,q}}\le \|f\|_q+\|\BB\curl f\|_q\lesssim_{q,\alpha}\|f\|_q,
    \end{align}
    which, combined with \eqref{LS25}, \eqref{LS26} and Proposition \ref{propB1}, yields
    \begin{align}
        \label{LS27}
        \|F\|_{L^p(\Omega\times(0,T);X^{-1,q})}\lesssim_{q,\alpha,\nu,\kappa}\|\tw\|_{L^p(\Omega\times(0,T);L^q)}+\|f\|_{L^p(\Omega\times(0,T);L^q)}.
    \end{align}
    We turn to the estimate of $G:=\{G_{k,j}\}_{k,j}$. Indeed, as $q\ge 2$, by the Minkowski inequality, it follows that
    \begin{align*}
        \|G\|_{L^q(\ell^2)}\le \|G\|_{\ell^2(L^q)}=\left(\ssum_{k\neq 0}\ssum_{j=1,2}\theta_k^2\|\Pi(\sigma_{k,j}\cdot\nabla)\BB w-\BB\Pi (\sigma_{k,j}\cdot\nabla)w\|^2_{q}\right)^{\frac{1}{2}},
    \end{align*}
    where, by using Corollaries \ref{coroA2} and \ref{coroA4} combined with the fact $\int_D (\sigma_{k,j}\cdot\nabla)w_3\ddd x\ddd y=0$, one has
    \begin{align*}
        \|\Pi(\sigma_{k,j}\cdot\nabla)\BB w-\BB\Pi (\sigma_{k,j}\cdot\nabla)w\|_{q}&\lesssim_{q,\alpha}\|\nabla\BB w\|_q+\|\curl^{-1}\Pi(\sigma_{k,j}\cdot\nabla)w\|_q\\&\lesssim_{q,\alpha}\|w\|_q+\|(\sigma_{k,j}\cdot\nabla)w\|_{H^{-1,q}}\lesssim_{q,\alpha}\|w\|_q. 
    \end{align*}
    This, together with Proposition \ref{propB1}, yields
    \begin{align*}
        \|G\|_{L^p(\Omega\times(0,T);L^q(\ell^2))}\lesssim_{q,\alpha}\|\tw\|_{L^p(\Omega\times(0,T);L^q)}.
    \end{align*}
    Plugging the above estimate and \eqref{LS27} into \eqref{LS23}, one gets
    \begin{align*}\|\nabla\tw\|_{L^p(\Omega\times(0,T);L^{q})}&\lesssim_{p,q,\alpha,\nu,\kappa,T}\|\tw_{\ini}\|_{L^p(\Omega;B^{1-2/p}_{q,p})}\notag\\&\qquad\qquad\qquad+\|\tw\|_{L^p(\Omega\times(0,T);L^q)}+\|f\|_{L^p(\Omega\times(0,T);L^q)},
    \end{align*}    
    which, combined with the Gagliardo--Nirenberg interpolation inequality
    \begin{align}\label{LS27-1}
        \|\tw\|_{L^p(\Omega\times (0,T);L^{q})}\lesssim\|\tw\|^{1-\delta}_{L^p(\Omega\times (0,T);L^{2})}\|\nabla\tw\|^{\delta}_{L^p(\Omega\times (0,T);L^{q})},\qquad \delta=\frac{3(q-2)}{5q-6},
    \end{align}
    yields
    \begin{align}
        \label{LS28}
        \|\nabla\tw\|_{L^p(\Omega\times(0,T);L^{q})}&\lesssim_{p,q,\alpha,\nu,\kappa,T}\|\tw_{\ini}\|_{L^p(\Omega;B^{1-2/p}_{q,p})}\notag\\&\qquad\qquad\qquad+\|\tw\|_{L^p(\Omega\times(0,T);L^2)}+\|f\|_{L^p(\Omega\times(0,T);L^q)}.
    \end{align}
    
    \noindent\textbf{Step I\!I.} We estimate $\|\tw\|_{L^p(\Omega\times(0,T);L^2)}$ through the energy method. By the It\^o formula and \eqref{LS8}, it follows that 
    \begin{align}\label{LS29}
        \ddd \|\tw(t)\|^p&=\ddd M(t)+p \|\tw\|^{p-2}\langle \tw, \nu\Delta\tw +\curl f+S_\theta w+h\rangle\ddd t\notag\\&\ +2\kappa p\|\tw\|^{p-2}\ssum_{k\neq 0}\ssum_{j=1,2}\theta_k^2\|\Pi (\sigma_{k,j}\cdot\nabla)w +g_{k,j}\|^2\ddd t\notag\\&\ +4\kappa p(p-2)\|\tw\|^{p-4} \ssum_{k\neq 0}\ssum_{j=1,2}\theta_k^2|\langle\tw, \Pi (\sigma_{k,j}\cdot\nabla)w +g_{k,j}\rangle|^2 \ddd t\notag\\&=:\ddd M(t)+ I_2\ddd t+I_3\ddd t+I_4\ddd t,
    \end{align}
    where 
    \[\ddd M(t)=\sqrt{2\kappa} p\|\tw\|^{p-2}\ssum_{k\neq 0}\ssum_{j=1,2}\theta_k\langle\tw, \Pi (\sigma_{k,j}\cdot\nabla)w +g_{k,j}\rangle \ddd B_t^{k,j}.\]
    Applying \eqref{LS6}, \eqref{LS7}, \eqref{LS9}, the Poincar\'e inequality \eqref{LS10}, and Proposition \ref{propB1}, one has 
    \begin{align*}
        I_2+I_3&=p \|\tw\|^{p-2}\langle \tw, \nu\Delta\tw +\curl f+h\rangle\\&\quad+p \|\tw\|^{p-2}\left( \langle \tw,S_\theta w\rangle+2\kappa\ssum_{k\neq 0}\ssum_{j=1,2}\theta_k^2\|\Pi (\sigma_{k,j}\cdot\nabla)w +g_{k,j}\|^2\right)\\&\le -\nu p  \|\tw\|^{p-2}\left(\|\nabla\tw\|^2+\alpha\|\tw_3\|_{L^2(\partial D)}^2\right)+p \|\tw\|^{p-2}\|\tw \|_{H^1}\|f\|\\&\quad+p C_{\alpha,\nu,\kappa}\|\tw\|^{p-2}\|\tw\|(\|w\|_{H^1}+\|f\|)\\&\le -\frac{\nu}{2}p \|\tw\|^{p-2}\left(\|\nabla\tw\|^2+\alpha\|\tw_3\|_{L^2(\partial D)}^2\right)+p C_{\alpha,\nu,\kappa}\|f\|^p+p C_{\alpha,\nu,\kappa}\|\tw\|^{p}
    \end{align*}
    and 
    \begin{align*}
        I_4&\lesssim_\kappa p(p-2)\|\tw\|^{p-4} \ssum_{k\neq 0}\ssum_{j=1,2}\theta_k^2\|\tw\|^2\|\Pi (\sigma_{k,j}\cdot\nabla)w +g_{k,j}\|^2\\&\lesssim_{\alpha,\kappa} p(p-2)\|\tw\|^{p-2} \|\tw\|^2_{H^1}\\&\lesssim_{\alpha,\kappa} (p-2)\left(p\|\tw\|^{p-2} \left(\|\nabla\tw\|^2+\alpha\|\tw_3\|_{L^2(\partial D)}^2\right)\right).
    \end{align*}
    By choosing $p_0=p_0(\alpha,\nu,\kappa)$ sufficiently close to $2$ so that 
    \[(p-2)C_{\alpha,\kappa}\le \frac{\nu}{4},\]
    one obtains
    \begin{align*}
        I_2+I_3+I_4\le-\frac{\nu}{4}p \|\tw\|^{p-2}\left(\|\nabla\tw\|^2+\alpha\|\tw_3\|_{L^2(\partial D)}^2\right)+p C_{\alpha,\nu,\kappa}\|f\|^p+p C_{\alpha,\nu,\kappa}\|\tw\|^{p}.
    \end{align*}
    This, together with first integrating from $0$ to $t$ and then taking expectations on both sides of \eqref{LS29}, ensures
    \begin{align*}
        \E\|\tw(t)\|^p-\E\|\tw_{\ini}\|^p\le_{p,\alpha,\nu,\kappa} \E\int_0^t\|f\|^p\ddd s+\int_0^t\E\|\tw\|^p\ddd s.
    \end{align*}
    An application of the Gronwall inequality yields
    \begin{align*}
        \E\int_0^T\|\tw\|^p\ddd t\lesssim_{p,\alpha,\nu,\kappa,T}\E\|\tw_{\ini}\|^p+\|f\|^p_{L^p(\Omega\times(0,T);L^2)}.
    \end{align*}
    Plugging the above estimate into \eqref{LS28} and using $q\ge2$, the embedding $B^{1-2/p}_{q,p}\hookrightarrow L^2$, and the Poincar\'e inequality \eqref{LS10}, one gets
    \begin{align}
        \label{LS30}\|\tw\|_{L^p(\Omega\times(0,T);H^{1,q})}&\lesssim_{p,q,\alpha,\nu,\kappa,T}\|f\|_{L^p(\Omega\times(0,T);L^q)}+\|\tw_{\ini}\|_{L^p(\Omega;B^{1-2/p}_{q,p})}.
    \end{align}

    \noindent\textbf{Step I\!I\!I.} To conclude the proof, we apply stochastic maximal regularity of the heat semigroup to equation \eqref{LS4}, see Theorem 1.2 in \cite{VNVW12}, to obtain
    \begin{align}\label{LS31}
        \|\tw\|_{L^p(\Omega;H^{s,p}(0,T;H^{1-2s,q}))}&\lesssim_{s,p,q,\alpha,\nu,\kappa,T}\|\tw_{\ini}\|_{L^p(\Omega;B^{1-2/p}_{q,p})}\notag\\&\ +\|\nu[\BB,\partial_y^2]w+(I-\BB) S_\theta w+(I-\BB)\curl f\|_{L^p(\Omega\times(0,T);X^{-1,q})}\notag\\&\ +\|\{\theta_k(I-\BB)\Pi(\sigma_{k,j}\cdot\nabla)w\}_{k,j}\|_{L^p(\Omega\times(0,T);L^q(\ell^2))}\notag\\&=:\|\tw_{\ini}\|_{L^p(\Omega;B^{1-2/p}_{q,p})}+I_5+I_6.
    \end{align}
    By using \eqref{LS16}, \eqref{LS25}, \eqref{LS26-1}, Corollary \ref{coroA4}, and Proposition \ref{propB1}, it follows that
    \begin{align*}
        \|\nu[\BB,\partial_y^2]w-(I-\BB) S_\theta w-(I-\BB)\curl f\|_{X^{-1,q}}\lesssim_{q,\alpha,\nu,\kappa}\|\tw\|_{H^{1,q}}+\|f\|_q,
    \end{align*}
    which, together with \eqref{LS30}, leads to
    \begin{align}
        \label{LS32}
        I_5\lesssim_{p,q,\alpha,\nu,\kappa,T}\|f\|_{L^p(\Omega\times(0,T);L^q)}+\|\tw_{\ini}\|_{L^p(\Omega;B^{1-2/p}_{q,p})}.
    \end{align}
    We turn to the estimate of $I_6$. By the Minkowski inequality and Proposition \ref{propB1}, one has  
    \begin{align*}
        \|\{\theta_k(I-\BB)\Pi(\sigma_{k,j}\cdot\nabla)w\}_{k,j}\|_{L^q(\ell^2)}&\le\|\{\theta_k(I-\BB)\Pi(\sigma_{k,j}\cdot\nabla)w\}_{k,j}\|_{\ell^2(L^q)}\\&\lesssim_{q,\alpha}\|\nabla w\|_q\lesssim_{q,\alpha} \|\tw\|_{H^{1,q}}.
    \end{align*}
    Therefore, one obtains
    \begin{align*}
        I_6\lesssim_{p,q,\alpha,\nu,\kappa,T}\|f\|_{L^p(\Omega\times(0,T);L^q)}+\|\tw_{\ini}\|_{L^p(\Omega;B^{1-2/p}_{q,p})}.
    \end{align*}
    Plugging the above estimate and \eqref{LS32} into \eqref{LS31} yields
    \begin{align*}
        \|\tw\|_{L^p(\Omega;H^{s,p}(0,T;H^{1-2s,q}))}\lesssim _{s,p,q,\alpha,\nu,\kappa,T}\|f\|_{L^p(\Omega\times(0,T);L^q)}+\|\tw_{\ini}\|_{L^p(\Omega;B^{1-2/p}_{q,p})}.
    \end{align*}
    This, combined with Proposition \ref{propB1}, ensures \eqref{LS12}. 

    \noindent\textbf{Step I\!V.} The degenerate case is handled in the same way, with the following notational modifications. Throughout the proof of Proposition \ref{propLS2} and Lemma \ref{lemmaLS3}, one replaces the transport term $\sigma_{k,j}\cdot\nabla$ by $\sigma_{k}\cdot\nabla_x$, and $S_\theta$ is defined in \eqref{INTRO9}$_2$. Since $\sigma_k$, considered as a three-dimensional vector field with zero vertical component, is divergence-free and tangent to the boundary $\partial D$, all integrations by parts used above produce no boundary contribution. Therefore, Proposition \ref{propLS2} and Lemma \ref{lemmaLS3} hold verbatim in the degenerate case.
    \end{proof}

    \section{Scaling limit of the Navier--Stokes equation with transport noise and cut-off}\label{SLofNS}
    In this section, we study the stochastic NS equation with cut-off
    \begin{align}
    \label{SL1}
    \begin{cases}
    \partial_tw-\nu\Delta w+\eta_R(\|w\|)[(u\cdot \nabla)w-(w\cdot\nabla)u]=S_\theta w+\sqrt{2\kappa}\Pi (\dot{W}\cdot \nabla)w,\\ u=\curl^{-1}w,\\(\partial_y w_3-\alpha w_3)|_{y=0}=(w_h-\alpha 
    u_h^\perp)|_{y=0}=0,\\(\partial_y w_3+\alpha w_3)|_{y=1}=(w_h+\alpha 
    u_h^\perp)|_{y=1}=0,\\
    w|_{t=0}=w_{\ini},
    \end{cases}
    \end{align}
    where $W$ is given in \eqref{INTRO4} or \eqref{INTRO6}, $S_\theta$ is defined accordingly in \eqref{INTRO9}, and $\eta:[0,\infty)\to[0,1]$ is a smooth function satisfying $\eta\equiv 1$ on $[0,R]$ and $\eta\equiv 0$ on $[2R,\infty)$. For $\gamma>0$, define the noise coefficients $\theta_\cdot$ by
    \begin{align}
        \label{SL2}\theta^N_k:=\frac{\CC_N}{|k|^\gamma} \I_{\{N\le |k|\le 2N\}},
    \end{align}
    where $\CC_N$ is chosen so that $\|\theta^N_\cdot\|_{\ell^2}=1$ for every $N$. We denote by $w^N_{\cut}$ the solution of \eqref{SL1} corresponding to the noise coefficient $\theta^N_\cdot$.
    
    The main goal of this section is to investigate the asymptotic behavior of $w^N_{\cut}$ as $N\to\infty$. The analysis is carried out in three steps. First, using the Meyers-type estimate obtained in Proposition \ref{propLS2}, we prove the tightness of the laws $\{\DDD(w^N_{\cut})\}_{N\ge1}$ in a suitable space-time topology. Second, under this topology, we identify the limit of the It\^o--Stratonovich corrector $S_\theta$. Finally, we show that $\{w^N_{\cut}\}_{N\ge1}$ converges to a process $w^{\Det}_{\cut}$, which solves a limiting deterministic equation.
    
    \subsection{Tightness of the cut-off solutions}
    The main result of this subsection is stated below.
    \begin{prop}\label{propSL1} Let $\alpha,\nu,\kappa, R>0$, and
    \[w_{\ini}\in L^\infty(\Omega;H_{\curl}\cap H^1).\]
    Then, for each $N\ge 1$, there exists a unique global solution $w^N_{\cut}$ of the cut-off equation. Moreover, there exists $\delta_0$ depending on $\alpha,\nu,\kappa$ such that for any $0<\delta<\delta_0$ and $T>0$, the laws $\{\DDD(w^N_{\cut})\}_{N\ge1}$ are tight in 
    \[\chi_{\delta,T}:=C([0,T];H^\delta)\cap L^2(0,T;H^{1-\delta}).\]
    \end{prop}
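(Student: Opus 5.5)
Existence and uniqueness of $w^N_{\cut}$ for fixed $N$ come from a fixed-point argument that treats the cut-off nonlinearity as a forcing in the linear problem \eqref{LS1}. Tightness comes from uniform-in-$N$ bounds in $H^{s,p}(0,T;H^{1-2s,q})$ supplied by Proposition \ref{propLS2}, combined with compact embeddings. The point is that all constants in Propositions \ref{propLS1} and \ref{propLS2} depend only on $\alpha,\nu,\kappa,T$ (and $p,q,s$), never on $\theta$, because only $\|\theta^N\|_{\ell^2}=1$ and the radial symmetry \eqref{INTRO4-1} were used.

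\textbf{Step 1: well-posedness.} I write the nonlinearity in divergence form: since $\dive u=\dive w=0$, one has $(u\cdot\nabla)w-(w\cdot\nabla)u=\curl(w\times u)$. Hence \eqref{SL1} is \eqref{LS1} with $f=-\eta_R(\|w\|)\,(w\times u)$.

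For $w\in L^2(0,T;H^1)$, Corollary \ref{coroA4} gives $\|u\|_{H^2}\lesssim\|w\|_{H^1}$, so
\[
\|w\times u\|\le\|w\|_3\|u\|_6\lesssim\|w\|^{1/2}\|w\|_{H^1}^{1/2}\|w\|_{H^1},
\]
and the cut-off keeps $\|w\|\le 2R$. This is only locally Lipschitz, with a superlinear growth in $\|w\|_{H^1}$. I would therefore first use a truncated problem, adding a further cut-off in $\int_0^t\|w\|_{H^1}^2$ that is removed a posteriori. On this problem I run a contraction in $L^2(\Omega;C([0,T_0];L^2)\cap L^2(0,T_0;H^1))$ using Proposition \ref{propLS1} (and its $L^p$ version). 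The a priori bound below then yields a global solution. Uniqueness follows from the same Lipschitz estimate together with Gronwall.

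\textbf{Step 2: uniform energy bound.} I would repeat the It\^o computation of Proposition \ref{propLS1} for $\tw=(I-\BB)w$, now with the nonlinear term included. Writing $\BB\curl(w\times u)$ through \eqref{LS4-1}, the nonlinear pairing is
\[
\langle \tw,\eta_R\curl(w\times u)\rangle\lesssim \eta_R\|\tw\|_{H^1}\|w\|^{1/2}\|w\|_{H^1}^{3/2}.
\]
This is supercritical as stated, so I would instead exploit the structure. Since $\langle w,\curl(w\times u)\rangle$ is not zero in 3D, I would bound it via $\|w\|_{L^3}^2\|w\|_{H^1}$-type terms and Young's inequality, using $\|w\|\le2R$ on the support of $\eta_R$:
\[
\eta_R\|w\|^{1/2}\|w\|_{H^1}^{5/2}
\]
still cannot be absorbed. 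I therefore expect that the paper's cut-off must be understood so that the nonlinear term is controlled. The most plausible route is to work with $H^1$ data and do an $H^1$-level (enstrophy-type) estimate via the Meyers bound, or to note that the truncation acts on a norm that makes the nonlinearity Lipschitz. I would try the latter first: I check whether $\eta_R(\|w\|)$ with $w\in H^1$ data lets $f$ be bounded in $L^p(\Omega\times(0,T);L^q)$ uniformly in $N$, via interpolation, the Meyers gain $q>2$, and a Gronwall argument on $\E\int_0^T\|w\|_{H^{1,q}}^p$. \textbf{This step, obtaining a uniform-in-$N$ bound on $f$ in $L^p_tL^q_x$, is the main obstacle.}

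\textbf{Step 3: fractional bound and compactness.} Once $f$ is bounded uniformly in $N$ in $L^p(\Omega\times(0,T);L^q)$, Proposition \ref{propLS2} with $2\le q\le p<p_0$ and $s\in(0,1/2)$ gives
\[
\sup_N \E\|w^N_{\cut}\|^p_{H^{s,p}(0,T;H^{1-2s,q})}<\infty.
\]
The data enter through $B^{1-2/p}_{q,p}$, which $H^1$ embeds into for $p,q$ close to $2$ in 3D.

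I then pick $s$ small and $p>2$ with $sp>1$ (possible when $p$ is large enough relative to $1/s$; otherwise I take $s$ close to $1/2$). For $\delta$ small relative to $s$ and to the gaps $p-2$, $q-2$, the following embeddings hold:
\begin{itemize}
\item $H^{s,p}(0,T;H^{1-2s,q})\hookrightarrow C^{\gamma}([0,T];H^{1-2s})\hookrightarrow\hookrightarrow C([0,T];H^{\delta})$ when $1-2s>\delta$, by Arzelà--Ascoli;
\item $H^{s,p}(0,T;H^{1-2s,q})\cap L^p(0,T;H^{1,q})\hookrightarrow\hookrightarrow L^2(0,T;H^{1-\delta})$, by Aubin--Lions/Simon.
\end{itemize}
The threshold $\delta_0$ is set by the range of admissible $p_0(\alpha,\nu,\kappa)$. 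Chebyshev's inequality then gives tightness of the laws $\{\DDD(w^N_{\cut})\}_{N\ge1}$ in $\chi_{\delta,T}$.
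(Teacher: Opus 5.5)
There is a genuine gap, and it is exactly the step you flag as the ``main obstacle''. You never establish the uniform-in-$N$ bound on the forcing $f=-\eta_R(\|w\|)\,w\times u$, and the supercriticality you run into comes from a wasteful estimate. You bound $\|u\|_6$ through $\|u\|_{H^2}\lesssim\|w\|_{H^1}$. But the Biot--Savart operator already gains a derivative at the $L^2$ level (Corollary~\ref{coroA4} with $m=1$), so $\|u\|_6\lesssim\|u\|_{H^1}\lesssim\|w\|$. The cut-off on $\|w\|$ is therefore exactly the right one: it controls $u$ in $L^6$. Consequently
\[
\|f\|\le\eta_R(\|w\|)\,\|w\|_3\|u\|_6\lesssim\eta_R(\|w\|)\,\|w\|^{3/2}\|w\|_{H^1}^{1/2}\lesssim_R 1+\|\nabla w\|^{1/2},
\]
which is \emph{sublinear} in $\|\nabla w\|$. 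By H\"older and Young,
\[
\|f\|_{L^p(\Omega\times(0,T);L^2)}\le C_{T,R,\epsilon}+\epsilon\|\nabla w\|_{L^p(\Omega\times(0,T);L^2)} .
\]
This term is absorbed directly in Proposition~\ref{propLS2} with $q=2$ and $s=0$. Feeding the resulting bound back into Proposition~\ref{propLS2} gives
\[
\sup_N\|w^N_{\cut}\|_{L^p(\Omega;H^{s,p}(0,T;H^{1-2s}))}\lesssim 1+\|w_{\ini}\|_{H^1}
\]
for every $s\in[0,1/2)$. That is the paper's argument. It needs no energy estimate of your Step~2 type, no $q>2$, and no Gronwall argument on $\|w\|_{H^{1,q}}$. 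Your Step~2 pairing would in fact also close: after Proposition~\ref{propB1} it is $\lesssim_R\|\tw\|_{H^1}^{3/2}\le\epsilon\|\tw\|_{H^1}^2+C_\epsilon$.

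The same mis-estimate undermines your Step~1. With your claimed growth $\|w\|_{H^1}^{3/2}$, your bound would not even show that $f$ maps $L^2(0,T;H^1)$ into $L^2(0,T;L^2)$. Truncating $\int_0^t\|w\|_{H^1}^2\,\ddd s$ does not repair this, since it does not control $\int_0^T\|w\|_{H^1}^3\,\ddd t$. With the correct bound, $f$ is locally Lipschitz with a factor $\|w\|_{H^1}^{1/2}$. Then a Galerkin scheme (the paper's sketch) or a truncated fixed point closes, with uniqueness by Gronwall.

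Your Step~3 is essentially the paper's compactness argument, with one correction. Since $p<p_0$ is close to $2$, the condition $sp>1$ forces $s\in(1/p,1/2)$, so $s$ is never small. Hence $\delta_0=1-2s$, and the restriction on $\delta$ is $\delta<1-2s$, not smallness relative to $s$.
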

    \begin{proof}We only give the proof of the non-degenerate case, since the degenerate case follows from the same argument by replacing $\sigma_{k,j}\cdot\nabla$ by $\sigma_k\cdot\nabla_x$.
    
    \noindent\textbf{Step I.} In this step, we derive some uniform-in-$N$ a priori estimates. Utilizing the identity
    \[(u\cdot\nabla)w-(w\cdot\nabla)u=\curl((u\cdot \nabla) u),\]
    the cut-off equation \eqref{SL1} can be formulated in the form
    \begin{align}
        \label{SL3}\partial_t w-\nu\Delta w
    = \curl f_R(w)+S_{\theta}w
    +\sqrt{2\kappa}\Pi(\dot W\cdot\nabla)w,
    \end{align}
    where
    \[f_R(w):=-\eta_R(\|w\|)(u\cdot \nabla) u,
    \qquad u=\curl^{-1}w.\]
    By taking $q=2$ and $p\in[2,p_0)$ in Proposition \ref{propLS2}, for any $s\in[0,1/2)$, one has 
    \begin{align}
        \label{SL4}
        \|w^N_{\cut}\|_{L^p(\Omega;H^{s,p}(0,T;H^{1-2s}))}\lesssim_{s,p,\alpha,\nu,\kappa,T}\|f_R(w^N_{\cut})\|_{L^p(\Omega\times (0,T);L^2)}+\|w_{\ini}\|_{B^{1-2/p}_{2,p}},
    \end{align}
    where, by the H\"older inequality, the Sobolev embedding $H^1\hookrightarrow L^6$, Corollary \ref{coroA4}, and the Gagliardo--Nirenberg interpolation inequality
    \[\|v\|_3\lesssim \|v\|^{\frac{1}{2}}\|v\|^{\frac{1}{2}}_{H^1},\]
    it follows
    \begin{align*}
        \|(u\cdot\nabla)u\|\lesssim \|u\|_6\|\nabla u\|_3\lesssim \|u\|_{H^1}^{\frac{3}{2}}\|\nabla u\|_{H^1}^{\frac{1}{2}}\lesssim\|w\|^{\frac{3}{2}}\|w\|_{H^1}^{\frac{1}{2}}.
    \end{align*}
    This implies 
    \begin{align*}
    \|f_R(w^N_{\cut})\|\lesssim_R 1+\|\nabla w^N_{\cut}\|^{\frac{1}{2}},
    \end{align*}
    and thus 
    \begin{align}\label{SL5}
        \|f_R(w^N_{\cut})\|_{L^p(\Omega\times (0,T);L^2)}\le C_{T,R,\epsilon}+\epsilon\|\nabla w^N_{\cut}\|_{L^{p}(\Omega\times (0,T);L^2)}.
    \end{align}
    Plugging the above estimate into \eqref{SL4} and choosing $s=0$ and $\epsilon$ sufficiently small, one has  
    \begin{align*}\|w^N_{\cut}\|_{L^p(\Omega\times(0,T);H^{1})}\lesssim_{p,\alpha,\nu,\kappa,T,R}1+\|w_{\ini}\|_{B^{1-2/p}_{2,p}},
    \end{align*}
    which, together with \eqref{SL4}, \eqref{SL5}, and the embedding $H^1\hookrightarrow B^{1-2/p}_{2,p}$, yields
    \begin{align}
        \label{SL6}\sup_{N\ge 1}\|w^N_{\cut}\|_{L^p(\Omega;H^{s,p}(0,T;H^{1-2s}))}\lesssim_{s,p,\alpha,\nu,\kappa,T,R}1+\|w_{\ini}\|_{H^1}.
    \end{align}
    
    \noindent\textbf{Step I\!I.} The unique solvability of \eqref{SL1} follows from a standard Galerkin-type argument combined with the a priori estimate \eqref{SL6}. For simplicity, we give a sketch and omit the details. Indeed, the Galerkin approximation is a finite-dimensional SDE with locally Lipschitz coefficients. Moreover, the cut-off makes the nonlinear drift globally controlled, and the estimate \eqref{SL6} is uniform in the Galerkin dimension. Passing to the limit ensures a global solution. The uniqueness holds, because of the local Lipschitz property of $w\mapsto f_R(w)$. Therefore, for each $N\ge1$, there exists a unique global solution $w^N_{\cut}$.

    \noindent\textbf{Step I\!I\!I.} It remains to prove the tightness of the laws $\{\DDD(w^N_{\cut})\}_{N\ge1}$. Now choose $p\in (2,p_0)$ and $\frac{1}{p}<s<\frac{1}{2}$. Set
    \[\delta_0:=1-2s.\]
    Fix arbitrary positive $\delta<\delta_0$. By applying Theorems 2.1 and 2.2 in \cite{FG95} combined with the embedding
    \[H^{s,p}(0,T;H^{1-2s})\hookrightarrow W^{s,p}(0,T;H^{1-2s}),\]
    where $W^{s,p}$ denotes the fractional Slobodeckij--Sobolev space, one has 
    \[H^{s,p}(0,T;H^{1-2s})\hookrightarrow\hookrightarrow C([0,T];H^\delta)\]
    and 
    \[L^p(0,T;H^1)\cap H^{s,p}(0,T;H^{1-2s}) \hookrightarrow\hookrightarrow  L^p(0,T;H^{1-\delta})\hookrightarrow L^2(0,T;H^{1-\delta}).\]
    Together with \eqref{SL6}, this ensures the tightness of $\{\DDD(w^N_{\cut})\}_{N\ge1}$ in $\chi_{\delta,T}$.
    \end{proof}

    \subsection{Scaling limit of the It\^o--Stratonovich corrector}
    Next, we justify the scaling limit of the It\^o--Stratonovich corrector under the convergence topology specified in Proposition \ref{propSL1}.
    \subsubsection{The non-degenerate case}
    We first address the non-degenerate case. For $s>1/2$, define
    \[\mathfrak T_h: (L^2(\partial D))^2\to (H^{-s}(D))^2,\qquad g\mapsto \FT_h g,\]
    where $\FT_h g$ is given by 
    \begin{align*}\langle \FT_h g, \vartheta\rangle=\int_{\T^2} g\cdot \vartheta|_{y=0}\ddd x-\int_{\T^2} g\cdot \vartheta|_{y=1}\ddd x.
    \end{align*}
    Due to the trace theorem, the above operator $\FT_h$ is well-defined. Based on $\FT_h$, we introduce the operator $\FT:(L^2(\partial D))^2\to (H^{-s}(D))^3$ by 
    \begin{align}
        \label{SL6-0}\langle \FT g, \phi\rangle:=\langle\FT_h g, \phi_h\rangle,\qquad \forall \phi\in (H^s(D))^3.\end{align}
    
    The main result is stated as follows.
    \begin{prop}\label{propSL2}
        Let $\{\theta^N_\cdot\}_{N\ge 1}$ be defined in \eqref{SL2} and $s\in (1/2,1]$. For any $\varphi\in H_{\curl}\cap C^\infty$ satisfying $\varphi_h\in C_c^\infty(D)$,
        \begin{align}
            \label{SL6-1}
            \lim_{N\to\infty}\left\|S_{\theta^N}\varphi-\frac{4\kappa}{5}\Delta\varphi-\FT\left(\frac{2\kappa}{15}\nabla_x\varphi_3|_{\partial D}\right)\right\|_{H^{-s}}=0,
        \end{align}
        where $S_{\theta}$ is defined in \eqref{INTRO9}$_1$, and $\FT$ is given in \eqref{SL6-0}.
    \end{prop}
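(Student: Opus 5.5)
The plan is to reduce the channel computation to the periodic computation of \cite{FL21,Agr25} by \emph{reflection across the boundary}. The boundary correction then appears as a distributional jump contribution at $y=0,1$. First, extend every field on $D$ to the doubled torus $\T^2\times(\R/2\Z)$: evenly in $y$ for the horizontal components and oddly for the vertical component. Under this reflection the sine--cosine modes $\sigma_{k,j}$ in \eqref{INTRO3} become genuine combinations of torus Fourier modes $e^{2\pi i k_h\cdot x}e^{\pm i m\pi y}$ with the reflection symmetry. The relations \eqref{INTRO2} and the radial symmetry $\theta_k=\theta_{k'}$ for $|k|=|k'|$ survive as isotropy on the doubled torus. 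I will then check, using the Helmholtz decomposition defining $\Pi$ (Appendix, Corollary \ref{coroA2}), that on reflection-symmetric fields $\Pi$ coincides with the Leray projection of the doubled torus, up to the finite-rank correction that removes the horizontal mean of the third component. I expect that correction to be negligible in $H^{-s}$ as $N\to\infty$, because it only sees the $k_h=0$ modes, which carry vanishing $\ell^2$ mass under \eqref{SL2}.

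The point is that $\varphi_h\in C_c^\infty(D)$ extends smoothly, whereas the odd extension $\tilde\varphi_3$ jumps by $2\varphi_3|_{y=0}$ (resp. $-2\varphi_3|_{y=1}$) across $y=0$ (resp. $y=1$). Consequently $\tilde\varphi$ is smooth away from the reflection planes, but it is not divergence-free as a distribution: $\dive\tilde\varphi$ contains $2\varphi_3|_{y=0}\,\delta_{y=0}$ and the analogous term at $y=1$. Step two is therefore the torus computation for a general, not necessarily solenoidal, field $v$. For each $k$, expand $\Pi(\sigma_{k,j}\cdot\nabla)\Pi(\sigma_{-k,j}\cdot\nabla)v$ in Fourier variables. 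Using \eqref{INTRO2} and the fact that $\theta^N$ is concentrated on the shell $N\le|k|\le 2N$, replace the discrete shell sums by spherical averages up to errors $O(N^{-1})$ in $H^{-s}$ norm on smooth data. This should give
\begin{align*}
S_{\theta^N}v\to \PPP\big(c_1\Delta v+c_2\nabla\dive v\big),
\end{align*}
with explicit constants coming from the angular averages $\int_{S^2}(I-\hat k\otimes\hat k)\,\ddd\hat k=\tfrac23 I$ and the fourth moments $\int_{S^2}\hat k_i\hat k_j\hat k_l\hat k_m$; here $\PPP$ is the Leray projection of the doubled torus. On the smooth solenoidal part this reproduces the bulk constant $\frac{4\kappa}{5}$, consistent with the three-dimensional periodic result of \cite{FL21}.

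Step three restricts back to $D$ by pairing with test functions $\phi\in H^s(D)^3$, $s\in(1/2,1]$, extended with the same parities, and identifies the singular contributions. These are of three kinds:
\begin{itemize}
\item the $\delta_{y=0,1}$ terms in $\nabla_x\dive\tilde\varphi$, which produce exactly horizontal boundary integrals of $\nabla_x\varphi_3|_{\partial D}$ against $\phi_h$, i.e.\ a multiple of $\FT(\nabla_x\varphi_3|_{\partial D})$;
\item the $\delta'$ terms in the third component coming from $\partial_y^2\tilde\varphi_3$ and from $\partial_y\dive\tilde\varphi$;
\item the action of the outer projection $\PPP$ on these singular pieces.
\end{itemize}
I expect the $\delta'$ terms in the vertical component either to cancel between $c_1\Delta$ and $c_2\nabla\dive$, or to be gradient-like and removed by the outer projection, since they are not in $H^{-s}$ for $s\le1$ and the statement asserts no such term. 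Verifying this cancellation, and extracting the precise coefficient $\frac{2\kappa}{15}$ and the sign convention matching \eqref{SL6-0}, is the main obstacle. I would first do this computation in the one-dimensional normal Fourier variable for fixed $k_h$: compute the elliptic problem for the gradient part of $\Pi$ explicitly, since its Neumann condition is what generates the boundary layer of width $N^{-1}$. I would then compare the result with the reflected torus formula.

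Finally, I will control all remainders uniformly. There are three: discretization of the shell sums, the $k_h=0$ modes, and the difference between the boundary-layer profiles and their distributional limits. The tool is Parseval on the doubled torus together with the trace theorem, which is where $s>1/2$ is needed. The rates are $O(N^{-\beta})$ for some $\beta>0$ depending on $s-\tfrac12$, which yields \eqref{SL6-1}.
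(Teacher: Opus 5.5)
Your reflection uses the wrong parity for $\Pi$, and the reduction rests on that identification, so it fails. By Proposition \ref{propA1}, the gradient part of $\Pi$ solves a \emph{Dirichlet} problem. Its odd reflection has $\nabla_x\pp$ odd and $\partial_y\pp$ even in $y$. Hence $\Pi$ agrees with the Leray projection of $\T^2\times(\R/2\Z)$, up to the rank-one mean correction, on fields with \emph{odd} horizontal and \emph{even} vertical components. That is the parity of the modes \eqref{SL6-2}, i.e.\ of vorticities. With the parity you chose (horizontal even, vertical odd), the torus Leray projection restricts on $D$ to $f\mapsto f-\nabla\pp$ with $\partial_y\pp=f_3$ on $\partial D$, the no-flux projection. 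This differs from $\Pi$ by gradients of harmonic functions with $O(1)$ Neumann data $(\Pi f)_3|_{\partial D}$, not by a finite-rank operator. With the correct parity, $\tilde\varphi$ is smooth and divergence-free, because near $\partial D$ one has $\partial_y\varphi_3=-\dive_h\varphi_h=0$. So the $\delta$ and $\delta'$ sources you plan to track do not exist. Even in your setting, your first bullet produces nothing, since $\PPP\nabla\dive v=0$.

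The deeper problem is that the reflected noise is reflection-symmetric but not homogeneous, so ``isotropy on the doubled torus'' fails at the level of the covariance. Each $\sigma_{k,j}$ combines the torus modes $k=(k_h,m/2)$ and $Jk=(k_h,-m/2)$, with $J=\diag\{1,1,-1\}$, and both are driven by the same Brownian motion. Hence $S_{\theta^N}$ contains two kinds of interactions. The matched ones ($k$ with $-k$) give $\frac{4\kappa}{5}\Delta\varphi$. The cross ones ($k$ with $-Jk$) behave differently: the outer derivative no longer sees $a_{k,j}\cdot k=0$, since $a_{k,j}\cdot Jk=-a_{k,j,3}m$. They carry a factor $e^{\pm 2\pi i m y}$ with $|m|\sim N$; this is the oscillatory part \eqref{SL20}.

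On the doubled torus these cross terms are $O(N^{1/2-s})$ in $H^{-s}$, but that says nothing about $(H^s(D))^*$. The parity extension of a general $\vartheta\in H^s(D)$ jumps at $y=0,1$ (odd extension of $\vartheta_h$ with nonzero trace), so it lies in $H^{s'}$ of the torus only for $s'<\frac12$. Pairing $\sin(\lambda_{m,\rr}y)$ with $\vartheta_h$ and integrating by parts gives $\lambda_{m,\rr}^{-1}$ times boundary values of $\vartheta_h$, plus an $O(N^{1/2-s})$ remainder. With weights $(\theta^N_k)^2|m|\sim N^{-2}$ summed over the $\sim N^3$ shell modes, this is an $O(1)$ trace term. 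Its limit, computed from spherical averages of $\zeta_3^2\,\zeta_h\otimes\zeta_h$ and $(1-\zeta_3^2)(I_2-\zeta_h\otimes\zeta_h)$, is $\FT_h\big(\frac{4\kappa}{5}\partial_y\varphi_h|_{\partial D}+\frac{2\kappa}{15}\nabla_x\varphi_3|_{\partial D}\big)$. The first piece vanishes because $\varphi_h\in C_c^\infty(D)$.

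These cross terms are the sole source of the boundary term in \eqref{SL6-1}. Your scheme replaces the shell sums by isotropic averages with $O(N^{-1})$ errors, which discards exactly this contribution. So the coefficient $\frac{2\kappa}{15}$ cannot come out of your argument as set up.
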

    \begin{proof}
    \noindent\textbf{Step I.} We first consider the case of a single mode. Let 
    \[l:=(l_h,n/2)=(l_1,l_2,n/2),\qquad l_1,l_2,n\in \Z\]
        be a wave number, and 
        \begin{align}
            \label{SL6-2}\varphi:=e^{2\pi i l_h\cdot x}\left(\begin{matrix}
        ib_{1}\sin n\pi y\\
        ib_{2}\sin n\pi y\\
         b_{3} \cos n\pi y
    \end{matrix}\right),
        \end{align}
    with $b:=(b_1,b_2,b_3)\in\C^3$ satisfying $b\cdot l=0$. If $n=0$, we take $b_h=0$. By the Euler formula, one has 
    \[\varphi=\frac{1}{2}(e^{2\pi il^+\cdot \z}b^{+}-e^{2\pi il^-\cdot \z}b^-),\]
    where $\z:=(x,y)$, 
    \[l^+:=l,\qquad b^+=b,\]
    and 
    \[l^-:=(l_h,-n/2),\qquad b^-:=(b_h,-b_3).\]
    Similarly, it follows that 
    \[\sigma_{k,j}=\frac{c_k}{2}(e^{2\pi ik^+\cdot \z}a_{k,j}^++e^{2\pi ik^-\cdot \z}a_{k,j}^-)\]
    with $c_k:=\sqrt{2}$ for $m\neq0$ and $c_k:=1$ for $m=0$, which implies 
    \begin{align}
        \label{SL7}
        (\sigma_{-k,j}\cdot\nabla)\varphi=\frac{c_k}{2}\pi i\ssum_{\rr,\s=\pm}e^{2\pi i(l^\rr-k^\s)\cdot\z}(a_{-k,j}^\s\cdot l^\rr) \fd_\rr b^\rr
    \end{align}
    with $\fd_+:=1$ and $\fd_-:=-1$. This, together with Proposition \ref{propA1}, yields
    \begin{align}
        \label{SL8}\Pi(\sigma_{-k,j}\cdot\nabla)\varphi=(\sigma_{-k,j}\cdot\nabla)\varphi-\nabla \psi_{k,j},
    \end{align}
    where $\psi_{k,j}$ solves
    \[\begin{cases}
        \Delta\psi_{k,j}=-c_k\pi^2\ssum_{\rr,\s=\pm}e^{2\pi i(l^\rr-k^\s)\cdot\z}(a_{-k,j}^\s\cdot l^\rr) (\fd_\rr b^\rr\cdot(l^\rr-k^\s)),\\
        \psi_{k,j}|_{y=0,1}=0.
    \end{cases}\]
    Plugging the ansatz $\psi_{k,j}=\phi_{k,j}(y)e^{2\pi i(l_h-k_h)\cdot x}$ into the above equation and applying the principle of superposition, one has the decomposition $\phi_{k,j}=\ssum_{\rr,\s=\pm}\phi^{\rr,\s}_{k,j}$ with $\phi^{\rr,\s}_{k,j}$ satisfying
    \begin{align}\label{SL8-1}\begin{cases}
        (\partial_y^2-4\pi^2|l_h-k_h|^2)\phi^{\rr,\s}_{k,j}=-c_k\pi^2e^{\pi i(\rr n-\s m)y}(a_{-k,j}^\s\cdot l^\rr) (\fd_\rr b^\rr\cdot(l^\rr-k^\s))=:F^{\rr,\s}_{k,j},\\
        \phi^{\rr,\s}_{k,j}|_{y=0,1}=0.
    \end{cases}
    \end{align}
    Since $\theta^N_k$ is supported on high modes $\{N\le |k|\le 2N\}$, to study the scaling limit of $S_{\theta^N}$, one has to understand the asymptotic behavior of $\phi^{\rr,\s}_{k,j}$ as $|k|\to\infty$. Notice that the above equation with large $|k|$ is a singular-limit problem. It is natural to introduce the decomposition
    \begin{align}
        \label{SL8-2}\phi^{\rr,\s}_{k,j}=\phi^{\rr,\s}_{\bulk,k,j}+\phi^{\rr,\s}_{\bd,k,j}.
    \end{align}
    Here, the bulk part
    \begin{align}
        \label{SL9}\phi^{\rr,\s}_{\bulk,k,j}:=-\frac{1}{4\pi^2|l^\rr-k^\s|^2}F^{\rr,\s}_{k,j}
    \end{align}
    satisfies
    \[(\partial_y^2-4\pi^2|l_h-k_h|^2)\phi^{\rr,\s}_{\bulk,k,j}=F^{\rr,\s}_{k,j},\]
    and the boundary layer term $\phi^{\rr,\s}_{\bd,k,j}$ is introduced to compensate the mismatch of the boundary conditions of $\phi^{\rr,\s}_{\bulk,k,j},\phi^{\rr,\s}_{k,j}$. Notice that $\phi^{\rr,\s}_{\bd,k,j}$ satisfies 
    \begin{align*}
    \begin{cases}
        (\partial_y^2-4\pi^2|l_h-k_h|^2)\phi^{\rr,\s}_{\bd,k,j}=0,\\
        \phi^{\rr,\s}_{\bd,k,j}|_{y=0,1}=-\phi^{\rr,\s}_{\bulk,k,j}|_{y=0,1},
    \end{cases}
    \end{align*}
    it can be solved explicitly by
    \begin{align}
        \label{SL10}
        \phi^{\rr,\s}_{\bd,k,j}=-\phi^{\rr,\s}_{\bulk,k,j}(0)\frac{\sinh \lambda_{k}(1-y)}{\sinh \lambda_{k}}-\phi^{\rr,\s}_{\bulk,k,j}(1)\frac{\sinh \lambda_{k}y}{\sinh \lambda_{k}},
    \end{align}
    if $\lambda_k:=2\pi|l_h-k_h|\neq0$; and
    \begin{align}
        \label{SL10-1}
        \phi^{\rr,\s}_{\bd,k,j}(y)=-y\phi^{\rr,\s}_{\bulk,k,j}(1)-(1-y)\phi^{\rr,\s}_{\bulk,k,j}(0)
    \end{align}
    if $\lambda_k:=2\pi|l_h-k_h|=0$. 
    
    To summarize, from \eqref{SL8}, it follows that 
    \begin{align*}\Pi(\sigma_{-k,j}\cdot\nabla)\varphi=(\sigma_{-k,j}\cdot\nabla)\varphi-\nabla\left[e^{2\pi i(l_h-k_h)\cdot x}\ssum_{\rr,\s=\pm}\left(\phi^{\rr,\s}_{\bulk,k,j}+\phi^{\rr,\s}_{\bd,k,j}\right)\right].
    \end{align*}
    This gives the decomposition
    \begin{align}
        \label{SL17-1}S_{\theta^N}\varphi=S_{\theta^N,\bulk}\varphi-S_{\theta^N,\bd}\varphi,
    \end{align}
    where
    \begin{align*}
        S_{\theta^N,\bulk}\varphi&:=2\kappa \ssum_{k\neq 0}\ssum_{j=1,2}(\theta^N_k)^2\Pi(\sigma_{k,j}\cdot\nabla)(\sigma_{-k,j}\cdot \nabla)\varphi\\&\quad-2\kappa \ssum_{k\neq 0}\ssum_{j=1,2}(\theta^N_k)^2
    \Pi(\sigma_{k,j}\cdot\nabla)\nabla\left(e^{2\pi i(l_h-k_h)\cdot x}\ssum_{\rr,\s=\pm}\phi^{\rr,\s}_{\bulk,k,j}\right),
    \end{align*}
    and
    \begin{align}
        \label{SL11}S_{\theta^N,\bd}\varphi=2\kappa \ssum_{k\neq 0}\ssum_{j=1,2}(\theta^N_k)^2\Pi(\sigma_{k,j}\cdot\nabla)\nabla\left(e^{2\pi i(l_h-k_h)\cdot x}\ssum_{\rr,\s=\pm}\phi^{\rr,\s}_{\bd,k,j}\right).
    \end{align}
    
    \noindent\textbf{Step I\!I.} In this step, we estimate $S_{\theta^N,\bd}\varphi$. By the solution formulas \eqref{SL9}--\eqref{SL10-1}, it follows that 
    \begin{align}
        \label{SL12}
        \|\phi^{\rr,\s}_{\bd,k,j}\|&\lesssim\|\phi^{\rr,\s}_{\bulk,k,j}\|_\infty\left(\|e^{-\lambda_k y}\|+\|e^{-\lambda_k (1-y)}\|\right)\notag\\&\lesssim\frac{1}{\sqrt{\lambda_k}|l^\rr-k^\s|^2}\|F^{\rr,\s}_{k,j}\|_\infty\lesssim \frac{1}{\sqrt{|l_h-k_h|}|l^\rr-k^\s|},
    \end{align}
    and
    \begin{align}
        \label{SL13}
        \|\partial_y\phi^{\rr,\s}_{\bd,k,j}\|\lesssim \frac{\sqrt{|l_h-k_h|}}{|l^\rr-k^\s|},\qquad \|\partial^2_y\phi^{\rr,\s}_{\bd,k,j}\|\lesssim \frac{|l_h-k_h|^{\frac{3}{2}}}{|l^\rr-k^\s|},
    \end{align}
    if $\lambda_k\neq0$. For $\lambda_k=0$, $e^{2\pi i(l_h-k_h)\cdot x}\phi^{r,s}_{\bd,k,j}=\phi^{r,s}_{\bd,k,j}$ is affine in $y$ and independent of $x$, which implies that
    \[(\sigma_{k,j}\cdot\nabla)\nabla\left(e^{2\pi i(l_h-k_h)\cdot x}\ssum_{\rr,\s=\pm}\phi^{\rr,\s}_{\bd,k,j}\right)=0,\]
    and therefore its contribution to $S_{\theta^N,\bd}\varphi$ vanishes. On the other hand, since
    \[\ssum_{N\le |k|\le 2N}|k|^{-2\gamma}\sim N^{3-2\gamma},\]
    it follows that the constant $\CC_N$ in \eqref{SL2} satisfies $\CC_N\sim N^{\gamma-3/2}$, which yields 
    \begin{align}
        \label{SL14}
        \theta_k^N\sim N^{-\frac{3}{2}}.
    \end{align}
    Combining \eqref{SL11}--\eqref{SL14}, one gets 
    \begin{align}\label{SL15}
        \|S_{\theta^N,\bd}\varphi\|&\lesssim\kappa \ssum_{N\le |k|\le 2N}\ssum_{j=1,2}N^{-3}\left\|\nabla^2\left(e^{2\pi i(l_h-k_h)\cdot x}\ssum_{\rr,\s=\pm}\phi^{\rr,\s}_{\bd,k,j}\right)\right\|\notag\\&\lesssim\kappa N^{-3}\ssum_{N\le |k|\le 2N}\frac{|l_h-k_h|^{\frac{3}{2}}}{|l^\rr-k^\s|}\lesssim \kappa N^{\frac{1}{2}}.
    \end{align}
    Next, we establish an estimate in $H^{-1}$ for $S_{\theta^N,\bd}\varphi$. Let $\vartheta\in H^1$ be a test function. By integrating by parts and using \eqref{SL12} and \eqref{SL13}, one has
    \begin{align*}
        &\left\langle \Pi(\sigma_{k,j}\cdot\nabla)\nabla\left(e^{2\pi i(l_h-k_h)\cdot x}\ssum_{\rr,\s=\pm}\phi^{\rr,\s}_{\bd,k,j}\right),\vartheta\right\rangle\\&\qquad\qquad\qquad\qquad=\left\langle \nabla\left(e^{2\pi i(l_h-k_h)\cdot x}\ssum_{\rr,\s=\pm}\phi^{\rr,\s}_{\bd,k,j}\right),(\sigma_{-k,j}\cdot\nabla)\Pi\vartheta\right\rangle\\&\qquad\qquad\qquad\qquad\lesssim \frac{\sqrt{|l_h-k_h|}}{|l^\rr-k^\s|}\|\vartheta\|_{H^1},
    \end{align*}
    which implies 
    \[\left\|\Pi(\sigma_{k,j}\cdot\nabla)\nabla\left(e^{2\pi i(l_h-k_h)\cdot x}\ssum_{\rr,\s=\pm}\phi^{\rr,\s}_{\bd,k,j}\right)\right\|_{H^{-1}}\lesssim \frac{\sqrt{|l_h-k_h|}}{|l^\rr-k^\s|}\]
    and thus 
    \begin{align}
        \label{SL16}
        \|S_{\theta^N,\bd}\varphi\|_{H^{-1}}\lesssim \kappa \ssum_{k\neq 0}(\theta^N_k)^2 \frac{\sqrt{|l_h-k_h|}}{|l^\rr-k^\s|}\lesssim \kappa N^{-\frac{1}{2}}.
    \end{align}
    Therefore, by using interpolation and \eqref{SL15}, \eqref{SL16}, it follows that 
    \begin{align}
        \label{SL17}
        \lim_{N\to\infty}\left\|S_{\theta^N,\bd}\varphi\right\|_{H^{-s}}=0,\qquad s\in (1/2,1].
    \end{align}

    \noindent\textbf{Step I\!I\!I.} We turn to the bulk part $S_{\theta,\bulk}$. First, in the computation below we shall use the normalization $c_k=\sqrt2$ for all $k$. This does not change the scaling limit. Indeed, it only modifies the modes on the layer $\{m=0\}$, which contains only $O(N^2)$ lattice points in the shell $\{N\le |k|\le 2N\}$. Moreover, on this layer, it follows
    \[\sigma_{k,j}\cdot \nabla= c_ke^{2\pi ik_h\cdot x}a_{k,j,h}\cdot\nabla_x,\qquad a_{k,j,h}\cdot k_h=0,\]
    and
    \[(\sigma_{k,j}\cdot \nabla)(\sigma_{-k,j}\cdot \nabla)\varphi=c_k^2(a_{k,j,h}\cdot\nabla_x)^2\varphi,\]
    and 
    \begin{align*}
        (\sigma_{k,j}\cdot\nabla)\nabla&\big(e^{2\pi i(l_h-k_h)\cdot x}\ssum_{\rr,\s=\pm}\phi^{\rr,\s}_{\bulk,k,j}\big)\\&\qquad\qquad=-2\pi^2c_ke^{2\pi i l_h\cdot x}(a_{k,j,h}\cdot l_h)(2(l_h-k_h),\rr n)\left(\ssum_{\rr,\s=\pm}\phi^{\rr,\s}_{\bulk,k,j}\right).
    \end{align*}
    Together with \eqref{SL9} and \eqref{SL14}, this gives 
    \begin{align*}
        \|\ssum_{k\neq 0,m=0}\ssum_{j=1,2}(\theta^N_k)^2\Pi(\sigma_{k,j}\cdot\nabla)(\sigma_{-k,j}\cdot \nabla)\varphi\|\lesssim  \ssum_{N\le |k|\le 2N,m=0}(\theta^N_k)^2\lesssim N^{-1},
    \end{align*}
    and 
    \begin{align*}
        \|\ssum_{k\neq 0,m=0}\ssum_{j=1,2}(\theta^N_k)^2\Pi (\sigma_{k,j}\cdot\nabla)\nabla&\big(e^{2\pi i(l_h-k_h)\cdot x}\ssum_{\rr,\s=\pm}\phi^{\rr,\s}_{\bulk,k,j}\big)\|\\&\qquad\lesssim  \ssum_{N\le |k|\le 2N,m=0}(\theta^N_k)^2\lesssim N^{-1}.
    \end{align*}
    Therefore, modifying the normalization parameter $c_k$ on the layer $\{m=0\}$ does not change the scaling limit of $S_{\theta^N,\bulk}\varphi$. Since the estimates \eqref{SL15} and \eqref{SL16} are uniform with respect to $c_k$, the modification does not affect the vanishing of $S_{\theta^N,\bd}\varphi$, and thus does not change the scaling limit of $S_{\theta^N}\varphi$.

    Let
    \begin{align}\label{SL17-2}\A^{\rr,\s}_{k,j}:=-\sqrt{2}\pi^2(a_{-k,j}^\s\cdot l^\rr) \fd_\rr b^\rr,\end{align}
    so one has 
    \[F^{\rr,\s}_{k,j}=e^{\pi i(\rr n-\s m)y}\A^{\rr,\s}_{k,j}\cdot (l^\rr-k^\s),\qquad \phi^{\rr,\s}_{\bulk,k,j}=-\frac{e^{\pi i(\rr n-\s m)y}\A^{\rr,\s}_{k,j}\cdot (l^\rr-k^\s)}{4\pi^2|l^\rr-k^\s|^2},\]
    and 
    \[(\sigma_{-k,j}\cdot\nabla)\varphi=\frac{1}{2\pi i}\ssum_{\rr,\s=\pm}e^{2\pi i(l^\rr-k^\s)\cdot\z}\A^{\rr,\s}_{k,j}.\]
    Therefore, it follows that 
    \begin{align*}
    2\kappa \ssum_{k\neq 0}\ssum_{j=1,2}&(\theta^N_k)^2
    \Pi(\sigma_{k,j}\cdot\nabla)(\sigma_{-k,j}\cdot \nabla)\varphi\\&=\frac{-i\kappa}{\pi}\Pi\ssum_{k\neq 0,j=1,2}\ssum_{\rr,\s=\pm}(\theta^N_k)^2\A^{\rr,\s}_{k,j}
    (\sigma_{k,j}\cdot\nabla) e^{2\pi i(l^\rr-k^\s)\cdot\z},
    \end{align*}
    and
    \begin{align*}
    2\kappa \ssum_{k\neq 0}&\ssum_{j=1,2}(\theta^N_k)^2
    \Pi(\sigma_{k,j}\cdot\nabla)\nabla\left(e^{2\pi i(l_h-k_h)\cdot x}\ssum_{\rr,\s=\pm}\phi^{\rr,\s}_{\bulk,k,j}\right)\\
        &=\frac{-\kappa}{2\pi^2} \ssum_{k\neq 0,j=1,2}\ssum_{\rr,\s=\pm}(\theta^N_k)^2
    \Pi(\sigma_{k,j}\cdot\nabla)\nabla\left(e^{2\pi i(l^\rr-k^\s)\cdot\z}\frac{\A^{\rr,\s}_{k,j}\cdot (l^\rr-k^\s)}{|l^\rr-k^\s|^2}\right)\\&=\frac{-i\kappa}{\pi}\Pi\ssum_{k\neq 0,j=1,2}\ssum_{\rr,\s=\pm}(\theta^N_k)^2
    \left(\PPPP_{l^\rr-k^\s}\A^{\rr,\s}_{k,j}(\sigma_{k,j}\cdot\nabla)e^{2\pi i(l^\rr-k^\s)\cdot\z}\right),
    \end{align*}
    where for any vector $\zeta\in\R^3$, 
    \[\PPPP_{\zeta}:=\frac{\zeta\otimes\zeta}{|\zeta|^2},\qquad\PPPP_{\zeta}^\perp:=I_3-\PPPP_{\zeta}\]
    are projection matrices in $\R^3$. This yields
    \begin{align*}
        S_{\theta^N,\bulk}\varphi=\frac{-i\kappa}{\pi}\Pi\ssum_{k\neq 0,j=1,2}\ssum_{\rr,\s=\pm}(\theta^N_k)^2\PPPP_{l^\rr-k^\s}^\perp
    \A^{\rr,\s}_{k,j}(\sigma_{k,j}\cdot\nabla)e^{2\pi i(l^\rr-k^\s)\cdot\z}.
    \end{align*}
    Since 
    \[(\sigma_{k,j}\cdot\nabla)e^{2\pi i(l^\rr-k^\s)\cdot\z}=\sqrt{2}\pi i\ssum_{\ttt=\pm}e^{2\pi i(k^\ttt+l^\rr-k^\s)\cdot \z}a_{k,j}^{\ttt}\cdot (l^\rr-k^\s),\]
    we introduce the decomposition
    \begin{align}
        \label{SL18}S_{\theta^N,\bulk}\varphi=S_{\theta^N,\bulk,1}\varphi+S_{\theta^N,\bulk,2}\varphi,
    \end{align}
    where 
    \begin{align}
        \label{SL19}S_{\theta^N,\bulk,1}\varphi&:=\sqrt{2}\kappa\Pi\ssum_{k\neq 0,j=1,2}\ssum_{\rr,\s=\pm,\ttt=\s}(\theta^N_k)^2\PPPP_{l^\rr-k^\s}^\perp\A^{\rr,\s}_{k,j} e^{2\pi il^\rr\cdot \z}a_{k,j}^{\s}\cdot (l^\rr-k^\s)
    \end{align}
    is the non-oscillatory bulk part, corresponding to the matched interactions $\ttt=\s$, and 
    \begin{align}
        \label{SL20}S_{\theta^N,\bulk,2}\varphi:=\sqrt{2}\kappa\Pi\ssum_{k\neq 0,j=1,2}\ssum_{\rr,\s=\pm,\ttt\neq\s}(\theta^N_k)^2&\PPPP_{l^\rr-k^\s}^\perp\A^{\rr,\s}_{k,j}\notag\\\times &e^{2\pi i(k^\ttt+l^\rr-k^\s)\cdot \z}a_{k,j}^{\ttt}\cdot (l^\rr-k^\s)
    \end{align}
    is the oscillatory bulk part, corresponding to the unmatched interactions $\ttt\neq\s$. The latter is oscillatory in the vertical direction.
    
    \noindent\textbf{Step I\!V.} In this step, we justify the scaling limit of the non-oscillatory bulk part $S_{\theta^N,\bulk,1}\varphi$. By the definition \eqref{SL17-2} of $\A^{\rr,\s}_{k,j}$ and \eqref{INTRO1-2}, it follows that 
    \[S_{\theta^N,\bulk,1}\varphi=-2\pi^2\kappa\Pi\ssum_{k\neq 0,j=1,2}\ssum_{\rr,\s=\pm}(\theta^N_k)^2|a_{k,j}^{\s}\cdot l^\rr|^2\PPPP_{l^\rr-k^\s}^\perp (e^{2\pi il^\rr\cdot \z}\fd_\rr b^\rr).\]
    Since $l,b$ are given by the function $\varphi$, it suffices to compute the coefficient matrix
    \[\SSSS^\rr_{\theta^N}:=-2\pi^2\kappa\ssum_{k\neq 0,j=1,2}\ssum_{\s=\pm}(\theta^N_k)^2 |a_{k,j}^{\s}\cdot l^\rr|^2\PPPP_{l^\rr-k^\s}^\perp.\]
    By applying the estimate
    \begin{align}
        \label{SL20-1}|\PPPP_{l^\rr-k^\s}-\PPPP_{k^\s}|\lesssim\frac{|l|}{|k|},
    \end{align}
    see Lemma 5.5 in \cite{FL21}, one has
    \[\SSSS^\rr_{\theta^N}=-2\pi^2\kappa\ssum_{k\neq 0,j=1,2}\ssum_{\s=\pm}(\theta^N_k)^2|a_{k,j}^{\s}\cdot l^\rr|^2\PPPP_{k^\s}^\perp +o(1).\]
    Furthermore, by using the radial symmetry of $\theta^N_\cdot$, for any $\s$, one has 
    \begin{align*}
        \ssum_{k\neq 0,j=1,2}(\theta^N_k)^2 |a_{k,j}^{\s}\cdot l^\rr|^2\PPPP_{k^\s}^\perp&=\ssum_{k\neq 0,j=1,2}(\theta^N_k)^2 |a^\s_{k^\s,j}\cdot l^\rr|^2\PPPP_{k}^\perp,
    \end{align*}
    where, by using \eqref{INTRO2}, it follows
    \[\ssum_{k\neq 0,j=1,2}(\theta^N_k)^2 |a^+_{k^+,j}\cdot l^\rr|^2\PPPP_{k}^\perp=\ssum_{k\neq 0}(\theta^N_k)^2|\PPPP_{k}^\perp l^\rr|^2 \PPPP_{k}^\perp,\]
    and
    \begin{align*}
        \ssum_{k\neq 0,j=1,2}(\theta^N_k)^2 |a^-_{k^-,j}\cdot l^\rr|^2\PPPP_{k}^\perp&=\ssum_{k\neq 0,j=1,2}(\theta^N_k)^2 |a_{k^-,j}\cdot l^{-\rr}|^2\PPPP_{k}^\perp\\&=\ssum_{k\neq 0,j=1,2}(\theta^N_k)^2 (l^{-\rr})^T\left(I_3-\frac{k^-\otimes k^{-}}{|k|^2}\right)l^{-\rr}\PPPP_{k}^\perp\\&=\ssum_{k\neq 0}(\theta^N_k)^2|\PPPP_{k}^\perp l^\rr|^2 \PPPP_{k}^\perp.
    \end{align*}
    Therefore,
    \begin{align}\label{SL21}
    \SSSS^\rr_{\theta^N}=-4\pi^2\kappa\ssum_{k\neq 0}(\theta^N_k)^2|\PPPP_{k}^\perp l^\rr|^2 \PPPP_{k}^\perp+o(1).\end{align}
    For any $\rr=\pm$, by proceeding as in the proof of Lemma 5.6 in \cite{FL21} and using the polar coordinates, it follows
    \begin{align}\label{SL22}
        \lim_{N\to\infty}\ssum_{k\neq 0}(\theta^N_k)^2|\PPPP_{k}^\perp l^\rr|^2 \PPPP_{k}^\perp =\frac{1}{4\pi}\!\int_{|\zeta|=1}\!|\PPPP_{\zeta}^\perp l^\rr|^2 \PPPP_{\zeta}^\perp\ddd S.
    \end{align}
    Let $Q_{l^\rr}$ be the orthogonal matrix such that $Q^T\frac{l^\rr}{|l^\rr|}=e_1$, where $e_1$ is the unit vector in $\R^3$ with the first component being $1$. Using the coordinate transform $\zeta\mapsto Q\zeta$, one gets
    \begin{align}\label{SL23}\notag
        \frac{1}{4\pi}\!\int_{|\zeta|=1}\!|\PPPP_{\zeta}^\perp l^\rr|^2 \PPPP_{\zeta}^\perp\ddd S&=\frac{|l^\rr|^2}{4\pi}Q\left(\int_{|\zeta|=1}|\PPPP_{\zeta}^\perp e_1|^2 \PPPP_{\zeta}^\perp \ddd S\right)Q^T\\&=\frac{|l^\rr|^2}{4\pi}Q\left(\int_{|\zeta|=1}(1-\zeta_1^2) (I_3-\zeta\otimes \zeta)\ddd S\right)Q^T=:\frac{|l^\rr|^2}{4\pi}Q\MM Q^T.
    \end{align}
    By symmetry, for $i\neq j$,  
    \[\MM_{ij}=\int_{|\zeta|=1}(1-\zeta_1^2) (\delta_{ij}-\zeta_i\zeta_j)\ddd S=0.\]
    Through direct computation, one has 
    \[\MM_{11}=\int_{|\zeta|=1}(1-\zeta_1^2)^2 \ddd S=\frac{32\pi}{15},\qquad \MM_{22}=\int_{|\zeta|=1}(1-\zeta_1^2)(1-\zeta_2^2) \ddd S=\frac{8\pi}{5},\]
    and by symmetry, $\MM_{33}=\MM_{22}=\frac{8\pi}{5}$. Hence, 
    \[\MM=4\pi\diag \left\{\frac{8}{15},\frac{2}{5},\frac{2}{5}\right\},\]
    which, together with \eqref{SL23}, yields
    \[\frac{1}{4\pi}\int_{|\zeta|=1} |\PPPP_{\zeta}^\perp l^\rr|^2 \PPPP_{\zeta}^\perp\ddd S=\frac{2}{5}|l^\rr|^2I_3+\frac{2}{15}l^\rr\otimes l^\rr.\]
    Combining this with \eqref{SL21} and \eqref{SL22} and using the fact $b^\rr\cdot l^\rr=0$, it follows that 
    \begin{align}
        \label{SL24}
        \notag\lim_{N\to\infty}S_{\theta^N,\bulk,1}\varphi&=-4\pi^2\kappa\Pi\ssum_{\rr=\pm}\fd_\rr\left(\frac{2}{5}|l^\rr|^2e^{2\pi il^\rr\cdot \z} b^\rr+\frac{2}{15}e^{2\pi il^\rr\cdot \z} l^\rr\otimes l^\rr b^\rr\right)\\&=\frac{4\kappa}{5}\Pi\Delta\varphi=\frac{4\kappa}{5}\Delta\varphi,
    \end{align}
    where the convergence holds in $L^2$.

    \noindent\textbf{Step V.} We turn to the oscillatory bulk part $S_{\theta^N,\bulk,2}$. Let $J:=\diag\{1,1,-1\}.$ Notice that $Jl^+=l^-$, $Jk^+=k^-$, $Ja_{k,j}^+=a_{k,j}^-$,
    \[\A^{+,-}_{k,j}=-\sqrt{2}\pi^2(a_{-k,j}^-\cdot l^+) b^+=\sqrt{2}\pi^2(a_{-k,j}^+\cdot l^-)\fd_-Jb^-=-J\A^{-,+}_{k,j},\]
    $\A^{+,+}_{k,j}=-J\A^{-,-}_{k,j}$, and 
    \[\PPPP_{Jk}^{\perp}=\left(I_3-\frac{Jk\otimes Jk}{|k|^2}\right)=J\PPPP_{k}^{\perp}J.\]
    By using the above relations, it follows that
    \begin{align*}
        S_{\theta^N,\bulk,2}\varphi =\sqrt{2}\kappa\Pi &\ssum_{k\neq0,j=1,2}\ssum_{\rr=\pm} (\theta_k^N)^2(a_{k,j}^+\cdot l^\rr+a_{k,j,3}m)e^{2\pi il_h\cdot x}\\&\times\left(\PPPP_{l^\rr-k^-}^\perp \A_{k,j}^{\rr,-}e^{2\pi i(\frac{\rr n}{2}+m)y}-J(\PPPP_{l^\rr-k^-}^\perp \A_{k,j}^{\rr,-})e^{-2\pi i(\frac{\rr n}{2}+m)y}\right).
    \end{align*}
    Define 
    \begin{align}
        \label{SL24-1}\V^\rr_{k,j}:=(\theta_k^N)^2(a_{k,j}^+\cdot l^\rr+a_{k,j,3}m)\PPPP_{l^\rr-k^-}^\perp \A_{k,j}^{\rr,-},
    \end{align}
    so one has 
    \begin{align}
        \label{SL25}
        S_{\theta^N,\bulk,2}\varphi &=\sqrt{2}\kappa \ssum_{k\neq0,j=1,2}\ssum_{\rr=\pm} \Pi\left(\V^\rr_{k,j}e^{2\pi i(\frac{\rr n}{2}+m)y}-J\V^\rr_{k,j}e^{-2\pi i(\frac{\rr n}{2}+m)y}\right)e^{2\pi il_h\cdot x}\notag\\&=2\sqrt{2}\kappa\ssum_{k\neq0,j=1,2}\ssum_{\rr=\pm} \Pi \left(\V^\rr_{k,j,h}i\sin(\lambda_{m,\rr}y), \V^\rr_{k,j,3}\cos(\lambda_{m,\rr}y)\right)e^{2\pi il_h\cdot x}\notag\\
        &=:2\sqrt{2}\kappa \ssum_{k\neq0,j=1,2}\ssum_{\rr=\pm} \Pi\UU^\rr_{k,j},
    \end{align}
    where $\lambda_{m,\rr}:=2\pi \left(\frac{\rr n}{2}+m\right)$. Applying Proposition \ref{propA1} and using the fact
    \begin{align*}
        \int_D \UU^{\rr}_{k,j,3}\ddd x\ddd y&=\I_{\{l_h=0,\lambda_{m,\rr}=0\}}\V^\rr_{k,j,3}\\[-0.4em]&=\I_{\{l_h=0,\lambda_{m,\rr}=0\}}(\theta_k^N)^2(a_{k,j}^+\cdot l^\rr+a_{k,j,3}m)(\PPPP_{l^\rr-k^-}^\perp \A_{k,j}^{\rr,-})_3\\&=\I_{\{l_h=0,\lambda_{m,\rr}=0\}}(\theta_k^N)^2(a_{k,j,3}(\rr n/2)+a_{k,j,3}m)(\PPPP_{l^\rr-k^-}^\perp \A_{k,j}^{\rr,-})_3=0,
    \end{align*}
    it follows
    \begin{align}\label{SL25-1}
    \Pi \UU^\rr_{k,j}= \UU^\rr_{k,j}-\nabla \pp^\rr_{k,j},
    \end{align}
    where $\pp^\rr_{k,j}$ satisfies
    \begin{align}
        \label{SL26}
        \begin{cases}
        \Delta\pp^\rr_{k,j}=\dive \UU^\rr_{k,j}= - e^{2\pi il_h\cdot x}\sin(\lambda_{m,\rr} y)(2\pi l_h\cdot\V^\rr_{k,j,h}+\lambda_{m,\rr}\V^\rr_{k,j,3}),\\
        \pp^\rr_{k,j}|_{y=0,1}=0.
        \end{cases}
    \end{align}
    For $4\pi^2|l_h|^2+\lambda_{m,\rr}^2>0$, the solution is given explicitly by 
    \begin{align}
        \label{SL28}
        \pp^\rr_{k,j}:= e^{2\pi il_h\cdot x}\sin(\lambda_{m,\rr} y)\frac{2\pi l_h\cdot\V^\rr_{k,j,h}+\lambda_{m,\rr}\V^\rr_{k,j,3}}{4\pi^2|l_h|^2+\lambda_{m,\rr}^2}.
    \end{align}
    On the other hand, if $4\pi^2|l_h|^2+\lambda_{m,\rr}^2=0$, then $\V^\rr_{k,j}=0$, which implies $\Pi\UU^\rr_{k,j}=0$, and thus this case does not contribute to the scaling limit.

    We claim that 
    \begin{align}
        \label{SL29}
        \lim_{N\to\infty} 2\sqrt{2}\kappa \ssum_{k\neq0,j=1,2}\ssum_{\rr=\pm}( \UU^\rr_{k,j,3}-\partial_y \pp^\rr_{k,j})=0
    \end{align}
    in $L^2$. Indeed, notice that 
    \[\partial_y\pp^\rr_{k,j}= e^{2\pi il_h\cdot x}\cos(\lambda_{m,\rr} y)\frac{2\pi \lambda_{m,\rr}l_h\cdot\V^\rr_{k,j,h}+\lambda^2_{m,\rr}\V^\rr_{k,j,3}}{4\pi^2|l_h|^2+\lambda_{m,\rr}^2},\]
    which gives 
    \begin{align*}
        \UU^\rr_{k,j,3}-\partial_y\pp^\rr_{k,j}=e^{2\pi il_h\cdot x}\cos(\lambda_{m,\rr} y)\frac{4\pi^2|l_h|^2\V^\rr_{k,j,3}-2\pi \lambda_{m,\rr}l_h\cdot\V^\rr_{k,j,h}}{4\pi^2|l_h|^2+\lambda_{m,\rr}^2}.
    \end{align*}
    Since $l_h$ is given as the tangential frequency of $\varphi$, by the Bessel inequality in the vertical direction combined with the estimate 
    \begin{align}
        \label{SL30}
        |\V_{k,j}^\rr|\lesssim (\theta_k^N)^2 |m|\lesssim N^{-3}|m|,
    \end{align}
    it follows
    \begin{align}\label{SL30-1}
        &\left\|\ssum_{k\neq 0,j=1,2}\ssum_{\rr=\pm}(\UU^\rr_{k,j,3}-\partial_y\pp^\rr_{k,j})\right\|\notag\\&\lesssim\ssum_{k_h\in\Z^2,j=1,2}\left\|\ssum_{m\in\Z}\I_{\{N\le |k|\le 2N\}}\frac{4\pi^2|l_h|^2\V^\rr_{k,j,3}-2\pi \lambda_{m,\rr}l_h\cdot\V^\rr_{k,j,h}}{4\pi^2|l_h|^2+\lambda_{m,\rr}^2}\cos(\lambda_{m,\rr} y)\right\|_{L^2_y}\notag\\&\lesssim N^{-3}\ssum_{k_h\in\Z^2}\left(\ssum_{m\in\Z}\I_{\{N\le |k|\le 2N\}}\right)^{\frac{1}{2}}\lesssim N^{-\frac{1}{2}}\to 0,
    \end{align}
    as $N\to\infty.$ 
    
    Next, we turn to the horizontal components. Notice that 
    \begin{align*}
        \nabla_x\pp^\rr_{k,j}:=2\pi ie^{2\pi il_h\cdot x}\sin(\lambda_{m,\rr} y)\frac{2\pi l_h\cdot\V^\rr_{k,j,h}+\lambda_{m,\rr}\V^\rr_{k,j,3}}{4\pi^2|l_h|^2+\lambda_{m,\rr}^2}l_h,
    \end{align*}
    which gives 
    \begin{align*}
        \UU^\rr_{k,j,h}-\nabla_x\pp^\rr_{k,j}&=ie^{2\pi il_h\cdot x}\sin(\lambda_{m,\rr} y)\V^\rr_{k,j,h}-ie^{2\pi il_h\cdot x}\sin(\lambda_{m,\rr} y)\frac{4\pi^2 l_h\otimes l_h}{4\pi^2|l_h|^2+\lambda_{m,\rr}^2}\V^\rr_{k,j,h}\\&\qquad-ie^{2\pi il_h\cdot x}\sin(\lambda_{m,\rr} y)\frac{2\pi\lambda_{m,\rr}\V^\rr_{k,j,3}}{4\pi^2|l_h|^2+\lambda_{m,\rr}^2}l_h\\&=:ie^{2\pi il_h\cdot x}\sin(\lambda_{m,\rr} y)\V^\rr_{k,j,h}+\RR_{k,j}.
    \end{align*}
    Repeating the derivation of \eqref{SL30-1}, one has 
    \begin{align}
        \label{SL31-1}
        \lim_{N\to\infty}\left\|\ssum_{k\neq 0}\ssum_{j=1,2}\RR_{k,j}\right\|=0,
    \end{align}
    so it remains to control the main part
    \[\SSSSS_{\theta^N}:=2\sqrt{2}\kappa i\ssum_{k\neq 0,j=1,2}\ssum_{\rr=\pm}e^{2\pi il_h\cdot x}\sin(\lambda_{m,\rr} y)\V^\rr_{k,j,h}.\]
    Define the boundary trace
    \[\SSSSS_{\theta^N,\bd}:=\begin{cases}
        2\sqrt{2}\kappa i\left(\ssum_{k\neq 0,\lambda_{m,\rr} \neq0}\ssum_{j=1,2,\rr=\pm}\lambda_{m,\rr}^{-1}\V^\rr_{k,j,h}\right)e^{2\pi il_h\cdot x},\qquad \ \ \ \ \mbox{on $\{y=0\}$},\\2\sqrt{2}\kappa i(-1)^n\left(\ssum_{k\neq 0,\lambda_{m,\rr} \neq0}\ssum_{j=1,2,\rr=\pm}\lambda_{m,\rr}^{-1}\V^\rr_{k,j,h}\right)e^{2\pi il_h\cdot x},\ \ \mbox{on $\{y=1\}$}.
    \end{cases}\]
    For any smooth function $\vartheta$, by integrating by parts, it follows
    \begin{align*}
        \langle \SSSSS_{\theta^N},\vartheta\rangle&=2\sqrt{2}\kappa i\ssum_{k\neq 0,j=1,2}\ssum_{\rr=\pm}\langle e^{2\pi il_h\cdot x}\sin(\lambda_{m,\rr} y),\V^\rr_{k,j,h}\cdot\vartheta\rangle\\&=\int_{\T^2}\SSSSS_{\theta^N,\bd}\cdot \vartheta|_{y=0}\ddd x-\int_{\T^2}\SSSSS_{\theta^N,\bd}\cdot \vartheta|_{y=1}\ddd x\\&\qquad+2\sqrt{2}\kappa i\ssum_{k\neq 0,\lambda_{m,\rr} \neq0}\ssum_{j=1,2,\rr=\pm}\lambda^{-1}_{m,\rr}\langle e^{2\pi il_h\cdot x}\cos(\lambda_{m,\rr} y),\V^\rr_{k,j,h}\cdot\partial_y\vartheta\rangle\\&=\langle\FT_h \SSSSS_{\theta^N,\bd}, \vartheta\rangle\\&\qquad+2\sqrt{2}\kappa i\ssum_{k\neq 0,\lambda_{m,\rr} \neq0}\ssum_{j=1,2,\rr=\pm}\lambda^{-1}_{m,\rr}\langle e^{2\pi il_h\cdot x}\cos(\lambda_{m,\rr} y),\V^\rr_{k,j,h}\cdot\partial_y\vartheta\rangle,
    \end{align*}
    which implies 
    \begin{align*}\|\SSSSS_{\theta^N}-\FT_h \SSSSS_{\theta^N,\bd}\|_{H^{-s}}\le \|\tilde\RR\|_{H^{1-s}},
    \end{align*}
    where 
    \[\tilde\RR=2\sqrt{2}\kappa i\ssum_{k\neq 0,\lambda_{m,\rr} \neq0}\ssum_{j=1,2,\rr=\pm}\lambda^{-1}_{m,\rr}e^{2\pi il_h\cdot x}\cos(\lambda_{m,\rr} y)\V^\rr_{k,j,h}.\]
    The remainder $\tilde\RR$ is estimated in the same way as in the vertical case, which gives
    \begin{align*}
        \|\tilde\RR\|_{H^{1-s}}\lesssim_\kappa N^{\frac{1}{2}-s},
    \end{align*}
    and thus
    \begin{align}\label{SL31-2}\|\SSSSS_{\theta^N}-\FT_h \SSSSS_{\theta^N,\bd}\|_{H^{-s}}\lesssim_\kappa N^{\frac{1}{2}-s}\to 0,
    \end{align}
    as $N\to\infty$. Therefore, it suffices to compute the scaling limit of the coefficient in $\SSSSS_{\theta^N,\bd}$
    \[\tilde\SSSSS_{\theta^N}:=2\sqrt{2}\kappa i\ssum_{k\neq 0,\lambda_{m,\rr} \neq0}\ssum_{j=1,2,\rr=\pm}\lambda_{m,\rr}^{-1}\V^\rr_{k,j,h}.\]
    By applying \eqref{INTRO1-2}, \eqref{SL14}, \eqref{SL17-2}, \eqref{SL20-1}, and \eqref{SL24-1}, one has 
    \begin{align*}
        \tilde\SSSSS_{\theta^N}&=2\sqrt{2}\kappa i\ssum_{k\neq 0,\lambda_{m,\rr} \neq0}\ssum_{j=1,2,\rr=\pm}(\theta_k^N)^2\frac{2\pi a_{k,j,h}\cdot l_h+a_{k,j,3}\lambda_{m,\rr}}{2\pi \lambda_{m,\rr}}(\PPPP_{l^\rr-k^-}^\perp \A_{k,j}^{\rr,-})_h\notag\\&=-2\pi i\kappa\ssum_{k\neq 0}\ssum_{j=1,2,\rr=\pm}(\theta_k^N)^2a_{k,j,3}(a_{k,j}^-\cdot l^\rr )(\fd_{\rr}\PPPP_{k^-}^\perp b^{\rr})_h+o(1).
    \end{align*}
     The above reduction can be justified rigorously by splitting $\SSSSS_{\theta^N}$ into the regions $|m|\le \varepsilon N$ and $|m|>\varepsilon N$. Indeed, the former contributes $O(\varepsilon)$, while an application of the above reduction on the latter gives an error term satisfying $O_\varepsilon(N^{-1})$. Hence, by first letting $N\to\infty$ and then $\varepsilon\to0$, the total error term vanishes. The restriction $\lambda_{m,\rr}\ne0$ is removed, since the condition $\{\lambda_{m,\rr}=0\}=\{m=-\rr n/2\}$ gives at most one horizontal layer, which contains $O(N^2)$ lattice points in the shell and contributes at most $O(N^{-1})$.

     Now we compute the first term on the right-hand side of the above identity. By applying \eqref{INTRO2}, it follows
    \begin{align*}
        \ssum_{j=1,2}a_{k,j,3} (a_{k,j}^-\cdot l^\rr)=\ssum_{j=1,2}a_{k,j,3} (a_{k,j}\cdot Jl^\rr)=\row_3\PPPP_k^\perp\cdot Jl^\rr.
    \end{align*}
    Therefore, by setting $\hk:=\frac{k}{|k|}$, one has
    \begin{align*}
        \tilde\SSSSS_{\theta^N}&=-2\pi i\kappa\ssum_{k\neq 0}\ssum_{\rr=\pm}(\row_3\PPPP_k^\perp\cdot Jl^\rr)(\theta_k^N)^2(\fd_{\rr}\PPPP_{k^-}^\perp b^{\rr})_h+o(1)\\
        &=2\pi i\kappa\ssum_{k\neq0}\ssum_{\rr=\pm}(\theta_k^N)^2\left(\hk_3\hk_h\cdot l_h+\frac{\rr n}{2}(1-\hk_3^2)\right)\\&\qquad\qquad\qquad\qquad\qquad\qquad\times\left(\rr\big(b_h-(\hk_h\cdot b_h)\hk_h\big)+ \hk_3b_3\hk_h\right)+o(1)\\&=4\pi i\kappa\ssum_{k\neq0}(\theta_k^N)^2\left(\hk_3^2b_3(\hk_h\otimes \hk_h) l_h+\frac{n}{2}(1-\hk_3^2)(I_2-\hk_h\otimes\hk_h)b_h \right)+o(1).
    \end{align*}
    Proceeding as in Step I\!V, it follows that
    \begin{align*}
        \lim_{N\to\infty}\tilde\SSSSS_{\theta^N}&=i\kappa\left(b_3l_h\int_{|\zeta|=1}\zeta_3^2\zeta_h\otimes\zeta_h\ddd S+\frac{nb_h}{2}\int_{|\zeta|=1}(1-\zeta_3^2)(I_2-\zeta_h\otimes\zeta_h)\ddd S\right)\\&=\frac{4\pi i\kappa}{15}(3nb_h+b_3l_h),
    \end{align*}
    which, together with the definition \eqref{SL6-2} of $\varphi$, implies 
    \[\lim_{N\to\infty}\SSSSS_{\theta^N,\bd}=\frac{4\kappa}{5}\partial_y\varphi_h|_{\partial D}+\frac{2\kappa}{15}\nabla_x\varphi_3|_{\partial D}\]
    in $L^2$, and thus 
    \begin{align}
        \label{SL31-3}\lim_{N\to\infty}\FT_h\SSSSS_{\theta^N,\bd}=\FT_h\left(\frac{4\kappa}{5}\partial_y\varphi_h|_{\partial D}+\frac{2\kappa}{15}\nabla_x\varphi_3|_{\partial D}\right)
    \end{align}
    in $H^{-s}$. To summarize, by \eqref{SL25}, \eqref{SL25-1}, \eqref{SL29}, \eqref{SL31-1}--\eqref{SL31-3}, one has
    \begin{align}
        \label{SL31-4}\lim_{N\to\infty}S_{\theta^N,\bulk,2}\varphi=\FT\left(\frac{4\kappa}{5}\partial_y\varphi_h|_{\partial D}+\frac{2\kappa}{15}\nabla_x\varphi_3|_{\partial D}\right)
    \end{align}
    in $H^{-s}$.

    \noindent\textbf{Step V\!I\!I.} Combining \eqref{SL17-1}, \eqref{SL17}, \eqref{SL18}, \eqref{SL24}, and \eqref{SL31-4}, one has the convergence 
    \begin{align}
            \label{SL31-6}
            \lim_{N\to\infty}\left\|S_{\theta^N}\varphi-\frac{4\kappa}{5}\Delta\varphi-\FT\left(\frac{4\kappa}{5}\partial_y\varphi_h|_{\partial D}+\frac{2\kappa}{15}\nabla_x\varphi_3|_{\partial D}\right)\right\|_{H^{-s}}=0,
        \end{align}
    for a single mode $\varphi$. The extension to general smooth functions follows from a standard argument as in \cite{FL21}. Therefore, we give a brief sketch and omit the details. Since $\varphi_h\in C^\infty_c(D)$, one has $\partial_y\varphi_3\in C^\infty_c(D)$, which implies that any such smooth function $\varphi$ has the following expansion
    \[\varphi=\ssum_{l}c_l\varphi_l.\]
    Here, $\varphi_l$ is the single mode given in \eqref{SL6-2}, and the coefficient $c_l$ decays rapidly. For any integer $M\ge 1$, define 
    \[\varphi^{<M}:=\ssum_{|l|< M}c_l\varphi_l,\qquad \varphi^{\ge M}:=\ssum_{|l|\ge M}c_l\varphi_l.\]
    It is clear that \eqref{SL31-6} holds for $\varphi^{<M}$. As for $\varphi^{\ge M}$, notice that from the single-mode estimates established in Steps I--V\!I, one has 
    \[\sup_{N\ge 1}\|S_{\theta^N}\varphi_l\|_{H^{-s}}\le C_l,\]
    where $C_l$ has a polynomial growth in $l$. Hence, by choosing $M$ sufficiently large and using the rapid decay of $c_l$, it follows
    \[\sup_{N\ge 1}\left\|S_{\theta^N}\varphi^{\ge M}\right\|_{H^{-s}}\le \ssum_{|l|\ge M} |c_l|C_l\le \epsilon,\]
    and 
    \[\left\|\frac{4\kappa}{5}\Delta\varphi^{\ge M}+\FT\left(\frac{4\kappa}{5}\partial_y\varphi^{\ge M}_h|_{\partial D}+\frac{2\kappa}{15}\nabla_x\varphi_3^{\ge M}|_{\partial D}\right)\right\|_{H^{-s}}\le \epsilon.\]
    This, together with the convergence of $\varphi^{<M}$, implies \eqref{SL31-6} in the general case. Since $\varphi_h\in C_c^\infty(D)$, we obtain \eqref{SL6-1}.
    \end{proof}
       
    \subsubsection{The degenerate case}
    Next, we consider the degenerate case. The main result is given below.
    \begin{prop}\label{propSL2-1}
        Let $\{\theta^N_\cdot\}_{N\ge 1}$ be defined in \eqref{SL2}. For any $\varphi\in H_{\curl}\cap C^\infty$,
        \begin{align}
            \label{SL34}
            \lim_{N\to\infty}\left\|S_{\theta^N}\varphi-\kappa\FA_{\eff}\varphi\right\|=0,
        \end{align}
        where $S_{\theta}$ is defined in \eqref{INTRO9}$_2$ and 
        \begin{align}
            \label{SL35}
            \FA_{\eff}\varphi:=\Pi\left(\frac{1}{4}\Delta_x \varphi_h+\frac{1}{2}\nabla_x\dive_h\varphi_h,\Delta_x\varphi_3\right).
        \end{align}
    \end{prop}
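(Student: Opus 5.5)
We follow the single-mode strategy of Proposition~\ref{propSL2}. Since $\Pi$ is bounded on $L^2$, we first prove \eqref{SL34} for a single mode
\[\varphi=e^{2\pi i l_h\cdot x}\big(f_h(y),f_3(y)\big),\]
with $f_h$ built from $\sin n\pi y$ and $f_3$ from $\cos n\pi y$ as in \eqref{SL6-2}. We then extend it to general $\varphi\in H_{\curl}\cap C^\infty$ by the truncation $\varphi=\varphi^{<M}+\varphi^{\ge M}$ used in Step V\!I\!I of Proposition~\ref{propSL2}, using a bound $\sup_N\|S_{\theta^N}\varphi_l\|\le C_l$ with polynomial growth in $l$.

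In the degenerate case the shell $\{N\le|k|\le 2N\}$ lies in $\Z^2$. Hence $\CC_N\sim N^{\gamma-1}$ and $(\theta^N_k)^2\sim N^{-2}$, so the shell carries $O(N^2)$ modes of total weight $1$. The key algebraic simplification is $k^\perp\cdot k=0$. Thus
\[(\sigma_{-k}\cdot\nabla_x)\varphi=\pm\frac{2\pi i\,k^\perp\cdot l_h}{|k|}\,e^{2\pi i(l_h-k)\cdot x}\big(f_h,f_3\big),\]
which has amplitude $O(|l|)$ uniformly in $k$. Likewise, $(\sigma_k\cdot\nabla_x)$ applied to anything with horizontal frequency $l_h-k$ produces the bounded factor $2\pi i\,k^\perp\cdot l_h/|k|$ and brings the frequency back to $l_h$. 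Consequently, no derivative loss occurs, and we can aim for convergence in $L^2$ rather than in $H^{-s}$.

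Next we compute the inner projection via Proposition~\ref{propA1}. We have $\Pi(\sigma_{-k}\cdot\nabla_x)\varphi=(\sigma_{-k}\cdot\nabla_x)\varphi-\nabla\psi_k$, where $\psi_k=e^{2\pi i(l_h-k)\cdot x}\phi_k(y)$ solves
\[(\partial_y^2-4\pi^2|l_h-k|^2)\phi_k=c_k\big(2\pi i(l_h-k)\cdot f_h+\partial_yf_3\big),\qquad \phi_k|_{y=0,1}=0,\]
with $c_k=\pm 2\pi i\,k^\perp\cdot l_h/|k|$. As in \eqref{SL8-2}, we split $\phi_k=\phi_{\bulk,k}+\phi_{\bd,k}$. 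The bulk part satisfies $\phi_{\bulk,k}=-(4\pi^2|l_h-k|^2)^{-1}\times(\text{RHS})+O(N^{-3})$; the correction comes from $\partial_y^2$ acting on the $y$-profile. Hence $\nabla_x\psi_{\bulk,k}$ is, up to $O(N^{-1})$, the horizontal Leray projection $\PPPP_{l_h-k}$ applied to $c_kf_h$. Moreover $\partial_y\psi_{\bulk,k}=O(N^{-1})$, so the vertical component is essentially untouched. The boundary layer $\phi_{\bd,k}$ is given by the $\sinh$ formula \eqref{SL10} with $\lambda_k\sim N$ and boundary data $O(N^{-1})$. Exactly as in \eqref{SL12}--\eqref{SL13}, this gives $\|\nabla\psi_{\bd,k}\|\lesssim N^{-1/2}$ in $L^2$. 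Since the outer operator $\Pi(\sigma_k\cdot\nabla_x)$ acts on this frequency with a bounded factor, and $\|\Pi\|_{\LL(L^2)}=1$, the boundary-layer contribution to $S_{\theta^N}\varphi$ is $\lesssim\kappa\sum_k(\theta_k^N)^2N^{-1/2}\to0$ in $L^2$. This is the step I expect to need the most care. The outer operator involves $\nabla_x$ hitting $e^{2\pi i(l_h-k)\cdot x}$, and only the cancellation $k^\perp\cdot k=0$ prevents a factor $N$. The same cancellation must be checked for the $y$-derivative components of $\nabla\psi_{\bd,k}$, which are $O(1)$ pointwise but concentrated in a layer of width $N^{-1}$.

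For the bulk part, the outer step returns the frequency to $l_h$ with no oscillation in $y$. Using \eqref{SL20-1} in its 2D version, $|\PPPP_{l_h-k}-\PPPP_k|\lesssim|l|/|k|$, we obtain
\[S_{\theta^N}\varphi=-8\pi^2\kappa\,\Pi\Big[e^{2\pi il_h\cdot x}\ssum_k(\theta_k^N)^2(\hk^\perp\cdot l_h)^2\big(\hk^\perp\otimes\hk^\perp f_h,\ f_3\big)\Big]+o(1)\]
in $L^2$, where $\hk=k/|k|$. Equidistribution on the circle (the analogue of \eqref{SL22}, using the radial symmetry \eqref{INTRO4-1}) replaces the sums by averages over $\eta\in S^1$:
\[\frac{1}{2\pi}\int_{S^1}(\eta\cdot l_h)^2\eta\otimes\eta=\frac{|l_h|^2I_2+2l_h\otimes l_h}{8},\qquad \frac{1}{2\pi}\int_{S^1}(\eta\cdot l_h)^2=\frac{|l_h|^2}{2}.\]
Translating $-4\pi^2|l_h|^2\mapsto\Delta_x$ and $-4\pi^2\,l_h\otimes l_h\mapsto\nabla_x\dive_h$, this gives $\kappa\,\Pi\big(\frac14\Delta_x\varphi_h+\frac12\nabla_x\dive_h\varphi_h,\ \Delta_x\varphi_3\big)=\kappa\FA_{\eff}\varphi$, which is \eqref{SL35}. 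The density argument then completes the proof.
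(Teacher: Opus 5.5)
Your computation of the limit is correct and follows the paper's route. Both arguments solve the inner Dirichlet--Poisson problem for $\Pi^\perp(\sigma_{-k}\cdot\nabla_x)\varphi$ and split it into a bulk part, a boundary layer and an $O(N^{-3})$ remainder (the paper's $\phi_{\rem,k}$). Both use $k^\perp\cdot(l_h-k)=k^\perp\cdot l_h$, so no power of $N$ is lost and the boundary layer costs only $O(N^{-1/2})$ in $L^2$. Both then replace $\PPPP_{l_h-k}$ by $\PPPP_k$ and evaluate circle averages. The paper only organizes this differently. It first peels off the exact identity $2\kappa\ssum_k\theta_k^2\Pi(\sigma_k\cdot\nabla_x)(\sigma_{-k}\cdot\nabla_x)\varphi=\kappa\Delta_x\varphi$, and then shows that the gradient correction converges to $\kappa\Pi\big(\tfrac34\Delta_x\varphi_h-\tfrac12\nabla_x\dive_h\varphi_h,0\big)$. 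Your averages $\tfrac18(|l_h|^2I_2+2l_h\otimes l_h)$ and $\tfrac12|l_h|^2$ give the same $\kappa\FA_{\eff}\varphi$.

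The step that fails as written is the reduction to the modes \eqref{SL6-2} via the truncation of Step V\!I\!I. That truncation needs the coefficients $c_l$ to decay rapidly, which comes from $\varphi_h\in C_c^\infty(D)$. Proposition \ref{propSL2} assumes this, but the present statement does not: $\varphi$ is an arbitrary element of $H_{\curl}\cap C^\infty$. As soon as $\varphi_h\neq0$ on $\partial D$, its sine coefficients in $y$ decay only like $1/n$, which is not summable. So the bound $\ssum_{|l|\ge M}|c_l|C_l\le\epsilon$ fails however $C_l$ grows. Note also that for a pure mode \eqref{SL6-2} the source term is a multiple of $\sin n\pi y$, so $\phi_k$ is explicit and has no boundary layer. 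The boundary-layer estimate you wrote is really the one for a general profile.

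There are two easy repairs.
\begin{enumerate}
\item Expand only in $x$ and take $\varphi=e^{2\pi i l_h\cdot x}b(y)$ with a general smooth divergence-free profile, as the paper does. Your bulk, boundary-layer and remainder estimates then hold with constants polynomial in $|l_h|$ times $\|b\|_{H^2_y}$, and $\|b\|_{H^2_y}$ decays rapidly in $l_h$.
\item Replace the summation over modes by the $N$-uniform operator bound $\|S_{\theta}\varphi\|\le 2\kappa\|\Delta_x\varphi\|$, valid for every $\theta$ with $\|\theta\|_{\ell^2}=1$. It holds because $\Pi$ is a contraction commuting with tangential Fourier projections. Moreover, on the tangential mode $l_h$ each of $\sigma_{-k}\cdot\nabla_x$ and $\sigma_k\cdot\nabla_x$ (the latter acting on frequency $l_h-k$) contributes only a factor of modulus at most $2\pi|l_h|$. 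Combine this with $\|\FA_{\eff}\varphi\|\lesssim\|\nabla_x^2\varphi\|$ and the $L^2_y$-convergence of the sine/cosine expansion, which is divergence-free term by term, and the density argument closes.
\end{enumerate}
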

    \begin{proof} Notice that 
    \begin{align*}
        S_\theta\varphi=2\kappa \ssum_{k\neq 0}\theta_k^2
    \Pi(\sigma_k\cdot\nabla_x)(\sigma_{-k}\cdot \nabla_x)\varphi-2\kappa \ssum_{k\neq 0}\theta_k^2
    \Pi(\sigma_k\cdot\nabla_x)\Pi^\perp(\sigma_{-k}\cdot \nabla_x)\varphi,
    \end{align*}
    where, by using the definition \eqref{INTRO5} of $\sigma_k$ and \eqref{INTRO4-1}, it follows 
    \begin{align*}
        \ssum_{k\neq 0}\theta_k^2\Pi(\sigma_k\cdot\nabla_x)(\sigma_{-k}\cdot \nabla_x)\varphi&=\Pi\ssum_{k\neq 0}\theta_k^2\frac{(k^\perp\cdot\nabla_x)^2\varphi}{|k|^2}\\&=\Pi\ssum_{k\neq 0}\theta_k^2\frac{1}{|k|^2}(k_2^2\partial_{x_1}^2\varphi+k_1^2\partial_{x_2}^2\varphi-2k_1k_2\partial_{x_1}\partial_{x_2}\varphi)\\&=\frac{1}{2}\Pi\Delta_x\varphi=\frac{1}{2}\Delta_x\varphi.
    \end{align*}
    This implies
    \begin{align}
        \label{SL38}S_\theta\varphi=\kappa\Delta_x\varphi-S_\theta^\perp\varphi,
    \end{align}
    where
    \[S_\theta^\perp\varphi:=2\kappa \ssum_{k\neq 0}\theta_k^2
    \Pi(\sigma_k\cdot\nabla_x)\Pi^\perp(\sigma_{-k}\cdot \nabla_x)\varphi.\]
    Therefore, it remains to compute the scaling limit of $S_\theta^\perp\varphi$. As in the proof of Proposition \ref{propSL2}, the general case can be reduced to a single tangential Fourier mode of the form
    \begin{align}\label{SL36}
        \varphi(x,y)=e^{2\pi i l\cdot x}b(y)\in H_{\curl},
    \end{align}
    where $b=(b_h,b_3)^T=(b_1,b_2,b_3)^T$ is a smooth vector field satisfying 
    \begin{align}\label{SL37}
    2\pi il\cdot b_h(y)+b_3'(y)=0.
    \end{align}
    Moreover, if $l=0$, then $b_3\equiv 0$, due to the horizontal zero-mean condition for vertical component in \eqref{INTRO1-1}. Hence, $\varphi$ is a shear flow. In this case, $S_{\theta^N}\varphi=\FA_{\eff}\varphi=0.$ Thus, we assume that $l\neq 0$. The rest of the proof is divided into the following steps.

    \noindent\textbf{Step I.} Define 
    \[\fd_k=\begin{cases}
        1,\qquad k\in\Z_{0,+},\\
        -1,\qquad k\in \Z_{0,-}.
    \end{cases}\]
    Then, one has
    \[(\sigma_{-k}\cdot \nabla_x)\varphi=-2\pi i\fd_{-k}\frac{k^\perp\cdot l}{|k|}e^{2\pi i(l-k)\cdot x}b(y),\]
    and by using the divergence free condition \eqref{SL37}, it follows
    \begin{align*}
        \dive( (\sigma_{-k}\cdot \nabla_x)\varphi)&=-2\pi i\fd_{-k}\frac{k^\perp\cdot l}{|k|}e^{2\pi i(l-k)\cdot x}(2\pi i(l-k)\cdot b_h+b_3')\\&=-4\pi^2\fd_{-k}\frac{k^\perp\cdot l}{|k|}e^{2\pi i(l-k)\cdot x}(k\cdot b_h).
    \end{align*}
    Together with Proposition \ref{propA1}, this implies
    \[\Pi^\perp(\sigma_{-k}\cdot \nabla_x)\varphi=\nabla (\fd_{k}\phi_{k}(y)e^{2\pi i(l-k)\cdot x}),\]
    where $\phi_{k}$ solves the Dirichlet--Poisson problem
    \begin{align}\label{SL42}
        \begin{cases}
            \phi_{k}'' -\lambda^2_{k}\phi_{k}=4\pi^2\frac{k^\perp\cdot l}{|k|}(k\cdot b_h(y))=:f_{k}(y),\\
            \phi_{k}(0)=\phi_{k}(1)=0,
        \end{cases}
    \end{align}
    and $\lambda_{k}:=2\pi|l-k|$. Therefore,
    \begin{align*}
        S^\perp_\theta \varphi=4\pi \kappa i \Pi\ssum_{k\neq 0}\theta_k^2 \frac{k^\perp\cdot l}{|k|} e^{2\pi i l\cdot x}(2\pi i (l_1-k_1)\phi_{k},2\pi i (l_2-k_2)\phi_{k},\phi_{k}').
    \end{align*}
    To study the asymptotic behavior of $\phi_k$ as $|k|\to \infty$, we introduce the decomposition 
    \begin{align}
        \label{SL43}\phi_k=\phi_{\bulk,k}+\phi_{\bd,k}+\phi_{\rem,k},
    \end{align}
    where
    \begin{align}\label{SL39}
    \phi_{\bulk,k}:=-\lambda_k^{-2}f_{k},
    \end{align}
    and $\phi_{\bd,k},\phi_{\rem,k}$ satisfy the following problems
   \begin{align}\label{SL40}
    \begin{cases}
        \phi_{\bd,k}'' -\lambda_k^2\phi_{\bd,k}=0,\\
        \phi_{\bd,k}(0)=\lambda_k^{-2}f_{k}(0),\\
        \phi_{\bd,k}(1)=\lambda_k^{-2}f_{k}(1),
    \end{cases}
    \end{align}
    and 
    \begin{align}\label{SL41}
    \begin{cases}
        \phi_{\rem,k}'' -\lambda_k^2\phi_{\rem,k}=\lambda_k^{-2}f_{k}'',\\
        \phi_{\rem,k}(0)= \phi_{\rem,k}(1)=0.
    \end{cases}
    \end{align}
    Compared with the decomposition \eqref{SL8-2} in the non-degenerate case, the decomposition \eqref{SL43} contains an additional remainder term. The reason is that the source term in \eqref{SL42} is not a full trigonometric mode in all variables. Consequently, the one-dimensional elliptic problem does not admit a simple explicit particular solution. We therefore use an approximate bulk term and introduce a remainder, which accounts for the error made by replacing the full one-dimensional resolvent with its leading-order high-frequency approximation.

    \noindent\textbf{Step I\!I.} Define
    \begin{align*}
        S^\perp_{\theta,\bulk} \varphi\!:=4\pi \kappa i \Pi\ssum_{k\neq 0}\theta_k^2 \frac{k^\perp\cdot l}{|k|} e^{2\pi i l\cdot x}(2\pi i (l_1-k_1)\phi_{\bulk,k},2\pi i (l_2-k_2)\phi_{\bulk,k},\phi_{\bulk,k}'),
    \end{align*}
    and
    \begin{align*}
        S^\perp_{\theta,\bd} \varphi\!:=4\pi \kappa i \Pi\ssum_{k\neq 0}\theta_k^2 \frac{k^\perp\cdot l}{|k|} e^{2\pi i l\cdot x}(2\pi i (l_1-k_1)\phi_{\bd,k},2\pi i (l_2-k_2)\phi_{\bd,k},\phi_{\bd,k}'),
    \end{align*}
    and
    \begin{align*}
        S^\perp_{\theta,\rem} \varphi\!:=4\pi \kappa i \Pi\ssum_{k\neq 0}\theta_k^2 \frac{k^\perp\cdot l}{|k|} e^{2\pi i l\cdot x}(2\pi i (l_1-k_1)\phi_{\rem,k},2\pi i (l_2-k_2)\phi_{\rem,k},\phi_{\rem,k}').
    \end{align*}
    In this step, we prove that 
    \begin{align}
        \label{SL44}
        \lim_{N\to\infty}\|S^\perp_{\theta^N,\bd}\varphi +S^\perp_{\theta^N,\rem}\varphi\|=0.
    \end{align}
    Indeed, notice that $\phi_{\bd,k}$ has an explicit solution formula
    \[\phi_{\bd,k}(y)=\lambda_k^{-2}f_{k}(0)\frac{\sinh (\lambda_k(1-y))}{\sinh \lambda_k}+\lambda_k^{-2}f_{k}(1)\frac{\sinh (\lambda_k y)}{\sinh \lambda_k},\]
    which, together with the estimate
    \[\left|\frac{\sinh (\lambda_k(1-y))}{\sinh \lambda_k}\right|\lesssim e^{-\lambda_k y},\qquad \left|\frac{\sinh (\lambda_k y)}{\sinh \lambda_k}\right|\lesssim e^{-\lambda_k(1-y)},\]
    yields
    \begin{align}
        \label{SL45}
        \|\phi_{\bd,k}\|\lesssim \frac{|k||l|\|b_h\|_{H^1_y}}{\lambda_k^{5/2}},\qquad \|\phi_{\bd,k}'\|\lesssim \frac{|k||l|\|b_h\|_{H^1_y}}{\lambda_k^{3/2}}.
    \end{align}
    On the other hand, the remainder term $\phi_{\rem,k}$ can be bounded through a direct energy estimate
    \[\|\phi_{\rem,k}'\|^2+\lambda_k^2\|\phi_{\rem,k}\|^2\le \lambda_k^{-2}\|f''_{k,l}\|\|\phi_{\rem,k}\|,\]
    which implies 
    \begin{align}
        \label{SL46}
        \|\phi_{\rem,k}\|\lesssim \frac{|k||l|\|b_h\|_{H^2_y}}{\lambda_k^4},\qquad \|\phi_{\rem,k}'\|\lesssim \frac{|k||l|\|b_h\|_{H^2_y}}{\lambda_k^3}.
    \end{align}
    This, together with \eqref{SL45}, leads to
    \begin{align*}
       \|S^\perp_{\theta^N,\bd}\varphi \|&\lesssim \kappa\ssum_{k\neq 0}(\theta^N_k)^2|l|\left(|l-k|\|\phi_{\bd,k}\|+\|\phi_{\bd,k}'\|\right)\\&\lesssim \kappa|l|^2\|b_h\|_{H^1_y}\ssum_{k\neq 0}(\theta^N_k)^2|k|\lambda_k^{-3/2}\lesssim \left(\kappa|l|^2\|b_h\|_{H^1_y}\right) N^{-1 /2}\to0,
    \end{align*}
    and 
    \begin{align*}
       \|S^\perp_{\theta^N,\rem}\varphi \|&\lesssim \kappa\ssum_{k\neq 0}(\theta^N_k)^2|l|\left(|l-k|\|\phi_{\rem,k}\|+\|\phi_{\rem,k}'\|\right)\\&\lesssim \kappa|l|^2\|b_h\|_{H^2_y}\ssum_{k\neq 0}(\theta^N_k)^2|k|\lambda_k^{-3}\lesssim \left(\kappa|l|^2\|b_h\|_{H^2_y}\right) N^{-2}\to0, 
    \end{align*}
    as $N\to\infty$.

    \noindent\textbf{Step I\!I\!I.} Now we turn to $S^\perp_{\theta^N,\bulk}\varphi$. Since $\Pi$ is a continuous linear operator, it suffices to study
    \[\FS_{\theta^N}:=4\pi \kappa i \ssum_{k\neq 0}\theta_k^2 \frac{k^\perp\cdot l}{|k|} e^{2\pi i l\cdot x}(2\pi i (l_1-k_1)\phi_{\bulk,k},2\pi i (l_2-k_2)\phi_{\bulk,k},\phi_{\bulk,k}').\]
    For the vertical component, it follows
    \begin{align}
        \notag\label{SL47}\|\FS_{\theta^N,3} \|_{L^2}&\lesssim \kappa\ssum_{k\neq 0}(\theta^N_k)^2|l|\|\phi_{\bulk,k}'\|_{L^2_y}\lesssim \kappa\ssum_{k\neq 0}(\theta^N_k)^2|l|\lambda_k^{-2}\|f_{k}'\|\\&\lesssim \kappa\ssum_{k\neq 0}(\theta^N_k)^2|l|^2\lambda_k^{-2}|k|\|b_h\|_{H^1_y}\lesssim\left(\kappa|l|^2\|b_h\|_{H^1_y}\right)N^{-1}\to0.
    \end{align}
    As for the horizontal components, one has 
    \begin{align*} 
    \FS_{\theta^N,h}&=-8\pi^2 \kappa  e^{2\pi i l\cdot x}\ssum_{k\neq 0}(\theta_k^N)^2 \left(\frac{k^\perp\cdot l}{|k|}\right) (l-k)\phi_{\bulk,k}(y)\\&=8\pi^2 \kappa  e^{2\pi i l\cdot x}\ssum_{k\neq 0}(\theta_k^N)^2 \left(\frac{k^\perp\cdot l}{|k|}\right)(l-k) \lambda_k^{-2}\left( 4\pi^2\frac{k^\perp \cdot l}{|k|}k\cdot b_h(y)\right)\\&=8\pi^2 \kappa  e^{2\pi i l\cdot x}\left(\ssum_{k\neq 0}(\theta^N_k)^2 \left(\frac{k^\perp\cdot l}{|k|}\right)^2 \frac{(l-k)\otimes k}{|l-k|^2}\right)b_h(y).\end{align*}
    Thus, it remains to justify the convergence of the coefficient matrix
    \[\MM_{\theta^N}:=\ssum_{k\neq 0}(\theta^N_k)^2 \left(\frac{k^\perp\cdot l}{|k|}\right)^2 \frac{(l-k)\otimes k}{|l-k|^2}.\]
    Since this can be done in the same way as in Step~I\!V of the proof of Proposition~\ref{propSL2}, we only give a brief sketch. By using a two-dimensional version of the estimate \eqref{SL20-1}
    \[\left|\frac{(l-k)\otimes (l-k)}{|l-k|^2}-\frac{k\otimes k}{|k|^2}\right|\lesssim \frac{|l|}{|k|},\]
    and repeating the derivation of \eqref{SL22}, one gets
    \begin{align*}
        \MM_{\theta^N}=-\ssum_{k\neq 0}(\theta^N_k)^2 \left(\frac{k^\perp\cdot l}{|k|}\right)^2 \frac{k\otimes k}{|k|^2}+o(1)\to -\frac{1}{2\pi}\int_{|\xi|=1}\left(\frac{\xi^\perp\cdot l}{|\xi|}\right)^2\frac{\xi\otimes \xi}{|\xi|^2}\ddd S.
    \end{align*}
    Let $Q$ be the two-dimensional orthogonal matrix such that $Q^T\frac{l}{|l|}=e_1$. By applying the coordinate transform $\xi\to Q\xi$, it follows
    \begin{align*}
        \int_{|\xi|=1}\left(\frac{\xi^\perp\cdot l}{|\xi|}\right)^2\frac{\xi\otimes \xi}{|\xi|^2}\ddd S&=|l|^2Q\left(\int_{|\xi|=1}\left(\frac{\xi^\perp\cdot e_1}{|\xi|}\right)^2\frac{\xi\otimes \xi}{|\xi|^2}\ddd S\right)Q^T\\&=\pi|l|^2 Q\diag\left\{\frac{1}{4},\frac{3}{4}\right\}Q^T,
    \end{align*}
    which implies that
    \begin{align*}
        \lim_{N\to\infty}\FS_{\theta^N,h}&=-4\pi^2|l|^2 \kappa \left( Q\diag\left\{\frac{1}{4},\frac{3}{4}\right\}Q^T\right)e^{2\pi i l\cdot x} b_h(y)\\&=-4\pi^2|l|^2 \kappa \left( \frac{3}{4}I_2-\frac{1}{2}\frac{l\otimes l}{|l|^2}\right)e^{2\pi i l\cdot x} b_h(y)=\frac{3\kappa}{4}\Delta_x \varphi_h-\frac{\kappa}{2}\nabla_x\dive_h\varphi_h
    \end{align*}
    and 
    \begin{align*}
        \lim_{N\to\infty}S^\perp_{\theta^N,\bulk}\varphi=\kappa\Pi\left(\frac{3}{4}\Delta_x \varphi_h-\frac{1}{2}\nabla_x\dive_h\varphi_h,0\right)
    \end{align*}
    in $L^2$. Combining this with \eqref{SL38}, \eqref{SL44}, one obtains \eqref{SL34}.
    \end{proof}
    \subsection{Scaling limit of the cut-off solutions}
    Let $w^N_{\cut}$ be the solution of the cut-off equation \eqref{SL1}, corresponding to the noise coefficients $\theta^N_\cdot$ given in \eqref{SL2}. The goal of this subsection is to justify the limit of $w^N_{\cut}$, as $N\to\infty$. We shall mainly focus on the non-degenerate case. Since the degenerate case is obtained by the same tightness argument, combined with its own corrector limit, we only state the result of that case at the end.
    \begin{prop}\label{propSL3}
        Let $\alpha,\nu,\kappa,R>0$, and 
        \[w_{\ini}\in L^\infty(\Omega;H_{\curl}\cap H^1)\]
        be $\FFF_0$-measurable. Suppose that $W$ is the non-degenerate random field defined in \eqref{INTRO4}. Then, there exists $\delta_0$ depending on $\alpha,\nu,\kappa$ such that for any $T>0$, $\epsilon\in(0,1)$, and $\delta<\delta_0$,
        \begin{align}
            \label{LE00}\lim_{N\to\infty}\PP\left\{\sup_{t\le T}\|w_{\cut}^N(t)-w_{\cut}^{\det}(t)\|_{H^{\delta}}>\epsilon\right\}=0,
        \end{align}
        where $w_{\cut}^{\det}$ satisfies the following NS-type equation with cut-off and boundary feedback
    \begin{align}
    \label{LE6}
    \begin{cases}
    \partial_tw+\eta_R(\|w\|)[(u\cdot \nabla)w-(w\cdot\nabla)u]=\left(\nu+\frac{4\kappa}{5}\right)\Delta w+\frac{14\alpha\kappa}{15}\nabla\HH[n_3w_{3}],\\u=\curl^{-1}w,\\(\partial_y w_3-\alpha w_3)|_{y=0}=(w_h-\alpha 
    u_h^\perp)|_{y=0}=0,\\(\partial_y w_3+\alpha w_3)|_{y=1}=(w_h+\alpha 
    u_h^\perp)|_{y=1}=0,\\
    w|_{t=0}=w_{\ini}.
    \end{cases}
    \end{align} 
    Here, $n=(n_1,n_2,n_3)$ denotes the unit outward normal vector, and $\HH$ is the harmonic extension operator of a given boundary data, that is, for any boundary data $h$, $\HH[h]$ satisfies
        \begin{align}\label{SL6-1-1}
            \begin{cases}
                \Delta\HH[h]=0,\\
                \HH[h]|_{y=0,1}=h|_{y=0,1}.
            \end{cases}
        \end{align}
    \end{prop}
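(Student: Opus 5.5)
The plan is a compactness–identification–uniqueness argument. By Proposition \ref{propSL1}, the laws of $\{w^N_{\cut}\}_{N\ge1}$ are tight in $\chi_{\delta,T}=C([0,T];H^\delta)\cap L^2(0,T;H^{1-\delta})$ for $\delta<\delta_0$. I will take an arbitrary subsequence and apply Prokhorov and Skorokhod (in the Jakubowski form if needed) to the pairs $(w^{N}_{\cut},\{B^{k,j}\})$. This produces a new probability space carrying copies $\hat w^N\to \hat w$ in $\chi_{\delta,T}$ almost surely. The uniform bound \eqref{SL6} passes to the limit, so $\hat w\in L^p(\Omega;L^p(0,T;H^1))$. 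Since the limit equation is deterministic, it suffices to show that $\hat w$ solves a weak form of \eqref{LE6} that has a unique solution. Then every subsequence of the laws converges to $\delta_{w^{\det}_{\cut}}$, and convergence in law to a deterministic limit gives convergence in probability, which is \eqref{LE00}.

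\textbf{Identification.} I fix a test function $\varphi$ as in \eqref{INTRO4-3} and write the weak formulation of the Itô form \eqref{SL1} tested against $\varphi$. There are three terms to handle.
\begin{enumerate}
\item \emph{Martingale term.} By the BDG inequality, its second moment is bounded by $\kappa\sum_k(\theta^N_k)^2\E\int_0^T|\langle w,(\sigma_{k,j}\cdot\nabla)\Pi\varphi\rangle|^2\,\ddd t\lesssim \kappa\|\theta^N\|_{\ell^\infty}^2\to0$, since $\|\theta^N\|_{\ell^\infty}\sim N^{-3/2}$ by \eqref{SL14}.
\item \emph{Nonlinear term.} It converges by the strong convergence in $L^2(0,T;H^{1-\delta})$ together with the continuity of $\eta_R(\|\cdot\|)$.
\item \emph{Corrector term.} Using the duality structure seen in Step~I of the proof of Proposition \ref{propLS1}, I write $\langle S_{\theta^N}w,\varphi\rangle=\langle w,S_{\theta^N}\varphi\rangle$ (up to the symmetry $k\mapsto-k$). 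Proposition \ref{propSL2} then gives $S_{\theta^N}\varphi\to \frac{4\kappa}{5}\Delta\varphi+\FT(\frac{2\kappa}{15}\nabla_x\varphi_3|_{\partial D})$ in $H^{-s}$ for $s\in(1/2,1]$. Choosing $s=1-\delta>1/2$, this pairs with $w^N\to w$ in $L^2(0,T;H^{1-\delta})$.
\end{enumerate}

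The limit identity therefore reads
\[\langle w(t),\varphi\rangle+\int_0^t\Big(\nu\langle\nabla w,\nabla\varphi\rangle+\nu\alpha\langle w_3,\varphi_3\rangle_{\partial D}+\eta_R\langle\cdots\rangle\Big)=\langle w_{\ini},\varphi\rangle+\int_0^t\Big\langle w,\tfrac{4\kappa}{5}\Delta\varphi+\FT\big(\tfrac{2\kappa}{15}\nabla_x\varphi_3\big)\Big\rangle.\]
The boundary conditions of $w_h$ also pass to the limit in the trace sense, since $w^N\to w$ in $L^2H^{1-\delta}$ and $\BB$ is continuous.

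\textbf{Rewriting as \eqref{LE6}.} This step is the main computation and the main obstacle. I integrate $\langle w,\frac{4\kappa}{5}\Delta\varphi\rangle$ by parts, using $\varphi_h\in C_c^\infty$ and $w_3$'s Robin condition. This produces $-\frac{4\kappa}{5}\langle\nabla w,\nabla\varphi\rangle$ together with boundary terms $\frac{4\kappa}{5}\int_{\partial D}w_3\partial_n\varphi_3$. Then $\partial_y\varphi_3=-\dive_h\varphi_h=0$ on $\partial D$, while the Robin term from $w$ yields $-\frac{4\kappa\alpha}{5}\langle w_3,\varphi_3\rangle_{\partial D}$.

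The term $\langle w_h,\FT_h\nabla_x\varphi_3\rangle$ is $\int_{\partial D}$ of traces of $w_h=\mp\alpha u_h^\perp$. I convert it, using $\dive_h u_h^\perp=-w_3$ on the boundary (since $w_3=\partial_{x_1}u_2-\partial_{x_2}u_1$), into $\frac{2\alpha\kappa}{15}\int_{\partial D}n_3 w_3\,\varphi_3$ with a sign to be checked. Combining these with the $\frac{4\alpha\kappa}{5}$ Robin piece needed to realize $(\nu+\frac{4\kappa}{5})\Delta$ with its own boundary form leaves a residual boundary pairing $c\,\alpha\kappa\int_{\partial D}n_3w_3\varphi_3$. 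I will identify this as $\langle \frac{14\alpha\kappa}{15}\nabla\HH[n_3w_3],\varphi\rangle$: since $\dive\varphi=0$ and $\HH$ is harmonic, $\langle\nabla\HH[g],\varphi\rangle=\int_{\partial D}g\,\varphi\cdot n$. Getting the constants ($\frac{4}{5}+\frac{2}{15}=\frac{14}{15}$) and signs to match exactly is where I expect most of the care to be needed.

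\textbf{Uniqueness.} Finally I need uniqueness for \eqref{LE6} in the class $L^\infty L^2\cap L^2H^1$. I plan to apply the correction $I-\BB$, which is invertible, to the difference of two solutions and run the energy estimate as in Proposition \ref{propLS1} without noise. The feedback term $\nabla\HH[n_3w_3]$ is bounded in $L^2$ by $\|w_3\|_{H^{1/2}(\partial D)}\lesssim\|w\|_{H^1}$, so it is absorbed with a Gronwall constant. The cut-off makes the nonlinearity locally Lipschitz on $L^2$ with an $H^1$ factor, exactly as used in Step~II of Proposition \ref{propSL1}. Existence of $w^{\det}_{\cut}$ then follows from the limit itself, and uniqueness upgrades subsequential convergence to convergence of the full sequence.
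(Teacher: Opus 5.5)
Your overall route is the paper's: tightness from Proposition~\ref{propSL1}, then Skorokhod, then term-by-term limits using Proposition~\ref{propSL2} and the $N^{-3/2}$ size of $\theta^N_k$, then conversion of the $\FT$-term through the Navier condition, rewriting with $\HH$, and uniqueness by the $\BB$-corrected energy method. However, two steps do not work as written.

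\textbf{The concluding step.} The datum $w_{\ini}$ is only $\FFF_0$-measurable. So $w^{\det}_{\cut}$ is the deterministic solution map evaluated at a random datum, its law is not a Dirac mass, and you cannot invoke ``convergence in law to a deterministic limit''. The paper proves the deterministic-data case and then appeals to the Gy\"ongy--Krylov argument. Two equivalent fixes:
\begin{itemize}
\item carry $w_{\ini}$ in the Skorokhod tuple and use pathwise uniqueness of the limit for pairs of subsequences;
\item condition on $\FFF_0$, which is independent of the Brownian increments, and use dominated convergence, since all constants depend only on $\|w_{\ini}\|_{L^\infty(\Omega;H^1)}$.
\end{itemize}
Also, for uniqueness you must first extend the limiting weak formulation to test fields in $H^1_0(D)^2\times H^1(D)$, by density and the decomposition \eqref{A0}, before testing against the corrected difference. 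The paper notes this explicitly.

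\textbf{The boundary bookkeeping.} As written it is internally inconsistent, and followed literally it gives the wrong constant.
\begin{itemize}
\item One integration by parts gives exactly $\langle w,\Delta\varphi\rangle=-\langle\nabla w,\nabla\varphi\rangle$, because $\partial_y\varphi_h$ and $\partial_y\varphi_3=-\dive_h\varphi_h$ vanish on $\partial D$. The Robin condition of $w$ never enters. If integration by parts really produced your term $-\frac{4\alpha\kappa}{5}\langle w_3,\varphi_3\rangle_{L^2(\partial D)}$, it would already complete the Robin form, and the feedback coefficient would come out as $\frac{2}{15}$, not $\frac{14}{15}$.
\item The $\FT$-term carries no $n_3$. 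The sign change of $\FT_h$ between $y=0$ and $y=1$ cancels the sign change in $w_h=\pm\alpha u_h^\perp$. With $\dive_h u_h^\perp=-w_3$ one gets $-\frac{2\kappa}{15}\int_{\T^2}\nabla_x\varphi_3\cdot w_h|_0^1\,\ddd x=\frac{2\alpha\kappa}{15}\langle w_3,\varphi_3\rangle_{L^2(\partial D)}$, with the same sign on both boundary components. No overall sign choice rescues $\int_{\partial D}n_3w_3\varphi_3$.
\end{itemize}
The correct computation adds and subtracts $\frac{4\alpha\kappa}{5}\langle w_3,\varphi_3\rangle_{L^2(\partial D)}$ to form $(\nu+\frac{4\kappa}{5})\bigl(\langle\nabla w,\nabla\varphi\rangle+\alpha\langle w_3,\varphi_3\rangle_{L^2(\partial D)}\bigr)$. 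This leaves $\frac{14\alpha\kappa}{15}\langle w_3,\varphi_3\rangle_{L^2(\partial D)}$. That equals $\frac{14\alpha\kappa}{15}\langle\nabla\HH[n_3w_3],\varphi\rangle$ precisely because $\dive\varphi=0$, $\varphi_h|_{\partial D}=0$ and $n_3^2=1$.
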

    \begin{proof}
        \textbf{Step I.} For simplicity, we assume that $w_{\ini}$ is deterministic. The case of general initial data follows from the Gy\"ongy--Krylov argument. Let $\varphi$ be a test function satisfying \eqref{INTRO4-3}. Notice that $\varphi_h,\partial_y\varphi_3\in C_c^\infty(D)$ and $u_3|_{y=0,1}=0$, one has
        \begin{align*}
            \langle \nabla w,\nabla \varphi\rangle=-\langle w,\Delta \varphi\rangle+\int_{\T^2}\left(w_3\partial_y\varphi_3|_{y=1}-w_3\partial_y\varphi_3|_{y=0}\right)\ddd x=-\langle w,\Delta \varphi\rangle,
        \end{align*}
        and 
        \begin{align*}
            \langle (u\cdot \nabla) w-(w\cdot \nabla )u,\varphi\rangle&=-\langle w, (u\cdot \nabla)\varphi \rangle+\langle u, (w\cdot \nabla)\varphi \rangle\\&\qquad+\int_{\T^2}\left(w_3(u\cdot\varphi)|_{y=1}-w_3(u\cdot\varphi)|_{y=0}\right)\ddd x\\&=\langle u\otimes w-w\otimes u,\nabla \varphi \rangle.
        \end{align*}
        Therefore, from \eqref{INTRO4-4}, the weak formulation of \eqref{SL1} is given by
        \begin{align}
        \label{LE1}
        \langle& w_{\cut}^N(t),\varphi\rangle-\langle w_{\ini},\varphi\rangle+\int_0^{t} \left(-\nu\langle w_{\cut}^N,\Delta \varphi\rangle+\nu\alpha\langle w_{\cut,3}^N,\varphi_3\rangle_{L^2(\partial D)}\right)\ddd s\notag\\&\qquad+\int_0^t\eta_R(\|w_{\cut}^N\|)\langle u_{\cut}^N\otimes w_{\cut}^N-w_{\cut}^N\otimes u_{\cut}^N,\nabla \varphi \rangle\ddd s\notag\\& \quad =\int_0^t\langle w_{\cut}^N,S_{\theta^N}\varphi\rangle \ddd s +\sqrt{2\kappa}\ssum_{k\neq 0}\ssum_{j=1,2}\int_0^{t}\theta^N_k\langle \Pi(\sigma_{k,j}\cdot\nabla)w_{\cut}^N,\varphi\rangle \ddd B_{t}^{k,j}
    \end{align}
    $\PP$-almost surely for any $t\in [0,T]$. Applying Proposition \ref{propSL1} with sufficiently small $\delta>0$ combined with the Prokhorov theorem and the Skorokhod theorem, passing to a subsequence and changing the underlying probability space if necessary, it follows 
    \begin{align}\label{LE1-1}
        w_{\cut}^N\to w_{\cut}^{\det}\qquad \mbox{$\PP$-almost surely in $\chi_{\delta,T}$}
    \end{align}
    for some process $w_{\cut}^{\det}$. This implies 
    \begin{align}\label{LE2}
        \langle w_{\cut}^N(t),\varphi&\rangle+\int_0^{t}  \left(-\nu\langle w_{\cut}^N,\Delta \varphi\rangle+\nu\alpha\langle w_{\cut,3}^N,\varphi_3\rangle_{L^2(\partial D)}\right)\ddd s\notag\\&\to \langle w_{\cut}^{\det}(t),\varphi\rangle+\int_0^{t}  \left(-\nu\langle w_{\cut}^{\det},\Delta \varphi\rangle+\nu\alpha\langle w_{\cut,3}^{\det},\varphi_3\rangle_{L^2(\partial D)}\right)\ddd s
    \end{align}
    and 
    \begin{align}\label{LE3}
        \int_0^t\eta_R(\|w_{\cut}^N\|)&\langle u_{\cut}^N\otimes w_{\cut}^N-w_{\cut}^N\otimes u_{\cut}^N,\nabla \varphi \rangle\ddd s\notag\\&\to \int_0^t\eta_R(\|w_{\cut}^{\det}\|)\langle u_{\cut}^{\det}\otimes w_{\cut}^{\det}-w_{\cut}^{\det}\otimes u_{\cut}^{\det},\nabla \varphi \rangle\ddd s
    \end{align}
    $\PP$-almost surely in $L^\infty(0,T)$.

    \noindent\textbf{Step I\!I.} It remains to deal with the It\^o--Stratonovich corrector and the martingale term. Indeed, by using \eqref{LE1-1} with sufficiently small $\delta$ and applying Proposition \ref{propSL2}, one has 
    \begin{align}
        \label{LE4}
        \int_0^t\langle w^N_{\cut},S_{\theta^N}\varphi\rangle \ddd s \to \frac{4\kappa}{5}\int_0^t\langle w_{\cut}^{\det},\Delta\varphi\rangle \ddd s -\frac{2\kappa}{15}\int_0^t\int_{\T^2}\nabla_x\varphi_3\cdot w_{\cut,h}^{\det}|^1_0\ddd x\ddd s
    \end{align}
    $\PP$-almost surely in $L^\infty(0,T)$. Since the tangential Navier boundary conditions in \eqref{SL1} are preserved in the above convergence, by integrating by parts, it follows
    \begin{align}
        \label{LE4-1-1}-\frac{2\kappa}{15}\int_{\T^2}\nabla_x\varphi_3\cdot w_{\cut,h}^{\det}|^1_0\ddd x=\frac{2\alpha\kappa}{15}\langle w_{\cut,3}^{\det},\varphi_3\rangle_{L^2(\partial D)}.
    \end{align}
    We turn to the martingale term. Notice that $\{\sigma_{k,j}\}_{k,j}$ forms an orthonormal system in $L^2$. By the Burkholder--Davis--Gundy inequality, the Bessel inequality, \eqref{SL14}, and \eqref{SL6} with $s=0, p=2$, it follows that 
    \begin{align}\label{LE5}
        \sqrt{2\kappa}\E\sup_{t\le T}&\left|\ssum_{k\neq 0}\ssum_{j=1,2}\int_0^{t}\theta^N_k\langle \Pi(\sigma_{k,j}\cdot\nabla)w_{\cut}^N,\varphi\rangle \ddd B_{t}^{k,j}\right|\notag\\&\qquad\qquad\lesssim_\kappa \E\left(\ssum_{k\neq 0}\ssum_{j=1,2}\int_0^T(\theta^N_k)^2|\langle \nabla\varphi w_{\cut}^N,\sigma_{k,j}\rangle|^2 \ddd t\right)^{\frac{1}{2}}\notag\\&\qquad\qquad\lesssim_{\kappa} N^{-\frac{3}{2}}\E\left(\int_0^T\| \nabla\varphi w_{\cut}^N\|^2 \ddd t\right)^{\frac{1}{2}}\notag\\&\qquad\qquad\lesssim_{\alpha,\nu,\kappa,T,R} N^{-\frac{3}{2}}\|\nabla\varphi\|_\infty (1+\|w_{\ini}\|_{H^1})\to 0,
    \end{align}
    as $N\to\infty$. 

    \noindent\textbf{Step I\!I\!I.} Combining \eqref{LE2}--\eqref{LE5}, it follows that any convergent subsequence of $\DDD (w_{\cut}^N)$ converges weakly to $\delta_{w_{\cut}^{\det}}$, where $w_{\cut}^{\det}$ satisfies the weak formulation
    \begin{align*}
        &\langle w_{\cut}^{\det}(t),\varphi\rangle-\langle w_{\ini},\varphi\rangle+\int_0^{t} \left(\left(\nu+\frac{4\kappa}{5}\right)\langle \nabla w_{\cut}^{\det},\nabla \varphi\rangle+\nu\alpha\langle w_{\cut,3}^{\det},\varphi_3\rangle_{L^2(\partial D)}\right)\ddd s\notag\\&-\int_0^t \frac{2\alpha\kappa}{15}\langle w_{\cut,3}^{\det},\varphi_3\rangle_{L^2(\partial D)}\ddd s+\int_0^t\eta_R(\|w_{\cut}^{\det}\|)\langle u_{\cut}^{\det}\otimes w_{\cut}^{\det}-w_{\cut}^{\det}\otimes u_{\cut}^{\det},\nabla \varphi \rangle\ddd s=0
    \end{align*}
    for any $t\in [0,T]$ and test function $\varphi$ satisfying \eqref{INTRO4-3}. Notice that 
    \begin{align*}
        \left(\nu+\frac{4\kappa}{5}\right)&\langle \nabla w_{\cut}^{\det},\nabla \varphi\rangle+\left(\nu\alpha-\frac{2\alpha\kappa}{15}\right)\langle w_{\cut,3}^{\det},\varphi_3\rangle_{L^2(\partial D)}\\&\!=\left(\!\nu+\frac{4\kappa}{5}\right)\!\!\left(\langle \nabla w_{\cut}^{\det},\nabla \varphi\rangle+\alpha\langle w_{\cut,3}^{\det},\varphi_3\rangle_{L^2(\partial D)}\right)-\!\frac{14\alpha\kappa}{15}\langle w_{\cut,3}^{\det},\varphi_3\rangle_{L^2(\partial D)}\\&\!=\left(\!\nu+\frac{4\kappa}{5}\right)\!\!\left(\langle \nabla w_{\cut}^{\det},\nabla \varphi\rangle+\alpha\langle w_{\cut,3}^{\det},\varphi_3\rangle_{L^2(\partial D)}\right)-\!\frac{14\alpha\kappa}{15}\langle \nabla\HH[n_3w_{\cut,3}^{\det}],\varphi\rangle
    \end{align*}
    where $\HH$ is defined in \eqref{SL6-1-1} and $n=(n_1,n_2,n_3)$ denotes the unit outward normal vector of the boundary. Hence, we conclude that $w_{\cut}^{\det}$ is a weak solution of \eqref{LE6}. Moreover, using the decomposition~\eqref{A0}, the limiting weak formulation can be extended, by density and orthogonality, from the test functions in Definition \ref{definition1} to the mixed space $H^1_0(D)^2\times H^1(D)$. Thus, the boundary-corrected energy method used in Proposition \ref{propLS1} can be applied to the difference of two solutions. This, together with the harmonic lifting estimate
    \begin{align}
        \label{LE6-1}\|\nabla\HH[n_3w_{3}]\|\lesssim \|w_3\|_{H^1},
    \end{align}
    and the cut-off in the nonlinear term, implies the uniqueness of weak solutions of \eqref{LE6}. 
    
    Hence, all subsequential limits of $\DDD (w_{\cut}^N)$ coincide with $\delta_{w^{\det}_{\cut}}$, which ensures the convergence of the whole family and thereby yields the convergence in probability of the cut-off solutions $w_{\cut}^N$.
    \end{proof}
    The degenerate case is obtained by the same compactness argument, combined with the scaling limit of its own corrector. The corresponding result is presented below.
    \begin{prop}\label{propSL4}
        Let $\alpha,\nu,\kappa,R>0$, and
        \[w_{\ini}\in L^\infty(\Omega;H_{\curl}\cap H^1)\]
        be $\FFF_0$-measurable. Suppose that $W$ is the degenerate random field defined in \eqref{INTRO6}. Then, there exists $\delta_0$ depending on $\alpha,\nu,\kappa$ such that for any $T>0$, $\epsilon\in(0,1)$, and $\delta<\delta_0$,
        \begin{align}
            \label{LE01}\lim_{N\to\infty}\PP\left\{\sup_{t\le T}\|w_{\cut}^N(t)-w_{\cut}^{\det}(t)\|_{H^{\delta}}>\epsilon\right\}=0,
        \end{align}
        where $w_{\cut}^{\det}$ satisfies the following NS-type equation with cut-off and anisotropic nonlocal dissipation
    \begin{align}
    \label{LE7}
    \begin{cases}
    \partial_tw+\eta_R(\|w\|)[(u\cdot \nabla)w-(w\cdot\nabla)u]=\nu\Delta w+\kappa\FA_{\eff} w,\\u=\curl^{-1}w,\\(\partial_y w_3-\alpha w_3)|_{y=0}=(w_h-\alpha 
    u_h^\perp)|_{y=0}=0,\\(\partial_y w_3+\alpha w_3)|_{y=1}=(w_h+\alpha 
    u_h^\perp)|_{y=1}=0,\\
    w|_{t=0}=w_{\ini}.
    \end{cases}
    \end{align} 
    Here, $\FA_{\eff}$ is defined in \eqref{SL35}.
    \end{prop}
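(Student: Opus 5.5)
The plan is to repeat the compactness--identification scheme of the proof of Proposition \ref{propSL3}, replacing the non-degenerate corrector limit by Proposition \ref{propSL2-1}. We take $w_{\ini}$ deterministic first; the general $\FFF_0$-measurable case then follows from the Gy\"ongy--Krylov argument. By Proposition \ref{propSL1}, which was proved for both noise types, the laws $\{\DDD(w^N_{\cut})\}_{N\ge1}$ are tight in $\chi_{\delta,T}$ for $\delta<\delta_0(\alpha,\nu,\kappa)$. The uniform bound \eqref{SL6} holds verbatim in the degenerate case, since Proposition \ref{propLS2} covers it. By Prokhorov and Skorokhod, along a subsequence and on a new probability space, $w^N_{\cut}\to w^{\det}_{\cut}$ almost surely in $\chi_{\delta,T}$.

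We then pass to the limit in the weak formulation \eqref{LE1}, with the transport term $\sigma_{k,j}\cdot\nabla$ replaced by $\sigma_k\cdot\nabla_x$. The linear viscous terms, the boundary term and the cut-off nonlinearity converge exactly as in \eqref{LE2}--\eqref{LE3}.

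For the corrector term, we use the self-adjointness of $S_{\theta^N}$ on $H_{\curl}$: integrate by parts along the divergence-free fields $\sigma_k$, which are tangent to $\partial D$, and note that $\Pi$ is an orthogonal projection. This moves $S_{\theta^N}$ onto $\varphi$. Proposition \ref{propSL2-1} then gives $S_{\theta^N}\varphi\to\kappa\FA_{\eff}\varphi$ in $L^2$, which is a stronger mode than the $H^{-s}$ convergence of the non-degenerate case. Combined with $w^N_{\cut}\to w^{\det}_{\cut}$ in $C([0,T];L^2)$, this yields
\[
\int_0^t\langle w^N_{\cut},S_{\theta^N}\varphi\rangle\,\ddd s\to\kappa\int_0^t\langle w^{\det}_{\cut},\FA_{\eff}\varphi\rangle\,\ddd s .
\]
No boundary trace term appears, because the convergence is in $L^2$. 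Since $\FA_{\eff}$ is symmetric on $H_{\curl}$, this is the weak form of $\kappa\FA_{\eff}w$.

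The martingale term tends to zero as in \eqref{LE5}. The $\sigma_k$ are orthonormal in $L^2(\T^2)$, hence orthogonal as functions on $D$. Bessel's inequality applied in $x$ for each fixed $y$, together with $\theta^N_k\sim N^{-1}$ in the two-dimensional shell, gives a bound $\lesssim N^{-1}\|\nabla\varphi\|_\infty(1+\|w_{\ini}\|_{H^1})$.

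The main obstacle is uniqueness for the limit problem \eqref{LE7}, which is needed to identify the full limit and to upgrade to convergence in probability in $C([0,T];H^\delta)$. We extend the weak formulation by density to test functions in $H^1_0(D)^2\times H^1(D)$, using the decomposition \eqref{A0}. We then run the boundary-corrected energy estimate of Proposition \ref{propLS1} on the difference of two solutions, with $\tw=(I-\BB)w$.

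The new term to control is $\kappa\langle\FA_{\eff}w,\tw\rangle=\kappa\langle\FA_{\eff}w,w\rangle-\kappa\langle\FA_{\eff}w,\BB w\rangle$. The first piece is nonpositive: it equals $-\kappa\bigl(\tfrac14\|\nabla_xw_h\|^2+\tfrac12\|\dive_hw_h\|^2+\|\nabla_xw_3\|^2\bigr)$, because $\Pi$ is self-adjoint and only tangential integrations by parts are involved. The second piece is bounded by $\|\nabla w\|\,\|\BB w\|_{H^1}\lesssim_\alpha\|w\|\,\|\nabla w\|$, using Corollary \ref{coroA4}, and is absorbed by $\nu\|\nabla\tw\|^2$. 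The cut-off makes the nonlinear difference locally Lipschitz. Gronwall then gives uniqueness, so every subsequential limit equals $\delta_{w^{\det}_{\cut}}$, and \eqref{LE01} follows.
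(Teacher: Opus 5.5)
Your proposal is correct and matches the paper's argument. The paper proves Proposition \ref{propSL4} by rerunning the tightness--Skorokhod--identification--uniqueness scheme of Proposition \ref{propSL3}, with Proposition \ref{propSL2-1} in place of Proposition \ref{propSL2}; since that convergence is in $L^2$, no boundary trace term appears, exactly as in your write-up.

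One small bookkeeping slip: in the corrected energy identity the dissipative contribution is $\kappa\langle(I-\BB)\FA_{\eff}w,\tw\rangle$, not $\kappa\langle\FA_{\eff}w,\tw\rangle$. This adds the term $-\kappa\langle\BB\FA_{\eff}w,\tw\rangle$, which is harmless: $\|\BB\FA_{\eff}w\|\lesssim_\alpha\|\FA_{\eff}w\|_{H^{-1}}\lesssim\|\nabla_x w\|$ by Corollaries \ref{coroA4} and \ref{coroA2}, as in \eqref{LS5}.
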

    
    \section{Global well-posedness of the limiting equation without cut-off}\label{GWPofDNS}
    In this section, we establish the global well-posedness of the limiting equations without cut-off for small initial data.
    \subsection{The non-degenerate case}
    First, we consider the non-degenerate case. The corresponding limiting equation without cut-off is given by 
    \begin{align}
    \label{NDGWP1}
    \begin{cases}
    \partial_tw+(u\cdot \nabla)w-(w\cdot\nabla)u=\left(\nu+\frac{4\kappa}{5}\right)\Delta w+\frac{14\alpha\kappa}{15}\nabla\HH[n_3w_{3}],\\u=\curl^{-1}w,\\(\partial_y w_3-\alpha w_3)|_{y=0}=(w_h-\alpha 
    u_h^\perp)|_{y=0}=0,\\(\partial_y w_3+\alpha w_3)|_{y=1}=(w_h+\alpha 
    u_h^\perp)|_{y=1}=0,\\
    w|_{t=0}=w_{\ini},
    \end{cases}
    \end{align}
    where $\HH$ is defined in \eqref{SL6-1-1} and $n=(n_1,n_2,n_3)$ denotes the unit outward normal vector of the boundary. Throughout this section, the parameters $\alpha,\nu>0$ are fixed. Denote 
    \[\nu_{\kappa}:=\nu+\frac{4\kappa}{5},\qquad \beta_\kappa:=\frac{14\alpha\kappa}{15}.\]
    Although the limiting equation exhibits an enhanced viscosity, the boundary feedback term also modifies the boundary vorticity generation mechanism. This modification creates an additional difficulty, since the enhanced bulk dissipation does not immediately imply stability of the full linearized operator. To overcome this issue, we establish a resolvent estimate for the linear operator and use it to derive an exponential decay estimate for the corresponding semigroup. This linear decay estimate is then combined with a standard bootstrap argument to obtain global well-posedness for small initial data.
    
    \subsubsection{Linear decay estimate}
    Define the effective dissipation operator
    \[\LLL_{\eff}w:=\nu_\kappa\Delta w+\beta_\kappa \nabla\HH[n_3w_{3}]\]
    on the domain
    \[\DDDD(\LLL_{\eff}):=\{w\in H^2\cap H_{\curl}| \mbox{ $w$ satisfies the boundary conditions \eqref{NDGWP1}$_{3}$ and \eqref{NDGWP1}$_{4}$}\}.\]
    The main result is stated below.
    \begin{prop}\label{GWPprop1}
        Let $\alpha,\nu,\kappa>0$. Then, there are constants $C_{\alpha},c_\alpha>0$ depending only on $\alpha$ such that 
        \begin{align}
            \label{NDGWP2}\sup_{\realpart\lambda\ge -c_\alpha\nu_\kappa}\| (\lambda-\LLL_{\eff})^{-1}\|_{\LL(H_{\curl})}\le \frac{C_\alpha}{\nu_\kappa}
        \end{align}
    \end{prop}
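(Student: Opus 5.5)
By scaling time, I will first reduce to $\nu_\kappa=1$. Write $\LLL_{\eff}=\nu_\kappa(\Delta+\beta\nabla\HH[n_3 w_3])$ with $\beta:=\beta_\kappa/\nu_\kappa=\frac{14\alpha\kappa}{15(\nu+4\kappa/5)}\in(0,\frac{7\alpha}{6})$. The key point is that $\beta$ lies in a compact range depending only on $\alpha$. It therefore suffices to show, for the operator $A_\beta:=\Delta+\beta\nabla\HH[n_3w_3]$ with these boundary conditions, that $\sup_{\realpart\mu\ge -c_\alpha}\|(\mu-A_\beta)^{-1}\|_{\LL(H_{\curl})}\le C_\alpha$ uniformly in $\beta\in[0,\frac{7\alpha}{6}]$. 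I will use the structural observation from the introduction. The term $\nabla\HH[n_3w_3]$ has third component $\beta\partial_y\HH[n_3w_3]$, which depends only on $w_3$, so the third component of $(\mu-A_\beta)w=g$ is a closed scalar problem:
\[\mu w_3-\Delta w_3-\beta\partial_y\HH[n_3w_3]=g_3,\qquad (\partial_yw_3\mp\alpha w_3)|_{y=0,1}=0 .\]
The tangential part then solves $\mu w_h-\Delta w_h=g_h+\beta\nabla_x\HH[n_3w_3]$ with Dirichlet data $w_h=\pm\alpha u_h^\perp$.

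\textbf{Step 1 (third component).} I will Fourier transform in $x\in\T^2$ at frequency $\xi\ne0$; the zero mode vanishes by the $H_{\curl}$ condition. Then $\HH[n_3w_3]$ is explicit: $-\hat w_3(0)\frac{\sinh(2\pi|\xi|(1-y))}{\sinh 2\pi|\xi|}+\hat w_3(1)\frac{\sinh(2\pi|\xi|y)}{\sinh 2\pi|\xi|}$. The problem becomes a 1D ODE $(\mu+4\pi^2|\xi|^2-\partial_y^2)\hat w_3=\hat g_3+\beta(\text{rank-two term in }\hat w_3(0),\hat w_3(1))$. I will solve it by superposing the Robin resolvent of $\hat g_3$ with two elementary solutions of the homogeneous-source problems carrying the boundary forcing. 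Matching traces gives a $2\times2$ linear system for $(\hat w_3(0),\hat w_3(1))$. The needed claim is that its determinant stays away from zero uniformly for $\realpart\mu\ge-c_\alpha$, $|\xi|\ge1$, $\beta\le 7\alpha/6$, with norms bounded by $C_\alpha\|\hat g_3\|$. Summing over $\xi$ by Plancherel then gives $\|w_3\|_{H^1}\lesssim_\alpha\|g_3\|$.

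\textbf{Step 2 (tangential part).} With $w_3$ controlled, the forcing $\beta\nabla_x\HH[n_3w_3]$ is bounded in $L^2$ by \eqref{LE6-1}. The boundary data $\pm\alpha u_h^\perp$ is nonlocal but, as in Proposition~\ref{propLS1}, of order $-1$. I will prove $\|w\|\lesssim_\alpha\|g\|$ by contradiction. Suppose there are $\mu_j$ with $\realpart\mu_j\ge-c_\alpha$, $\|w^j\|=1$, $\|g^j\|\to0$. Testing the equation for $\tw=(I-\BB)w$ against $\tw$, using Proposition~\ref{propB1}, Proposition~\ref{propB2} and the commutator bound \eqref{LS4-2}, gives uniform $H^1$ bounds. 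For large $|\mu_j|$, the commutator terms are lower order and yield directly $\|w\|\lesssim|\mu|^{-1}\|g\|$. For bounded $\mu_j$, compactness gives a limit $w\ne0$ with $(\mu-A_\beta)w=0$ and $\realpart\mu\ge -c_\alpha$. Step 1 forces $w_3=0$, hence $u_3=0$. The remaining $w_h$ solves a Navier-slip Stokes-type eigenproblem whose energy identity, at the velocity level with $\alpha>0$ friction, gives $\realpart\mu\le -2c_\alpha$ for a suitable $c_\alpha$. This contradicts $\realpart\mu\ge-c_\alpha$. Uniformity in $\beta$ follows by including $\beta_j\to\beta$ in the contradiction sequence.

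The main obstacle is Step 1. The boundary feedback has a favorable sign, since it reduces the effective Robin coefficient from $\alpha$ to roughly $\alpha-\frac{2\beta}{14}\cdot\frac{15}{\cdot}$, which corresponds to the $\nu\alpha-\frac{2\alpha\kappa}{15}$ coefficient in the weak formulation. That coefficient is still positive after renormalization, since $\frac{\nu\alpha-2\alpha\kappa/15}{\nu+4\kappa/5}\ge-\frac{\alpha}{6}$. A negative Robin coefficient is the danger. I would therefore first check, via the explicit 1D Robin eigenvalues, that the nonlocal feedback $\beta\partial_y\HH$ keeps all eigenvalues of the third-component problem below $-2c_\alpha$ uniformly, handling small $|\xi|$ by explicit computation and large $|\xi|$ by the $4\pi^2|\xi|^2$ gap. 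Finally, undoing the scaling gives \eqref{NDGWP2}.
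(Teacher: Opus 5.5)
Your overall plan is the paper's: rescale so that $\fc=\beta_\kappa/\nu_\kappa\in(0,\frac{7\alpha}{6})$; solve the closed $w_3$-problem mode by mode by superposing a resolvent of $\hat g_3$ with elementary homogeneous solutions and matching traces; then treat $w_h$ through $(I-\BB)$ by contradiction, splitting into bounded and unbounded spectral parameters. Your Step 2 is essentially the paper's Step V. One small slip: $w_3=0$ does not imply $u_3=0$, but the paper instead passes to the velocity equation via Lemma~\ref{lemmaA4}, and that is all the energy identity needs.

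The gap is Step 1. A uniform lower bound for your $2\times2$ trace determinant is the whole content of the proposition, and you only name it as ``the needed claim''. The heuristic you give for it also points the wrong way. Lowering a Robin coefficient is destabilizing. Moreover $\frac{\nu\alpha-2\alpha\kappa/15}{\nu+4\kappa/5}$ is negative as soon as $\kappa>\frac{15\nu}{2}$; the bound $\ge-\frac{\alpha}{6}$ is not positivity. Suppose $\fc\,\partial_y\HH[n_3w_3]$ really acted on the closed $w_3$-problem like a local shift $\alpha\mapsto\alpha-\fc$. Take $\frac{\alpha}{6}>2\pi\tanh\pi$ and $\kappa$ large. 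Then the mode $|\xi|=1$ would carry a positive eigenvalue $s^2-4\pi^2$, with even profile $\cosh(s(y-\frac12))$ and $s\tanh\frac{s}{2}=\fc-\alpha>2\pi\tanh\pi$, and the proposition would fail. Likewise, ``explicit computation for small $|\xi|$'' is not a proof. A crude energy/gap argument absorbs the feedback only for $|\xi|\gtrsim\alpha$, so the remaining range grows with $\alpha$. It must be handled analytically, uniformly in $\fc\le\frac{7\alpha}{6}$.

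What is missing is a quantitative estimate showing that the nonlocal feedback produces only a fixed fraction of the shift $\fc$. In the paper, centering $y$ at $\frac12$ and using parity diagonalizes your system into odd and even scalar conditions $\FD(\lambda)\fa_k=-\fq_k'(1/2)$, with
\[\FD(\lambda)=\fm_k(\lambda)-\fn_k\fc\,\frac{\fm_k(\lambda)-\fm_k(0)}{\lambda}+\alpha.\]
Here $\fm_k(\lambda)=\fh_{k,\lambda}'(1/2)$, where $\fh_{k,\lambda}$ solves $(\lambda+4\pi^2|k|^2)\fh-\fh''=0$ on $(0,\frac12)$ with $\fh(\frac12)=1$ and the parity condition at $0$. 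An energy identity gives $\realpart\fm_k\ge0$.

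The key steps are:
\begin{enumerate}
\item The Wronskian identity $\frac{\fm_k(\lambda)-\fm_k(0)}{\lambda}=\int_0^{1/2}\fh_{k,\lambda}\fh_{k,0}\,\ddd y$, combined with the Dirichlet eigenfunction expansion of $\fh_{k,\lambda}$, gives for $\realpart\lambda\ge-2\pi^2$ and uniformly in $k$: $\fn_k\bigl|\frac{\fm_k(\lambda)-\fm_k(0)}{\lambda}\bigr|\le\frac23$ in the odd case and $\le c_{\even}<\frac67$ in the even case.
\item Together with $\fc<\frac{7\alpha}{6}$, this yields $|\FD(\lambda)|\ge C_\alpha\bigl(1+|\lambda+4\pi^2|k|^2|^{1/2}\bigr)$.
\end{enumerate}
Only then do the $w_3$ bounds close, and with them your Step 2. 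In the paper's estimate the even-case margin is $1-\frac{7}{6}c_{\even}<10^{-2}$. So the precise constants $\frac45$ and $\frac{14}{15}$ from the scaling limit genuinely enter, and a qualitative sign argument cannot supply this step.
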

    \begin{proof}
    By rescaling, it suffices to prove the following uniform-in-$\kappa$ resolvent estimate
    \begin{align}
            \label{NDGWP3}\sup_{\realpart\lambda\ge -c_\alpha}\| (\lambda-\nu_\kappa^{-1}\LLL_{\eff})^{-1}\|_{\LL(H_{\curl})}\le C_\alpha.
        \end{align}
    For this purpose, we consider the normalized effective dissipation operator
    \[\nu_\kappa^{-1}\LLL_{\eff} w= \Delta w+\fc_\kappa \nabla\HH[n_3w_{3}],\qquad \fc_\kappa=\frac{\beta_\kappa}{\nu_\kappa}\in \left(0,\frac{7\alpha}{6}\right),\]
    and the resolvent equation 
    \begin{align}
        \label{NDGWP4}
        \lambda w-\Delta w-\fc\nabla \HH[n_3w_3]=f.
    \end{align}
    Here and in what follows, we omit the dependence of $\fc_\kappa$ on $\kappa$, and establish estimates for the above resolvent equation uniformly in $\fc\in [0,\frac{7\alpha}{6}]$. The proof is divided into the following steps.

    \noindent\textbf{Step I.} We start with the vertical component $w_3$, which satisfies a closed equation
    \begin{align}
        \label{NDGWP5}
        \begin{cases}
            \lambda w_3-\Delta w_3-\fc\partial_y \HH[n_3w_3]=f_3,\\
            (\partial_y w_3-\alpha w_3)|_{y=0}=(\partial_y w_3+\alpha w_3)|_{y=1}=0.
        \end{cases}
    \end{align}
    Consider the tangential Fourier expansion
    \[w_{3}=\ssum_{k\neq 0} \hw_{3,k}e^{2\pi ik\cdot x},\]
    where we used the condition $\int_{\T^2}w_3\ddd x=0$ in \eqref{INTRO1-1}. From \eqref{NDGWP5}, it follows that 
    \begin{align}
        \label{NDGWP6}
        \begin{cases}
        (\lambda+4\pi^2|k|^2)\hw_{3,k}-\hw_{3,k}''-\fc \partial_y\HH_k=\hf_{3,k},\\
        (\hw'_{3,k}-\alpha \hw_{3,k})|_{y=0}=(\hw'_{3,k}+\alpha \hw_{3,k})|_{y=1}=0,
        \end{cases}
    \end{align}
    where $\HH_k:=\HH_k[n_3\hw_{3,k}]$ solves 
    \begin{align}
        \label{NDGWP7}
        \begin{cases}
            \HH_k''-4\pi^2|k|^2\HH_k=0,\\
            \HH_k|_{y=0,1}=n_3\hw_{3,k}.
        \end{cases}
    \end{align}
    After translating the vertical variable $y\mapsto y-\frac{1}{2}$, we regard the above equations as being posed on the interval $(-\frac{1}{2},\frac{1}{2})$. Notice that if $\hf_{3,k}$ is odd in $y$, then the corresponding solution $\tw_{3,k}$ remains odd; the same is true for the even part. Therefore, by linearity and by decomposing $\hf_{3,k}$ into its odd
    and even parts, it suffices to treat the odd and even cases separately.

    \noindent\textbf{Step I\!I.} We first give the details for the odd case. The even case can be handled by a similar argument, with the necessary modifications indicated afterwards. In the odd case, the equation \eqref{NDGWP6} reduces to 
    \begin{align}
        \label{NDGWP8}
        \begin{cases}
        (\lambda+4\pi^2|k|^2)\hw_{3,k}-\hw_{3,k}''-\fc \HH_k'=\hf_{3,k},\\
        \hw_{3,k}|_{y=0}=(\hw'_{3,k}+\alpha \hw_{3,k})|_{y=\frac{1}{2}}=0.
        \end{cases}
    \end{align}
    Let $\fq_k,\fu_k$ be the solutions of the following auxiliary Dirichlet problems, respectively:
    \begin{align}\label{NDGWP9}
        (\lambda+4\pi^2|k|^2)\fq_k-\fq_k''=\hf_{3,k},\qquad \fq_k(0)=\fq_k(1/2)=0,
    \end{align}
    and
    \begin{align}\label{NDGWP10}
        (\lambda+4\pi^2|k|^2)\fu_k-\fu_k''-\fc \HH_k'[n_3\fu_k]=0,\qquad \fu_k(0)=0,\qquad \fu_k(1/2)=1.
    \end{align}
    We shall choose a constant $\fa_k$ such that 
    \begin{align}
        \label{NDGWP11}\hw_{3,k}=\fq_k+\fa_k\fu_k
    \end{align}
    satisfies the resolvent equation \eqref{NDGWP8}. Indeed, notice that 
    \[\HH_k[n_3\fq_{k}]=\HH_k[0]=0,\]
    the equation \eqref{NDGWP8}$_1$ is satisfied by construction. Therefore, it remains to match the boundary condition  \eqref{NDGWP8}$_2$ at $y=\frac{1}{2}$. To this end, one needs to solve the equation \eqref{NDGWP10} explicitly. Since $\fu_k$ is odd, it follows 
    \[\HH_k[n_3\fu_k]=\HH_k[1]=\frac{\cosh(2\pi|k|y)}{\cosh(\pi|k|)},\qquad \HH_k'[n_3\fu_k]=2\pi |k|\frac{\sinh(2\pi|k|y)}{\cosh(\pi|k|)}.\] 
    Moreover, notice that 
    \[(\lambda+4\pi^2|k|^2)\frac{\fc}{\lambda}\HH_k'[n_3\fu_k]-\frac{\fc}{\lambda}\HH_k'''[n_3\fu_k]=\fc\HH_k'[n_3\fu_k],\]
    if $\lambda\neq0$. Therefore, the solution of \eqref{NDGWP10} is given by 
    \begin{align}\label{NDGWP12}
        \fu_k=C_{\neq}\sinh (\rho_{k,\lambda} y)+\frac{2\pi |k|\fc }{\lambda}\frac{\sinh(2\pi|k|y)}{\cosh(\pi|k|)},\qquad \lambda\neq 0,
    \end{align}
    where 
    \[\rho_{k,\lambda}:=(\lambda+4\pi^2|k|^2)^{\frac{1}{2}}\]
    is chosen with $\realpart \rho_{k,\lambda}>0$, and $C_{\neq}$ satisfies
    \[C_{\neq}\sinh (\rho_{k,\lambda}/2)=1-\frac{2\pi |k|\fc }{\lambda}\tanh(\pi|k|).\]
    If $\lambda=0$, the equation \eqref{NDGWP10} reduces to 
    \[ 4\pi^2|k|^2\fu_k-\fu_k''=2\pi |k|\fc\frac{\sinh(2\pi|k|y)}{\cosh(\pi|k|)}.\]
    Notice that 
    \[4\pi^2|k|^2(y\cosh(2\pi|k|y))-(y\cosh(2\pi|k|y))''=-4\pi|k|\sinh(2\pi|k|y)).\]
    Thus, the corresponding solution is given by
    \begin{align}\label{NDGWP13}
        \fu_k=C_0\sinh (2\pi|k|y)-\frac{\fc }{2}\frac{y\cosh(2\pi|k|y)}{\cosh(\pi|k|)},
    \end{align}
    where $C_0$ satisfies
    \[C_0\sinh(\pi|k|)=1+\frac{\fc}{4}.\]
    Plugging \eqref{NDGWP12} and \eqref{NDGWP13} into the boundary condition \eqref{NDGWP8}$_2$ at $y=\frac{1}{2}$ yields
    \begin{align}
        \label{NDGWP14}
        -\fq_k'(1/2)&=\fa_k\left(\rho_{k,\lambda}\coth(\rho_{k,\lambda}/2)\left(1-\frac{2\pi |k|\fc }{\lambda}\tanh(\pi|k|)\right)+\frac{4\pi^2|k|^2\fc}{\lambda}+\alpha\right)\notag\\&=\fa_k\left(\fm_k(\lambda)-\fn_k\fc\frac{\fm_k(\lambda)-\fm_k(0)}{\lambda}+\alpha\right)=:\FD(\lambda)\fa_k
    \end{align}
    for $\lambda\neq0$, where
    \begin{align}
        \label{NDGWP15}\fm_k(\lambda)=\rho_{k,\lambda}\coth(\rho_{k,\lambda}/2),\qquad \fn_k=2\pi |k|\tanh(\pi|k|).
    \end{align}
    Similarly, for $\lambda=0$, one has
    \begin{align}\label{NDGWP16}
        -\fq_k'(1/2)=\fa_k\left(\fm_k(0)-\fn_k\fc\fm_k'(0)+\alpha\right)=:\FD(0)\fa_k.
    \end{align}
    Hence, the validity of the decomposition \eqref{NDGWP11} reduces to appropriate lower bounds for the function $\FD(\lambda)$.

    \noindent\textbf{Step I\!I\!I.} To bound $\FD(\lambda)$, we establish some auxiliary estimates for $\fm_k$. Notice that 
    \[\fm_k(\lambda)=\fh_{k,\lambda}'(1/2),\]
    where $\fh_{k,\lambda}(y)=\frac{\sinh (\rho_{k,\lambda} y)}{\sinh (\rho_{k,\lambda} /2)}$ satisfies 
    \begin{align}
        \label{NDGWP17}(\lambda+4\pi^2|k|^2)\fh_{k,\lambda}-\fh_{k,\lambda}''=0,\qquad \fh_{k,\lambda}(0)=0,\qquad \fh_{k,\lambda}(1/2)=1.
    \end{align}
    Therefore, by the standard energy estimate, it follows 
    \begin{align}
        \label{NDGWP17-1}\realpart \fm_k(\lambda)=\int_0^{1/2}|\fh_{k,\lambda}'|^2\ddd y+\left(\realpart \lambda+4\pi^2|k|^2\right)\int_0^{1/2}|\fh_{k,\lambda}|^2\ddd y\ge 0,
    \end{align}
    if 
    \begin{align}\label{NDGWP18}
        \realpart \lambda\ge -2\pi^2.
    \end{align}
    Next, we turn to $\fm_{k}'$. Using \eqref{NDGWP17}, one has
    \begin{align*}
        (\fh_{k,\lambda}'\fh_{k,0}-\fh_{k,\lambda}\fh_{k,0}')'=\lambda \fh_{k,\lambda}\fh_{k,0},
    \end{align*}
    which gives 
    \begin{align}
        \label{NDGWP19}
        \frac{\fm_k(\lambda)-\fm_k(0)}{\lambda}= \int_0^{1/2}\fh_{k,\lambda}\fh_{k,0}\ddd y.
    \end{align}
    On the other hand, define 
    \begin{align}\label{NDGWP19-1}
        \Delta_k:=\partial_y^2-4\pi^2|k|^2,\qquad \DDD(\Delta_k):=H^2(0,1/2)\cap H^1_0(0,1/2).
    \end{align}
    From \eqref{NDGWP17}, it follows
    \[(\lambda-\Delta_k)(\fh_{k,\lambda}-\fh_{k,0})=-\lambda \fh_{k,0},\]
    which yields
    \begin{align}
        \label{NDGWP20}\fh_{k,\lambda}=\fh_{k,0}-(\lambda-\Delta_k)^{-1}\lambda \fh_{k,0},
    \end{align}
    if \eqref{NDGWP18} holds. Let $\{e_j\}_{j\ge 1}$ be the normalized eigenfunctions of $\Delta_k$, namely, 
    \[\Delta_k e_j= -4\pi^2(|k|^2+j^2)e_j,\qquad j\ge 1.\]
   Combining this with \eqref{NDGWP20}, one obtains
   \begin{align*}
       \fh_{k,\lambda}=\ssum_{j\ge 1} \frac{4\pi^2(|k|^2+j^2)}{\lambda+4\pi^2(|k|^2+j^2)}\langle \fh_{k,0},e_j \rangle e_j
   \end{align*}
   Plugging the above identity into \eqref{NDGWP19} and using the fact $\fh_{k,0}$ is a real-valued function, it follows 
   \begin{align*}
        \frac{\fm_k(\lambda)-\fm_k(0)}{\lambda}= \ssum_{j\ge 1} \frac{4\pi^2(|k|^2+j^2)}{\lambda+4\pi^2(|k|^2+j^2)}|\langle \fh_{k,0},e_j \rangle|^2,
    \end{align*}
    which, together with \eqref{NDGWP18}, implies
    \begin{align*}
        \left|\frac{\fm_k(\lambda)-\fm_k(0)}{\lambda}\right|\le \ssum_{j\ge 1} \left|\frac{4\pi^2(|k|^2+j^2)}{\lambda+4\pi^2(|k|^2+j^2)}\right||\langle \fh_{k,0},e_j \rangle|^2\le \frac{4}{3}\|\fh_{k,0}\|^2_{L^2_y}.
    \end{align*}
    Since by taking $\lambda\to0$ in \eqref{NDGWP19}, it follows
    \[\|\fh_{k,0}\|^2_{L^2_y}=\fm_k'(0),\]
    one gets
    \begin{align}\label{NDGWP21}
        \fn_k\left|\frac{\fm_k(\lambda)-\fm_k(0)}{\lambda}\right|\le \frac{8\pi |k|}  {3}\fm_k'(0)\tanh(\pi|k|)=\frac{2}{3}\left(1-\frac{2\pi|k|}{\sinh(2\pi|k|)}\right)\le \frac{2}{3}.
    \end{align}
    Combining this with \eqref{NDGWP17-1} and the fact $\fc\in[0,\frac{7\alpha}{6}]$, one gets
    \begin{align*}
        |\FD(\lambda)| \ge |\fm_k(\lambda)+\alpha|\left(1-\frac{2\fc}{3|\fm_k(\lambda)+\alpha|}\right)\ge \frac{2|\fm_k(\lambda)+\alpha|}{9}\ge \frac{\sqrt{2}(\alpha+|\fm_k(\lambda)|)}{9}.
    \end{align*}
    Since \eqref{NDGWP18} ensures
    \[\realpart \rho_{k,\lambda}\ge \sqrt{2}\pi>0,\]
    it follows 
    \begin{align*}
        |\fm_k(\lambda)|=|\rho_{k,\lambda}|\left|\frac{1+e^{-\rho_{k,\lambda}}}{1-e^{-\rho_{k,\lambda}}}\right|\ge |\rho_{k,\lambda}|\frac{1-e^{-\realpart\rho_{k,\lambda}}}{1+e^{-\realpart\rho_{k,\lambda}}}\ge \frac{1-e^{-\sqrt{2}\pi}}{2}|\rho_{k,\lambda}|.
    \end{align*}
    Therefore, one has
    \begin{align}
        \label{NDGWP22} |\FD(\lambda)| \ge C_\alpha (1+|\rho_{k,\lambda}|).
    \end{align}

    \noindent\textbf{Step I\!V.} We are in a position to close the estimate for the resolvent equation \eqref{NDGWP8}. Indeed, notice that the decomposition \eqref{NDGWP11} holds with $\fa_k=-\FD^{-1}(\lambda)\fq_k'(1/2)$, where 
    \[\fq_k(y)=\int_0^{1/2}\frac{\sinh(\rho_{k,\lambda}\min\{y,y'\})\sinh(\rho_{k,\lambda}(1/2-\max\{y,y'\}))}{\rho_{k,\lambda}\sinh(\rho_{k,\lambda}/2)}\hf_{3,k}(y')\ddd y'.\]
    This, together with \eqref{NDGWP22} and the Cauchy--Schwarz inequality, implies 
    \begin{align}\label{NDGWP22-0}
        |\fa_k|&\lesssim_\alpha (1+|\rho_{k,\lambda}|)^{-1}|\fq_k'(1/2)|\notag\\&\lesssim_\alpha (1+|\rho_{k,\lambda}|)^{-1}\int_0^{1/2}\left|\frac{\sinh(\rho_{k,\lambda} y')}{\sinh (\rho_{k,\lambda}/2)}\hf_{3,k}(y')\right|\ddd y'\notag\\&\lesssim_\alpha (1+|\rho_{k,\lambda}|)^{-1}\left\|\frac{\sinh(\rho_{k,\lambda} \cdot)}{\sinh (\rho_{k,\lambda}/2)}\right\|_{L^2_y}\|\hf_{3,k}\|_{L^2_y}\lesssim_\alpha (1+|\rho_{k,\lambda}|)^{-\frac{3}{2}}\|\hf_{3,k}\|_{L^2_y}.
    \end{align}
    We turn to the estimate of $\fu_k$. Suppose that $\lambda\neq0$. Then,
    \begin{align}\label{NDGWP22-1}
        \fu_k&=\frac{\sinh \rho_{k,\lambda}y}{\sinh (\rho_{k,\lambda}/2)}+\frac{\fc\fn_k}{\lambda}\left(\frac{\sinh 2\pi|k|y}{\sinh \pi|k|}-\frac{\sinh \rho_{k,\lambda}y}{\sinh (\rho_{k,\lambda}/2)}\right)\notag\\&=\ff_{\rho_{k,\lambda}}(y)-\frac{\fc\fn_k}{\rho_{k,\lambda}+2\pi|k|}\frac{\ff_{\rho_{k,\lambda}}-\ff_{2\pi|k|}}{\rho_{k,\lambda}-2\pi|k|},
    \end{align}
    where 
    \[\ff_{\llll}(y):=\frac{\sinh \llll y}{\sinh (\llll/2)}.\]
    As $\fn_k=2\pi|k|\tanh (\pi|k|)\sim |k|$, it remains to bound the function $\ff_{\llll}(y)$. Notice that 
    \begin{align}\label{NDGWP22-1-1}\realpart\rho_{k,\lambda}^2\ge 2\pi^2>0 \qquad\Rightarrow\qquad |\arg \rho_{k,\lambda}|<\frac{\pi}{4},
    \end{align}
    it suffices to consider $\llll \in \{|\arg \llll|<\frac{\pi}{4},\realpart \llll\ge \sqrt{2}\pi\}$. Since in this case, one has 
    \begin{align}
        \label{NDGWP23}\realpart \llll \ge \frac{\sqrt{2}}{2}|\llll|,
    \end{align}
    it follows
    \begin{align}\label{NDGWP24}
        |\ff_{\llll}(y)|=\left|\frac{e^{-(1/2-y)\llll}-e^{-(1/2+y)\llll}}{1-e^{-\llll}}\right|\le \frac{e^{-\frac{\sqrt{2}}{2}(1/2-y)|\llll|}+e^{-\frac{\sqrt{2}}{2}(1/2+y)|\llll|}}{1-e^{-\sqrt{2}\pi}}\lesssim e^{-\frac{\sqrt{2}}{2}(1/2-y)|\llll|}.
    \end{align}
    Similarly, for $j=1,2$,
    \begin{align}\label{NDGWP25}
        |\partial_y^j\ff_{\llll}(y)|\lesssim |\llll|^je^{-\frac{\sqrt{2}}{2}(1/2-y)|\llll|}.
    \end{align}
    To estimate the difference quotient in \eqref{NDGWP22-1}, one needs to address
    \[\partial_\llll \ff_\llll=\frac{-(1/2-y)e^{-(1/2-y)\llll}+(1/2+y)e^{-(1/2+y)\llll}}{1-e^{-\llll}}-e^{-\llll}\frac{e^{-(1/2-y)\llll}-e^{-(1/2+y)\llll}}{(1-e^{-\llll})^2},\]
    which is bounded in the following way:
    \begin{align*}
        |\partial_\llll \ff_\llll|&\lesssim (1/2-y)e^{-\frac{\sqrt{2}}{2}(1/2-y)|\llll|}+(1/2+y)e^{-\frac{\sqrt{2}}{2}(1/2+y)|\llll|}+e^{-\frac{\sqrt{2}}{2}|\llll|}e^{-\frac{\sqrt{2}}{2}(1/2-y)|\llll|}
        \\&\lesssim |\llll|^{-1}e^{-\frac{\sqrt{2}}{4}(1/2-y)|\llll|}\!+|\llll|^{-1}e^{-\frac{\sqrt{2}}{4}(1/2+y)|\llll|}\!+|\llll|^{-1}e^{-\frac{\sqrt{2}}{2}(1/2-y)|\llll|}\lesssim |\llll|^{-1}e^{-\frac{\sqrt{2}}{4}(1/2-y)|\llll|}.
    \end{align*}
    This implies 
    \begin{align}\label{NDGWP26}
        |\partial_y^j\partial_\llll \ff_\llll|\lesssim |\llll|^{j-1}e^{-\frac{\sqrt{2}}{4}(1/2-y)|\llll|}
    \end{align}
    for $j=0,1,2$. Now by applying \eqref{NDGWP24}--\eqref{NDGWP26} combined with \eqref{NDGWP22-1}, \eqref{NDGWP22-1-1}, and the mean value theorem, one has 
    \begin{align*}
        \|\partial_y^j\fu_k\|&\lesssim \|\partial_y^j\ff_{\rho_{k,\lambda}}\|+\fc\left\|\frac{\partial_y^j\ff_{\rho_{k,\lambda}}-\partial_y^j\ff_{2\pi|k|}}{\rho_{k,\lambda}-2\pi|k|}\right\|\\&\lesssim\|\partial_y^j\ff_{\rho_{k,\lambda}}\|+\alpha \int_{0}^{1}\|\partial_y^j\partial_\llll \ff_\llll|_{\llll= 2\pi|k|+(\rho_{k,\lambda}-2\pi|k|)s} \|\ddd s\\&\lesssim_\alpha|\rho_{k,\lambda}|^{j-\frac{1}{2}}+\int_{0}^{1}|2\pi|k|+(\rho_{k,\lambda}-2\pi|k|)s|^{j-\frac{3}{2}}\ddd s\lesssim_\alpha|\rho_{k,\lambda}|^{j-\frac{1}{2}},
    \end{align*}
    which yields
    \begin{align}
        \label{NDGWP27}
        (|\lambda|+4\pi^2|k|^2)\|\fu_k\|+2\pi|k|\|\fu_k'\|+\|\fu_k''\|\lesssim_\alpha |\rho_{k,\lambda}|^{\frac{3}{2}}.
    \end{align}
    The estimate for $\fu_k$ with $\lambda=0$ can be obtained by taking $\lambda\to0$ in \eqref{NDGWP22-1} and repeating the above argument, so we omit the details. Moreover, the standard resolvent estimate for the operator $\Delta_k$ defined in \eqref{NDGWP19-1} gives the following estimate for $\fq_k$
    \begin{align*}
        (|\lambda|+4\pi^2|k|^2)\|\fq_k\|+2\pi|k|\|\fq_k'\|+\|\fq_k''\|\lesssim \|\hf_{3,k}\|_{L^2_y},
    \end{align*}
    provided that \eqref{NDGWP18} is satisfied. Combining this with \eqref{NDGWP11}, \eqref{NDGWP22-0}, and \eqref{NDGWP27}, one gets
    \begin{align}
        \label{NDGWP28}(|\lambda|+4\pi^2|k|^2)\|\hw_{3,k}\|+2\pi|k|\|\hw_{3,k}'\|+\|\hw_{3,k}''\|\lesssim_{\alpha} \|\hf_{3,k}\|_{L^2_y},
    \end{align}
    when $\hf_{3,k}$ is odd. 
    
    The even case can be treated in the same way. The auxiliary problems are now defined by 
    \begin{align*}
        (\lambda+4\pi^2|k|^2)\fq_k-\fq_k''=\hf_{3,k},\qquad \fq'_k(0)=\fq_k(1/2)=0,
    \end{align*}
    and
    \begin{align*}
        (\lambda+4\pi^2|k|^2)\fu_k-\fu_k''-\fc \HH_k'[n_3\fu_k]=0,\qquad \fu'_k(0)=0,\qquad \fu_k(1/2)=1.
    \end{align*}
    Therefore, the decomposition \eqref{NDGWP11} holds with $\fa_k$ given by
    \[\fa_k:=-\FD^{-1}(\lambda)\fq_k'(1/2),\]
    where
    \[\FD(\lambda):=\fm_k(\lambda)-\fn_k\fc\frac{\fm_k(\lambda)-\fm_k(0)}{\lambda}+\alpha\]
    with 
    \begin{align*}\fm_k(\lambda):=\rho_{k,\lambda}\tanh(\rho_{k,\lambda}/2),\qquad \fn_k:=2\pi |k|\coth(\pi|k|).
    \end{align*}
    Repeating the derivation of \eqref{NDGWP21}, one gets 
    \begin{align*}
        \fn_k\left|\frac{\fm_k(\lambda)-\fm_k(0)}{\lambda}\right|\le \frac{5}  {3}\fm_k'(0)\fn_k=\frac{5}{6}\!\left(1+\frac{2\pi|k|}{\sinh(2\pi|k|)}\right)\!\le\! \frac{5}{6}\left(1+\frac{2\pi}{\sinh(2\pi)}\right)=:c_{\even}.
    \end{align*}
    Since $c_{\even}<\frac{6}{7}$, one has the estimate
    \[|\FD(\lambda)| \ge |\fm_k(\lambda)+\alpha|\left(1-\frac{c_{\even}\fc}{|\fm_k(\lambda)+\alpha|}\right)\ge\left(1-\frac{7c_{\even}}{6}\right) |\fm_k(\lambda)+\alpha|,\]
    which implies
    \[|\FD(\lambda)|\ge C_\alpha(1+|\rho_{k,\lambda}|).\]
    This, together with the arguments used in the derivation of \eqref{NDGWP22-0} and \eqref{NDGWP27}, leads to \eqref{NDGWP28} in the even case. 
    
    Hence, the following estimate 
    \begin{align}
        \label{NDGWP29}
        |\lambda|\|w_3\|+\| w_3\|_{H^2}\lesssim_{\alpha} \|f_{3}\|
    \end{align}
    holds for the resolvent equation \eqref{NDGWP5} with $\lambda$ satisfying \eqref{NDGWP18}.

    \noindent\textbf{Step V.} Finally, we turn to the horizontal components $w_h$, which satisfy
    \[\begin{cases}
        \lambda w_h-\Delta w_h-\fc\nabla_x \HH[n_3w_3]=f_h,\qquad u=\curl^{-1} w,\\
        (w_h-\alpha 
    u_h^\perp)|_{y=0}=(w_h+\alpha 
    u_h^\perp)|_{y=1}=0.
    \end{cases}\]
    Applying $I-\BB$ to both sides, where $\BB$ is the boundary correction operator defined in \eqref{LS3}, the above equation reduces to 
    \begin{align}
        \label{NDGWP30}\lambda \tw_h-\Delta \tw_h+([\BB,\partial_y^2]w)_h=\tf_h+\fc(I-\BB)\nabla_x \HH[n_3w_3],\qquad \tw_h|_{y=0,1}=0,
    \end{align}
    where $\tw:=(I-\BB)w$ and $\tf:=(I-\BB)f$. We claim that there exist constants $c_\alpha, C_\alpha>0$ such that for any $\lambda$ satisfying $\realpart \lambda\ge -c_\alpha$ and $\fc\in[0,\frac{7\alpha}{6}]$, it follows
    \begin{align}\label{NDGWP30-1}
        \| w_h\|\le C_{\alpha} \|f\|.
    \end{align}
    By contradiction, suppose that there are $\lambda_n,\fc_n$ such that 
    \[\realpart \lambda_n\ge -c_\alpha,\qquad \fc_n\in\left[0,\frac{7\alpha}{6}\right],\]
    and functions $w^n_h,f^n$ satisfying \eqref{NDGWP30} and
    \begin{align}\label{NDGWP31}
        \|w^n_h\|=1,\qquad \lim_{n\to\infty}\|f^n\|=0.
    \end{align}
    Then, by applying \eqref{NDGWP29} and \eqref{LE6-1}, one has
    \begin{align}
        \label{NDGWP32}w_3^n\to0,\qquad \fc_n(I-\BB)\nabla_x \HH[n_3w^n_3]\to0,\qquad\mbox{in $L^2$,}
    \end{align}
    as $n\to\infty$. We split the rest of the argument into two cases.

    \noindent \textbf{Case I.} Suppose that the sequence $\{\lambda_n\}_{n\ge 1}$ is bounded. Applying the standard elliptic estimate to \eqref{NDGWP30} and using \eqref{NDGWP31}, \eqref{NDGWP32}, and Proposition \ref{propB2}, one has 
    \begin{align*}
        \|\tw^n_h\|_{H^2}&\le |\lambda_n|\|\tw^n_h\|+\|[\BB,\partial_y^2]w^n\|+\|\tf^n_h+\fc_n(I-\BB)\nabla_x \HH[n_3w_3^n]\|\\&\lesssim_\alpha \sup_{n\ge 1}\left(\|w^n\|+\|\tf^n_h+\fc_n(I-\BB)\nabla_x \HH[n_3w_3^n]\|\right)<\infty.
    \end{align*}
    Combining this with the compact Sobolev embedding theorem and passing to a subsequence if necessary, it follows
    \[\fc_n\to\fc_\infty,\qquad\lambda_n\to\lambda_\infty,\qquad w^n\to w^\infty,\]
    as $n\to\infty$, where $\fc_\infty,\lambda_\infty$ satisfy
    \[\fc_\infty\in [0,\frac{7\alpha}{6}],\qquad \realpart\lambda_\infty\ge -c_\alpha,\]
    and the convergence of $w^n$ holds in the strong topology of $H^1$ and the weak topology of $H^2$. Therefore, $w^\infty$ solves the limiting equation
    \[\lambda_\infty \tw^\infty_h-\Delta \tw^\infty_h+([\BB,\partial_y^2]w^\infty)_h=0,\qquad w_3^\infty=0,\qquad\tw^\infty_h|_{y=0,1}=0,\]
    which is equivalent with
    \begin{align*}
        \begin{cases}
            \lambda_\infty w^\infty_h-\Delta w^\infty_h=0,\qquad w_3^\infty=0,\qquad u^\infty=\curl^{-1} w^\infty,\\
        (w^\infty_h-\alpha 
    (u^\infty_h)^\perp)|_{y=0}=(w^\infty_h+\alpha 
    (u^\infty_h)^\perp)|_{y=1}=0.
    \end{cases}
    \end{align*}
    By applying Lemma \ref{lemmaA4}, from the above equation, one has
    \begin{align*}
    \begin{cases}
        \lambda_\infty u^\infty-\Delta u^\infty+\nabla p_h+(C_1,C_2,0)=0,\\
        (\partial_y u^\infty_h-\alpha 
    u^\infty_h)|_{y=0}=(\partial_y u^\infty_h+\alpha 
    u^\infty_h)|_{y=1}=u^\infty_3|_{y=0,1}=0
    \end{cases}
    \end{align*}
    for some constants $C_1,C_2$ and function $p_h$. Notice that by the definition \eqref{A13} of the Biot--Savart operator $\curl^{-1}$, 
    \[\int_Du_h^\infty\ddd x=0.\]
    Then, by the standard energy method, it follows
    \begin{align*}
        \realpart \lambda_\infty\|u^\infty\|^2+\|\nabla u^\infty\|^2+\alpha\|u^\infty_h\|^2_{L^2(\partial D)}=0,
    \end{align*}
    which, together with choosing 
    \begin{align}\label{NDGWP33}c_\alpha\le\frac{C_p}{2},
    \end{align}
    where $C_p$ is the constant in the Poincar\'e inequality, leads to $u^\infty\equiv 0$ and $w_h^\infty\equiv 0$. Since $\|w^n_h\|=1$ and $w^n$ converges to $w^\infty$ in the strong topology of $H^1$, this is a contradiction.

    \noindent\textbf{Case I\!I.} Now we assume that $|\lambda_n|\to\infty$, as $n\to\infty$. From \eqref{NDGWP30}, it follows 
    \begin{align*}
        \tw_h^n=(\lambda_n-\Delta_D)^{-1}\left(\tf_h^n+\fc_n(I-\BB)\nabla_x \HH[n_3w_3^n]-([\BB,\partial_y^2]w^n)_h\right),
    \end{align*}
    where $\Delta_D$ denotes the Dirichlet--Laplacian operator. By the spectral theorem for the positive self-adjoint operator $-\Delta_D$, one has 
    \[\|(\lambda-\Delta_D)^{-1}\|_{\LL(L^2)}=\sup_{j\ge 1}|\lambda+\fb_j|^{-1},\]
    where $\fb_j>0$ are the eigenvalues of $-\Delta_D$. Combining the above results with Proposition \ref{propB2} and \eqref{NDGWP32}, one obtains
    \begin{align*}
        \|\tw_h^n\|&\lesssim_\alpha |\lambda_n|^{-1}\left(\|\tf_h^n+\fc_n(I-\BB)\nabla_x \HH[n_3w_3^n]\|+\|w^n_3\|+\|\tw^n_h\|\right)\\&\lesssim_\alpha |\lambda_n|^{-1} \|\tw^n_h\|+|\lambda_n|^{-1}\sup_{n\ge 1}\left(\|\tf_h^n+\fc_n(I-\BB)\nabla_x \HH[n_3w_3^n]\|+\|w^n_3\|\right).
    \end{align*}
    By taking $n$ sufficiently large, it follows
    \begin{align*}
        \|\tw_h^n\|\lesssim_\alpha|\lambda_n|^{-1}\sup_{n\ge 1}\left(\|\tf_h^n+\fc_n(I-\BB)\nabla_x \HH[n_3w_3^n]\|+\|w^n_3\|\right)\to0,
    \end{align*}
    as $n\to\infty$. Together with Proposition \ref{propB1}, this contradicts $\|w^n_h\|=1$. Hence, the estimate \eqref{NDGWP30-1} holds as claimed.

    Now set $c_\alpha:=\min \{2\pi^2,C_p/2\}$. By combining \eqref{NDGWP18}, \eqref{NDGWP29}, \eqref{NDGWP30-1}, and \eqref{NDGWP33}, one has 
    \[\|(\lambda-\nu_\kappa^{-1}\LLL_{\eff})^{-1}f\|=\|w\|\lesssim_\alpha \|f\|.\]
    This implies \eqref{NDGWP3}, and by rescaling, completes the proof.
    \end{proof}

    We now turn the resolvent estimate into an exponential decay estimate for the linear semigroup. To this end, we use the following exponential stability criterion, cf. Theorem 1.4 in \cite{HSV24} and references therein.
    \begin{theorem}\label{TheoremES}
        Let $S(t)$ be a strongly continuous semigroup on a Hilbert space $H$ with the generator $A$. Suppose that there is $\lambda_0\in \R$ such that 
        \[r_0^{-1}:=\sup_{\realpart \lambda>\lambda_0}\|(\lambda-A)^{-1}\|_{\LL(H)}<\infty.\]
        Assume that 
        \[\|S(t)\|_{\LL(H)}\le m(t),\qquad \forall t\ge 0\]
        for some continuous positive function $m(t)$. Then, for any $t,a,b>0$ such that $t\ge a+b$,
        \[\|S(t)\|_{\LL(H)}\le \frac{e^{\lambda_0t-r_0(t-a-b)}}{r_0\|\frac{1}{m}\|_{e^{-\lambda_0}L^2(0,a)}\|\frac{1}{m}\|_{e^{-\lambda_0}L^2(0,b)}},\]
        where 
        \[\|f\|_{e^{-\lambda_0}L^2(0,a)}^2:=\int_0^a|f(t)|^2e^{2\lambda_0 t}\ddd t.\]
    \end{theorem}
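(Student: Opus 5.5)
The plan is to prove the statement in two layers. The first is an $L^2$-in-time bound obtained from the resolvent via Plancherel, with a cut-off chosen to produce the rate $r_0$. The second passes from $L^2$ to pointwise norms using the a priori bound $m$, once at the start and once at the end; this is what produces the two factors $\|\frac1m\|$. I take the semigroup setting exactly as stated in Theorem~\ref{TheoremES}.

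\textbf{Step 0 (normalization).} I replace $S(t)$ by $S_0(t):=e^{-\lambda_0 t}S(t)$. Its generator is $A-\lambda_0$, and the resolvent bound holds on $\{\realpart\lambda>0\}$ with the same constant $r_0^{-1}$. The majorant becomes $m_0(t):=e^{-\lambda_0 t}m(t)$, and one has $\|\frac1{m_0}\|_{L^2(0,a)}=\|\frac1m\|_{e^{-\lambda_0}L^2(0,a)}$. After this reduction the claim reads
\[
\|S_0(t)\|\le \frac{e^{-r_0(t-a-b)}}{r_0\|\frac1{m_0}\|_{L^2(0,a)}\|\frac1{m_0}\|_{L^2(0,b)}},
\]
so I may assume $\lambda_0=0$.

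\textbf{Step 1 ($L^2$ bound for the Duhamel operator).} Let $(Ku)(t):=\int_0^tS(t-s)u(s)\,\ddd s$ act on $L^2(\R_+;H)$. By the vector-valued Plancherel theorem in the Hilbert space $H$, the Laplace transform of $Ku$ on the line $\realpart\lambda=\omega>0$ equals $(\lambda-A)^{-1}\hat u(\lambda)$. Letting $\omega\downarrow0$ gives $\|K\|_{\LL(L^2(\R_+;H))}\le r_0^{-1}$. The same bound holds for $\tilde Ku(t):=\int_t^\infty S(s-t)^*u(s)\,\ddd s$, by taking adjoints.

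\textbf{Step 2 (the sharp rate; main obstacle).} Fix $x\in H$ and set $u(t)=S(t)x$. For a Lipschitz cut-off $\chi$ with $\chi(0)=0$, the function $v=\chi u$ solves $v'=Av+\chi'u$ with $v(0)=0$. Hence $v=K(\chi'u)$, and Step 1 gives
\[
\|\chi u\|_{L^2}\le r_0^{-1}\|\chi'u\|_{L^2}.
\]
A naive linear ramp for $\chi$ gives no decay at all. Instead I will take $\chi$ exponential on the transition zone, namely $\chi(t)=e^{r_0(t-a)}-1$ on $[a,a+\tau]$ and frozen after that. The aim is that the terms with $\chi'=r_0(\chi+1)$ cancel against the left-hand side, leaving only the contribution of $u$ near $[0,a]$ and a term weighted by $e^{r_0\tau}$. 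This is the Helffer--Sj\"ostrand mechanism, and it should yield a weighted estimate of the form
\[
\int e^{2r_0 t}\|u\|^2\ \lesssim\ r_0^{-2}\cdot(\text{data on }[0,a]).
\]
I expect this choice of weight, and checking that the constant comes out exactly $r_0^{-1}$ with no loss of the form $(r_0-\omega)^{-1}$, to be the delicate point. I will follow the proof of Theorem 1.4 in \cite{HSV24} here, optimizing $\chi$ via the associated Euler--Lagrange equation $\chi''=r_0^2\chi$.

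\textbf{Step 3 (from $L^2$ to operator norm).} For the input end, use $S(t)x=S(t-s)S(s)x$ with $s\in(0,a)$. One has $\|S(s)x\|\ge\|S(t)x\|/m(t-s)$ only on the wrong side, so I instead use $\|S(s)x\|\le m(s)\|x\|$. Dually, for the output end write
\[
\langle S(t)x,y\rangle=\langle S(t-\sigma)x,S(\sigma)^*y\rangle,\qquad \sigma\in(0,b),
\]
with $\|S(\sigma)^*y\|\le m(\sigma)\|y\|$. Multiplying by $\frac1{m(s)}$ and $\frac1{m(\sigma)}$, integrating over $s\in(0,a)$ and $\sigma\in(0,b)$, and applying Cauchy--Schwarz gives
\[
\|\tfrac1m\|^2_{L^2(0,a)}\,\|\tfrac1m\|^2_{L^2(0,b)}\,|\langle S(t)x,y\rangle|^2
\]
bounded by a pairing of $L^2$-in-time quantities. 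Those quantities are controlled by the weighted bound of Step 2 together with the $\tilde K$ bound. Together these give the stated estimate for $t\ge a+b$.
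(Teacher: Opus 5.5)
The paper gives no proof of this statement; it quotes it from \cite{HSV24}, so your outline has to stand on its own as a reconstruction of the Helffer--Sj\"ostrand argument. The architecture is right: reduce to $\lambda_0=0$, use $\|K\|_{\LL(L^2(\R_+;H))}\le r_0^{-1}$, apply it to $\chi u$, then pass to a pointwise bound using $m$. The decisive Step~2, however, does not work as written.

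With $\chi=e^{r_0(s-a)}-1$ on $[a,a+\tau]$ you have $\chi'=r_0(\chi+1)$, hence $|\chi'|^2-r_0^2\chi^2=r_0^2(2\chi+1)>0$. So nothing cancels. The inequality $\|\chi u\|\le r_0^{-1}\|\chi'u\|$ leaves $\int_a^{a+\tau}(2e^{r_0(s-a)}-1)\|u(s)\|^2\,\ddd s$ on the right, which is governed by $u$ near $s=a+\tau$ rather than by data on $[0,a]$. One can still extract decay from it by a Gr\"onwall argument in $\tau$, but only with an extra constant, not the stated bound.

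There are further problems in Step~2:
\begin{itemize}
\item Your $\chi$ vanishes at $s=a$, so the ramp on $[0,a]$ that must produce the factor $\|\frac1m\|_{L^2(0,a)}$ is missing.
\item The relevant condition is the first-order one $\chi'=r_0\chi$, which your $\chi$ does not satisfy (nor $\chi''=r_0^2\chi$).
\item The target $\int e^{2r_0t}\|u\|^2\lesssim\cdots$ is false as written. For $H=\C$, $A=-r_0$ the hypotheses hold and this integral diverges; only a tail bound with a frozen weight is available.
\end{itemize}

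The fix is to choose $\chi$ so the cancellation is exact:
\begin{itemize}
\item $\chi(0)=0$ and $\chi'=m^{-2}/\int_0^a m^{-2}$ on $[0,a]$, so $\chi(a)=1$ and $\int_0^a|\chi'|^2m^2=\|\frac1m\|^{-2}_{L^2(0,a)}$ (the optimum, by Cauchy--Schwarz on $1=\int_0^a\chi'$);
\item $\chi(s)=e^{r_0(s-a)}$ on $[a,t-b]$;
\item $\chi\equiv e^{r_0(t-a-b)}$ on $[t-b,\infty)$.
\end{itemize}
The semigroup law gives $\chi(t)S(t)x=\int_0^tS(t-s)\chi'(s)S(s)x\,\ddd s$, i.e.\ $\chi u=K(\chi'u)$, so $\|\chi u\|\le r_0^{-1}\|\chi'u\|$. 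On $[a,t-b]$ the two integrands agree because $|\chi'|=r_0\chi$, and after cancelling them
\[
e^{2r_0(t-a-b)}\int_{t-b}^\infty\|u(s)\|^2\,\ddd s\le r_0^{-2}\int_0^a|\chi'|^2\|u\|^2\,\ddd s\le \frac{\|x\|^2}{r_0^2\|\frac1m\|^2_{L^2(0,a)}}.
\]
The output end is just $\|S(t)x\|\le m(t-s)\|S(s)x\|$ for $s\in(t-b,t)$, which gives $\|\frac1m\|^2_{L^2(0,b)}\|S(t)x\|^2\le\int_{t-b}^t\|u\|^2$; your adjoint formulation is equivalent. Thus the resolvent bound is used exactly once, and $\tilde K$ and the double integration over $(s,\sigma)$ in Step~3 play no role.

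Finally, in Step~1, ``letting $\omega\downarrow0$'' presupposes that the growth bound of $S$ is $\le 0$, because Plancherel on $\realpart\lambda=\omega$ needs $e^{-\omega t}Ku\in L^2$. Justify this either by the vector-valued Paley--Wiener theorem applied to $\lambda\mapsto(\lambda-A)^{-1}\hat u(\lambda)$ on $\realpart\lambda>0$, or by the qualitative Gearhart--Pr\"uss theorem.
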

    Now we are in a position to present the following linear decay estimate.
    \begin{prop}\label{GWPprop3}
        Let $\alpha,\nu,\kappa>0$. Then, there are constants $C_{\alpha},c_\alpha>0$ depending only on $\alpha$ such that 
        \begin{align}
            \label{NDGWP34}
            \|e^{t\LLL_{\eff}}\|_{\LL(H_{\curl})}\le C_\alpha e^{-c_\alpha \nu_\kappa t}.
        \end{align}
    \end{prop}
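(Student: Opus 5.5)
The plan is to apply Theorem~\ref{TheoremES} to the semigroup $e^{t\LLL_{\eff}}$ on $H=H_{\curl}$, taking $A=\LLL_{\eff}$ and $\lambda_0=-c_\alpha\nu_\kappa$. First one checks that $\LLL_{\eff}$ with domain $\DDDD(\LLL_{\eff})$ generates a strongly continuous semigroup on $H_{\curl}$ with an a priori bound $\|e^{t\LLL_{\eff}}\|\le m(t)$. For the bound, I would rescale time so that it suffices to treat $\nu_\kappa^{-1}\LLL_{\eff}=\Delta+\fc\nabla\HH[n_3\,\cdot\,]$ with $\fc\in[0,\tfrac{7\alpha}{6}]$, and then prove an estimate uniform in $\fc$. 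The natural candidate is
\[
m(t)=M_\alpha e^{\omega_\alpha t},
\]
with $M_\alpha,\omega_\alpha$ depending only on $\alpha$.

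To get this bound I would run the energy argument used in the proof of Proposition~\ref{propLS1}, now deterministic, on the corrected unknown $\tw=(I-\BB)w$.
\begin{itemize}
\item The operator $\BB$ and its inverse are bounded by Proposition~\ref{propB1}.
\item The commutator satisfies $\|[\BB,\partial_y^2]w\|\lesssim_\alpha\|w\|$ by Proposition~\ref{propB2}.
\item The feedback term obeys $\|\nabla\HH[n_3w_3]\|\lesssim\|w_3\|_{H^1}$ by \eqref{LE6-1}, and can be absorbed into the dissipation $\|\nabla\tw\|^2$ at the cost of an $\|\tw\|^2$ term.
\end{itemize}
Gronwall then gives $\|w(t)\|\le M_\alpha e^{\omega_\alpha t}\|w_{\ini}\|$ in rescaled time. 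Generation itself follows from Lumer--Phillips applied to $\LLL_{\eff}-\omega$ in the equivalent norm $\|(I-\BB)\cdot\|$, since the energy estimate shows quasi-dissipativity. Surjectivity of $\lambda-\LLL_{\eff}$ for large $\lambda$ comes from Proposition~\ref{GWPprop1}.

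Next, Proposition~\ref{GWPprop1} in rescaled form \eqref{NDGWP3} gives
\[
r_0^{-1}=\sup_{\realpart\lambda>-c_\alpha}\|(\lambda-\nu_\kappa^{-1}\LLL_{\eff})^{-1}\|\le C_\alpha.
\]
I would take $\lambda_0=-c_\alpha$, or better $\lambda_0=-c_\alpha/2$ so that the exponent stays negative. Then fix $a=b=1$ and insert $m(t)=M_\alpha e^{\omega_\alpha t}$ into Theorem~\ref{TheoremES}. The weighted norms $\|1/m\|_{e^{-\lambda_0}L^2(0,1)}$ are positive constants depending only on $\alpha$. For $t\ge2$ this yields
\[
\|e^{t\nu_\kappa^{-1}\LLL_{\eff}}\|\le C_\alpha e^{(\lambda_0-r_0)t}e^{2r_0}\le C_\alpha e^{-c_\alpha t/2}.
\]
For $t\le2$ the bound follows directly from $m(t)$. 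Undoing the rescaling $t\mapsto\nu_\kappa t$ gives \eqref{NDGWP34}.

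The main obstacle is that every constant must depend only on $\alpha$, with no dependence on $\kappa$ or $\nu$. This is exactly why I would do everything in rescaled time, where $\fc$ ranges over a compact set and the $\fc$-dependence of the energy constant is explicit. A second delicate point is the generation property with domain inside $H_{\curl}$: one must check that $\LLL_{\eff}$ preserves $H_{\curl}$. The zero horizontal mean of $w_3$ is preserved because $\int_{\T^2}\HH[n_3w_3]\,\ddd x$ solves a one-dimensional Laplace problem with zero boundary data, hence vanishes. Divergence-freeness is preserved since $\dive\nabla\HH=\Delta\HH=0$.
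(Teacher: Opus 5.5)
Your proposal is correct and follows essentially the paper's proof. Both arguments obtain a rough exponential bound from the energy estimate for the corrected vorticity $\tw=(I-\BB)w$ (via Proposition~\ref{propB2} and \eqref{LE6-1}), feed it with the resolvent bound of Proposition~\ref{GWPprop1} into Theorem~\ref{TheoremES}, and handle short times separately.

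The differences are only bookkeeping. You rescale time first and take $a=b=1$, while the paper works in original time with $\lambda_0=-c_\alpha\nu_\kappa$ and $a=b=r_0^{-1}$, using $\nu_\kappa/r_0\le C_\alpha$. You also spell out semigroup generation and invariance of $H_{\curl}$, which the paper takes for granted.
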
   
    \begin{proof}
        \textbf{Step I.} In this step, we derive a rough semigroup bound. Consider the linear evolutionary problem 
        \begin{align*}
        \begin{cases}
        \partial_t w=\LLL_{\eff} w,\\w|_{t=0}=w_{\ini}.
        \end{cases}
    \end{align*}
    Let $\tw:=(I-\BB)w$, where $\BB$ is the boundary correction operator defined in \eqref{LS3}. Applying $(I-\BB)$ on both sides of the above equation, it follows
     \begin{align}\label{NDGWP35}
        \begin{cases}
            \partial_t \tw=\nu_\kappa \Delta \tw-\nu_\kappa [\BB,\partial_y^2]w+\beta_\kappa(I-\BB)\nabla\HH[n_3w_3],
            \\(\partial_y \tw_3-\alpha\tw_3)|_{y=0}=(\partial_y \tw_3+\alpha\tw_3)|_{y=1}=\tw_h|_{y=0,1}=0,\\
            \tw|_{t=0}=\tw_{\ini}.
        \end{cases}
    \end{align}
    By the standard energy estimate combined with Proposition \ref{propB2} and \eqref{LE6-1}, it follows that 
    \begin{align*}
    \frac{\ddd }{\ddd t}\|\tw\|^2+\nu_\kappa\left(\|\nabla\tw\|^2+\alpha\|\tw_3\|^2_{L^2(\partial D)}\right)\lesssim_\alpha \nu_\kappa\|\tw\|^2,
    \end{align*}
     which implies
    \begin{align*}
        \|\tw(t)\| \le e^{C_\alpha \nu_\kappa t}\|\tw_{\ini}\|,
    \end{align*}
    and thus 
    \begin{align}
    \label{NDGWP36}\|e^{t\LLL_{\eff}}\|_{\LL(H_{\curl})}\le C_\alpha e^{C_\alpha\nu_\kappa t}.
    \end{align}

    \noindent\textbf{Step I\!I.} Now we apply Theorem \ref{TheoremES} with
    \[\lambda_0:=-c_\alpha \nu_\kappa,\qquad m(t):=C_\alpha e^{C_\alpha \nu_\kappa  t},\qquad a=b=r_0^{-1},\]
    where $c_\alpha$ is given in Proposition \ref{GWPprop1}. Notice that 
    \begin{align*}
        r_0\|m^{-1}\|_{e^{-\lambda_0}L^2(0,r_0^{-1})}^2&=C_\alpha^{-2}r_0\int_{0}^{r_0^{-1}}\exp\left(-(2c_\alpha\nu_\kappa +2C_\alpha\nu_\kappa )t\right)\ddd t\\&=\frac{r_0}{2C_\alpha^2\nu_\kappa(c_\alpha + C_\alpha) }\left(1-\exp\left(-\frac{2\nu_\kappa(c_\alpha + C_\alpha) }{r_0}\right)\right),
    \end{align*}
    where, by using \eqref{NDGWP2}, one has
    \[\frac{2\nu_\kappa(c_\alpha + C_\alpha) }{r_0}\le 2(c_\alpha + C_\alpha)C_\alpha=:C'_\alpha.\]
    Since the function $t\mapsto t^{-1}(1-e^{-t})$ is decreasing on $\R_+$, this gives
     \begin{align*}
        r_0\|m^{-1}\|_{e^{-\lambda_0}L^2(0,r_0^{-1})}^2\ge\frac{1-e^{-C_\alpha'}}{C_{\alpha}^2C_\alpha'},
    \end{align*}
    and thus
    \begin{align}\label{NDGWP37}
        \|e^{t\LLL_{\eff}}\|_{\LL(H_{\curl})}\le \frac{C_{\alpha}^2C_\alpha'}{1-e^{-C_\alpha'}} e^{-c_\alpha\nu_\kappa t},\qquad t\ge \frac{2}{r_0}.
    \end{align}
    For $t\in [0,2/r_0)$, notice that from \eqref{NDGWP2}, one has
    \[\frac{\nu_\kappa}{r_0}\le C_\alpha.\]
    This, together with the rough bound \eqref{NDGWP36}, gives
    \[\|e^{t\LLL_{\eff}}\|_{\LL(H_{\curl})}\le C_\alpha \exp((C_\alpha+c_\alpha)\nu_\kappa t)e^{-c_\alpha\nu_\kappa t}\le  C_\alpha \exp(2(C_\alpha+c_\alpha)C_\alpha)e^{-c_\alpha\nu_\kappa t}.\]
    Combining the above estimate with \eqref{NDGWP37}, we obtain \eqref{NDGWP34}.
    \end{proof}
    Let $\XX^m$ denote the anisotropic Sobolev space defined in \eqref{INTRO7}. The following linear estimate in $\XX^m$ follows directly from the above semigroup estimate; for completeness, its proof is postponed to Appendix \ref{Duhamel}.
     \begin{coro}\label{NDGWPcoro1}
        Let $m$ be a positive integer and $f\in L^2_{\loc}(0,\infty;H_{\curl})$. Suppose that $w$ solves 
        \begin{align}\label{NDGWP38}
        \begin{cases}
            \partial_t w=\nu_\kappa \Delta w+\beta_\kappa\nabla\HH[n_3w_3]+f,\qquad u=\curl^{-1}w,
            \\(\partial_y w_3-\alpha w_3)|_{y=0}=(w_h-\alpha 
            u_h^\perp)|_{y=0}=0,\\(\partial_y w_3+\alpha w_3)|_{y=1}=(w_h+\alpha
            u_h^\perp)|_{y=1}=0,\\
            w|_{t=0}=w_{\ini}.
        \end{cases}
    \end{align}
    Then, there is a constant $C_{\alpha}>0$ depending only on $\alpha$ such that for any $T>0$,
    \[\sup_{t\le T}\|w(t)\|^2_{m,0}+\nu_{\kappa}\int_0^T\left(\|\nabla w\|^2_{m,0}+\| w\|^2_{m,0}\right)\ddd t\le C_\alpha\|w_{\ini}\|^2_{m,0}+\frac{C_\alpha}{\nu_\kappa}\int_0^T\|f\|^2_{m-1,0}\ddd t.\]
    \end{coro}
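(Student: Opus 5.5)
The plan is to reduce everything to the semigroup bound of Proposition~\ref{GWPprop3}, using the fact that tangential derivatives commute with the linear operator. Since $D=\T^2\times(0,1)$, both the boundary conditions \eqref{NDGWP38}$_{2,3}$ and the operators $\Delta$, $\curl^{-1}$, $\HH$ are invariant under translations in $x$. Hence $\partial_x^l$ commutes with $\LLL_{\eff}$, and the constraint $H_{\curl}$ is preserved, including $\int_{\T^2}\partial_x^l w_3\,\ddd x=0$. Thus $w^{(l)}:=\partial_x^l w$ solves \eqref{NDGWP38} with data $\partial_x^l w_{\ini}$ and forcing $\partial_x^l f$. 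It is cleanest to work mode by mode in the tangential Fourier variable $k$: the operator decouples in $k$, and $\|\cdot\|_{m,0}^2\sim\sum_k\langle k\rangle^{2m}\|\hat w_k\|_{L^2_y}^2$.

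First I will apply the Duhamel formula $w(t)=e^{t\LLL_{\eff}}w_{\ini}+\int_0^t e^{(t-s)\LLL_{\eff}}f(s)\,\ddd s$ together with \eqref{NDGWP34}. This gives
\[\|w(t)\|_{m,0}\le C_\alpha\|w_{\ini}\|_{m,0}+C_\alpha\int_0^t e^{-c_\alpha\nu_\kappa(t-s)}\|f(s)\|_{m,0}\,\ddd s,\]
and by Young's inequality the $L^2_t$ norm of the integral term is at most $(c_\alpha\nu_\kappa)^{-1}\|f\|_{L^2_t\XX^m}$. This already yields the $\nu_\kappa\int\|w\|_{m,0}^2$ part, but with $f$ measured in $\XX^m$ and not $\XX^{m-1}$.

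To gain one derivative on $f$ and obtain the gradient term, I will use an energy estimate. The idea is that the top tangential derivative of $f$ can be moved onto $w$ and absorbed by dissipation. Concretely, I apply $(I-\BB)$ as in \eqref{NDGWP35} to each $w^{(l)}$ and take the $L^2$ inner product with $\tw^{(l)}$. This gives
\[\tfrac12\tfrac{\ddd}{\ddd t}\|\tw^{(l)}\|^2+\nu_\kappa\bigl(\|\nabla\tw^{(l)}\|^2+\alpha\|\tw^{(l)}_3\|^2_{L^2(\partial D)}\bigr)\le C_\alpha\nu_\kappa\|\tw^{(l)}\|\,\|w^{(l)}\|_{H^1}+\langle(I-\BB)\partial_x^l f,\tw^{(l)}\rangle.\]
The first term on the right is handled by Proposition~\ref{propB2}, \eqref{LE6-1} and $\fc_\kappa\le 7\alpha/6$. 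For $|l|=m\ge1$, I integrate one $\partial_x$ by parts in the forcing term; this is legitimate because there is no boundary in $x$. The result is bounded by $C\|f\|_{m-1,0}\|\nabla_x\tw^{(l)}\|$, which is at most $\frac{\nu_\kappa}{4}\|\nabla\tw^{(l)}\|^2+\frac{C}{\nu_\kappa}\|f\|_{m-1,0}^2$. For the $\BB$ part I use that $\BB$ is of order $-1$ and commutes with $\partial_x$.

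The lower-order term $C_\alpha\nu_\kappa\|\tw^{(l)}\|^2$ cannot be closed by Gronwall, since that would produce growth $e^{C\nu_\kappa T}$. Instead I integrate in time and bound $\nu_\kappa\int_0^T\|w\|_{m,0}^2$ using the Duhamel estimate above. The data there must have one derivative less, and I will fix this by interpolation. Mode by mode, the Duhamel estimate with $f$ in $\XX^{m-1}$ controls $\nu_\kappa\int\langle k\rangle^{2m-2}\|\hat w_k\|^2$. The energy estimate controls $\nu_\kappa\int\langle k\rangle^{2m}\|\hat w_k\|^2$, with an error $\nu_\kappa\int\langle k\rangle^{2m}\|\hat w_k\|^2$ times $C_\alpha$. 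The positive gain is $\nu_\kappa|k|^2\langle k\rangle^{2m}$ from $\|\nabla_x\cdot\|^2$, which is not comparable to $\langle k\rangle^{2m}$ at low $|k|$.

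So I split the frequencies at a threshold $K_\alpha$. For $|k|\ge K_\alpha$, the term $\nu_\kappa|k|^2$ absorbs the error $C_\alpha\nu_\kappa$, and the energy estimate closes with $\|f\|_{m-1,0}$ after the integration by parts. For $|k|<K_\alpha$, the weights $\langle k\rangle^{m}$ and $\langle k\rangle^{m-1}$ are comparable up to a constant depending on $\alpha$, so the Duhamel bound with \eqref{NDGWP34} suffices. It also controls $\nu_\kappa\int\|\nabla\hat w_k\|^2$ there, via one energy step with the $\nu_\kappa\|\hat w_k\|^2$ error already controlled. Finally, I recover the $\sup_t$ bound from the same energy inequality, and pass between $\tw$ and $w$ using Proposition~\ref{propB1}.

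I expect the main obstacle to be this low/high frequency bookkeeping. Proposition~\ref{GWPprop3} must be applied to each tangential mode separately, so I need the semigroup to commute with the Fourier projections in $x$. This holds by translation invariance, but it must be checked that $\BB$ and $\HH$ respect it.
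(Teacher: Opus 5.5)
Your argument is correct, but it takes a different route from the paper's proof in Appendix~\ref{Duhamel}.

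\textbf{The paper's route.} The paper does not split frequencies. Besides the decay \eqref{NDGWP34} lifted to $\XX^m$, it proves a parabolic smoothing bound $\|e^{t\LLL_{\eff}}g\|_{m,0}^2\lesssim_\alpha(\nu_\kappa t)^{-1}\|g\|_{m-1,0}^2$ for $t<\nu_\kappa^{-1}$, using the time-weighted energy $t\|\tw\|_{1,0}^2$. It glues this to the exponential decay for $t\ge\nu_\kappa^{-1}$. The resulting kernel has $L^1$ mass $O(\nu_\kappa^{-1})$, and inserting it into Duhamel gives $\int_0^T\|\hw\|_{m,0}^2\lesssim_\alpha\nu_\kappa^{-2}\int_0^T\|f\|_{m-1,0}^2$. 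A final energy estimate, with the same integration by parts in $x$ that you use, then gives the gradient and supremum bounds.

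\textbf{Your route.} You use that the boundary-correction errors are zeroth order and of size $O(\nu_\kappa)$, since $\beta_\kappa\le\frac{7\alpha}{6}\nu_\kappa$. On tangential modes $|k|\ge K_\alpha$, the dissipation $\nu_\kappa|k|^2$ absorbs these errors. There the plain energy estimate closes with $\|f\|_{m-1,0}$ and needs no semigroup input. On the finitely many shells $|k|<K_\alpha$, including $k=0$, the $\XX^m$ and $\XX^{m-1}$ norms are comparable with a constant depending only on $K_\alpha$. So Duhamel with \eqref{NDGWP34} loses nothing there.

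\textbf{Comparison.} Your argument avoids the smoothing estimate altogether. It also makes visible that the resolvent analysis is only needed at low tangential frequencies. The paper's route is the standard semigroup argument (smoothing, decay, Duhamel) and never has to identify a frequency threshold. Both rely on the same translation invariance in $x$ of $\BB$, $\HH$, $\curl^{-1}$ and the boundary conditions. The paper uses it when commuting $\partial_x^l$ with $e^{t\LLL_{\eff}}$, so the check you flag goes through.

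\textbf{One small fix.} The homogeneous term in your first Duhamel display must carry the factor $e^{-c_\alpha\nu_\kappa t}$ from \eqref{NDGWP34}. As written, $\nu_\kappa\int_0^T\|w\|_{m,0}^2$ would pick up a factor $\nu_\kappa T$, and the constant would not be uniform in $T$.
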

    \subsubsection{Nonlinear estimate}
    In this subsection, we prove the global well-posedness of \eqref{NDGWP1} with small initial data. The main result is given below.
    \begin{prop}\label{GWPprop5}
        Let $\alpha,\nu,\kappa>0$ and $m\ge 2$ be an integer. Then, there is a constant $C_{\alpha,m}>0$ such that if 
        \begin{align}
            \label{NDGWP39}w_{\ini}\in H_{\curl}\cap \XX^m,\qquad\|w_{\ini}\|_{m,0}\le \frac{\nu_\kappa}{C_{\alpha,m}},
        \end{align}
        the equation \eqref{NDGWP1} admits a unique global solution satisfying 
        \begin{align}\label{NDGWP40}
            \sup_{t\ge 0}\|w(t)\|^2_{m,0}+\nu_{\kappa}\int_0^\infty\|\nabla w(t)\|^2_{m,0}\ddd t\lesssim_{\alpha,m}\|w_{\ini}\|^2_{m,0}.
        \end{align}
    \end{prop}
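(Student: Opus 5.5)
We argue by a bootstrap (continuity) argument built on the linear estimate of Corollary \ref{NDGWPcoro1}, treating the nonlinearity as forcing. Using the identity $(u\cdot\nabla)w-(w\cdot\nabla)u=\curl((u\cdot\nabla)u)$, we write \eqref{NDGWP1} as \eqref{NDGWP38} with $f:=-\Pi\curl((u\cdot\nabla)u)$, which lies in $H_{\curl}$. Local existence of a solution in $C([0,T_*);\XX^m)\cap L^2(0,T_*;H^1\cap\XX^m)$ follows from a standard fixed point for the mild formulation, using the semigroup of Proposition \ref{GWPprop3}; the tangential derivatives $\partial_x^l$ commute with $\LLL_{\eff}$ and with the boundary conditions. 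Uniqueness follows from the boundary-corrected energy method, as in the uniqueness step of Proposition \ref{propSL3}. Define
\[
E(T):=\sup_{t\le T}\|w(t)\|_{m,0}^2+\nu_\kappa\int_0^T\|\nabla w\|_{m,0}^2\,\ddd t .
\]

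The key step is the nonlinear bound
\[
\|f\|_{m-1,0}\lesssim_{\alpha,m}\|w\|_{m,0}\,\|\nabla w\|_{m,0},
\]
or at least $\|f\|_{m-1,0}^2\lesssim \|w\|^2_{m,0}\|\nabla w\|^2_{m,0}$ up to harmless terms. Since $f$ is one derivative better than $\curl$, we have
\[
\|f\|_{m-1,0}\lesssim\sum_{|l|\le m-1}\|\nabla\partial_x^l((u\cdot\nabla)u)\|,
\]
modulo the bounded operator $\Pi$, which commutes with $\partial_x$ by Corollary \ref{coroA2}. We then expand by Leibniz in $x$. The mixed terms $\partial_x^{l_1}u\cdot\nabla\partial_x^{l_2}\nabla u$ are handled by anisotropic product estimates: in $\T^2$ we put the lower-order factor in $L^\infty_x$ via $H^2_x\hookrightarrow L^\infty_x$ (this is why $m\ge2$ is needed), and in $y$ we use the one-dimensional bound $\|g\|_{L^\infty_y}\lesssim\|g\|^{1/2}\|g\|_{H^1_y}^{1/2}$. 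Throughout we use the Biot--Savart bound $\|u\|_{m,0}+\|\nabla u\|_{m,0}+\|\nabla^2 u\|_{m-1,0}\lesssim_\alpha\|w\|_{m,0}+\|\nabla w\|_{m-1,0}$ from Corollary \ref{coroA4}, which holds because $\partial_x$ commutes with $\curl^{-1}$. Normal derivatives $\partial_y^2u$ are expressed through $\nabla w$. This product estimate is the main obstacle: every term must carry at most one full factor of $\nabla w$ in $\XX^m$, and the other factor must be controlled by $\|w\|_{m,0}$, with at most an $H^1$-in-$y$ norm placed on the low-order factor. I expect some terms to produce $\|w\|_{m,0}^{1/2}\|\nabla w\|_{m,0}^{3/2}$-type bounds; these are equally acceptable after squaring and integrating in time, since $\sup_t\|w\|_{m,0}$ is part of $E$.

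With the product estimate in hand, Corollary \ref{NDGWPcoro1} gives
\[
E(T)\le C_\alpha\|w_{\ini}\|_{m,0}^2+\frac{C_{\alpha,m}}{\nu_\kappa}\sup_{t\le T}\|w\|_{m,0}^2\int_0^T\|\nabla w\|_{m,0}^2\,\ddd t\le C_\alpha\|w_{\ini}\|_{m,0}^2+\frac{C_{\alpha,m}}{\nu_\kappa^2}E(T)^2 .
\]
Suppose $E(T)\le 2C_\alpha\|w_{\ini}\|^2_{m,0}$ and $\|w_{\ini}\|_{m,0}\le\nu_\kappa/C_{\alpha,m}$ with $C_{\alpha,m}$ large. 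Then the quadratic term is at most $\tfrac12E(T)$, so $E(T)\le\tfrac32C_\alpha\|w_{\ini}\|^2_{m,0}$, which is strictly better than the assumption. By continuity of $E$ in $T$, the bound therefore persists for all $T<T_*$. This excludes blow-up of the $\XX^m$ norm, which controls the local existence time, so $T_*=\infty$ and \eqref{NDGWP40} follows. The scaling $\nu_\kappa$ in the smallness condition \eqref{NDGWP39} is exactly what makes the quadratic term dimensionally consistent with the $\nu_\kappa^{-1}$ factor in Corollary \ref{NDGWPcoro1}.
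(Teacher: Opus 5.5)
Your architecture is the paper's: you treat the nonlinearity as the forcing $f$ in \eqref{NDGWP38}, apply Corollary~\ref{NDGWPcoro1}, derive $E\le C_\alpha\|w_{\ini}\|^2_{m,0}+C_{\alpha,m}\nu_\kappa^{-2}E^2$, and bootstrap. The detour through $\curl((u\cdot\nabla)u)$ and $\Pi$ is harmless but unnecessary: $(u\cdot\nabla)w-(w\cdot\nabla)u$ already lies in $H_{\curl}$, and the paper estimates $(u\cdot\nabla)w$ and $(w\cdot\nabla)u$ directly in \eqref{NDGWP41}--\eqref{NDGWP42}.

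Your fallback for the product estimate is wrong, however. A bound $\|f\|_{m-1,0}\lesssim\|w\|_{m,0}^{1/2}\|\nabla w\|_{m,0}^{3/2}$ is \emph{not} acceptable. Squared and inserted into Corollary~\ref{NDGWPcoro1}, it gives $\nu_\kappa^{-1}\int_0^T\|w\|_{m,0}\|\nabla w\|_{m,0}^3\,\ddd t$. But $E(T)$ only contains $\sup_t\|w\|_{m,0}$ and the $L^2_t$-norm of $\|\nabla w\|_{m,0}$, so $\int_0^T\|\nabla w\|_{m,0}^3\,\ddd t$ is not controlled and the bootstrap does not close. Such terms are exactly what the Agmon bound $\|g\|_{L^\infty_y}\lesssim\|g\|^{1/2}\|g\|_{H^1_y}^{1/2}$ produces when applied to $\nabla u$ against a $\nabla^2u$ factor.

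Each term must therefore carry at most one factor $\|\nabla w\|_{m,0}$. The full embedding $H^1_y\hookrightarrow L^\infty_y$ must go on a factor whose $H^1_yH^{m-1}_x$ norm is bounded by $\|w\|_{m,0}$. The paper does this as follows:
\begin{itemize}
\item For $(u\cdot\nabla)w$ it uses $\|u\|_{H^1_yH^{m-1}_x}\lesssim\|w\|_{m,0}$, giving $\|(u\cdot\nabla)w\|_{m-1,0}\lesssim\|w\|_{m,0}\|\nabla w\|_{m,0}$.
\item For $(w\cdot\nabla)u$ it puts $H^1_y$ on $\nabla_xu$, which is bounded through $\|\nabla u\|_{m,0}\lesssim\|w\|_{m,0}$, and on $w_3$, using $\partial_yw_3=-\dive_h w_h$. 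This gives $\|(w\cdot\nabla)u\|_{m-1,0}\lesssim\|w\|_{m,0}^2$.
\end{itemize}

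That $\|w\|_{m,0}^2$ contribution leads to a second gap. It is harmless in the paper only because $\EE_m(T)$ also contains $\nu_\kappa\int_0^T\|w\|_{m,0}^2\,\ddd t$. Corollary~\ref{NDGWPcoro1} supplies this term, since it encodes the exponential decay of Proposition~\ref{GWPprop3}, and then $\nu_\kappa^{-1}\int_0^T\|w\|^4_{m,0}\,\ddd t\le\nu_\kappa^{-2}\EE_m(T)^2$.

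Your $E(T)$ omits this term. The ``harmless terms'' of size $\|w\|_{m,0}^2$ would then leave $\int_0^\infty\|w\|_{m,0}^4\,\ddd t$ uncontrolled, unless you also prove a Poincar\'e inequality $\|w\|_{m,0}\lesssim_\alpha\|\nabla w\|_{m,0}$. That is not immediate: $w_h$ does not vanish on $\partial D$, and its zeroth tangential mode has no mean-zero constraint. The simple fix is to keep $\nu_\kappa\int_0^T\|w\|^2_{m,0}\,\ddd t$ in $E$, as the paper does. With that change and the product estimate above, your argument coincides with the paper's.
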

     \begin{proof}\textbf{Step I.} In this step, we estimate the nonlinear terms. By applying Corollary \ref{coroA4}, the Sobolev embedding $H_y^1\hookrightarrow L^\infty_y$, and the divergence-free condition of $w$, it follows 
        \begin{align}
            \label{NDGWP41}
            \|(w\cdot\nabla)u\|_{m-1,0}&\lesssim_{m}\|\|w_h\|_{H^{m-1}_x}\|\nabla_x u\|_{H^{m-1}_x}\|_{L^2_y}+\|\|w_3\|_{H^{m-1}_x}\|\partial_y u\|_{H^{m-1}_x}\|_{L^2_y}\notag\\&\lesssim_{m}\|w_h\|_{m-1,0}\|\nabla_xu\|_{H^1_yH_x^{m-1}}+\|w_3\|_{H^1_yH^{m-1}_x}\|\partial_y u\|_{m-1,0}\notag\\&\lesssim_m\|w\|_{m,0}^2+\|w_3\|_{m-1,0}\|w\|_{m-1,0}+\|\partial_yw_3\|_{m-1,0}\|w\|_{m-1,0}\notag\\&\lesssim_m\|w\|_{m,0}^2+\|\dive w_h\|_{m-1,0}\|w\|_{m-1,0}\notag\\
            &\lesssim_m\|w\|_{m,0}^2.
        \end{align}
    Similarly,
    \begin{align}
        \label{NDGWP42}
        \|(u\cdot \nabla)w\|_{m-1,0}\lesssim_m\|u\|_{H^1_yH^{m-1}_x}\|\nabla w\|_{m-1,0}\lesssim_m\|w\|_{m,0}\|\nabla w\|_{m,0}.
    \end{align}

    \noindent\textbf{Step I\!I.} Define 
    \[\EE_m(T):=\sup_{t\le T}\|w(t)\|^2_{m,0}+\nu_{\kappa}\int_0^T(\|\nabla w\|^2_{m,0}+\| w\|^2_{m,0})\ddd t.\]
    By applying Corollary \ref{NDGWPcoro1} combined with \eqref{NDGWP41} and \eqref{NDGWP42}, it follows
    \begin{align*}
        \EE_m(T)&\le C_\alpha \|w_{\ini}\|^2_{m,0}+\frac{C_{\alpha,m}}{\nu_\kappa}\int_0^T\|w\|_{m,0}^2(\|\nabla w\|_{m,0}^2+\|w\|_{m,0}^2)\ddd t\\&\le C_\alpha \|w_{\ini}\|^2_{m,0}+\frac{C_{\alpha,m}}{\nu^2_\kappa}\EE_m^2(T).
    \end{align*}
    Let 
    \[T^*:=\sup\{T>0| \EE_m(T)\le 2C_\alpha \|w_{\ini}\|^2_{m,0}\}>0.\]
    For any $T<T^*$, one has
    \[\EE_m(T)\le C_\alpha \|w_{\ini}\|^2_{m,0}+\frac{4C_{\alpha,m}C^2_\alpha}{\nu^2_\kappa}\|w_{\ini}\|^4_{m,0}.\]
    Hence, if 
    \[\|w_{\ini}\|^2_{m,0}\le\frac{\nu^2_\kappa}{8C_{\alpha,m}C_\alpha},\]
    one obtains 
    \[\EE_m(T)\le \frac{3C_\alpha}{2} \|w_{\ini}\|^2_{m,0}<2C_\alpha \|w_{\ini}\|^2_{m,0},\]
    which implies $T^*=\infty$ and \eqref{NDGWP40}.
    \end{proof}

    \subsection{The degenerate case}
    Now we consider the degenerate case. After correcting the boundary conditions, the limiting equation without cut-off is given by
    \begin{align}
        \label{DGWP1}\begin{cases}
            \partial_t\tw+(u\cdot \nabla)\tw-(\tw\cdot\nabla)u=\nu \FJ\Delta\FJ^{-1}\tw+\kappa\FJ\FA_{\eff}\FJ^{-1}\tw+ \RR[w],
            \\(\partial_y \tw_3-\alpha \tw_3)|_{y=0}=(\partial_y \tw_3+\alpha\tw_3)|_{y=1}=\tw_h|_{y=0,1}=0,\\
    \tw|_{t=0}=\tw_{\ini},
        \end{cases}
    \end{align}
    where $\FJ:=I-\BB$, $\tw:=\FJ w$, $\FA_{\eff}$ is defined in \eqref{SL35}, and
    \begin{align}
        \label{DGWP2}\RR[w]:=\BB(u\cdot\nabla)w-\BB(w\cdot\nabla)u-(u\cdot\nabla)\BB w+(\BB w\cdot\nabla)u.
    \end{align}
    Introduce the effective dissipation operator
    \[\LLL_{\eff}\tw:=\nu \FJ\Delta\FJ^{-1}\tw+\kappa\FJ\FA_{\eff}\FJ^{-1}\tw\]
    on the domain
    \[\DDDD(\LLL_{\eff}):=\{\tw\in H^2|\mbox{ $\tw$ satisfies the corrected boundary condition \eqref{DGWP1}$_2$}\}.\]
    Consider the tangential Fourier expansion 
    \[\tw=\ssum_{k}e^{2\pi ik\cdot x}\htw(k,y),\]
    and decompose $\tw$ into its zeroth tangential mode and non-zero modes
    \[\tw_=:=\htw(0,y),\qquad \tw_{\neq}:=\ssum_{k\neq0}e^{2\pi ik\cdot x}\htw(k,y).\]
    Since 
    \[\FJ\FA_{\eff}\FJ^{-1}\tw_==0,\]
    the effective dissipation operator $\mathcal L_{\eff}$ does not produce enhanced dissipation on the zeroth mode $\tw_=$. Therefore, we shall treat the zeroth tangential mode and the non-zero modes separately. Notice that the zeroth modes of $u$, $w$, and $\tw$ are shear flows. Consequently, the nonlinearity 
    \[\NN:=(u\cdot \nabla)\tw-(\tw\cdot\nabla)u\]
    and the remainder $\RR[w]$ in equation \eqref{DGWP1} contain no zero--zero nonlinear interaction. Thus, the possible growth of $\tw_=$ is driven only by interactions involving the non-zero modes $\tw_{\neq}$. This structure allows us to control $\tw_=$ through the exponential stability estimate for the non-zero modes $\tw_{\neq}$.

    \subsubsection{Linear decay estimate for the non-zero modes}
    We establish a semigroup estimate for $e^{t\LLL_{\eff}}\PPP_{\neq}$, where $\PPP_{\neq}$ denotes the projection onto the non-zero modes
    \[\PPP_{\neq}f:=f-\PPP_= f,\qquad \PPP_= f:=\int_{\T^2} f(x,y)\ddd x.\]
    To this end, for each $k\neq0$, we define 
    \[H_{\curl,k}:=\{f=f(y)\in L^2| 2\pi ik\cdot f_h+f_3'=0\},\]
    and 
    \[\TIH_{\curl,k}:=\{f=f(y)\in L^2| 2\pi ik\cdot f_h+f_3'=\alpha(1-2y)f_3\}.\]
    Let $\FJ_k,\Pi_k,\BB_k,\Delta_k$ be the operators induced by $\FJ,\Pi,\BB,\Delta$ on the $k$-th tangential Fourier mode, respectively. More precisely, they are defined by
    \[\FJ(fe^{2\pi ik\cdot x})=e^{2\pi ik\cdot x}\FJ_k f,\qquad \Pi(fe^{2\pi ik\cdot x})=e^{2\pi ik\cdot x}\Pi_k f,\qquad \BB(fe^{2\pi ik\cdot x})=e^{2\pi ik\cdot x}\BB_k f,\]
    and 
    \[\Delta_k f:=-4\pi^2|k|^2f+f''.\]
    These operators are well-defined, since $\FJ$, $\Pi$, $\BB$, $\Delta$ commute with the tangential Fourier transform. In particular, they inherit the bounds of the corresponding full operators, e.g.
    \begin{align}
        \label{DGWP4-1}\sup_{k\neq0}&\left(\|\FJ_k\|_{\LL(H_{\curl,k};\TIH_{\curl,k})}+\|\FJ_k^{-1}\|_{\LL(\TIH_{\curl,k};H_{\curl,k})}\right)\notag\\&\qquad\qquad\qquad\qquad\qquad+\sup_{k\neq0}\left(\|\Pi_k\|_{\LL(L^2_y;H_{\curl,k})}+\|\BB_k\|_{\LL(H_{\curl,k};H^1_y)}\right)<\infty.
    \end{align}
    Let
    \[\MM_k:=\diag\left\{\frac{1}{4}I_2+\frac{1}{2}\frac{k\otimes k}{|k|^2},1\right\},\]
    and 
    \[\LLL_{\eff,k}:=\nu\FJ_k\Delta_k\FJ_k^{-1}-4\pi^2|k|^2\kappa\FJ_k\Pi_k\MM_k\FJ_k^{-1}.\]
    By construction, $\LLL_{\eff,k}$ is the restriction of $\LLL_{\eff}$ to the $k$-th mode. The following resolvent estimate holds for $\LLL_{\eff,k}$.
    \begin{prop}
        There are $C_\alpha,c_\alpha>0$ depending only on $\alpha$ such that for any $\kappa\ge C_\alpha\nu$ and $k\neq 0$, 
        \begin{align}
            \label{DGWP5}\sup_{\realpart\lambda\ge -c_\alpha\kappa|k|^2}\| (\lambda-\LLL_{\eff,k})^{-1}\|_{\LL(\TIH_{\curl,k})}\le \frac{C_\alpha}{\kappa}.
        \end{align}
    \end{prop}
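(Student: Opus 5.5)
We will prove \eqref{DGWP5} by conjugating back to the uncorrected variable and arguing by contradiction, as in Step V of the proof of Proposition \ref{GWPprop1}.

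\textbf{Reduction.} Since $\FJ_k:H_{\curl,k}\to\TIH_{\curl,k}$ is an isomorphism with bounds uniform in $k$ by \eqref{DGWP4-1}, and $(\lambda-\LLL_{\eff,k})^{-1}=\FJ_k(\lambda-\nu\Delta_k+4\pi^2|k|^2\kappa\Pi_k\MM_k)^{-1}\FJ_k^{-1}$, it suffices to bound the resolvent of
\[
A_k:=\nu\Delta_k-4\pi^2|k|^2\kappa\Pi_k\MM_k
\]
on $H_{\curl,k}$, with the Navier boundary conditions. We write the resolvent equation as $\lambda w-A_kw=f$ with $w\in H_{\curl,k}$.

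\textbf{Energy estimate for $\realpart\lambda$ large.} The dissipative part has a good sign:
\[
\langle \Pi_k\MM_kw,w\rangle=\langle\MM_kw,w\rangle\ge \tfrac14\|w\|^2,
\]
because $\Pi_k$ is an orthogonal projection fixing $w$ and $\MM_k\ge\frac14 I$. For the viscous part, the quadratic form $\langle-\Delta_kw,w\rangle$ is not coercive under the nonhomogeneous Navier conditions. We therefore pass to $\tw=\FJ_kw$ and use the energy identity from Step I of Proposition \ref{propLS1}. This gives
\[
\realpart\langle-\nu\Delta_kw,w\rangle\ge -C_\alpha\nu\|w\|^2,
\]
where the constant is uniform in $k$ because $\BB_k$ is of order $-1$ and $[\BB,\partial_y^2]$ is bounded by Proposition \ref{propB2}.

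The cross term $\kappa|k|^2\langle\FJ_k\Pi_k\MM_kw,\FJ_kw\rangle$ is not a pure square, since $\FJ_k$ is not unitary. We split $\FJ_k=I-\BB_k$. The leading part $\langle\Pi_k\MM_kw,w\rangle$ is coercive. The pieces containing $\BB_k$ carry a factor $\|\BB_kw\|\lesssim_\alpha|k|^{-1}\|w\|$, because $\curl^{-1}$ gains one tangential derivative on each mode $k\neq0$. This gain is the point to check carefully; it should follow from the mode-wise form of Corollary \ref{coroA4}.

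Combining these bounds, for $\kappa\ge C_\alpha\nu$ we obtain
\[
\realpart\langle(\lambda-A_k)w,w\rangle\ge \Big(\realpart\lambda+\tfrac{\kappa|k|^2}{8}\Big)\|w\|^2-C_\alpha|k|\kappa\|w\|^2 .
\]

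\textbf{Main obstacle.} The lower-order error $C_\alpha|k|\kappa$ is dominated only when $|k|$ is large. For the finitely many small $k$, and for $\realpart\lambda$ near $-c_\alpha\kappa|k|^2$, the energy estimate is not sufficient. There we rescale by $\kappa|k|^2$, so the normalized operator is $\epsilon\Delta_k/|k|^2-\Pi_k\MM_k$ with $\epsilon=\nu/\kappa\le C_\alpha^{-1}$, and argue by contradiction as in Step V of Proposition \ref{GWPprop1}.

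Suppose $\|w^n\|=1$ and $f^n\to0$, and consider the two cases.

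\emph{Bounded $\lambda_n$.} Compactness in $y$, coming from the $\epsilon\partial_y^2$ term together with the bound $|k|\ge1$, yields a limit $w^\infty$. This limit satisfies $\lambda_\infty w^\infty+\Pi_k\MM_kw^\infty-\epsilon\Delta_kw^\infty/|k|^2=0$ with $\realpart\lambda_\infty\ge-c_\alpha$. Testing against $u=\curl^{-1}w$, as in Lemma \ref{lemmaA4}, gives an identity whose real part is strictly positive unless $w^\infty=0$, provided $c_\alpha<\frac18$. This contradicts $\|w^\infty\|=1$.

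\emph{$|\lambda_n|\to\infty$.} The resolvent of the sectorial part decays like $|\lambda_n|^{-1}$, which forces $w^n\to0$ and again gives a contradiction.

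Uniformity in $k$ also requires ruling out $|k_n|\to\infty$ along the sequence. This case is covered by the energy estimate above, since the error $C_\alpha|k|\kappa$ is then absorbed. Undoing the $\kappa|k|^2$ scaling returns the constant $C_\alpha/\kappa$ stated in \eqref{DGWP5}.
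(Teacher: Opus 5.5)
Your large-$|k|$ energy estimate (using $\|\BB_k\|_{\LL(H_{\curl,k};L^2_y)}\lesssim_\alpha |k|^{-1}$) is sound, and so is your large-$|\lambda|$ case. Together they match Steps II--III of the paper.

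\textbf{The gap: bounded $k$, bounded $\lambda_n/|k|^2$.} The constant must be uniform in $\epsilon=\nu/\kappa\in(0,C_\alpha^{-1}]$. So your contradiction sequence must let $\epsilon_n$ vary, and in particular allow $\epsilon_n\to0$. When you negate the statement you may also send the threshold $C_\alpha\to\infty$, as the paper does; then $\epsilon_n\to0$ is the only case. In that case the only $y$-derivative bound is $\epsilon_n\|g_n'\|^2\lesssim 1$, so the ``compactness in $y$ coming from the $\epsilon\partial_y^2$ term'' is not available. The sequence $g_n$ converges only weakly, its weak limit may be $0$ although $\|g_n\|=1$, and the viscous term drops out of the limit equation. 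If you instead keep $\epsilon$ fixed, your constant depends on $\nu/\kappa$, which is not the claimed $C_\alpha/\kappa$.

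Your last step fails independently of this. In Step V of Proposition \ref{GWPprop1} the limit equation was a pure Laplacian, so lifting to the velocity gave a signed identity. Here, testing against $u=\curl^{-1}w$ leaves $\langle \Pi_k\MM_k w,u\rangle$, which has no usable sign. Testing against $w$ or $\FJ_k w$ instead brings back the boundary terms and the $O(|k|^{-1})$ cross terms that cannot be absorbed for small $|k|$.

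\textbf{The missing idea.} Make the degeneration $\epsilon_n\to0$ work for you, using compactness of $\BB_k$ on a fixed mode.
\begin{enumerate}
\item Suppose $g_n\rightharpoonup 0$ with $\|g_n\|=1$. Set $\bar g_n:=\FJ_k^{-1}g_n$. Then $\bar g_n-g_n=\BB_k\bar g_n\to0$ strongly, so the non-normal cross terms vanish and $\liminf_n\realpart\langle \FJ_k\Pi_k\MM_k\FJ_k^{-1}g_n,g_n\rangle\ge\frac14$.
\item Insert this into the energy inequality for the corrected variable, where $\epsilon_n\Delta_k$ is dissipative. With $\lambda_n/|k|^2\to\mu$, you get $\realpart\mu+\pi^2\le0$, contradicting $\realpart\mu\ge -c_\alpha$. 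Hence the weak limit $g_\infty$ is nonzero.
\item Since $\epsilon_n\|g_n'\|^2$ is bounded, $\epsilon_n\Delta_k g_n\to0$ in distributions. The limit equation is $\mu g_\infty+4\pi^2\FJ_k\Pi_k\MM_k\FJ_k^{-1}g_\infty=0$.
\item Test this against the eigenfunction $\FJ_k^{-1}g_\infty\in H_{\curl,k}$. Since $\MM_k\ge\frac14$, this gives $\realpart\mu\le-\pi^2$, again a contradiction.
\end{enumerate}
This is where $\kappa\ge C_\alpha\nu$ with $C_\alpha$ large is used essentially. It also means the viscous limit problem with $\epsilon$ bounded below never has to be analysed; your argument does not control that problem either.
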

    \begin{proof}
        By rescaling, it suffices to prove 
        \begin{align}
            \label{DGWP6}\sup_{\realpart\lambda\ge -c_\alpha|k|^2}\| (\lambda-\FG_{\epsilon,k})^{-1}\|_{\LL(\TIH_{\curl,k})}\le C_\alpha
        \end{align}
        for any $\epsilon\in(0,C^{-1}_\alpha]$ and $k\neq0$, where
        \[\FG_{\epsilon,k}:=\epsilon\FJ_k\Delta_k\FJ_k^{-1}-4\pi^2|k|^2\FJ_k\Pi_k\MM_k\FJ_k^{-1}\]
        is the normalized effective dissipation operator. The proof is divided into the following steps.

        \noindent\textbf{Step I.} First, we establish an auxiliary estimate. Let $\{g_n\}_{n\ge 1}\subset \TIH_{\curl,k}$ satisfy \begin{align}
            \label{DGWP7}\|g_n\|=1 \qquad\mbox{and}\qquad g_n\rightharpoonup 0,\qquad \mbox{as $n\to\infty$.}
        \end{align}
        Then,
        \begin{align}
            \label{DGWP8}\liminf_{n\to\infty}\realpart\langle \FJ_k\Pi_k\MM_k\FJ_k^{-1}g_n,g_n\rangle\ge \frac{1}{4}.
        \end{align}
        To see this, notice that $\BB_k$ is a compact linear operator from $H_{\curl,k}$ to $L^2$. Consequently, for $\bg_n:=\FJ_k^{-1}g_n$, one has
        \[\bg_n-g_n=(I-\FJ_k)\FJ_k^{-1}g_n=\BB_k\FJ_k^{-1}g_n\to0\]
        strongly in $L^2$, as $n\to\infty$. This implies
        \begin{align*}
            \lim_{n\to\infty}\|\bg_n\|=1.
        \end{align*}
        Combining this with the compactness of $\BB_k$ and the fact that $g_n\rightharpoonup 0$ as $n\to\infty$,  it follows
        \begin{align}\label{DGWP8-1}
           \notag \langle \FJ_k\Pi_k\MM_k\FJ_k^{-1}g_n,g_n\rangle&=\langle \FJ_k\Pi_k\MM_k\bg_n,\FJ_k\bg_n\rangle\\\notag&=\langle \Pi_k\MM_k\bg_n,\bg_n\rangle-\langle \BB_k\Pi_k\MM_k\bg_n,\bg_n\rangle-\langle \FJ_k\Pi_k\MM_k\bg_n,\BB_k\bg_n\rangle\\&=\langle \MM_k\bg_n,\bg_n\rangle+o(1).
        \end{align}
        As $\MM_k$ is orthogonally similar to $\diag\{\frac{3}{4},\frac{1}{4},1\}$, this yields \eqref{DGWP8}.

        \noindent\textbf{Step I\!I.} In this step, we establish the resolvent estimate \eqref{DGWP6} with large $|\lambda|$. Consider the resolvent equation 
        \begin{align}
            \label{DGWP9}
            f&=(\lambda-\FG_{\epsilon,k})g=\lambda g-\epsilon\FJ_k\Delta_k\FJ_k^{-1}g+4\pi^2|k|^2\FJ_k\Pi_k\MM_k\FJ_k^{-1}g\notag\\&=\lambda g-\epsilon\Delta_kg-\epsilon[\FJ_k,\partial_y^2]\FJ_k^{-1}g+4\pi^2|k|^2\FJ_k\Pi_k\MM_k\FJ_k^{-1}g.
        \end{align}
        Here, we view $\Delta_k$ as an operator from 
        \[\DDDD(\Delta_k):=\{g\in H^2_y| \mbox{$g$ satisfies the homogeneous boundary condition \eqref{DGWP1}$_2$}\}\]
        to $L^2_y$. Since
        \begin{align*}
            \langle -\Delta_k g,g\rangle=4\pi^2|k|^2 \|g\|^2+\|g'\|^2+\alpha|g_3(0)|^2+\alpha|g_3(1)|^2\ge (4\pi^2|k|^2 +C_\alpha)\|g\|^2,
        \end{align*}
        one has
        \begin{align}
            \label{DGWP10}\sigma(-\Delta_k)\subset (4\pi^2,\infty).
        \end{align}
        On the other hand, by the spectral theorem for the positive self-adjoint operator $-\Delta_k$, one has 
        \[\|(\lambda-\epsilon\Delta_k)^{-1}\|_{\LL(L_y^2)}=\sup_{j\ge 1}|\lambda+\epsilon\fb_j|^{-1},\]
        where $\fb_j$ are the eigenvalues of $-\Delta_k$. This, together with \eqref{DGWP10}, gives
        \begin{align}
            \label{DGWP11}
            \|(\lambda-\epsilon\Delta_k)^{-1}\|_{\LL(L_y^2)}\le \sup_{\fb>4\pi^2}|\lambda+\epsilon\fb|^{-1}.
        \end{align}
        Now assume that $\realpart \lambda\ge-\frac{\pi^2}{2}|k|^2$ and $|\lambda|\ge 2\pi^2|k|^2.$ Notice that if $|\imaginarypart \lambda|>\frac{1}{2}|\lambda|$, then 
        \begin{align*}
            |\lambda+\epsilon\fb|^{-1}\le |\imaginarypart \lambda|^{-1}\le 2|\lambda|^{-1};
        \end{align*}
        if $|\realpart \lambda|>\frac{1}{2}|\lambda|$, then $|\realpart \lambda|>\pi^2|k|^2$, which implies $\realpart \lambda>0$ and thus
        \begin{align*}
            |\lambda+\epsilon\fb|^{-1}\le (\realpart \lambda+\epsilon\fb)^{-1}\le 2|\lambda|^{-1}.
        \end{align*}
        As $|\realpart\lambda|+|\imaginarypart\lambda|\ge |\lambda|$, one of the above two cases must hold. Consequently, from \eqref{DGWP11}, it follows
        \begin{align*}
            \|(\lambda-\epsilon\Delta_k)^{-1}\|_{\LL(L_y^2)}\le 2|\lambda|^{-1},
        \end{align*}
        which, together with Propositions \ref{propB1} and \ref{propB2}, leads to
        \begin{align*}
            \|g\|&=\|(\lambda-\epsilon\Delta_k)^{-1}(f+\epsilon[\FJ_k,\partial_y^2]\FJ_k^{-1}g-4\pi^2|k|^2\FJ_k\Pi_k\MM_k\FJ_k^{-1}g)\|\\&\le 2|\lambda|^{-1}\|f\|+ 2|\lambda|^{-1}\|\epsilon[\FJ_k,\partial_y^2]\FJ_k^{-1}g-4\pi^2|k|^2\FJ_k\Pi_k\MM_k\FJ_k^{-1}g\|\\&\le 2|\lambda|^{-1}\|f\|+ C_\alpha|\lambda|^{-1}(|k|^2+1)\|g\|.
        \end{align*}
        By taking
        \begin{align}
            \label{DGWP12}\realpart \lambda\ge-\frac{\pi^2}{2}|k|^2,\qquad |\lambda|\ge (2\pi^2+4C_\alpha)|k|^2,
        \end{align}
        in the above estimate, one gets the a priori estimate
        \begin{align*}\|g\|\le 4|\lambda|^{-1}\|f\|,
        \end{align*}
        which ensures 
        \begin{align}
            \label{DGWP13}\|(\lambda-\FG_{\epsilon,k})^{-1}\|_{\LL(\TIH_{\curl,k})}\le 4|\lambda|^{-1}
        \end{align}
        for any $\epsilon\in(0,1]$ and $k\neq0$.

        \noindent\textbf{Step I\!I\!I.} By contradiction, suppose that \eqref{DGWP6} fails to hold. Then, for any $c_\alpha\in(0,\frac{\pi^2}{2})$ and $n\ge 1$, there are $\epsilon_n,k_n,\lambda_n$ and functions $f_n,g_n$ satisfying
        \begin{align}
            \label{DGWP14}\epsilon_n\in\left(0,\frac{1}{2n}\right],\qquad k_n\neq0,\qquad \realpart \lambda_n \ge -c_\alpha|k_n|^2,
        \end{align}
        and
        \begin{align}
            \label{DGWP15} \|f_n\|\le\frac{1}{n},\qquad \|g_n\|=1,\qquad (\lambda_n-\FG_{\epsilon_n,k_n})g_n=f_n.
        \end{align}
        We claim that $\sup_{n\ge 1}\frac{|\lambda_n|}{|k_n|^2}<\infty$. If not, passing to a subsequence if necessary, one has
        \[\lim_{n\to\infty}|\lambda_n|\ge\lim_{n\to\infty}\frac{|\lambda_n|}{|k_n|^2}=\infty.\]
        Together with \eqref{DGWP13}, this implies 
        \[\| g_n\|\le \frac{4}{n}|\lambda_n|^{-1}\to0,\]
        which contradicts \eqref{DGWP15}. Therefore, without loss of generality, one may assume $\frac{\lambda_n}{|k_n|^2}\to \mu$ for some $\mu$ satisfying
        \begin{align}
            \label{DGWP16}
            \realpart \mu\ge -c_\alpha.
        \end{align}
        
        Next, we show that $\sup_{n\ge 1}|k_n|<\infty.$ To see this, we need the following auxiliary estimate 
        \begin{align}
            \label{DGWP17}\|\BB_k\|_{\LL(H_{\curl,k};L^2_y)}\lesssim_\alpha |k|^{-1}.
        \end{align}
        Indeed, since $\BB_k$ is the restriction of $\BB$ to the $k$-th mode, by applying Corollary \ref{coroA4}, one has 
        \begin{align*}
            |k|\|\BB_k \tilde g\|\lesssim\|\BB(\nabla_x(\tilde ge^{2\pi i k\cdot x}))\|\lesssim_\alpha \|\tilde g\|,
        \end{align*}
        which ensures \eqref{DGWP17}. Now let us suppose by contradiction that $|k_n|\to\infty$. Then, from the resolvent equation
        \begin{align}
            \label{DGWP17-1}\lambda_n g_n-\epsilon_n\Delta_kg_n-\epsilon_n[\FJ_k,\partial_y^2]\FJ_k^{-1}g_n+4\pi^2|k_n|^2\FJ_k\Pi_k\MM_k\FJ_k^{-1}g_n=f_n,
        \end{align}
        it follows
        \begin{align}
            \label{DGWP18}\frac{\lambda_n}{|k_n|^2}  g_n-\frac{\epsilon_n}{|k_n|^2}\Delta_kg_n-\frac{\epsilon_n}{|k_n|^2}[\FJ_k,\partial_y^2]\FJ_k^{-1}g_n+4\pi^2\FJ_k\Pi_k\MM_k\FJ_k^{-1} g_n=\frac{f_n}{|k_n|^2}.
        \end{align}
        Here, to avoid double subscripts, we omit the dependence of $k$ in $\FJ_k,\Delta_k,\Pi_k,\MM_k$ on $n$. Repeating the derivation of \eqref{DGWP8-1} and using \eqref{DGWP4-1}, \eqref{DGWP15}, and \eqref{DGWP17}, it follows
        \begin{align*}
            \realpart\langle\FJ_k\Pi_k\MM_k\FJ_k^{-1} g_n, g_n\rangle=\realpart\langle\MM_k\FJ_k^{-1} g_n,\FJ_k^{-1} g_n\rangle+o(1),
        \end{align*}
        which implies
        \begin{align}
            \label{DGWP19}
            \liminf_{n\to\infty}4\pi^2\realpart\langle\FJ_k\Pi_k\MM_k\FJ_k^{-1} g_n, g_n\rangle\ge\pi^2\|\FJ_k^{-1} g_n\|^2\ge c^*_\alpha
        \end{align}
        for some constant $c^*_\alpha>0$, where the last inequality follows from the uniform boundedness of $\FJ_k$ and $\FJ_k^{-1}$ in \eqref{DGWP4-1}. Moreover, by applying Proposition \ref{propB2}, one has
        \begin{align}\label{DGWP20}
        \frac{\epsilon_n}{|k_n|^2}|\langle[\FJ_k,\partial_y^2]\FJ_k^{-1}g_n,g_n\rangle|\le \frac{C_\alpha\epsilon_n}{|k_n|^2}\|g_n\|^2=\frac{C_\alpha\epsilon_n}{|k_n|^2}\to0.
        \end{align}
        Therefore, by taking inner product with $ g_n$ on both sides of \eqref{DGWP18}, sending $n\to\infty$, and using \eqref{DGWP14}, \eqref{DGWP16}, \eqref{DGWP19}, and \eqref{DGWP20}, it follows
        \[c^*_{\alpha}-c_\alpha\le \realpart\mu+c^*_{\alpha}\le 0.\]
        By taking $c_\alpha<c_\alpha^*$, one gets a contradiction.

        \noindent\textbf{Step I\!V.} Since $\{k_n\}_{n\ge1}$ is bounded, we may pass to a subsequence and assume that $k_n\equiv k$ for some fixed $k\neq0$. Therefore, one has $\lambda_n\to \mu|k|^2$ with $\mu$ satisfying \eqref{DGWP16}. Since $g_n$ is uniformly bounded in $L^2$, there is a function $g_\infty$ such that 
        \[g_n\rightharpoonup g_\infty.\]
        We claim that $g_\infty\neq0$. Indeed, by contradiction, suppose that $g_\infty=0$. Applying the standard energy estimate to the resolvent equation \eqref{DGWP17-1}, it follows
        \begin{align}\label{DGWP21}
            \realpart \lambda_n \|g_n\|^2+\epsilon_n\|g'_n\|^2+4\pi^2|k|^2\epsilon_n\|g_n\|^2+&4\pi^2|k|^2\realpart\langle\FJ_k\Pi_k\MM_k\FJ_k^{-1}g_n,g_n\rangle\notag\\&\le (\|f_n\|+\epsilon_n\|[\FJ_k,\partial_y^2]\FJ_k^{-1}g_n\|)\|g_n\|,
        \end{align}
        which, together with sending $n\to\infty$, and using \eqref{DGWP8}, \eqref{DGWP14}, \eqref{DGWP15}, and Proposition \ref{propB2}, yields
        \[(\pi^2-c_\alpha)|k|^2=\realpart \mu|k|^2+\pi^2|k|^2\le 0.\]
        As $c_\alpha<\frac{\pi^2}{2}\wedge c^*_\alpha$, one gets a contradiction. 

        Now we are in a position to conclude the proof. Indeed, from \eqref{DGWP21}, it follows
        \[\epsilon_n\|g'_n\|^2\le 4\pi^2|k|^2|\langle\FJ_k\Pi_k\MM_k\FJ_k^{-1}g_n,g_n\rangle|+(\|f_n\|+\epsilon_n\|[\FJ_k,\partial_y^2]\FJ_k^{-1}g_n\|)\|g_n\|\le C_{\alpha,k}.\]
        Thus, for any test function $\varphi\in C_0^\infty(0,1)$
        \begin{align*}
            \epsilon_n|\langle -\Delta_k g_n, \varphi\rangle|&\le \epsilon_n(4\pi^2|k|^2\|g_n\|\|\varphi\|+\|g'_n\|\|\varphi'\|)\\&\le 4\pi^2|k|^2\epsilon_n\|\varphi\|+\sqrt{\epsilon_n}C_{\alpha,k}\|\varphi'\|\to0.
        \end{align*}
        Notice that 
        \[f_n+\epsilon_n[\FJ_k,\partial_y^2]\FJ_k^{-1}g_n\to0\qquad\mbox{in $L^2$},\]
        and 
        \[\lambda_n g_n+4\pi^2|k|^2\FJ_k\Pi_k\MM_k\FJ_k^{-1}g_n\rightharpoonup\mu|k|^2g_\infty+4\pi^2|k|^2\FJ_k\Pi_k\MM_k\FJ_k^{-1}g_\infty\qquad\mbox{in $L^2$}.\]
        By taking $n\to\infty$ in the resolvent equation \eqref{DGWP17-1}, it follows
        \[\mu|k|^2g_\infty+4\pi^2|k|^2\FJ_k\Pi_k\MM_k\FJ_k^{-1}g_\infty=0,\]
        where the equality holds in the sense of distributions. Moreover, by density argument, the above equation holds in $\TIH_{\curl,k}$, as the left-hand side belongs to $\TIH_{\curl,k}$. This yields
        \[\mu|k|^2\FJ_k^{-1}g_\infty+4\pi^2|k|^2\Pi_k\MM_k\FJ_k^{-1}g_\infty=0,\]
        and thus 
        \[\mu|k|^2\in\sigma(-4\pi^2|k|^2\Pi_k\MM_k).\]
        However, for any $\phi\in H_{\curl,k}$, one has 
        \[\realpart\langle -4\pi^2|k|^2\Pi_k\MM_k\phi,\phi\rangle\le -\pi^2|k|^2\|\phi\|^2,\]
        which yields
        \[\mu|k|^2\in\sigma(-4\pi^2|k|^2\Pi_k\MM_k)\subset (-\infty,-\pi^2|k|^2].\]
        This contradicts the fact $\realpart \mu \ge -c_\alpha>-\frac{\pi^2}{2}$ and thus completes the proof.
    \end{proof}
    Together with the Parseval theorem, the above resolvent estimate ensures
    \[\sup_{\realpart\lambda\ge -c_\alpha\kappa}\|(\lambda-\LLL_{\eff})^{-1}\PPP_{\neq}\|_{\LL(\TIH_{\curl})}\le \frac{C_\alpha}{\kappa}.\]
    Combining this  with the exponential stability criterion in Theorem~\ref{TheoremES}, the following decay estimate for the semigroup $e^{t\LLL_{\eff}}\PPP_{\neq}$ holds. The proof is identical to that of Proposition~\ref{GWPprop3}, and is therefore omitted.
    \begin{prop}Let $\alpha,\nu>0$. There exist constants $C_\alpha,c_\alpha>0$ depending only on $\alpha$ such that for any $\kappa\ge C_\alpha\nu$,
    \[\|e^{t\LLL_{\eff}}\PPP_{\neq}\|_{\LL(\TIH_{\curl})}\le C_\alpha e^{-c_\alpha\kappa t}.\]
    \end{prop}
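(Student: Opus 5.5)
The argument will mirror the proof of Proposition~\ref{GWPprop3}. It combines a crude semigroup bound on the non-zero modes with the resolvent estimate \eqref{DGWP5}, summed over modes via Parseval, and then applies Theorem~\ref{TheoremES} with $H=\PPP_{\neq}\TIH_{\curl}$.

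Since $\LLL_{\eff}$ commutes with the tangential Fourier transform, $e^{t\LLL_{\eff}}\PPP_{\neq}$ acts diagonally as $e^{t\LLL_{\eff,k}}$ on each mode $k\neq0$. The space $\PPP_{\neq}\TIH_{\curl}$ is invariant, and it is a Hilbert space. Parseval together with \eqref{DGWP5} gives
\[
r_0^{-1}:=\sup_{\realpart\lambda> -c_\alpha\kappa}\|(\lambda-\LLL_{\eff})^{-1}\PPP_{\neq}\|\le \frac{C_\alpha}{\kappa},
\]
because $|k|\ge1$ implies $-c_\alpha\kappa|k|^2\le -c_\alpha\kappa$. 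This holds for all $\kappa\ge C_\alpha\nu$.

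The first step is the rough bound $\|e^{t\LLL_{\eff}}\PPP_{\neq}\|\le C_\alpha e^{C_\alpha(\nu+\kappa)t}$. I would take the equation $\partial_t\tw=\nu\FJ\Delta\FJ^{-1}\tw+\kappa\FJ\FA_{\eff}\FJ^{-1}\tw$ and write $w=\FJ^{-1}\tw$. Using $\FJ\Delta\FJ^{-1}=\Delta-[\FJ,\partial_y^2]\FJ^{-1}$, Proposition~\ref{propB2} gives the bound $\|[\BB,\partial_y^2]w\|\lesssim_\alpha\|w\|$.

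The $\kappa$-term is more delicate. I would write $\FJ\FA_{\eff}\FJ^{-1}\tw=\FA_{\eff}w-\BB\FA_{\eff}w$ and pair it with $\tw=w-\BB w$. The piece $\langle\FA_{\eff}w,w\rangle$ is nonpositive, since $\FA_{\eff}=\Pi\MM\Delta_x$ with $\MM$ symmetric positive, $\Pi$ self-adjoint on $H_{\curl}$, and $w\in H_{\curl}$. The cross terms are controlled by $\|\BB\FA_{\eff}w\|\lesssim_\alpha\|\nabla_x w\|$ (because $\BB$ is of order $-1$; Corollary~\ref{coroA4}) and $\|\BB w\|_{H^1}\lesssim\|w\|$. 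After Young's inequality they are bounded by $\epsilon\kappa\|\nabla_x w\|^2+C\kappa\|w\|^2$. The first of these is absorbed by the coercive part $\kappa\langle\MM\nabla_x w,\nabla_x w\rangle\ge\frac{\kappa}{4}\|\nabla_x w\|^2$.

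This is the step I expect to be the main obstacle. Coercivity holds for $\langle -\Pi\MM\Delta_x w,w\rangle$ only modulo the $\Pi^\perp$ part, so I would first check that $\realpart\langle\Pi_k\MM_k\phi,\phi\rangle\ge\frac14\|\phi\|^2$ for $\phi\in H_{\curl,k}$; this is the fact used at the end of the resolvent proof. If the absorption fails, a rough bound $m(t)=C_\alpha e^{C'\kappa|k|^2 t}$ mode by mode is not acceptable, because Theorem~\ref{TheoremES} needs a uniform $m$. In that case I would fall back on the cruder fact that $\LLL_{\eff}$ is a bounded perturbation of a dissipative operator in an equivalent inner product $\langle\FJ^{-1}\cdot,\FJ^{-1}\cdot\rangle$. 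In that inner product the $\kappa$-part is exactly dissipative and only the $\nu$ commuter contributes growth $C_\alpha\nu$.

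Given $m(t)=C_\alpha e^{C_\alpha\kappa t}$ (using $\nu\le\kappa/C_\alpha$), I apply Theorem~\ref{TheoremES} with $\lambda_0=-c_\alpha\kappa$ and $a=b=r_0^{-1}$. The computation of Proposition~\ref{GWPprop3}, Step~I\!I, applies verbatim with $\nu_\kappa$ replaced by $\kappa$. It yields a lower bound on $r_0\|m^{-1}\|^2_{e^{-\lambda_0}L^2(0,r_0^{-1})}$ depending only on $\alpha$, because $\kappa/r_0\le C_\alpha$. This gives the decay for $t\ge2/r_0$, and the rough bound covers $t<2/r_0$. The result is $\|e^{t\LLL_{\eff}}\PPP_{\neq}\|\le C_\alpha e^{-c_\alpha\kappa t}$.
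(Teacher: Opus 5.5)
Your proposal is correct and follows the paper's route: Parseval plus \eqref{DGWP5} give the uniform resolvent bound on $\PPP_{\neq}\TIH_{\curl}$ for $\realpart\lambda\ge -c_\alpha\kappa$, and Theorem~\ref{TheoremES} is applied exactly as in Proposition~\ref{GWPprop3} with $\nu_\kappa$ replaced by $\kappa$, using the rough bound $C_\alpha e^{C_\alpha(\nu+\kappa)t}$ obtained from the energy estimate for the corrected vorticity. The step you flag as the main obstacle is not one: $\Pi$ is the orthogonal projection onto $H_{\curl}$ and $w\in H_{\curl}$, so $\langle\FA_{\eff}w,w\rangle=-\tfrac14\|\nabla_x w_h\|^2-\tfrac12\|\dive_h w_h\|^2-\|\nabla_x w_3\|^2$ holds exactly, the absorption goes through, and your fallback is never needed. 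The fallback would fail in any case, because in the inner product $\langle\FJ^{-1}\cdot,\FJ^{-1}\cdot\rangle$ the $\nu$-part is the full vorticity Laplacian with nonhomogeneous Navier conditions, not merely a commutator.
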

    As a direct consequence, we obtain the following linear estimate for the non-zero modes in the anisotropic space $\XX^m$. The proof is identical to that of Corollary~\ref{NDGWPcoro1}; we therefore omit the details and refer the reader to Appendix~\ref{Duhamel}.
    \begin{coro}\label{GWPcoro8}
        Let $\alpha,\nu>0$, $m$ be a positive integer, and $f\in L^2_{\loc}(0,\infty;\TIH_{\curl})$. Suppose that $\tw$ solves 
        \begin{align*}
        \begin{cases}
            \partial_t \tw=\LLL_{\eff} \tw+\PPP_{\neq} f,
            \\(\partial_y \tw_3-\alpha \tw_3)|_{y=0}=(\partial_y \tw_3+\alpha\tw_3)|_{y=1}=\tw_h|_{y=0,1}=0,\\
            \tw|_{t=0}=\PPP_{\neq} \tw_{\ini}.
        \end{cases}
    \end{align*}
    Then, there is a constant $C_{\alpha}>0$ depending only on $\alpha$ such that for any $\kappa\ge C_\alpha \nu$ and $T>0$,
    \begin{align*}
        \sup_{t\le T}\|\tw(t)\|^2_{m,0}+\kappa\int_0^T\|\nabla_x \tw\|^2_{m,0}\ddd t+&\nu \int_0^T\|\partial_y \tw\|^2_{m,0}\ddd t\\&\le C_\alpha\|\PPP_{\neq} \tw_{\ini}\|^2_{m,0}+\frac{C_\alpha}{\kappa}\int_0^T\|\PPP_{\neq} f\|^2_{m-1,0}\ddd t.
    \end{align*}
    \end{coro}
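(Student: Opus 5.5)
The plan is to follow the route of Corollary~\ref{NDGWPcoro1} (Appendix~\ref{Duhamel}), now using the semigroup bound for $e^{t\LLL_{\eff}}\PPP_{\neq}$ from the preceding proposition.

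\textbf{Tangential derivatives.} Since $\FJ$, $\Pi$, $\BB$, $\Delta$, and hence $\LLL_{\eff}$, commute with the tangential Fourier transform, they commute with $\partial_x^l$ for every multi-index $|l|\le m$. Consequently $v:=\partial_x^l\tw$ solves the same corrected problem with data $\partial_x^l\PPP_{\neq}\tw_{\ini}$ and forcing $\partial_x^l\PPP_{\neq}f$, and the boundary conditions are unchanged. It therefore suffices to prove the bound with $m=0$ on the left-hand side and the forcing measured in the weaker norm $\|\cdot\|_{-1,0}$, obtained by placing one $x$-derivative on the forcing. Summing over $|l|\le m$ then gives the claim. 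Concretely, for $|l|\ge1$ I write $\partial_x^l=\partial_{x_i}\partial_x^{l'}$ and keep $\partial_x^{l'}f$ in the forcing.

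\textbf{Supremum term.} By Duhamel,
\[v(t)=e^{t\LLL_{\eff}}\PPP_{\neq}v_0+\int_0^t e^{(t-s)\LLL_{\eff}}\PPP_{\neq}\partial_x^l f\,\ddd s.\]
Applying $\|e^{t\LLL_{\eff}}\PPP_{\neq}\|\le C_\alpha e^{-c_\alpha\kappa t}$ handles the initial-data part. For the forcing part, I move one $\partial_{x_i}$ onto the semigroup: $e^{\tau\LLL_{\eff}}\partial_{x_i}$ acts on mode $k$ by multiplication by $2\pi i k_i$. Using the modewise resolvent estimate \eqref{DGWP5}, whose decay rate is $c_\alpha\kappa|k|^2$, the criterion in Theorem~\ref{TheoremES} gives $\|e^{\tau\LLL_{\eff,k}}\|\le C_\alpha e^{-c_\alpha\kappa|k|^2\tau}$. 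Then
\[\int_0^\infty |k|^2 e^{-2c_\alpha\kappa|k|^2\tau}\,\ddd\tau\lesssim \kappa^{-1},\]
so Cauchy--Schwarz in time yields $\sup_t\|v\|^2\lesssim C_\alpha\|v_0\|^2+\frac{C_\alpha}{\kappa}\int_0^T\|\partial_x^{l'}f\|^2\,\ddd t$.

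\textbf{Dissipation terms.} I bound these with the boundary-corrected energy estimate. Pair the equation for $v$ with $v$. After undoing $\FJ$, the $\kappa\FJ\FA_{\eff}\FJ^{-1}$ part gives coercivity $\gtrsim\kappa\|\nabla_x v\|^2$: on nonzero modes, $\realpart\langle\Pi_k\MM_k\phi,\phi\rangle\ge\frac14\|\phi\|^2$, and the factor $4\pi^2|k|^2$ produces $\nabla_x$. The $\nu$ part gives $\nu\|\partial_y v\|^2$. The commutator errors $[\BB,\partial_y^2]$ (Proposition~\ref{propB2}) and the $\FJ$-conjugation errors are of size $(\nu+\kappa)\|v\|\,\|v\|_{H^1}$. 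Among these, the part coming from $\BB_k$ is small, of order $|k|^{-1}$ by \eqref{DGWP17}. The forcing is controlled by $\|\partial_x^{l'}f\|\,\|\partial_{x_i}v\|\le\frac{\kappa}{4}\|\nabla_x v\|^2+\kappa^{-1}\|\partial_x^{l'}f\|^2$. Integrating in time, the remaining lower-order term $\kappa\int\|v\|^2$ is bounded by the exponential decay already proved, since $\int_0^T e^{-2c_\alpha\kappa t}\,\ddd t\lesssim\kappa^{-1}$, and by the Duhamel forcing estimate above.

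\textbf{Main obstacle.} The hardest step is making the energy identity coercive uniformly in $k$ despite the conjugation by $\FJ$. The cross terms $\langle\BB_k\Pi_k\MM_k\FJ_k^{-1}v,v\rangle$, multiplied by $\kappa|k|^2$, must be absorbed by $\kappa|k|^2\|v\|^2$. I will attempt this using \eqref{DGWP17} for large $|k|$. The finitely many small $k$ are handled by the decay estimate together with $\kappa\ge C_\alpha\nu$.
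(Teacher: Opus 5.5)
Your architecture (commuting $\partial_x^l$ through $\LLL_{\eff}$, Duhamel for the supremum, the corrected energy identity for the dissipation) is a reasonable alternative to the paper's route, but its key step is not justified as you state it. Everything rests on the modewise bound $\|e^{\tau\LLL_{\eff,k}}\|\le C_\alpha e^{-c_\alpha\kappa|k|^2\tau}$ with $C_\alpha$ independent of $k$. You derive it from \eqref{DGWP5} and Theorem~\ref{TheoremES}. However, \eqref{DGWP5} bounds the resolvent only by $C_\alpha/\kappa$, so it gives $r_0\gtrsim\kappa$, while the rate you ask for is $\kappa|k|^2$.

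Take $\lambda_0=-c_\alpha\kappa|k|^2$ and a rough bound of the type used in Proposition~\ref{GWPprop3}, $m(t)=C_\alpha e^{C_\alpha\kappa t}$. Then $\|1/m\|^2_{e^{-\lambda_0}L^2(0,a)}\lesssim(\kappa|k|^2)^{-1}$ for every $a>0$. Hence $r_0\|1/m\|_{e^{-\lambda_0}L^2(0,a)}\|1/m\|_{e^{-\lambda_0}L^2(0,b)}\lesssim|k|^{-2}$, and the constant delivered by the criterion grows like $|k|^2$. For $\tau\lesssim\log(2+|k|)/(\kappa|k|^2)$ you are left with only the $O(1)$ rough bound. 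So the estimate $\int_0^\infty|k|^2\|e^{\tau\LLL_{\eff,k}}\|^2\,\ddd\tau\lesssim\kappa^{-1}$, which your supremum bound needs, is not obtained uniformly in $k$.

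There is a second, smaller issue. To control $\kappa\int_0^T\|v\|^2\,\ddd t$ independently of $T$, you need the $L^2_t$ (Young) form of the Duhamel bound, $\|\hat v_k\|_{L^2_t}\lesssim(\kappa|k|)^{-1}\|\hat g_k\|_{L^2_t}$ with $g:=\partial_x^{l'}\PPP_{\neq}f$. The supremum bound alone would cost a factor $T$.

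The missing decay comes from the idea you list under \emph{Main obstacle}, applied to the semigroup itself. For $|k|\ge K_\alpha$, \eqref{DGWP17} and Proposition~\ref{propB2} give $\realpart\langle-\LLL_{\eff,k}\tw,\tw\rangle\ge\nu\|\partial_y\tw\|^2+4\pi^2\kappa|k|^2(\tfrac14-C_\alpha|k|^{-1})\|\FJ_k^{-1}\tw\|^2-C_\alpha\nu\|\tw\|^2$. Together with $\|\tw\|\le(1+C_\alpha|k|^{-1})\|\FJ_k^{-1}\tw\|$ and $\kappa\ge C_\alpha\nu$, this yields decay at rate $c\kappa|k|^2$ with constant $1$. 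On these modes the energy estimate alone already gives the whole bound, with the forcing absorbed into $\kappa|k|^2\|\tw_k\|^2$. For the finitely many $|k|<K_\alpha$, the rate-$\kappa$ bound for $e^{t\LLL_{\eff}}\PPP_{\neq}$ suffices. Alternatively, the contradiction argument behind \eqref{DGWP5} can be run with $\|f_n\|\le|k_n|^2/n$ to give the sharper bound $C_\alpha/(\kappa|k|^2)$.

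The paper avoids $k$-dependent rates altogether, following Appendix~\ref{Duhamel}. It uses only the uniform decay $e^{-c_\alpha\kappa t}$ of $e^{t\LLL_{\eff}}\PPP_{\neq}$. It pays for the derivative lost on the forcing with a short-time smoothing bound $\|\tw(t)\|^2_{m,0}\lesssim(\kappa t)^{-1}\|\tw_{\ini}\|^2_{m-1,0}$ for $t<\kappa^{-1}$, obtained from $\frac{\ddd}{\ddd t}(t\|\tw\|^2_{1,0})$. Duhamel and Young then give $\int_0^T\|\hw\|^2_{m,0}\,\ddd t\lesssim\kappa^{-2}\int_0^T\|f\|^2_{m-1,0}\,\ddd t$, and the energy estimate closes the argument.
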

    
    \subsubsection{Nonlinear estimate}
    In this subsection, we prove the global well-posedness of \eqref{DGWP1} with small initial data. The main result is given below.
    \begin{prop}
        Let $\alpha,\nu>0$ and $m\ge 2$ be an integer. Then, there is a constant $C_{\alpha,\nu,m}>0$ such that if $\kappa\ge C_{\alpha,\nu,m}$ and the initial data $\tw_{\ini}:=(I-\BB)w_{\ini}$ satisfies
        \begin{align*}w_{\ini}\in H_{\curl}\cap \XX^m,\qquad\|w_{\ini}\|_{m,0}\le \frac{\sqrt{\kappa}}{C_{\alpha,\nu,m}},
        \end{align*}
        the equation \eqref{DGWP1} admits a unique global solution satisfying 
        \begin{align*}
            \sup_{t\ge 0}\|\tw(t)\|^2_{m,0}+\kappa\int_0^\infty\|\nabla_x \tw\|^2_{m,0}\ddd t+&\nu \int_0^\infty\|\partial_y \tw\|^2_{m,0}\ddd t\lesssim_{\alpha,\nu,m}\|w_{\ini}\|^2_{m,0}.
        \end{align*}
    \end{prop}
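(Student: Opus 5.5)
We run a bootstrap in the spirit of Proposition \ref{GWPprop5}. The difference is that the zeroth tangential mode $\tw_=$ feels no enhanced dissipation, so it is controlled only by $\nu$ together with the decay of the non-zero modes. Split $\tw=\tw_=+\tw_{\neq}$ and set
\[\EE(T):=\sup_{t\le T}\|\tw(t)\|^2_{m,0}+\kappa\int_0^T\|\nabla_x\tw_{\neq}\|^2_{m,0}\ddd t+\nu\int_0^T\|\partial_y\tw\|^2_{m,0}\ddd t.\]
The equation for $\tw_{\neq}$ is $\partial_t\tw_{\neq}=\LLL_{\eff}\tw_{\neq}+\PPP_{\neq}(\RR[w]-\NN)$. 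Corollary \ref{GWPcoro8} bounds the $\tw_{\neq}$-part of $\EE$ by $C_\alpha\|\tw_{\ini}\|^2_{m,0}+\frac{C_\alpha}{\kappa}\int_0^T\|\PPP_{\neq}(\RR[w]-\NN)\|^2_{m-1,0}\ddd t$.

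For the zeroth mode, $\FJ\FA_{\eff}\FJ^{-1}\tw_==0$ holds. Since $\FJ$ commutes with the tangential Fourier transform, $\tw_=$ solves a one-dimensional problem
\[\partial_t\tw_==\nu\FJ_0\partial_y^2\FJ_0^{-1}\tw_=+\PPP_=(\RR[w]-\NN).\]
Here $\tw_{=,3}=0$ by the zero-mean condition, and $\tw_{=,h}$ satisfies Dirichlet conditions. The energy estimate for $\tw_=$ is the one from Step I of Proposition \ref{GWPprop3} with $\nu$ in place of $\nu_\kappa$. It gives
\[\frac{\ddd}{\ddd t}\|\tw_=\|^2+\nu\|\partial_y\tw_=\|^2\le C_\alpha\nu\|\tw_=\|^2+2|\langle\PPP_=(\RR-\NN),\tw_=\rangle|.\]
The growth term $C_\alpha\nu\|\tw_=\|^2$ is problematic on an infinite horizon. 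To handle it I would use that the zeroth mode of $\BB w$ is built from the shear velocity $u_=$. For a pure shear the Navier condition $w_h\mp\alpha u_h^\perp=0$ can then be incorporated exactly: $u_{=,h}$ solves a 1D heat equation with Robin conditions $\partial_yu\mp\alpha u=0$. I would estimate $u_=$ (equivalently $\tw_=$) in this self-adjoint formulation, which yields genuine decay with rate $\nu C_\alpha$ from the Robin Poincaré inequality and no growth term.

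The key structural fact is that $\NN$ and $\RR[w]$ have no zero–zero interaction. Every contribution to $\PPP_=(\RR-\NN)$ is a product of two non-zero modes. The contributions to $\PPP_{\neq}(\RR-\NN)$ are of types $\neq\neq$ and $=\neq$. Using the product estimates \eqref{NDGWP41}–\eqref{NDGWP42}, the boundedness of $\BB$, $\nabla\BB$ from Corollary \ref{coroA4}, and the divergence-free structure, I would aim for the following bounds.
\begin{itemize}
\item For the zeroth mode: $|\langle\PPP_=(\RR-\NN),\tw_=\rangle|\lesssim\|\tw_=\|_{m,0}\|\tw_{\neq}\|_{m,0}\|\nabla\tw_{\neq}\|_{m,0}$. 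The $x$-derivatives in the transport term fall on non-zero modes, which Poincaré in $x$ controls: $\|\tw_{\neq}\|_{m,0}\lesssim\|\nabla_x\tw_{\neq}\|_{m,0}$. Integrating in time then gives $\lesssim\EE^{1/2}\cdot\kappa^{-1/2}\EE^{1/2}\cdot(\kappa^{-1/2}+\nu^{-1/2})\EE^{1/2}$.
\item For the non-zero modes: $\|\PPP_{\neq}(\RR-\NN)\|_{m-1,0}\lesssim\|\tw\|_{m,0}\big(\|\nabla_x\tw_{\neq}\|_{m,0}+\|\partial_y\tw_{\neq}\|_{m,0}+\|\tw_{\neq}\|_{m,0}\big)$. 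Here I would arrange that, in the $=\neq$ interactions, $\partial_y$ lands either on the non-zero factor or on $u_=$ (which is one order smoother). The forcing term from Corollary \ref{GWPcoro8} is then bounded by $\frac{C}{\kappa}\EE\cdot(\kappa^{-1}+\nu^{-1})\EE$.
\end{itemize}

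Combining these gives $\EE(T)\le C\|w_{\ini}\|^2_{m,0}+C_{\alpha,\nu}\kappa^{-1}\EE(T)^2+C_{\alpha,\nu}\kappa^{-1/2}\EE(T)^{3/2}$. The continuity argument of Proposition \ref{GWPprop5} closes provided $\|w_{\ini}\|^2_{m,0}\le\kappa/C_{\alpha,\nu,m}$ and $\kappa$ is large. Global existence follows, and uniqueness comes from the same energy method applied to differences, with Proposition \ref{propB1} translating back to $w$.

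The main obstacle is the $=\neq$ interaction $(u_=\cdot\nabla)\tw_{\neq}$-type terms carrying $\partial_y$ derivatives. In $\NN$, $u_{=,3}=0$, so $u_=\cdot\nabla$ contains only $\nabla_x$, which is favorable. The term $(\tw_{\neq}\cdot\nabla)u_=$ brings $\partial_yu_=\sim\tw_=$, which is harmless. The bracket terms in $\RR$ involving $\BB$, together with $\partial_y$ acting on non-zero modes with only $\nu$-dissipation, must be checked carefully; there I would exploit that $\BB$ is of order $-1$.
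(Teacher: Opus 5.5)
Your plan is essentially the paper's proof. You split off the shear mode and control $\tw_{\neq}$ by Corollary~\ref{GWPcoro8}. You control $\tw_=$ through the Robin heat equation for $u_{=,h}$; your self-adjoint treatment of the mean-zero constraint is a slightly cleaner variant of the paper's testing against $u_{=,h}$ and $-\partial_y^2u_{=,h}$, where the multiplier $(c_1,c_2)$ produces a term absorbed via the time-integrated bound \eqref{DGWP33}. You then use the absence of zero--zero interactions and close by bootstrap under $\|w_{\ini}\|^2_{m,0}\lesssim\kappa$.

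One correction is needed. In $\PPP_{\neq}\NN$, the term $(u_{\neq}\cdot\nabla)\tw_==u_{\neq,3}\partial_y\tw_=$ genuinely requires $\|\partial_y\tw_=\|_{L^2_y}$. Corollary~\ref{GWPcoro8} only accepts forcing in $L^2_t\XX^{m-1}$, so no $y$-derivative can be shifted off $\tw_=$, and your claimed bound by $\|\tw\|_{m,0}$ times norms of $\tw_{\neq}$ fails for this term. Bound it instead as in \eqref{DGWP25}, by $\|\tw_=\|_{H^1_y}\|\tw_{\neq}\|_{m,0}$. Its contribution is then $\kappa^{-1}\sup_t\|\tw_{\neq}\|^2_{m,0}\int_0^T\|\partial_y\tw_=\|^2\,\ddd t\lesssim(\kappa\nu)^{-1}\EE(T)^2$, because your $\EE$ already contains $\nu\int_0^T\|\partial_y\tw\|^2_{m,0}\,\ddd t$. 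So your closing inequality is unchanged.
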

    \begin{proof}
        The proof follows the same strategy as that of Proposition~\ref{GWPprop5}, so we only outline the main steps.

        \noindent\textbf{Step I.} In this step, we estimate the nonlinear terms. First, we decompose the convection terms into
        \begin{align}\label{DGWP22}
            \PPP_{\neq}\NN=\PPP_{\neq}[(u\cdot \nabla)\tw-(\tw\cdot\nabla)u]=\PPP_{\neq}\NN_1+\PPP_{\neq}\NN_2+\PPP_{\neq}\NN_3,
        \end{align}
        where
        \begin{align*}
            \NN_1&:=(u_{\neq}\cdot \nabla)\tw_{\neq}-(\tw_{\neq}\cdot\nabla)u_{\neq},\\
            \NN_2&:=(u_{=}\cdot \nabla)\tw_{\neq}-(\tw_{=}\cdot\nabla)u_{\neq},\\\NN_3&:=(u_{\neq}\cdot \nabla)\tw_{=}-(\tw_{\neq}\cdot\nabla)u_{=}.
        \end{align*}
        Repeating the derivation of \eqref{NDGWP41} and \eqref{NDGWP42}, for $\NN_1$, one has
        \begin{align}
            \label{DGWP23}\|\NN_1\|_{m-1,0}\lesssim_{\alpha,m}\|\tw_{\neq}\|_{m,0}\left(\|\tw_{\neq}\|_{m,0}+\|\partial_y\tw_{\neq}\|_{m,0}\right).
        \end{align}
        Notice that $\tw_{=}$ and $u_=$ are shear flows, then it follows
        \begin{align}
            \label{DGWP24}\|\NN_2\|_{m-1,0}\lesssim_{\alpha,m}\|\tw_=\|_{L^2_y}\|\tw_{\neq}\|_{m,0},
        \end{align}
        and
        \begin{align}
            \label{DGWP25}\|\NN_3\|_{m-1,0}\lesssim_{\alpha,m}\|\tw_=\|_{H^1_y}\|\tw_{\neq}\|_{m,0}.
        \end{align}
        Combining \eqref{DGWP22}--\eqref{DGWP25}, one gets
        \begin{align}
            \label{DGWP25-1}
             \|\PPP_{\neq}\NN\|_{m-1,0} \lesssim_{\alpha,m}\|\tw_{\neq}\|_{m,0}\left(\|\tw_{\neq}\|_{m,0}+\|\partial_y\tw_{\neq}\|_{m,0}\right)+\|\tw_=\|_{H^1_y}\|\tw_{\neq}\|_{m,0}.
        \end{align}
        Next, we turn to the remainder term
        \begin{align}
        \label{DGWP26}\RR[w]&=\BB(u\cdot\nabla)w-\BB(w\cdot\nabla)u-(u\cdot\nabla)\BB w+(\BB w\cdot\nabla)u\notag\\&=\RR_1+\RR_2+\RR_3+\RR_4.
        \end{align}
        For $\RR_1$, by using the divergence-free condition for $u_{\neq},u_=$ and the fact that $w_=,u_=$ are shear, it follows
        \begin{align}
            \label{DGWP27}
            &\notag\|\RR_1\|_{m-1,0}\le \|\BB(u_{\neq}\cdot\nabla)w_{\neq}\|_{m-1,0}+\|\BB(u_{=}\cdot\nabla)w_{\neq}\|_{m-1,0}+\|\BB(u_{\neq}\cdot\nabla)w_{=}\|_{m-1,0}\\&\qquad\le \|\BB\dive(u_{\neq}\otimes w_{\neq})\|_{m-1,0}\!+\!\|\BB\dive(u_{=}\otimes w_{\neq})\|_{m-1,0}\!+\!\|\BB(u_{\neq,3}\partial_yw_{=})\|_{m-1,0}\notag\\&\qquad\lesssim_{\alpha,m} \|\tw_{\neq}\|_{m,0}^2+\|\tw_=\|_{L^2_y}\|\tw_{\neq}\|_{m,0}+\|u_{\neq,3}w_{=}\|_{m-1,0}\!+\!\|\BB\dive_h( u_{\neq,h}\otimes w_=)\|_{m-1,0}\notag\\&\qquad\lesssim_{\alpha,m} \|\tw_{\neq}\|_{m,0}^2+\|\tw_=\|_{L^2_y}\|\tw_{\neq}\|_{m,0}.
        \end{align}
        Similarly, 
        \begin{align}
            \label{DGWP28}
            \|\RR_2\|_{m-1,0}\lesssim_{\alpha,m} \|\tw_{\neq}\|_{m,0}^2+\|\tw_=\|_{L^2_y}\|\tw_{\neq}\|_{m,0}.
        \end{align}
        Since $\BB w_=$ is also a shear flow, for $\RR_3$, one has 
        \begin{align}
            \label{DGWP29}
            \notag\|\RR_3\|_{m-1,0}&\le \|(u_{\neq}\cdot\nabla)\BB w_{\neq}\|_{m-1,0}+\|(u_{=}\cdot\nabla)\BB w_{\neq}\|_{m-1,0}+\|(u_{\neq}\cdot\nabla)\BB w_{=}\|_{m-1,0}\\&\lesssim_{\alpha,m} \|\tw_{\neq}\|_{m,0}^2+\|\tw_=\|_{L^2_y}\|\tw_{\neq}\|_{m,0}+\|u_{\neq,3}\partial_y\BB w_=\|_{m-1,0}\notag\\&\lesssim_{\alpha,m} \|\tw_{\neq}\|_{m,0}^2+\|\tw_=\|_{L^2_y}\|\tw_{\neq}\|_{m,0}.
        \end{align}
        Proceeding similarly, for $\RR_4$, one gets
        \begin{align}
            \label{DGWP30}
            \|\RR_4\|_{m-1,0}\lesssim_{\alpha,m} \|\tw_{\neq}\|_{m,0}^2+\|\tw_=\|_{L^2_y}\|\tw_{\neq}\|_{m,0}.
        \end{align}
        By summarizing the estimates \eqref{DGWP26}--\eqref{DGWP30}, it follows
        \begin{align}
            \label{DGWP31}
            \|\PPP_{\neq} \RR[w]\|_{m-1,0}\lesssim_{\alpha,m} \|\tw_{\neq}\|_{m,0}^2+\|\tw_=\|_{L^2_y}\|\tw_{\neq}\|_{m,0}.
        \end{align}

        \noindent\textbf{Step I\!I.} Now we estimate the zeroth mode. Since 
        \begin{align}
            \label{DGWP31-1}\tw_{=}=\FJ_0 w_{=}=\FJ_0(\partial_yu_{=,h}^\perp,0),
        \end{align}
        we establish the estimate for $\tw_{=}$ through the velocity formulation of \eqref{DGWP1}
        \begin{align*}\begin{cases}
            \partial_tu+(u\cdot \nabla)u+\nabla p+(c_1(t),c_2(t),0)=\nu \Delta u+\kappa\curl^{-1}\FA_{\eff}\curl u,
            \\(\partial_y u_h-\alpha u_h)|_{y=0}=(\partial_y u_h+\alpha u_h)|_{y=1}=u_3|_{y=0,1}=0,\\
            u|_{t=0}=u_{\ini}.
        \end{cases}
        \end{align*}
        In particular, by applying the projection $\PPP_=$ and using the fact that $\curl^{-1}\FA_{\eff}$ commutes with $\PPP_=$, the equation for $u_{=,h}$ is given by 
        \begin{align}\label{DGWP32}\begin{cases}
            \partial_tu_{=,h}+\PPP_=\partial_y(u_{\neq,3}u_{\neq,h})+(c_1(t),c_2(t))=\nu \partial_y^2u_{=,h},
            \\(\partial_y u_{=,h}-\alpha u_{=,h})|_{y=0}=(\partial_y u_{=,h}+\alpha u_{=,h})|_{y=1}=\int_0^1 u_{=,h}\ddd y=0,\\
            u_{=,h}|_{t=0}=\PPP_=u_{\ini,h}.
        \end{cases}
        \end{align}
        By taking $L^2_y$ inner product with $u_{=,h}$ on both sides and using the mean-zero condition, it follows
        \begin{align*}
            \frac{1}{2}\frac{\ddd}{\ddd t}\|u_{=,h}\|^2+\nu\|\partial_y u_{=,h}\|^2+\alpha\nu|u_{=,h}|_{y=0}|^2+\alpha\nu|u_{=,h}|_{y=1}|^2=-\langle \partial_y(u_{\neq,3}u_{\neq,h}),u_{=,h}\rangle,
        \end{align*}
        which gives 
        \begin{align}
            \label{DGWP33}
            \nu\int_0^t\|\partial_y u_{=,h}\|^2 \ddd s\le\|\PPP_=u_{\ini,h}\|^2+C_\nu\int_0^t\| \PPP_=(u_{\neq,3}u_{\neq,h})\|^2\ddd s.
        \end{align}
        On the other hand, by taking integration with respect to $y\in(0,1)$ on both sides of \eqref{DGWP32} and using the identity
        \[ \int_0^1\PPP_=\partial_y(u_{\neq,3}u_{\neq,h})\ddd y=\int_{\T^2}\int_0^1\partial_y(u_{\neq,3}u_{\neq,h})\ddd y\ddd x=0,\]
         one has 
        \[(c_1(t),c_2(t))=\nu \int_0^1\partial_y^2u_{=,h}\ddd y=-\alpha\nu(u_{=,h}|_{y=1}-u_{=,h}|_{y=0}).\]
        Therefore, by testing \eqref{DGWP32} against $-\partial_y^2u_{=,h}$ and using the Poincar\'e inequality, it follows
        \begin{align*}
            \frac{1}{2}\frac{\ddd}{\ddd t}&\left(\|\partial_y u_{=,h}\|^2+\alpha|u_{=,h}|_{y=0}|^2+\alpha|u_{=,h}|_{y=1}|^2\right)+\nu\|\partial_y^2u_{=,h}\|^2\\&\qquad\qquad=\langle \partial_y(u_{\neq,3}u_{\neq,h}), \partial^2_y u_{=,h}\rangle+\alpha^2\nu|u_{=,h}|_{y=1}-u_{=,h}|_{y=0}|^2\\&\qquad\qquad\le |\langle \partial_y(u_{\neq,3}u_{\neq,h}), \partial^2_y u_{=,h}\rangle|+C_{\alpha,\nu}\|\partial_y u_{=,h}\|^2,
        \end{align*}
        which, together with \eqref{DGWP33}, implies
        \begin{align*}
            \|\partial_y u_{=,h}(t)\|^2+\nu\int_0^t&\|\partial_y^2u_{=,h}\|^2\ddd s\lesssim_{\alpha,\nu}\|\PPP_=u_{\ini,h}\|^2_{H^1_y}+\int_0^t\|\PPP_=(u_{\neq,3}u_{\neq,h})\|_{H^1_y}^2\ddd s.
        \end{align*}
        Combining this, \eqref{DGWP31-1}, and \eqref{DGWP33}, one gets
        \begin{align}\label{DGWP34}
            \|\tw_{=}(t)\|^2+\nu\int_0^t\|\partial_y\tw_{=}\|^2\ddd s&\lesssim_{\alpha,\nu}\|\tw_{\ini}\|^2+\int_0^t\|\PPP_=(u_{\neq,3}u_{\neq,h})\|_{H^1_y}^2\ddd s\notag\\&\lesssim_{\alpha,\nu}\|\tw_{\ini}\|^2+\int_0^t\|\tw_{\neq}\|_{m,0}^4\ddd s.
        \end{align}

        \noindent\textbf{Step I\!I\!I.} Define
        \begin{align*}
            \EE_m(T):=\sup_{t\le T}\|\tw_{\neq}(t)\|^2_{m,0}+&\sup_{t\le T}\|\tw_{=}(t)\|_{L^2_y}^2+\kappa\int_0^T\|\nabla_x \tw_{\neq}\|^2_{m,0}\ddd t\\&\qquad\qquad+\nu\int_0^T\|\partial_y \tw_{\neq}\|^2_{m,0}\ddd t+\nu\int_0^T\|\partial_y \tw_{=}\|_{L^2_y}^2\ddd t.
        \end{align*}
        By applying Corollary \ref{GWPcoro8} combined with \eqref{DGWP25-1}, \eqref{DGWP31}, and the Poincar\'e inequalities
        \[\|\tw_{\neq}\|_{m,0}\le \|\nabla_x \tw_{\neq}\|_{m,0},\qquad \|\tw_{=,h}\|_{L^2_y}\lesssim\|\partial_y\tw_{=,h}\|_{L^2_y},\]
        it follows
        \begin{align*}
        \sup_{t\le T}\|\tw_{\neq}(t)\|^2_{m,0}\!+\kappa\!\int_0^T\!\|\nabla_x \tw_{\neq}\|^2_{m,0}\ddd t+\nu\! \int_0^T\!\|\partial_y \tw_{\neq}\|^2_{m,0}\ddd t\lesssim_{\alpha,\nu,m} \|\tw_{\ini}\|^2_{m,0}\!+\frac{1}{\kappa}\EE^2_m(T).
    \end{align*}
    Similarly, from \eqref{DGWP34}, one has 
    \[\sup_{t\le T}\|\tw_{=}(t)\|_{L^2_y}^2+\nu\int_0^T\|\partial_y\tw_{=}\|_{L^2_y}^2\ddd t\lesssim_{\alpha,\nu,m} \|\tw_{\ini}\|^2_{m,0}+\frac{1}{\kappa}\EE^2_m(T)\]
    Therefore,
    \[\EE_m(T)\lesssim_{\alpha,\nu,m} \|\tw_{\ini}\|^2_{m,0}+\frac{1}{\kappa}\EE^2_m(T).\]
    This, together with a standard bootstrap argument, completes the proof; see Step I\!I of the proof of Proposition \ref{GWPprop5}.
    \end{proof}
    
    \section{Proof of Theorem \ref{MT}}\label{sec5}
    In this section, we combine the results obtained in Sections~\ref{SLofNS} and~\ref{GWPofDNS} to prove Theorem~\ref{MT}. We prove the result only in the non-degenerate case. The degenerate case is treated in the same way, using the corresponding degenerate versions of the scaling limit result and the deterministic global well-posedness result.

    Let 
    \[M:=\|w_{\ini}\|_{L^\infty(\Omega;H^1\cap \XX^m)}.\]
    Choose $\kappa>0$ sufficiently large so that 
    \[M\le \frac{\nu_\kappa}{2 C_{\alpha,m}},\qquad \nu_{\kappa}:=\nu+\frac{4\kappa}{5},\]
    where $C_{\alpha,m}$ is the constant given in Proposition \ref{GWPprop5}. Then, there is a unique global solution $w^{\det}$ of the limiting equation \eqref{NDGWP1} satisfying
    \begin{align*}
        \sup_{t\ge 0}\|w^{\det}(t)\|^2_{m,0}+\nu_{\kappa}\int_0^\infty\|\nabla w^{\det}(t)\|^2_{m,0}\ddd t\le C^2_\alpha M^2.
    \end{align*}
    In particular, since $m\ge 2$, it follows
    \[\sup_{t\ge 0}\|w^{\det}(t)\|\le C_\alpha M.\]
    Let the cut-off parameter $R>0$ be sufficiently large so that
    \[R\ge 2C_\alpha M.\]
    Therefore, the cut-off function $\eta_R$ is inactive along $w^{\det}$, and the deterministic cut-off solution coincides with the deterministic solution without cut-off, that is,
    \[w^{\det}_{\cut}(t)=w^{\det}(t),\qquad \forall t\in [0,T].\]
    Let $w^N_{\cut}$ be the solution of the cut-off equation \eqref{SL1}, corresponding to the noise coefficients $\theta^N_\cdot$ given in \eqref{SL2}. By Proposition \ref{propSL3}, after choosing $N$ sufficiently large, it follows
    \begin{align*}
        \PP\left\{\sup_{t\le T}\|w_{\cut}^N(t)-w_{\cut}^{\det}(t)\|_{H^{\delta}}<R-C_\alpha M\right\}\ge 1-\epsilon.
    \end{align*}
    Notice that on this event, one has
    \[\sup_{t\le T}\|w_{\cut}^N(t)\|\le \sup_{t\le T}\|w_{\cut}^N(t)-w_{\cut}^{\det}(t)\|_{H^{\delta}}+\sup_{t\le T}\|w_{\cut}^{\det}(t)\|<R.\]
    Define 
    \[\tau^N:=\inf\{t|\|w_{\cut}^N(t)\|\ge R\}.\]
    The preceding estimate implies
    \begin{align}
        \label{NDGWP43}\PP\{\tau^N\ge T\}\ge 1-\epsilon.
    \end{align}
    Moreover, on the time interval $[0,\tau^N)$, the cut-off is inactive, so $w^N_{\cut}$ coincides with a local solution of \eqref{INTRO1}. This, together with \eqref{NDGWP43}, ensures the existence of a solution 
    \[(w,\tau):=(w_{\cut}^N,\tau^N)\]
    of \eqref{INTRO1}, which exists up to $T$ with high probability.

    \appendix 
    \section{The curl-admissible class and Biot--Savart operator in the periodic channel}\label{curl-ad}
    In this section, we collect some basic properties of the curl-admissible class $H_{\curl}$ defined in \eqref{INTRO1-1} and the Biot--Savart operator $\curl^{-1}$. Define 
    \[H_{\curl}^\perp:=\{\nabla \pp| \pp\in H^1,\mbox{ $\pp|_{y=0,1}$ are constants}\}.\]
    \begin{prop} \label{propA1}The following orthogonal decomposition holds
    \[L^2=H_{\curl}\oplus H^\perp_{\curl}.\]
    Moreover, for any $f\in L^2$,
    \begin{align}
        \label{A0}\Pi f=f-\nabla \pp,
    \end{align}
    where $\Pi: L^2\to H_{\curl}$ is the projection operator and $\pp$ satisfies the non-homogeneous Dirichlet--Poisson problem
    \begin{align*}
        \begin{cases}
        \Delta \pp=\dive f,\\
        \pp|_{y=0}=0,\qquad \pp|_{y=1}=\int_D f_3\ddd x\ddd y.
        \end{cases}
    \end{align*}
    \end{prop}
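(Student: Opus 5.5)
The plan is the standard Hilbert-space projection argument. Set $G:=\{\nabla\pp\mid \pp\in H^1,\ \pp|_{y=0},\pp|_{y=1}\ \text{constants}\}$. I will show three things, in order: $G$ is closed in $L^2$; $G^\perp=H_{\curl}$; and the $G$-component of a given $f$ is $\nabla\pp$ with $\pp$ solving the stated Dirichlet--Poisson problem. Closedness follows from the Poincaré inequality on $D$. Write $\pp=\pp_0+c\,y$ with $\pp_0\in H^1_0$ up to the constant $\pp|_{y=0}$, which does not affect $\nabla\pp$. A short computation gives $c=\int_D\partial_y\pp$. Hence $\|\pp-\pp|_{y=0}\|_{H^1}\lesssim\|\nabla\pp\|$. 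So if $\nabla\pp_n$ is Cauchy in $L^2$, then the normalized $\pp_n$ converge in $H^1$, and the trace theorem shows that the limit still has constant traces.

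\textbf{Orthogonality.} Take $w\in L^2$ with $\langle w,\nabla\pp\rangle=0$ for all $\nabla\pp\in G$. Testing with $\pp\in C_c^\infty(D)$ gives $\dive w=0$ in $\mathcal D'$. Then the normal trace $w_3|_{y=0,1}\in H^{-1/2}(\T^2)$ is defined. Testing with $\pp=\chi(y)\phi(x)$, which is admissible only when $\phi$ is constant, is not enough. So I instead use the Green formula
\[
\langle w,\nabla\pp\rangle=\langle w_3,\pp\rangle_{y=1}-\langle w_3,\pp\rangle_{y=0}.
\]
With $\pp$ equal to a constant on each boundary component, this yields $\int_{\T^2}w_3(x,1)\,\ddd x=\int_{\T^2}w_3(x,0)\,\ddd x=0$. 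Since $\dive w=0$, the function $y\mapsto\int_{\T^2}w_3(x,y)\,\ddd x$ has derivative $-\int_{\T^2}\dive_h w_h\,\ddd x=0$. It is therefore constant in $y$, and by the boundary values it vanishes for a.e.\ $y$. This gives $w\in H_{\curl}$. Conversely, for $w\in H_{\curl}$ the same Green formula together with $\int_{\T^2}w_3\,\ddd x=0$ at $y=0,1$ gives $w\perp G$. Thus $L^2=H_{\curl}\oplus G$.

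\textbf{Identification of $\pp$.} For $f\in L^2$, the decomposition $f=\Pi f+\nabla\pp$ forces $\Delta\pp=\dive f$ in $\mathcal D'$. The constants $\pp|_{y=0}$ and $\pp|_{y=1}$ are fixed as follows. Normalize $\pp|_{y=0}=0$ (the additive constant is free), and call $c=\pp|_{y=1}$. Testing $f-\nabla\pp\in H_{\curl}$ against $\nabla y=e_3\in G$ gives
\[
\int_D f_3=\int_D\partial_y\pp=\int_{\T^2}\bigl(\pp|_{y=1}-\pp|_{y=0}\bigr)=c.
\]
This is exactly the stated boundary condition. Uniqueness and existence of this Dirichlet problem are standard: lift the boundary datum by $c\,y$ and apply Lax--Milgram in $H^1_0$.

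The only mildly delicate step is the use of normal traces for a merely $L^2$ divergence-free field, together with the $y$-independence of the horizontal mean of $w_3$. I would handle this by approximating with smooth fields, or by working in the distributional form $\int_D w_3\,\chi'(y)\,\ddd x\,\ddd y=0$ for all $\chi\in C^\infty([0,1])$, which avoids traces altogether.
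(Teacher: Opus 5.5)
Your proposal is correct, but it is organized differently from the paper's proof.

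You use the abstract projection theorem. You prove that $G$ is closed, using the Poincaré-type bound $\|\pp-\pp|_{y=0}\|_{H^1}\lesssim\|\nabla\pp\|$ and continuity of traces. You establish both inclusions of $G^\perp=H_{\curl}$. Only then do you read off the boundary value problem for $\pp$ as a necessary condition, with uniqueness for the Dirichlet problem identifying it.

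The paper proves only the inclusion $H_{\curl}\perp G$, and does so without normal traces: it subtracts the affine function $\pp_1y+(1-y)\pp_0$ so that the remainder lies in $H^1_0$. Existence of the decomposition is then obtained constructively. The paper solves $\Delta\tilde\pp=\dive f$ in $H^1_0$ and sets $\pp=\tilde\pp+y\int_D f_3\,\ddd x\,\ddd y$. It then checks directly that $f-\nabla\pp$ is divergence-free and that its third component has vanishing horizontal mean. That mean is constant in $y$ by testing $\dive=0$ against $\varphi(y)$, and it vanishes because $\int_D(f_3-\partial_y\pp)=0$.

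What each route buys:
\begin{itemize}
\item The paper's route gets closedness of $G$ and the inclusion $G^\perp\subset H_{\curl}$ for free. A sum $L^2=H_{\curl}+G$ of mutually orthogonal pieces with $H_{\curl}$ closed already forces $G$ to be the orthogonal complement of $H_{\curl}$.
\item Your route never has to verify that $f-\nabla\pp$ lies in $H_{\curl}$ for a constructed $\pp$. It also makes the characterization $G^\perp=H_{\curl}$ explicit.
\end{itemize}

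One correction to your orthogonality step: your claim that testing with $\pp=\chi(y)$ ``is not enough'' is mistaken. As $\chi$ ranges over $C^\infty([0,1])$, $\chi'$ ranges over all of $C^\infty([0,1])$. So the identities $\int_0^1\bigl(\int_{\T^2}w_3\,\ddd x\bigr)\chi'(y)\,\ddd y=0$ already give $\int_{\T^2}w_3\,\ddd x=0$ for a.e.\ $y$. This needs neither normal traces nor even $\dive w=0$. Your closing remark effectively recognizes this, and it is the cleanest way to write that step.
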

    \begin{proof}
        \textbf{Step I.} Since
        \[\int_0^1\left|\int_{\T^2}w_3(x,y)\ddd x\right|^2\ddd y\lesssim \|w\|^2,\]
        the map 
        \[w\mapsto Tw(y):=\int_{\T^2}w_3(x,y)\ddd x\]
        is bounded from $L^2$ to $L^2_y$, which implies that $H_{\curl}$ is a closed subspace of $L^2$. Fix $w\in H_{\curl}$ and $\nabla \pp\in H_{\curl}^\perp$. According to the definition of $H^\perp_{\curl}$,  
        \[\tilde \pp:=\pp-\pp_1y-(1-y)\pp_0\in H^1_0,\]
        where $\pp_0:=\pp|_{y=0}$ and $\pp_1:=\pp|_{y=1}$ are constants. Integrating by parts and using the definition~\eqref{INTRO1-1} of $H_{\curl}$, one has the orthogonality relation
        \begin{align*}
            \int_D w\cdot\nabla \pp\ddd x\ddd y=\int_D w\cdot\nabla \tilde \pp\ddd x\ddd y+(\pp_1-\pp_0)\int_D w_3\ddd x\ddd y=0.
        \end{align*}

        \noindent\textbf{Step I\!I.} It remains to construct the decomposition \eqref{A0}. Define 
        \[\pp:=\tilde \pp+y\int_D f_3\ddd x\ddd y,\]
        where $\tilde \pp$ solves the Dirichlet--Poisson problem
        \begin{align}\label{A1}
            \begin{cases}
            \Delta\tilde \pp=\dive f,\\
            \tilde \pp|_{y=0,1}=0.
            \end{cases}
        \end{align}
        Then, it follows that $\nabla \pp\in H^\perp_{\curl}$ and $w:=f-\nabla \pp$ satisfies 
        \[\dive w=\dive f-\Delta \pp=\dive f-\Delta \tilde \pp=0.\]
        To verify the horizontal zero-mean condition for $w_3$, notice that for any $\varphi=\varphi(y)\in C_0^\infty(0,1)$, one has 
        \begin{align*}
            0=\int_D \dive w \varphi \ddd x\ddd y=-\int_D w_3\varphi' \ddd x\ddd y=\int_D \partial_y\left(\int_{\T^2}w_3(x,y)\ddd x\right)\varphi(y) \ddd y,
        \end{align*}
        which implies 
        \[\int_{\T^2}w_3(x,y)\ddd x=c,\qquad a.e.\ y\in (0,1)\]
        for some constant $c$. On the other hand, by the Newton--Leibniz formula,
        \[0=\int_D (f_3-\partial_y\pp)\ddd x\ddd y=\int_D w_3\ddd x\ddd y=c.\]
        Therefore, $w:=f-\nabla \pp\in H_{\curl}$ and the proof is complete.
    \end{proof}
    Let $H^{-1,q}$ be defined in \eqref{notation0}. The following estimate plays an important role in the a priori estimate.
    \begin{coro}\label{coroA2}
    Let $q\in(1,\infty)$ and $m\ge 0$ be an integer. For any smooth vector field $f$, it follows that 
    \[\|\Pi f\|_{H^{m,q}}\lesssim_{m,q} \|f\|_{H^{m,q}},\]
    and
    \begin{align}\label{A1-0}
        \|\Pi f\|_{H^{-1,q}}\lesssim_{q} \left(\|f\|_{H^{-1,q}}+\left|\int_D f_3 \ddd x\ddd y\right|\right).
    \end{align}
    \end{coro}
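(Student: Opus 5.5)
The plan is to read both bounds off the explicit formula in Proposition~\ref{propA1}: $\Pi f=f-\nabla\pp$, where $\pp=\tilde\pp+y\int_D f_3\,\ddd x\ddd y$ and $\tilde\pp$ solves the homogeneous Dirichlet problem \eqref{A1}, $\Delta\tilde\pp=\dive f$. The correction $y\int_D f_3$ is affine in $y$, so its gradient is the constant vector $(0,0,\int_D f_3)$. This constant is bounded by $\|f\|_{q}\le\|f\|_{H^{m,q}}$ in the first estimate, and by the extra term $|\int_D f_3|$ in \eqref{A1-0}; in the latter case its $H^{-1,q}$ norm is bounded by a constant multiple of its modulus. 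Everything therefore reduces to estimating $\nabla\tilde\pp$, where $\tilde\pp=\Delta_D^{-1}\dive f$ and $\Delta_D$ is the Dirichlet Laplacian on the periodic channel $D=\T^2\times(0,1)$.

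For the first estimate I will use $L^q$ elliptic regularity for the Dirichlet problem in $D$. This gives $\|\nabla\tilde\pp\|_{H^{m,q}}\lesssim_{m,q}\|\dive f\|_{H^{m-1,q}}$ for $m\ge1$. For $m=0$ it gives the weak-form estimate $\|\nabla\tilde\pp\|_q\lesssim\|f\|_q$, because the operator $f\mapsto\nabla\Delta_D^{-1}\dive f$ is bounded on $L^q$.

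To justify these claims concretely, I will expand in tangential Fourier modes and reflect in $y$. Oddly extending $\tilde\pp$ across $y=0,1$ turns the problem into a periodic one on $\T^2\times(-1,1)$. For $f_h$ extended evenly and $f_3$ extended oddly, $\dive f$ extends oddly, so this parity is consistent with the odd extension of $\tilde\pp$. On the torus, $\nabla\Delta^{-1}\dive$ is a matrix of Riesz transforms, and these are bounded on $H^{m,q}$ for every $m$ by Mikhlin's multiplier theorem. Reflection, however, is only compatible with $H^{m,q}$ for $m\le1$ without extra boundary conditions. For higher $m$ I will therefore use the classical Agmon--Douglis--Nirenberg estimates for $\Delta$ on the smooth bounded domain $D$, localizing in $x$ if needed; since $f$ is smooth, all quantities are finite.

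For the negative-order estimate \eqref{A1-0} I will argue by duality. Take $\varphi\in H^{1,q'}_0$ and, for now, regard $\Pi$ as acting on smooth $f$. Because $\Pi$ is the $L^2$-orthogonal projection, $\langle\Pi f,\varphi\rangle=\langle f,\Pi\varphi\rangle$. The subtlety is that $\Pi\varphi$ need not vanish on $\partial D$, while $H^{-1,q}=(H^{1,q'})^*$ in the convention \eqref{notation0}. With the full space $H^{1,q'}$ as the test class, the boundary issue disappears. By the first estimate with $m=1$, $\|\Pi\varphi\|_{H^{1,q'}}\lesssim\|\varphi\|_{H^{1,q'}}$, and hence $|\langle f,\Pi\varphi\rangle|\le\|f\|_{H^{-1,q}}\|\Pi\varphi\|_{H^{1,q'}}$. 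This argument gives \eqref{A1-0} even without the mean term. I will nonetheless keep the mean term and run the direct argument as well: it is harmless, and it covers the case where one instead defines $\Pi$ on $H^{-1,q}$ through the formula for $\pp$. In that direct argument, $\|\nabla\tilde\pp\|_{H^{-1,q}}\lesssim\|\tilde\pp\|_q$, and $\|\tilde\pp\|_q\lesssim\|\dive f\|_{H^{-2,q}}\lesssim\|f\|_{H^{-1,q}}$, using the very-weak $L^q$ theory of the Dirichlet Laplacian, which follows from duality with $H^{2,q'}\cap H^{1,q'}_0$ regularity. The boundary value $\int_D f_3$ in the definition of $\pp$ contributes exactly the constant term.

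The main obstacle I expect is the $H^{m,q}$ boundedness for large $m$ in the first estimate. Here the projection onto $H_{\curl}$ imposes no compatibility conditions on $f$, so one must rely on genuine boundary elliptic regularity in $D$ rather than on reflection. I would cite the standard $W^{m+1,q}$ Dirichlet estimates for the Laplacian on the flat-boundary domain $D$.
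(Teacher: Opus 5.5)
\textbf{Error in the $m=0$ reflection.} You have the parities reversed, and as written the step fails. Reflect $f_h$ evenly and $f_3$ oddly across $y=0,1$, and call the result $F$. Then $\partial_{x_j}f_j$ and $\partial_y f_3$ both extend evenly, so $\dive F$ is even rather than odd. Moreover, the odd reflection of $f_3$ jumps across the planes $y=0,1$ unless $f_3$ vanishes there. Consequently the odd extension $P$ of $\tilde\pp$ does not solve $\Delta P=\dive F$. To see this, pair with an odd test function $\Phi$: the right-hand side $\langle \dive F,\Phi\rangle=-\int F\cdot\nabla\Phi$ vanishes identically, while $\langle\Delta P,\Phi\rangle=-2\int_D f\cdot\nabla\phi$ with $\phi=\Phi|_D\in H^1_0$. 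So $\nabla\tilde\pp$ is not a Riesz transform of your $F$.

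The correct choice, which is the paper's, is $f_1,f_2$ odd and $f_3$ even. Then $F$ has no jumps, and $\dive F$ is exactly the odd extension of $\dive f$. The odd extension of $\tilde\pp$ is the mean-zero solution of $\Delta P=\dive F$ on $\T^2\times(\R/2\Z)$. Mikhlin then bounds $\nabla P=\nabla\Delta^{-1}\dive F$ on $L^q$. With these parities, the odd reflection of $f_h$ lies in $H^{1,q}$ only when $f_h|_{y=0,1}=0$. So reflection gives $m=0$ only, not $m\le 1$. The cases $m\ge1$ must come from Dirichlet elliptic regularity, as you also state. That part is fine. The paper does $m=1$ the same way: it differentiates tangentially (so $\partial_{x_j}\pp$ has zero Dirichlet data) and recovers $\partial_y^2\pp=\dive f-\Delta_x\pp$ from the equation.

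\textbf{Comparison for \eqref{A1-0}.} Your main argument is correct and takes a different route from the paper. You use that $\Pi$ is the $L^2$-orthogonal projection, so $\langle \Pi f,\varphi\rangle=\langle f,\Pi\varphi\rangle$. You combine this with the $m=1$ bound for $\Pi$ on $H^{1,q'}$, extended from smooth $\varphi$ by density. The paper's convention $H^{-1,q}=(H^{1,q'})^*$ admits test functions with nonzero traces, so the fact that $\Pi\varphi$ does not vanish on $\partial D$ is harmless.

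The paper instead starts from $f=\Pi f+\nabla\tilde\pp+(\int_D f_3)e_3$. It bounds $\|\nabla\tilde\pp\|_{H^{-1,q}}\lesssim\|\tilde\pp\|_q$ using $\tilde\pp|_{\partial D}=0$. It then obtains $\|\tilde\pp\|_q\lesssim\|f\|_{H^{-1,q}}$ by duality with $-\Delta\phi=\psi$, $\phi|_{\partial D}=0$.

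Both routes rest on the same $H^{2,q'}$ Dirichlet regularity. Yours is shorter and shows at once that the mean term is superfluous. Indeed, constants are admissible test functions, so $|\int_D f_3|=|\langle f,e_3\rangle|\lesssim\|f\|_{H^{-1,q}}$.

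One caution on your backup direct argument. With $H^{-2,q}=(H^{2,q'})^*$, the quantity $\|\dive f\|_{H^{-2,q}}$ is not controlled by $\|f\|_{H^{-1,q}}$, because of the boundary term $\int_{\partial D}(f\cdot n)\phi$. The duality must be run only against $\phi\in H^{2,q'}\cap H^{1,q'}_0$, exactly as in the paper.
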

    \begin{proof}\textbf{Step I.} We first address the case $m=0.$ Let $\tilde \pp$ be the solution of \eqref{A1}. By applying odd extension on $\tilde \pp, f_1, f_2$ and even extension on $f_3$, periodizing the resulting equation, and using the Mikhlin multiplier theorem, one has 
    \[\|\nabla \tilde \pp\|_{q}\lesssim_q \|f\|_{q}.\]
    This, together with the decomposition
    \begin{align}\label{A1-1}
        f=\Pi f+\nabla \tilde{\pp}+\left(\int_D f_3 \ddd x\ddd y\right)e_3,
    \end{align}
    implies that
    \[\|\Pi f\|_{q}\le\|f\|_{q}+\|\nabla \tilde \pp\|_{q}+\|f_3\|_{L^1}\lesssim_q \|f\|_{q}.\]

    \noindent\textbf{Step I\!I.} The higher regularity can be obtained through a standard argument. For simplicity, we only discuss the case $m=1$. Taking $\partial_{x_j}$ on both sides of \eqref{A0}, it follows that 
    \[\partial_{x_j}\Pi f=\partial_{x_j}f-\nabla\partial_{x_j}\pp,\]
    where $\partial_{x_j}\pp$ satisfies the Dirichlet--Poisson problem
    \begin{align*}
        \begin{cases}
        \Delta \partial_{x_j}\pp=\dive \partial_{x_j}f,\\
        \partial_{x_j}\pp|_{y=0}=\partial_{x_j}\pp|_{y=1}=0.
        \end{cases}
    \end{align*}
    By the standard elliptic estimate, one has 
    \begin{align*}
        \|\partial_{x_j}\Pi f\|_q\lesssim_q \|f\|_{H^{1,q}}+\|\nabla \partial_{x_j}\pp\|_q\lesssim_q\|f\|_{H^{1,q}}.
    \end{align*}
    Notice that 
    \begin{align*}
        \|\nabla\partial_y \pp\|_q\le \|\nabla_x \partial_y \pp\|_q+\|\partial_y^2\pp\|_q\lesssim\|f\|_{H^{1,q}}+\|\Delta_x \pp\|_q+\|\dive f\|_q\lesssim_q \|f\|_{H^{1,q}},
    \end{align*}
    it follows that 
    \[\|\nabla \Pi f\|_q\le \|\nabla_x\Pi f\|_q+ \|\partial_y\Pi f\|_q\lesssim_q \|f\|_{H^{1,q}}.\]

    \noindent\textbf{Step I\!I\!I.} We turn to \eqref{A1-0}. Using \eqref{A1-1} again, one has
    \begin{align}\label{A2}
        \|\Pi f\|_{H^{-1,q}}\le \|f\|_{H^{-1,q}}+\|\nabla \tilde \pp\|_{H^{-1,q}}+\left|\int_D f_3 \ddd x\ddd y\right|.
    \end{align}
    For any $\psi\in L^{q'}$, consider the following Dirichlet--Poisson problem 
    \begin{align}\label{A3}
        \begin{cases}
            -\Delta\phi=\psi,\\
            \phi|_{y=0,1}=0.
        \end{cases}
    \end{align}
    By duality, it follows that
    \[\|\tilde \pp\|_{q}=\sup_{\|\psi\|_{L^{q'}=1}}|\langle \tilde \pp,\psi\rangle|,\]
    where, by integrating by parts and the standard elliptic regularity theory for \eqref{A3}, one has
    \[|\langle \tilde \pp,\psi\rangle|=|\langle \tilde \pp,\Delta\phi\rangle|=|\langle \dive f,\phi\rangle|=|\langle f,\nabla \phi\rangle|\le \|f\|_{H^{-1,q}}\|\nabla \phi\|_{H^{1,q'}}\lesssim_q\|f\|_{H^{-1,q}}.\]
    Hence,
    \begin{align}\label{A4}
    \|\tilde \pp\|_{q}\lesssim_q\|f\|_{H^{-1,q}}.
    \end{align}
    On the other hand, using the boundary conditions of $\tilde \pp$, one has
    \[\|\nabla \tilde \pp\|_{H^{-1,q}}=\sup_{\|\varphi\|_{H^{1,q'}}=1}|\langle \nabla \tilde \pp, \varphi\rangle|\le \|\tilde \pp\|_{q}\|\dive \varphi\|_{L^{q'}}\lesssim  \|\tilde \pp\|_{q}.\]
    This, together with \eqref{A2} and \eqref{A4}, implies \eqref{A1-0}.
    \end{proof}
    Next, we turn to the basic properties of the Biot--Savart operator $\curl^{-1}$ in $\T^2\times (0,1)$.
    \begin{prop}\label{propA3}Let $w\in H_{\curl}$. Then, there is a unique solution $u$ of the div-curl system
    \begin{align}
        \label{A5}
        \begin{cases}
            \curl u=w,\qquad \dive u=0,\\
            u_3|_{y=0,1}=\int_{D}u_1\ddd x=\int_Du_2\ddd x=0.
        \end{cases} 
    \end{align}
    Moreover, it follows that 
    \begin{align}
        \label{A6}\|u\|_{H^1}\lesssim \|w\|.
    \end{align}
    \end{prop}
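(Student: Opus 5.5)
We reduce the div-curl system \eqref{A5} to scalar elliptic problems by writing $u=\curl\psi$ with a vector potential and then correcting for the horizontal means. Concretely, given $w\in H_{\curl}$, we first solve the vector Poisson problem $-\Delta\psi=w$ with mixed boundary conditions adapted to the channel. For the tangential components we impose the Neumann condition $\partial_y\psi_h|_{y=0,1}=0$; for the vertical component we impose the Dirichlet condition $\psi_3|_{y=0,1}=0$. These conditions are chosen so that $\dive\psi$ satisfies a homogeneous Dirichlet problem. Indeed, $\Delta\dive\psi=-\dive w=0$, and on the boundary $\dive\psi=\dive_h\psi_h+\partial_y\psi_3$. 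Here $\partial_y\psi_3$ is controlled by the equation $-\partial_y^2\psi_3=w_3+\Delta_x\psi_3$ together with the zero horizontal mean of $w_3$.

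The cleanest way to carry this out is mode by mode in the tangential Fourier variable $k$. For $k\neq0$ one solves one-dimensional ODEs on $(0,1)$ and checks that $\dive\psi$ vanishes. For $k=0$, the constraint $\int_{\T^2}w_3\,\ddd x=0$ gives $\hat w_3(0,\cdot)=0$, so only shear data remain, and one sets $\hat u_{h}(0,y)$ equal to the mean-zero primitive solving $\partial_y\hat u_h(0)^\perp=\hat w_h(0)$ up to sign, with $\hat u_3(0)=0$. We then define $u:=\curl\psi$ for the nonzero modes plus this explicit shear part. By construction $\dive u=0$. The identity $\curl\curl\psi=-\Delta\psi+\nabla\dive\psi=w$ holds once $\dive\psi=0$. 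Finally, $u_3=\partial_{x_1}\psi_2-\partial_{x_2}\psi_1$ has nonzero modes that need not vanish at the boundary under Neumann conditions on $\psi_h$.

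So the boundary conditions should be chosen the other way round: take $\psi_h|_{y=0,1}=0$ (Dirichlet) and $\partial_y\psi_3|_{y=0,1}=0$ (Neumann). Then $u_3=\dive_h\psi_h^\perp$-type terms vanish on $\partial D$. Moreover $\dive\psi|_{\partial D}=\partial_y\psi_3|_{\partial D}=0$, and since $\dive\psi$ is harmonic it is zero. This is the choice I would commit to, with the $k=0$ mode of $\psi_3$ fixed by the zero-mean constraint, which is solvable precisely because $\int_{\T^2} w_3\,\ddd x=0$. The normalization $\int_D u_h=0$ is then imposed by subtracting the constant horizontal mean, which does not affect $\curl u$, $\dive u$ or $u_3$. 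The bound \eqref{A6} follows from $H^2$ elliptic regularity for these mixed Dirichlet/Neumann problems on the flat channel, using reflection across $y=0,1$ to reduce to the torus as in Corollary~\ref{coroA2}: $\|\psi\|_{H^2}\lesssim\|w\|$, hence $\|u\|_{H^1}\lesssim\|w\|$.

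Uniqueness is proved by taking the difference $v$ of two solutions, so that $\curl v=0$, $\dive v=0$, $v_3|_{\partial D}=0$, and $\int_D v_h=0$. Since the channel is not simply connected in the horizontal directions, $\curl v=0$ gives $v=\nabla\phi+c$ with $c$ a constant vector in the horizontal periods. Then $\Delta\phi=0$ with $\partial_y\phi|_{\partial D}=-c_3$, and integrating forces $c_3=0$. Hence $\phi$ is constant and $v=c$, and the mean condition gives $c=0$. The main obstacle I expect is verifying the boundary compatibility for the divergence of the potential, together with the $k=0$ mode bookkeeping. This is exactly where the hypothesis $w\in H_{\curl}$, and in particular the horizontal zero-mean condition on $w_3$, enters.
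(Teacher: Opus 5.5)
Your construction is correct with the boundary conditions you finally commit to ($\psi_h|_{y=0,1}=0$, $\partial_y\psi_3|_{y=0,1}=0$), but it takes a genuinely different route from the paper's.

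The paper argues variationally. It applies Lax--Milgram to $a(u,v)=\int_D\nabla u\cdot\nabla v$ on the constrained space $V$ (divergence-free $H^1$ fields with $u_3|_{y=0,1}=0$ and zero horizontal means), with right-hand side $\int_D w\cdot\curl v$. This gives \eqref{A6} directly from coercivity. It then uses $\int_D\nabla u\cdot\nabla v=\int_D\curl u\cdot\curl v$ on $V$, and has to work to show that $z:=\curl u-w$ vanishes. That requires the representation $z=\nabla\varphi+(c_h,0)$ of Lemma~\ref{lemmaA4}, a test-function argument showing $\varphi$ is constant on each boundary component, the orthogonality of Proposition~\ref{propA1}, and the special test field $v=(c_2(y-1/2),-c_1(y-1/2),0)$ to kill $c_h$.

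Your vector potential instead decouples everything into scalar Dirichlet and Neumann Poisson problems. Identifying $\curl u=w$ then reduces to the single fact that $\dive\psi\in H^1_0$ is harmonic, hence zero. The cost is that the bound rests on $H^2$ regularity for those scalar problems. On the flat channel this is cheap by reflection, and it also yields the $L^q$ bound of Corollary~\ref{coroA4} at no extra cost.

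Two parts of your construction are unnecessary. The global $\psi$ already handles the zero tangential mode, so you need no separate shear construction. The horizontal means also vanish automatically, since $\int_D u_1\,\ddd x\,\ddd y=-\int_{\T^2}(\psi_2|_{y=1}-\psi_2|_{y=0})\,\ddd x=0$, and similarly for $u_2$; no subtraction is needed.

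One step of your uniqueness argument is wrong as written. Neumann data $\partial_y\phi=-c_3$ on both $y=0$ and $y=1$ is compatible, because the outward fluxes cancel, so integrating does not force $c_3=0$; indeed $\phi=-c_3y$ solves the problem. What you actually get is $\phi=-c_3y+\mathrm{const}$, hence $v=(c_1,c_2,0)$, and the mean condition then gives $v=0$. Alternatively, you can:
\begin{itemize}
\item absorb $c_3e_3=\nabla(c_3y)$ into $\nabla\phi$ from the start, as in Lemma~\ref{lemmaA4}; or
\item use the paper's one-line argument $\|\nabla v\|=\|\curl v\|=0$, valid for divergence-free $v\in H^1$ with $v_3|_{y=0,1}=0$.
\end{itemize}
Also, drop the first, discarded choice of boundary conditions from the final write-up.
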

    We need the following representation of curl-free vector fields.
    \begin{lemma}\label{lemmaA4}
        For any vector field $z\in L^2$ satisfying $\curl z=0$, there are $\varphi\in H^1$ and $c_h\in \R^2$ such that
        \[z=\nabla\varphi+(c_h,0).\]
    \end{lemma}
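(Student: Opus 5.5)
The plan is to reduce the statement to the Poincaré lemma on a simply connected domain by removing the harmonic part of $z$. The domain $D=\T^2\times(0,1)$ has first de Rham cohomology generated by the two tangential loops, so a curl-free field should be a gradient plus a constant tangential field. Accordingly, I would set
\[
c_h:=\left(\int_D z_1\,\ddd x\ddd y,\ \int_D z_2\,\ddd x\ddd y\right),\qquad \bar z:=z-(c_h,0).
\]
Then $\bar z$ is still curl-free, and its first two components have zero mean over $D$.

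Next I would construct $\varphi$ by expanding $\bar z$ in tangential Fourier series, $\bar z=\ssum_{k}e^{2\pi ik\cdot x}\hat z(k,y)$. The condition $\curl \bar z=0$ is understood in the distributional sense, and by orthogonality of the tangential modes it holds mode by mode.

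For $k\neq0$, the horizontal part of the curl condition gives $k^\perp\cdot\hat z_h(k,y)=0$, so $\hat z_h(k,y)=2\pi ik\,\hat\varphi_k(y)$ with $\hat\varphi_k:=\frac{k\cdot\hat z_h(k,y)}{2\pi i|k|^2}$. The remaining components of the curl condition say $\partial_y\hat z_h(k,\cdot)=2\pi ik\,\hat z_3(k,\cdot)$, which yields $\hat\varphi_k'=\hat z_3(k,\cdot)$. Hence $\hat z(k,\cdot)$ is exactly the Fourier mode of $\nabla(\hat\varphi_k e^{2\pi ik\cdot x})$. Moreover,
\[
|k|\,\|\hat\varphi_k\|_{L^2_y}\lesssim\|\hat z_h(k)\|,\qquad \|\hat\varphi_k'\|_{L^2_y}=\|\hat z_3(k)\|.
\]

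For $k=0$, the curl condition reads $\partial_y\hat z_h(0,y)=0$. Thus $\hat z_h(0,\cdot)$ is constant in $y$, and this constant is zero by the mean-zero normalization of $\bar z$. I would then set $\hat\varphi_0(y):=\int_0^y\hat z_3(0,s)\,\ddd s\in H^1(0,1)$.

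Summing over modes gives $\varphi:=\ssum_k\hat\varphi_k(y)e^{2\pi ik\cdot x}$. By Parseval, $\nabla\varphi=\bar z$ holds in $L^2$, so $\nabla\varphi\in L^2$. For $\varphi\in L^2$ itself, the nonzero modes are controlled by $|k|\ge1$ together with the bound above, and the zero mode lies in $H^1$. Therefore $\varphi\in H^1$ and $z=\nabla\varphi+(c_h,0)$.

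The main obstacle I expect is the rigor of the step from distributional curl-freeness to the mode-wise identities. This should be settled by testing against the functions $e^{-2\pi ik\cdot x}\chi(y)$ with $\chi\in C_c^\infty(0,1)$. The identities then hold in $\mathcal D'(0,1)$ for each $k$, which is all the argument requires, and no boundary information on $z$ is needed.
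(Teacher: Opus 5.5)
Your proposal is correct and follows essentially the same route as the paper: a tangential Fourier expansion in which the vertical component of $\curl z$ forces $\hat z_h(k,\cdot)$ to be parallel to $k$ for $k\neq0$, the two horizontal components identify $\hat z_3(k,\cdot)$ with the $y$-derivative of the resulting potential, and $\partial_y\hat z_h(0,\cdot)=0$ produces the constant $c_h$ (subtracting the mean first just normalizes that same constant). The differences are minor: you show $\varphi\in L^2$ directly via $|k|\,\|\hat\varphi_k\|_{L^2_y}\lesssim\|\hat z_h(k)\|$ rather than invoking the Poincar\'e inequality, and the relation $k^\perp\cdot\hat z_h=0$ comes from the third (vertical) component of the curl, not the ``horizontal part'' as you wrote.
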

    \begin{proof} Consider the tangential Fourier expansion
    \[z=\ssum_{k\in\Z^2} \hat z(k,y) e^{2\pi i k\cdot x}.\]
    Since $z$ is curl-free, then for any $k\in \Z^2$, 
    \begin{align}\label{A9}
        \begin{cases}
        \partial_y\hat{z}_2(k,y)-2\pi i k_2\hat z_3(k,y)=0,\\
        \partial_y\hat{z}_1(k,y)-2\pi i k_1\hat z_3(k,y)=0,\\
        k_1\hat z_2(k,y)-k_2\hat z_1(k,y)=0.
    \end{cases}
    \end{align}
    For any $k\neq 0$, by using \eqref{A9}$_3$, one has
    \begin{align}\label{A11}
        \hat z_h(k,y)=\frac{-ik}{2\pi}\phi(k,y)
    \end{align}
    for some scalar function $\phi$, which, together with \eqref{A9}$_1$, yields
    \begin{align*}
        \hat z_3(k,y)=-\frac{1}{4\pi^2}\partial_y\phi(k,y).
    \end{align*}
    Therefore, 
    \begin{align}\label{A11-1}
        \hat z(k,y) e^{2\pi ik\cdot x}=-\frac{1}{4\pi^2}\nabla (\phi(k,y)e^{2\pi ik\cdot x}).
    \end{align}
    For $k=0$, one has
    \[0=\curl \hat z(0,y)=(-\partial_y\hat z_2(0,y),\partial_y\hat z_1(0,y),0),\]
    which implies $\hat z_h\equiv c_h$ for some $c_h\in\R^2$. Let
    \[\varphi:=-\frac{1}{4\pi^2}\ssum_{k\neq 0}\phi(k,y)e^{2\pi ik\cdot x}+\int_0^y \hat z_3(0,y')\ddd y',\]
    then it follows that 
    \[z=(\hat z_h,0)+\hat z_3 e_3+\nabla\left(-\frac{1}{4\pi^2}\ssum_{k\neq 0}\phi(k,y)e^{2\pi ik\cdot x}\right)=(c_h,0)+\nabla \varphi.\]
    Since $\nabla \varphi\in L^2$, by assuming $\int_D\varphi \ddd x\ddd y=0$ if necessary, from the Poincar\'e inequality, one has $\varphi\in H^1$.
    \end{proof}
    
    Now we prove Proposition \ref{propA3}.
    \begin{proof}[Proof of Proposition \ref{propA3}]\textbf{Step I.} We start with an auxiliary problem. Define
    \begin{align}
        \label{A6-1}V:=\left\{u\in H^1\Big| \dive u=0, u_3|_{y=0,1}=\int_Du_1\ddd x=\int_Du_2\ddd x=0\right\}
    \end{align}
    and the bilinear form
    \[a(u,v):=\int_D\nabla u\cdot\nabla v\ddd x\ddd y,\qquad u,v\in V.\]
    For any $v\in V$, since $v_1,v_2$ have a zero mean and $v_3$ vanishes on the boundary, by applying the Poincar\'e inequality, one has
    \[a(v,v)=\int_D\|\nabla v\|^2 \ddd x\ddd y\ge c\|v\|^2.\]
    It is clear to have the boundedness of $a(u,v)$. Therefore, by applying the Lax--Milgram theorem, for any $w\in H_{\curl}$, there exists a unique element $u\in V$ satisfying \eqref{A6} and
    \[a(u,v)=\int_D w\cdot \curl v \ddd x\ddd y,\qquad \forall v\in V.\]
    This, together with the identity
    \[\int_D\nabla u\cdot\nabla v\ddd x\ddd y=\int_D\curl u\cdot\curl v\ddd x\ddd y,\]
    yields
    \begin{align}\label{A7}
        \int_D(\curl u-w)\cdot \curl v\ddd x\ddd y=0,\qquad \forall v\in V.
    \end{align}
    \noindent\textbf{Step I\!I.} Let $z:=\curl u-w$. It follows that
    \[\dive z= \dive \curl u-\dive w=0.\]
    Moreover, we claim that $z$ is curl-free. Indeed, for any vector field $\psi\in C^\infty_0$, by taking
    \[v=\PPP \psi-\left(\int_D(\PPP\psi)_1\ddd x,\int_D(\PPP\psi)_2\ddd x,0\right)\in V\]
    in \eqref{A7}, where $\PPP$ is the usual Leray projection, one has
    \[0=\int_D z\cdot \curl \psi \ddd x\ddd y=-\int_D \curl z \cdot\psi \ddd x\ddd y.\]
    This implies $\curl z=0$ as claimed. 

    \noindent\textbf{Step I\!I\!I.} As $z$ is curl-free and divergence-free, by Lemma \ref{lemmaA4}, 
    \begin{align}\label{A8-0}
        z=\nabla \varphi+(c_h,0)
    \end{align}
    for some harmonic function $\varphi$ and constant vector field $c_h:=(c_1,c_2)$. We claim that 
    \begin{align}
        \label{A8}\varphi|_{y=0}\equiv C_0,\qquad \varphi|_{y=1}\equiv C_1
    \end{align}
    for some constants $C_0,C_1$. To see this, let $\psi\in C^{\infty}(\T^2)$ and $\vartheta\in C^\infty(0,1)$. By plugging \eqref{A8-0} and
    \[v=\vartheta(y)(-\partial_{x_2}\psi(x),\partial_{x_1}\psi(x),0)\]
    into \eqref{A7}, it follows that
    \begin{align*}
        0&=\int_D \nabla \varphi\cdot\curl v\ddd x\ddd y+\int_D c_h\cdot(\curl v)_h\ddd x\ddd y\\&=\int_D \nabla \varphi\cdot\curl v\ddd x\ddd y-\int_D\vartheta'(c_h\cdot \nabla_x)\psi \ddd x\ddd y=\int_D \nabla \varphi\cdot\curl v\ddd x\ddd y.
    \end{align*}
    This, together with integrating by parts and the fact 
    \[\dive \curl v=0,\qquad (\curl v)_3=\vartheta(y)\Delta_x\psi,\]
    gives
    \begin{align*}
        0=\vartheta(1)\int_{\T^2}  \varphi|_{y=1} \Delta_x\psi\ddd x-\vartheta(0)\int_{\T^2}  \varphi|_{y=0} \Delta_x\psi\ddd x.
    \end{align*}
    Since $\vartheta$ are arbitrary smooth functions, one may take $\vartheta(1)=0$ and $\vartheta(0)=1$, to get
    \[\int_{\T^2}  \varphi|_{y=0} \Delta_x\psi\ddd x=0\]
    for any $\psi \in C^\infty(\T^2)$, which implies $\varphi|_{y=0}$ is a constant and thus \eqref{A8} holds as claimed. Now note that $z-(c_h,0)=\nabla \varphi$, where $\varphi$ is a harmonic function satisfying \eqref{A8} and $z$ fulfills $\dive z=0$ and $\int_{\T^2} z_3\ddd x=0$, then an application of Proposition \ref{propA1} yields
    \[z\equiv (c_h,0).\]
    This, together with taking
    \[v=(c_2(y-1/2),-c_1(y-1/2),0)\]
    in \eqref{A7}, implies 
    \[c_1^2+c_2^2=0,\]
    and $z\equiv 0$. As a result, $\curl u=w$ and $u$ is a solution of \eqref{A5}.
    
    \noindent\textbf{Step I\!V.} It remains to show the uniqueness. Suppose that $u$ is a solution of \eqref{A5} with $w=0$. Then, one has 
    \[\int_{D}\|\nabla u\|^2 \ddd x\ddd y= \int_{D}\|\curl u\|^2 \ddd x\ddd y=0,\]
    which implies that $u$ is a constant vector field. This, combined with \eqref{A5}$_2$, gives $u\equiv 0$.
    \end{proof}
    Define the Biot--Savart operator 
    \begin{align}
        \label{A13}\curl^{-1}: H_{\curl}\to V,\qquad w\mapsto u,
    \end{align}
    where $u$ is the unique solution of \eqref{A5}, and $V$ is defined in \eqref{A6-1}. 
    \begin{coro}\label{coroA4}
        Let $q\in(1,\infty)$ and $m\ge -1$ be an integer. For any $w\in H_{\curl}\cap C^\infty$, it follows that 
        \begin{align}\label{A14}
            \|\curl^{-1} w\|_{H^{m,q}}\lesssim_{m,q} \|w\|_{H^{m-1,q}}.
        \end{align}
    \end{coro}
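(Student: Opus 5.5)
The plan is to reduce \eqref{A14} to the $H^1$ bound \eqref{A6}, upgraded to $L^q$ and to arbitrary order by an explicit tangential Fourier description of $u=\curl^{-1}w$. Since $\curl u=w$, $\dive u=0$, we have $-\Delta u=\curl w$. The boundary conditions are $u_3|_{y=0,1}=0$ and, from $\dive u=0$, $\partial_y u_3|_{y=0,1}=-\dive_h u_h|_{y=0,1}$. The horizontal components carry no Dirichlet data. I would therefore work with the representation furnished by Proposition \ref{propA3}: for each tangential mode $k\neq0$, $u_3$ solves a Dirichlet problem $(\partial_y^2-4\pi^2|k|^2)\hat u_3=-\widehat{(\curl w)}_3$, $\hat u_3(0)=\hat u_3(1)=0$. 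Given $\hat u_3$, the horizontal part is recovered algebraically from $\dive u=0$ and $(\curl u)_3=w_3$:
\[\hat u_h=\frac{ik}{2\pi|k|^2}\partial_y\hat u_3-\frac{ik^\perp}{2\pi|k|^2}\hat w_3.\]
For $k=0$, the zero mode of $u_h$ is given by $\partial_y\hat u_h(0,y)=(\hat w_2,-\hat w_1)(0,y)$ together with the mean-zero normalization in \eqref{A5}, while $\hat u_3(0,\cdot)=0$.

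\textbf{Step 1.} Bound $u_3$. The $u_3$ equation is a Dirichlet--Poisson problem with right-hand side $(\curl w)_3=\partial_{x_1}w_2-\partial_{x_2}w_1$, which has no $y$-derivative. Odd reflection in $y$ of $u_3$, $w_1,w_2$ and periodization, followed by the Mikhlin multiplier theorem exactly as in Step I of Corollary \ref{coroA2}, gives $\|u_3\|_{H^{m,q}}\lesssim\|w_h\|_{H^{m-1,q}}$ for all $m\ge1$. Indeed, $u_3=(-\Delta)^{-1}\nabla_x^\perp\cdot w_h$ is a zeroth-order multiplier applied to $w_h$, shifted by one derivative. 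Higher $m$ follows by differentiating tangentially and recovering $\partial_y^2$ from the equation, as in Step II of Corollary \ref{coroA2}.

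\textbf{Step 2.} Bound $u_h$ for nonzero modes. The formula above shows that $u_h$ is obtained from $\nabla_x\Delta_x^{-1}\partial_y u_3$ and $\nabla_x^\perp\Delta_x^{-1}w_3$. The second term is a tangential Riesz-type operator of order $-1$, bounded $H^{m-1,q}\to H^{m,q}$ after extending $w_3$ evenly in $y$; here one uses that $\partial_y w_3=-\dive_h w_h$ to trade normal derivatives. The first term equals $\nabla_x\Delta_x^{-1}\partial_y u_3$; writing it through the reflected full-space symbol gives again an order $-1$ multiplier in $w_h$. For the zero mode, $\hat u_h(0,y)$ is a primitive of $w_h$-averages with zero mean, so it is trivially bounded by $\|w\|_{H^{m-1,q}}$ for $m\ge0$ via one-dimensional Hardy/Poincaré.

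\textbf{Step 3.} The case $m=0$ and $m=-1$. For $m=0$, one needs $\|u\|_q\lesssim\|w\|_{H^{-1,q}}$. The reflected multiplier representation is a symbol of order $-1$ acting on $w$, so this is again Mikhlin provided $H^{-1,q}$ is compatible with the reflections. Since $H^{-1,q}=(H^{1,q'})^*$ without boundary conditions, the reflection (odd for $w_h$, even for $w_3$) is not obviously bounded on $H^{-1,q}$. I would instead argue by duality, as in Step III of Corollary \ref{coroA2}: pair $u$ with $\psi\in L^{q'}$, solve an adjoint div-curl problem $\psi\mapsto v$ with $\|v\|_{H^{1,q'}}\lesssim\|\psi\|_{q'}$ (the $m=1$ case), and write $\langle u,\psi\rangle=\langle w,v\rangle$. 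The case $m=-1$ is treated by the analogous duality at one lower level.

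\textbf{Main obstacle.} The hardest part will be justifying that the reflected multipliers are legitimate, i.e.\ that the parities are consistent with the boundary conditions. Here $u_3$ is odd and $u_h$ is even, which matches $w_h$ odd and $w_3$ even, except for the boundary contribution hidden in $\partial_y u_3|_{\partial D}\neq0$. The second delicate point is the duality identity in Step 3, where the boundary terms from integration by parts must vanish; this should follow from $u_3|_{\partial D}=0$ and the mean-zero normalization of the adjoint problem.
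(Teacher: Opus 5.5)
\textbf{The $m\ge 1$ case.} Your argument here is correct, but it takes a more roundabout route than the paper's.

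The paper reflects the whole div--curl system at once: even extension of $u_h,w_3$ and odd extension of $u_3,w_h$. Because $u_3|_{\partial D}=0$, the extended field $U$ has no jump across the reflection planes, and $\curl U=W$, $\dive U=0$ hold on the doubled torus. Hence $\nabla U=\nabla(-\Delta)^{-1}\curl W$ is a composition of Riesz transforms, and Poincar\'e gives $\|u\|_{H^{1,q}}\lesssim\|w\|_q$. Higher $m$ follows by tangential differentiation together with the identities expressing $\partial_y^2u$ through $\partial_y w$ and tangential derivatives.

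In particular, the parity issue you single out as the main obstacle does not exist. After reflection $\partial_y u_3$ is even, so it creates no singular part.

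Your component-wise route can also be made to work: a Dirichlet problem for $u_3$, algebraic recovery of $\hat u_h$ for $k\neq0$, and a 1D primitive for $k=0$. One caution: $\nabla_x\Delta_x^{-1}$ and $\nabla_x^\perp\Delta_x^{-1}$ are not Mikhlin multipliers in three variables, since their symbols are singular along $k=0$. They must be applied slice-wise as two-dimensional Riesz-type operators. The $y$-derivatives are then traded via $\partial_y w_3=-\dive_h w_h$ and $\partial_y^2u_3=-\Delta_xu_3-(\partial_{x_1}w_2-\partial_{x_2}w_1)$.

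\textbf{The genuine gap: Step 3 ($m=0,-1$).} You propose to solve ``an adjoint div--curl problem $\psi\mapsto v$ with $\|v\|_{H^{1,q'}}\lesssim\|\psi\|_{q'}$ (the $m=1$ case)''. You expect the boundary terms to vanish because $u_3|_{\partial D}=0$. This does not work as stated, for two reasons.

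First, $\psi$ is an arbitrary element of $L^{q'}$, so $\curl^{-1}\psi$ is not defined. Moreover, the $m=1$ estimate concerns the normal condition $v_3|_{\partial D}=0$, which is the wrong boundary condition for the adjoint.

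Second, $\langle u,\curl v\rangle$ and $\langle\curl u,v\rangle$ differ by a boundary integral of $(u\times v)\cdot n$. This integral involves only the horizontal traces $u_h$ and $v_h$. Since $u_h|_{\partial D}\neq0$ in general, the term vanishes only if $v_h|_{\partial D}=0$. The condition $u_3|_{\partial D}=0$ (with $\dive u=0$) only removes the pairing of $u$ with gradients. The mean-zero normalization of $u_h$ only removes horizontal constants.

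The missing idea is a test field with vanishing tangential trace, together with its own estimate. The paper takes $v=\curl\phi$, where $-\Delta\phi=\psi-(\int_D\psi_1,\int_D\psi_2,0)$ with $\partial_y\phi_h|_{y=0,1}=\phi_3|_{y=0,1}=0$. Then $(\curl\phi)_h=0$ on $\partial D$. The identity $-\Delta\phi=\curl\curl\phi-\nabla\dive\phi$ then yields $\langle u,\psi\rangle=\langle w,\curl\phi\rangle$.

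The bound $\|\curl\phi\|_{H^{1,q'}}\lesssim\|\phi\|_{H^{2,q'}}\lesssim\|\psi\|_{q'}$ is an elliptic estimate for this mixed Neumann/Dirichlet vector Laplacian (e.g.\ by even/odd reflection). It is not the $m=1$ case of the corollary. For $m=-1$ you need the analogous bound $\|\phi\|_{H^{3,q'}}\lesssim\|\psi\|_{H^{1,q'}}$ for the same problem.
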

    \begin{proof}\textbf{Step I.} We begin with the case $m=1$. Let $u=\curl^{-1}w$ be the solution of \eqref{A5}. Applying even extension on $u_1,u_2,w_3$ and odd extension on $w_1,w_2,u_3$, periodizing the resulting equation, and using the boundedness of the Riesz transform, it follows that 
    \[\|\nabla u\|_q\lesssim_q\|w\|_q.\]
    This, combined with the Poincar\'e inequality, leads to 
    \[\|u\|_{H^{1,q}}\lesssim_q \|w\|_q.\]

    \noindent\textbf{Step I\!I.} We use a duality argument to address the case $m=0$. For any vector field $\psi\in L^{q'}$, consider the following Neumann--Poisson problem
        \begin{align}\label{A15}
            \begin{cases}
            -\Delta\phi =\psi-\left(\int_D \psi_1\ddd x,\int_D\psi_2\ddd x,0\right)=:\psi-\psi_{h,D},\\
            \partial_y \phi_1|_{y=0,1}=\partial_y \phi_2|_{y=0,1}=\phi_3|_{y=0,1}=0.
        \end{cases}
        \end{align}
        The above problem is solvable, since $(\psi-\psi_{h,D})_h$ has zero mean. By duality and \eqref{A5}$_2$, it follows that
        \[ \|u\|_{q}=\sup_{\|\psi\|_{q'}=1}\left|\int_D u\cdot\psi \ddd x\ddd y\right|=\sup_{\|\psi\|_{q'}=1}\left|\int_D u\cdot(\psi-\psi_{h,D}) \ddd x\ddd y\right|,\]
        where, by applying \eqref{A15}, the identity
        \[-\Delta\phi=\curl (\curl\phi)-\nabla(\dive \phi),\]
        and the fact that $n\times u$ is orthogonal to $\curl\phi$ on $\partial D$, one has 
        \begin{align*}
           \left|\int_D u\cdot(\psi-\psi_{h,D}) \ddd x\ddd y\right|&=|\langle u, \Delta \phi\rangle|=\!\left|\int_D u\cdot\curl (\curl\phi) \ddd x\ddd y-\int_D u\cdot\nabla (\dive \phi) \ddd x\ddd y\right|\\&=\left|\int_D\curl u\cdot \curl \phi \ddd x\ddd y-\int_{\{y=0,1\}}(n\times u)\cdot \curl \phi \ddd x\right|\\&=\left|\int_D w\cdot \curl \phi \ddd x\ddd y\right|\le \|w\|_{H^{-1,q}}\|\phi\|_{H^{2,q'}}.
        \end{align*}
        This, combined with the standard elliptic estimate
        \[\|\phi\|_{H^{2,q'}}\lesssim_q\|\psi-\psi_{h,D}\|_{q'}\lesssim_q\|\psi\|_{q'},\]
        implies \eqref{A14} with $m=0$. The case $m=-1$ can be obtained in the same way.
    
    \noindent\textbf{Step I\!I\!I.} As in the proof of Corollary \ref{coroA2}, we only discuss the case $m=2$. Taking $\partial_{x_j}$ on both sides of \eqref{A5}, it follows that 
    \begin{align*}
        \begin{cases}
            \curl \partial_{x_j}u=\partial_{x_j}w,\qquad \dive \partial_{x_j}u=0,\\
            \partial_{x_j}u_3|_{y=0,1}=\int_{D}\partial_{x_j}u_1\ddd x=\int_D\partial_{x_j}u_2\ddd x=0.
        \end{cases} 
    \end{align*}
    Applying \eqref{A14} with $m=1$, one has 
    \[\|\partial_{x_j}u\|_{H^{1,q}}\lesssim \|\partial_{x_j}w\|_{L^q}\lesssim\|w\|_{H^{1,q}},\]
    which, combined with the fact that
    \[\partial_y^2 u_1=\partial_y w_2+\partial_y\partial_{x_1}u_3,\quad\partial_y^2 u_2=-\partial_y w_1+\partial_y\partial_{x_2}u_3,\quad \partial_y^2u_3=-\partial_y(\partial_{x_1}u_1+\partial_{x_2}u_2),\]
    gives 
    \[\|u\|_{H^{2,q}}\lesssim\|w\|_{H^{1,q}}\]
    as desired.
    \end{proof}

    \section{Basic properties of the boundary correction operator \texorpdfstring{$\BB$}{B}}\label{BCO}
    In this section, we collect some basic properties of the boundary correction operator $\BB$ defined in \eqref{LS3}. First, we prove the invertibility of $I-\BB$. Define the corrected vorticity class
    \begin{align}
        \label{BDYY1}\TIH_{\curl}:=\bigg\{\tw\in L^2\bigg| \dive \tw=\alpha(1-2y)\tw_3,\ \int_{\T^2} \tw_3(x,y)\ddd x=0\mbox{ a.e. $y\in(0,1)$}\bigg\}
    \end{align}
    endowed with the usual $L^2$-norm. Here, the restriction on the divergence of $\tw$ is understood in the sense of distributions.
    \begin{prop}\label{propB1}
        Let $s\ge0$, $q\in(1,\infty)$. Then, $I-\BB$ is a topological isomorphism from $H_{\curl}\cap H^{s,q}$ to $\TIH_{\curl}\cap H^{s,q}$.
    \end{prop}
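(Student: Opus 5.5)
We prove Proposition \ref{propB1} in three steps: mapping properties of $\BB$, injectivity of $I-\BB$, and a Fredholm argument giving surjectivity.

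\textbf{Mapping properties.} Let $w\in H_{\curl}\cap H^{s,q}$ and $u=\curl^{-1}w$. We have $\BB w=(\alpha(1-2y)u_h^\perp,0)$, so Corollary \ref{coroA4} gives $\|\BB w\|_{H^{s+1,q}}\lesssim_{\alpha,s,q}\|w\|_{H^{s,q}}$. Hence $\BB$ gains one derivative and maps $H_{\curl}\cap H^{s,q}$ into $H^{s+1,q}$. On the bounded domain $D$ the embedding $H^{s+1,q}\hookrightarrow H^{s,q}$ is compact, so $\BB$ is compact on $H^{s,q}$.

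Next we check that $\tw:=(I-\BB)w$ lands in $\TIH_{\curl}$. Its vertical component is unchanged, $\tw_3=w_3$, so the horizontal zero-mean condition is inherited. Since $u_h^\perp=(-u_2,u_1)$, we get
\[\dive_h u_h^\perp=\partial_{x_2}u_1-\partial_{x_1}u_2=-(\curl u)_3=-w_3.\]
This gives
\[\dive \BB w=\alpha(1-2y)\dive_h u_h^\perp=-\alpha(1-2y)w_3,\]
and therefore $\dive\tw=0+\alpha(1-2y)\tw_3$. So $I-\BB$ is a bounded map $H_{\curl}\cap H^{s,q}\to\TIH_{\curl}\cap H^{s,q}$.

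\textbf{Injectivity.} Suppose $w=\BB w$. Then $w_3=0$ and $w_h=\alpha(1-2y)u_h^\perp$. The bootstrap $w\in H^{s,q}\Rightarrow \BB w\in H^{s+1,q}$ shows $w$ is smooth. From $w_3=(\curl u)_3=0$ and $\dive u=0$, $u_3|_{y=0,1}=0$, we would derive a closed system for $u$ alone. The aim is an energy identity forcing $u\equiv0$. Natural candidates are testing $\curl u=\alpha(1-2y)(u_h^\perp,0)$ against $u$, or using a tangential Fourier decomposition, which reduces everything to ODEs in $y$ for each $k$.

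If no clean identity appears, we fall back on a homotopy argument in $\alpha$. Replace $\alpha$ by $t\alpha$ with $t\in[0,1]$. Then $I-t\BB$ is Fredholm of index zero for every $t$, since it is a compact perturbation of the identity. Injectivity at each $t$ must still be established, which remains the genuine difficulty. Concretely, for the mode $k=0$ the fields are shear flows. Here $w_h=\partial_y u_h^\perp$ up to sign, and the relation becomes the first-order ODE $\partial_yu_h=\alpha(1-2y)u_h$, whose solution is $u_h=ce^{\alpha(y-y^2)}$. The zero-mean condition $\int_0^1u_h=0$ then forces $c=0$, since the exponential is positive. For $k\neq0$ we expect a similar ODE analysis, using $k\cdot u_h=$ (terms from $\partial_yu_3$) and $w_3=0$, to close the argument. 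This mode-by-mode injectivity is the step we expect to be the main obstacle.

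\textbf{Surjectivity and isomorphism.} The difficulty is that $\TIH_{\curl}$ differs from $H_{\curl}$, so we need a Fredholm setup between different spaces. We would build a bounded lift $\mathcal{K}:\TIH_{\curl}\cap H^{s,q}\to H_{\curl}\cap H^{s,q}$ with $(I-\BB)\mathcal{K}-I$ compact. For instance, given $\tw$, set $w:=\Pi\tw$, and check that $(I-\BB)\Pi\tw-\tw$ is compact. Its divergence part is $\nabla\pp$ with $\Delta\pp=\alpha(1-2y)\tw_3$, which gains two derivatives, and $\BB$ gains one. This yields a Fredholm map of index zero by a homotopy in $\alpha$. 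Injectivity then gives bijectivity, and the open mapping theorem yields the topological isomorphism for each $s,q$.
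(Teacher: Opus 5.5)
Your mapping step matches the paper's Step~I, and your shear-flow computation for $k=0$ is exactly the last step of the paper's injectivity argument. However, two essential steps are missing.

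First, injectivity on the nonzero tangential modes is never proved. You leave it as the main obstacle, and the energy test you suggest is empty: $\alpha\int_D(1-2y)\,u_h^\perp\cdot u_h\,\ddd x\ddd y=0$ identically, so testing $\curl u=\BB w$ against $u$ only says the helicity vanishes. The missing idea is a closed scalar equation for $u_3$. If $w=\BB w$, the first two components of $\curl u=\BB w$ read $\partial_y u_1-\partial_{x_1}u_3=\alpha(1-2y)u_1$ and $\partial_y u_2-\partial_{x_2}u_3=\alpha(1-2y)u_2$. Apply $\partial_{x_1}$ and $\partial_{x_2}$ respectively, sum, and use $\dive_h u_h=-\partial_y u_3$ to get
\[\Delta u_3-\alpha(1-2y)\partial_y u_3=0,\qquad u_3|_{y=0,1}=0.\]
This is $\dive(e^{-\alpha(y-y^2)}\nabla u_3)=0$ with zero Dirichlet data, so $u_3\equiv0$. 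Then $\dive_h u_h=0$ and $(\curl u)_3=w_3=0$, so $u_h(\cdot,y)$ is divergence- and curl-free on $\T^2$ and hence depends on $y$ only. Your ODE $\partial_y u_h=\alpha(1-2y)u_h$ with the zero-mean condition then finishes. No mode-by-mode analysis or homotopy is needed for this step.

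Second, the index-zero claim is not justified. Calling $I-t\BB$ ``a compact perturbation of the identity'' makes no sense for a map $H_{\curl}\to\TIH_{\curl}$. The identity does not map $H_{\curl}$ into $\TIH_{\curl}$ when $\alpha>0$, and the target space itself moves with $t\alpha$. So a homotopy in $\alpha$ is not a continuous family between fixed spaces and yields no index information.

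Your parametrix $\Pi$ is the right tool. Indeed $(I-\BB)\Pi-I=-\nabla\pp-\BB\Pi$ is compact on $\TIH_{\curl}\cap H^{s,q}$, and $\Pi(I-\BB)-I=-\Pi\BB$ is compact on $H_{\curl}\cap H^{s,q}$, so $I-\BB$ is Fredholm. But this only gives $\mathrm{ind}(I-\BB)=-\mathrm{ind}(\Pi|_{\TIH_{\curl}})$. You still have to show that $\Pi$ maps $\TIH_{\curl}\cap H^{s,q}$ isomorphically onto $H_{\curl}\cap H^{s,q}$, which is true:
\begin{itemize}
\item Kernel: an element is $\nabla\pp$ with $\pp$ constant on each boundary plane. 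The zero horizontal mean of $\partial_y\pp$ forces the two constants to agree. Then $\pp$ minus that constant solves the drift equation above with zero Dirichlet data, hence vanishes.
\item Surjectivity: given $v\in H_{\curl}$, take $\tw:=v+\nabla\phi$ with $\Delta\phi-\alpha(1-2y)\partial_y\phi=\alpha(1-2y)v_3$ and $\phi|_{y=0,1}=0$. The horizontal mean of $\phi$ solves a homogeneous problem, so $\tw_3$ keeps zero horizontal mean and $\tw\in\TIH_{\curl}$.
\end{itemize}
The paper performs the same step with the explicit isomorphism $\TT w=w+\nabla\Phi[w]$, built from a Neumann drift problem, and notes that $\TT-(I-\BB)=\nabla\Phi+\BB$ is compact.

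With both pieces in place, injectivity plus index zero and the open mapping theorem give the proposition.
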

    \begin{proof}\textbf{Step I.} Let $w\in H_{\curl}$ and $\tw=(I-\BB)w$. Notice that
    \[\dive \BB w=-\alpha(1-2y)w_3,\qquad \tw_3=w_3,\]
    then one has $\dive \tw =\alpha(1-2y)\tw_3$ and $\int_{\T^2}\tw_3\ddd x=0$ for almost every $y\in (0,1)$, which implies $\tw\in \TIH_{\curl}$. Thus, for any $s\ge 0$, $q\in(1,\infty)$,
    \[I-\BB\in\LL(H_{\curl}\cap H^{s,q};\TIH_{\curl}\cap H^{s,q}).\]
    
    \noindent\textbf{Step I\!I.} We show that $I-\BB$ can be expressed as a compact perturbation of an isomorphism. Indeed, for $w\in H_{\curl}$, define $\Phi[w]$ as the solution of the Neumann--Poisson problem
    \begin{align}\label{BDYY2}
            \Delta\Phi[w]-\alpha(1-2y)\partial_y\Phi[w]=\alpha(1-2y)w_3,\qquad \partial_y\Phi[w]|_{y=0,1}=0.
    \end{align}
    Since $\int_{\T^2} w_3\ddd x=0$ for almost every $y\in (0,1)$, the above problem is solvable. Let 
    \[\TT w:=w+\nabla \Phi[w].\]
    Notice that 
    \[\dive (\TT w)=\Delta\Phi[w]=\alpha(1-2y)(w_3+\partial_y\Phi[w])=\alpha(1-2y)(\TT w)_3,\]
    and 
    \[\int_{\T^2} (\TT w)_3\ddd x=\partial_y\int_{\T^2}\Phi[w]\ddd x.\]
    Since $\overline{\Phi[w]}:=\int_{\T^2}\Phi[w]\ddd x$ satisfies
    \[\partial_y^2\overline{\Phi[w]}-\alpha(1-2y)\partial_y\overline{\Phi[w]}=0,\qquad \partial_y\overline{\Phi[w]}|_{y=0,1}=0,\]
    one has $\partial_y\overline{\Phi[w]}\equiv 0$ and thus $\TT w\in \TIH_{\curl}$. Moreover, the standard elliptic estimate for \eqref{BDYY2} gives 
    \begin{align}
        \label{BDYY4}\|\nabla \Phi[w]\|_{H^{s+1,q}}\lesssim_\alpha \|w\|_{H^{s,q}}.
    \end{align}
    Therefore, $\TT$ is a bounded linear operator from $H_{\curl}\cap H^{s,q}$ to $\TIH_{\curl}\cap H^{s,q}$. We claim that $\TT$ is indeed an isomorphism. Indeed, for any $\tw\in \TIH_{\curl}\cap H^{s,q}$, define $\Psi[\tw]$ as the solution of the following Neumann--Poisson problem
    \begin{align}\label{BDYY3}
            \Delta\Psi[\tw]=\alpha(1-2y)\tw_3,\qquad\partial_y\Psi[\tw]|_{y=0,1}=0.
    \end{align}
    Let 
    \[w:=\tw-\nabla\Psi[\tw].\]
    Then, one has $\dive w=0$ and $\int_{\T^2}w_3\ddd x=0$ for almost every $y\in(0,1)$, which implies $w\in H_{\curl}$. Since $\Psi[\tw]$ satisfies
    \[\Delta \Psi[\tw]-\alpha(1-2y)\partial_y\Psi[\tw]=\alpha(1-2y)w_3,\qquad \partial_y\Psi[\tw]|_{y=0,1}=0,\]
    by the unique solvability of \eqref{BDYY2}, one has
    \[\nabla \Psi[\tw]=\nabla \Phi[w].\]
    This yields
    \[\tw=w+\nabla\Phi[w]=\TT w,\]
    and thus $\TT$ is surjective. It remains to show $\TT$ is injective. To see this, suppose that 
    \[w+\nabla\Phi[w]=0.\]
    Then, $\Phi[w]$ satisfies 
    \[\Delta\Phi[w]=0,\qquad \partial_y\Phi[w]|_{y=0,1}=0,\]
    which gives $\nabla \Phi[w]=0$ and thus $w=0$. Consequently, $\TT$ is an isomorphism.

    Now notice that 
    \[\TT w-(I-\BB)w=\nabla\Phi[w]+\BB w,\]
    where, by \eqref{BDYY4}, Corollary \ref{coroA4}, the standard interpolation theory, and the compact Sobolev embedding theorem, the linear operator $w\mapsto \nabla\Phi[w]+\BB w$ is compact. Hence, $I-\BB$ is a compact perturbation of an isomorphism, and is thus a Fredholm operator of index zero.

    \noindent\textbf{Step I\!I\!I.} By the Fredholm alternative theorem, it remains to show $I-\BB$ is injective. To this end, suppose that $w=\BB w$. Then, 
    \begin{align}\label{BDY1}
        \begin{cases}
        \alpha(1-2y)u_1=\partial_yu_1-\partial_{x_1}u_3,\\
        \alpha(1-2y)u_2=\partial_yu_2-\partial_{x_2}u_3,\\
        0=\partial_{x_2}u_1-\partial_{x_1}u_2,\\
        \dive u=0,\\
        \int_D u_1\ddd x\ddd y=\int_D u_2\ddd x\ddd y=0,
    \end{cases}
    \end{align}
    where $u:=\curl^{-1}w$. Applying $\partial_{x_1}$ to \eqref{BDY1}$_1$ and $\partial_{x_2}$ to \eqref{BDY1}$_2$ respectively, summing the resulting equalities, and using \eqref{BDY1}$_4$, one has
    \[\Delta u_3-\alpha(1-2y)\partial_3u_3=0,\qquad u_3|_{y=0}=u_3|_{y=1}=0,\]
    which gives
    \[u_3\equiv 0.\]
    This, together with \eqref{BDY1}$_3$, implies that for each fixed $y\in (0,1)$, the two-dimensional vector field $u_h(x,y)$ is divergence-free and curl-free. Therefore, 
    \[u_h(x,y)=c(y),\]
    for some vector field $c(y)$. Combining this with \eqref{BDY1}$_1$ and \eqref{BDY1}$_2$, one has 
    \[u_h=C e^{\alpha(y-y^2)}.\]
    Plugging the above result into \eqref{BDY1}$_5$ yields $C=0$. Thus, $u\equiv 0$ and $I-\BB$ is injective.
    \end{proof}
    Next, we address some commutator estimates.
    \begin{prop}
        \label{propB2} The following assertions hold.
        \begin{enumerate}
            \item For any $m\in\N^2$, $[\BB,\partial_x^m]=0$. 
            \item For any $w\in H_{\curl}\cap C^\infty$,
            \begin{align}\label{BDY2}
            \|[\BB,\partial_y] w\|\lesssim_\alpha \|w\|,\qquad \|\BB\partial_y w\|\lesssim_\alpha \|w\|.
            \end{align}
            Moreover, if $w$ satisfies the Navier boundary conditions \eqref{INTRO1}$_3$ and \eqref{INTRO1}$_4$,  it follows that
            \begin{align}\label{BDY3}
                \|[\BB,\partial_y^2] w\|\lesssim_\alpha \|w\|.
            \end{align}
        \end{enumerate}
    \end{prop}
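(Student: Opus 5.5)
The plan is to compute everything explicitly from the formula $\BB w=\big(\alpha(1-2y)[(\curl^{-1}w)_h]^\perp,0\big)$, using Corollary~\ref{coroA4} for all bounds on $u:=\curl^{-1}w$.

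\emph{Assertion 1.} Tangential derivatives commute with $\curl^{-1}$. Indeed, by Step~III of the proof of Corollary~\ref{coroA4}, $\partial_x^m u$ solves the div-curl system \eqref{A5} with datum $\partial_x^m w\in H_{\curl}$. The normalization $\int_D\partial_x^m u_h=0$ holds trivially for $|m|\ge1$, so uniqueness in Proposition~\ref{propA3} gives $\curl^{-1}\partial_x^m=\partial_x^m\curl^{-1}$. The weight $\alpha(1-2y)$ and the rotation $\perp$ are independent of $x$, hence $[\BB,\partial_x^m]=0$.

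\emph{Bounds \eqref{BDY2}.} First I compute
\[
\partial_y\BB w=-2\alpha(u_h^\perp,0)+\big(\alpha(1-2y)(\partial_yu_h)^\perp,0\big).
\]
The curl relations $w_1=\partial_{x_2}u_3-\partial_yu_2$ and $w_2=\partial_yu_1-\partial_{x_1}u_3$ give $(\partial_yu_h)^\perp=-w_h+\nabla_x^\perp$-type terms in $u_3$; more precisely, $\partial_y u_h^\perp=-w_h+(\partial_{x_2}u_3,-\partial_{x_1}u_3)$ up to sign conventions. Hence $\|\partial_y\BB w\|\lesssim_\alpha\|u\|_{H^1}\lesssim\|w\|$ by \eqref{A6}. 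For $\BB\partial_yw$, note that $\partial_yw$ need not lie in $H_{\curl}$. I will therefore interpret $\BB\partial_y$ through $\curl^{-1}\Pi\partial_y w$, or directly via Corollary~\ref{coroA4} with $m=0$: $\|\curl^{-1}\partial_y w\|\lesssim\|\partial_yw\|_{H^{-1}}\lesssim\|w\|$. One checks that the possible mean correction $\int_D\partial_yw_3=\int_{\T^2}(w_3|_{y=1}-w_3|_{y=0})=0$ vanishes by $H_{\curl}$. Then $[\BB,\partial_y]w=\BB\partial_yw-\partial_y\BB w$ is bounded by the triangle inequality.

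\emph{Bound \eqref{BDY3}.} I will write
\[
[\BB,\partial_y^2]=[\BB,\partial_y]\partial_y+\partial_y[\BB,\partial_y].
\]
The natural route is to compute $\partial_y^2\BB w$ explicitly. It equals $-4\alpha(\partial_yu_h^\perp,0)+\alpha(1-2y)(\partial_y^2u_h^\perp,0)$. In $\BB\partial_y^2w$, the leading piece $\alpha(1-2y)(\curl^{-1}\partial_y^2w)_h^\perp$ should match $\alpha(1-2y)\partial_y^2u_h^\perp$ up to lower order. The key identity is that $\curl^{-1}\partial_y^2w-\partial_y^2u$ is a solution of the div-curl system with zero curl, determined only by the boundary data of $\partial_yu_3$ and $\partial_y^2u_3$ and by means.

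This is where the Navier conditions enter, and it is the main obstacle. Since $u_3|_{y=0,1}=0$ while $\partial_y u_3=-\dive_hu_h$, the field $\partial_y^2 u$ fails the constraint $(\partial_y^2u)_3|_{\partial D}=0$. The mismatch is harmonic-type, of the form $\nabla\HH[\cdot]$ with boundary data $\partial_y^2u_3|_{y=0,1}=-\partial_y\dive_hu_h$. Using $w_h=\pm\alpha u_h^\perp$ on $\partial D$ together with the curl relations, this trace equals $\mp\alpha\,\dive_h(u_h)+$tangential derivatives of $u_3|_{\partial D}=0$, i.e.\ a trace of order zero in $u$. Its harmonic extension therefore satisfies $\|\nabla\HH\|\lesssim\|u\|_{H^1}\lesssim\|w\|$, by the $H^{1/2}$ trace theorem and the estimate \eqref{LE6-1}-type harmonic lifting. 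I expect the bookkeeping of this boundary correction, and checking that no $\|w\|_{H^1}$ term survives, to be the delicate step. Everything else reduces to \eqref{A6} and Corollary~\ref{coroA4}.
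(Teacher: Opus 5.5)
Your Assertion~1, the bound $\|\partial_y\BB w\|\lesssim_\alpha\|w\|$, and the skeleton of your argument for \eqref{BDY3} are fine. For \eqref{BDY3} you follow the paper's route: $\tilde z:=\curl^{-1}\partial_y^2w-\partial_y^2u$ is div- and curl-free, and the Navier condition turns its normal trace into $\pm\alpha\dive_hu_h|_{\partial D}$.

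The gap is in \eqref{BDY2}. The step $\|\partial_yw\|_{H^{-1}}\lesssim\|w\|$ is false for the norm $H^{-1}=(H^1)^*$ that Corollary~\ref{coroA4} uses. Since $\partial_y$ is the normal derivative, $\langle\partial_yw,\varphi\rangle=-\langle w,\partial_y\varphi\rangle+\int_{\T^2}w\cdot\varphi\,\ddd x\big|_{y=0}^{y=1}$. The tangential trace $w_h|_{\partial D}$ is not controlled by $\|w\|$, even on $H_{\curl}$. For instance, take $w=(\nabla_x^\perp\psi(x)\,e^{-L(1-y)},0)\in H_{\curl}\cap C^\infty$ and $\varphi=(\nabla_x^\perp\psi,0)\in H^1$. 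Then $\langle\partial_yw,\varphi\rangle=(1-e^{-L})\|\nabla_x\psi\|^2$ stays of order one, while $\|w\|\sim L^{-1/2}\to0$. (Also, $\partial_yw$ does belong to $H_{\curl}$, since $\int_{\T^2}\partial_yw_3\,\ddd x=\partial_y\int_{\T^2}w_3\,\ddd x=0$, so no projection is needed.)

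The fix is to treat the first-order commutator exactly as you treat the second-order one, which is what the paper does. Write $\BB\partial_yw=[\BB,\partial_y]w+\partial_y\BB w$. The field $z:=[\curl^{-1},\partial_y]w$ is div- and curl-free with $z_3|_{\partial D}=-\partial_yu_3|_{\partial D}=\dive_hu_h|_{\partial D}$; no boundary condition on $w$ is needed at this order. By Lemma~\ref{lemmaA4}, $z=\nabla\phi+(c_1,c_2,0)$ with $\phi$ harmonic. The Neumann energy estimate gives $\|\nabla\phi\|^2\lesssim\|u_h\|_{H^{1/2}(\partial D)}\|\phi\|_{H^{1/2}(\partial D)}$, and $|c_j|=|\int_D\partial_yu_j|\lesssim\|u\|_{L^2(\partial D)}$. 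Alternatively, your route can be saved. In the duality proof of Corollary~\ref{coroA4}, the test fields $\curl\phi$ have vanishing horizontal trace, so the bound holds with $X^{-1,2}$ in place of $H^{-1}$. Then $\|\partial_yw\|_{X^{-1,2}}\lesssim\|w\|$, because the only surviving boundary term is $\int_{\T^2}w_3\varphi_3\,\ddd x\big|_0^1=\int_Dw\cdot\nabla\varphi_3$ by $\dive w=0$.

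This same Neumann estimate is also what closes \eqref{BDY3}. Your appeal to a Dirichlet lifting of type \eqref{LE6-1} does not work there: the data $\pm\alpha\dive_hu_h|_{\partial D}$ lies only in $H^{-1/2}(\partial D)$, and $\nabla\HH$ of such data is not in $L^2$. You also need the Navier condition a second time, to bound the constants $c_j=-\int_{\T^2}[\partial_yu_j]_{y=0}^{y=1}\,\ddd x$ by $\|u\|_{L^2(\partial D)}$.
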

    \begin{proof}
        \textbf{Step I.} The first assertion is ensured by the fact $[\partial_x^m,\curl^{-1}]=0$ and the definition of $\BB$. The estimate \eqref{BDY2} is established in the following way. By applying Corollary \ref{coroA4}, it follows that
        \begin{align}\label{BDY3-1}
            \|[\BB,\partial_y]w\|&=\|\alpha(1-2y)(\curl^{-1}\partial_y w)_h-\alpha\partial_y[(1-2y)(\curl^{-1}w)_h]\|\notag\\&\lesssim_\alpha\|[\curl^{-1},\partial_y]w\|+\|w\|.
        \end{align}
        Notice that $z:=[\curl^{-1},\partial_y]w $ satisfies the div-curl system
        \begin{align*}
            \begin{cases}
                \dive z=\dive (\curl^{-1}\partial_yw-\partial_y\curl^{-1}w)=0,\\
                \curl z=\curl (\curl^{-1}\partial_yw-\partial_y\curl^{-1}w)=0,
            \end{cases}
        \end{align*}
        supplemented with the boundary conditions
        \begin{align}
            \label{BDY4}
            z_3|_{y=0,1}=-\partial_y(\curl^{-1}w)_3|_{y=0,1}=\dive_h(\curl^{-1}w)_h|_{y=0,1}.
        \end{align}
        Moreover, by applying Lemma \ref{lemmaA4}, it follows
        \begin{align}
            \label{BDY5}
            z=\nabla \phi+(c_1,c_2,0)
        \end{align}
        for some harmonic function $\phi$ and constants $c_1,c_2$. We turn to the estimate of $\nabla\phi$. From \eqref{BDY4} and \eqref{BDY5}, one gets
        \begin{align*}
            \begin{cases}
                \Delta \phi=0,\\
            \partial_y\phi|_{y=0,1}=\dive_h(\curl^{-1}w)_h|_{y=0,1}.
            \end{cases}
        \end{align*}
        Without loss of generality, one may assume $\int_D\phi\ddd x\ddd y=0.$ Then, by the standard energy estimate, it follows
        \begin{align*}
            \|\nabla \phi\|^2\lesssim_\alpha\|\curl^{-1}w\|_{H^{1/2}(\partial D)}\|\phi\|_{H^{1/2}(\partial D)}\lesssim_\alpha\|w\|\|\nabla\phi\|,
        \end{align*}
        which gives 
        \begin{align}
            \label{BDY6}
            \|\nabla\phi\|\lesssim_\alpha\|w\|.
        \end{align}
        It remains to bound $c_1,c_2$. For $j=1,2$, by the mean-zero condition required for the horizontal components in the definition \eqref{A13} of $\curl^{-1}$, it follows that 
        \begin{align*}
            |c_j|&=\left|\int_D z_j \ddd x \ddd y\right|=\left|\int_D \partial_y(\curl^{-1}w)_j \ddd x \ddd y\right|\\&\le \left|\int_{\T^2} (\curl^{-1} w)_j|_{y=0}\ddd x\right|+\left|\int_{\T^2} (\curl^{-1} w)_j|_{y=1}\ddd x\right|\\&\lesssim_\alpha \|\curl^{-1} w\|_{L^{2}(\partial D)}\lesssim_\alpha \|w\|,
        \end{align*}
        which, together with \eqref{BDY5} and \eqref{BDY6}, yields
        \begin{align}
            \label{BDY7}\|[\curl^{-1},\partial_y]w\|=\|z\|\lesssim_\alpha \|w\|.
        \end{align}
        Plugging the above estimate into \eqref{BDY3-1}, we obtain the first estimate in \eqref{BDY2}, which, together with Corollary \ref{coroA4}, ensures the second estimate therein.

        \noindent\textbf{Step I\!I.} We turn to the proof of \eqref{BDY3}. Proceeding as above, one has 
        \begin{align*}
            \|[\BB,\partial_y^2]w\|\lesssim_\alpha \|[\curl^{-1},\partial_y^2] w\|+\|\partial_y\curl^{-1}w\|\lesssim_\alpha  \|[\curl^{-1},\partial_y^2] w\|+\|w\|,
        \end{align*}
        where $\tilde z:=[\curl^{-1},\partial_y^2]w $ satisfies
        \begin{align*}
            \begin{cases}
                \dive \tilde z=\dive (\curl^{-1}\partial_y^2w-\partial_y^2\curl^{-1}w)=0,\\
                \curl \tilde z=\curl (\curl^{-1}\partial_y^2w-\partial_y^2\curl^{-1}w)=0.
            \end{cases}
        \end{align*}
        By using the Navier boundary conditions in the velocity formulation, one has the boundary conditions
        \begin{align*}
            \tilde z_3|_{y=0}=-\partial_y^2(\curl^{-1}w)_3|_{y=0}=\dive_h\partial_y(\curl^{-1}w)_h|_{y=0}=\alpha\dive_h(\curl^{-1}w)_h|_{y=0},
        \end{align*}
        and 
        \begin{align*} 
           \tilde z_3|_{y=1}=-\alpha\dive_h(\curl^{-1}w)_h|_{y=1}.
        \end{align*}
        Therefore, by repeating the derivation of \eqref{BDY7}, one gets
        \[\|[\curl^{-1},\partial^2_y]w\|=\|\tilde z\|\lesssim_\alpha \|w\|,\]
        and thus \eqref{BDY3} holds.
    \end{proof}
    \begin{coro}
        \label{coroB3} The following assertions hold.
        \begin{enumerate}
            \item For any $m\in\N^2$, $[(I-\BB)^{-1},\partial_x^m]=0$. 
            \item For any $\tw\in\TIH_{\curl}\cap C^\infty$ satisfying the mixed Dirichlet--Robin boundary condition \eqref{LS4}$_4$ and $w:=(I-\BB)^{-1}\tw$, it follows that
            \begin{align}\label{BDY8}
            \|[(I-\BB)^{-1},\partial_y] \tw\|+\|[(I-\BB)^{-1},\partial_y^2] \tw\|\lesssim_\alpha \|\tw\|,
            \end{align}
            and
            \begin{align}\label{BDY9}
                \|\partial_y w\|\lesssim_\alpha\|\partial_y\tw\|+\|\tw\|,\qquad \|\partial_y\tw\|\lesssim_{\alpha}\|\partial_y w\|+\|w\|.
            \end{align}
        \end{enumerate}
    \end{coro}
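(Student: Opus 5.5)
The plan is to deduce everything from Proposition \ref{propB1} and Proposition \ref{propB2} through the algebraic identity for the commutator of an inverse:
\[
[(I-\BB)^{-1},\TT]=(I-\BB)^{-1}[\TT,I-\BB](I-\BB)^{-1}=-(I-\BB)^{-1}[\TT,\BB](I-\BB)^{-1},
\]
valid for $\TT=\partial_x^m,\partial_y,\partial_y^2$ whenever both sides make sense. For assertion 1, take $\TT=\partial_x^m$. By Proposition \ref{propB2}(1) we have $[\BB,\partial_x^m]=0$, and so the identity gives $[(I-\BB)^{-1},\partial_x^m]=0$. One also needs $\partial_x^m$ to preserve $\TIH_{\curl}$ and $H_{\curl}$; this holds because the constraints $\dive\tw=\alpha(1-2y)\tw_3$ and zero horizontal mean have coefficients independent of $x$.

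For \eqref{BDY8}, set $w=(I-\BB)^{-1}\tw$, and write $[(I-\BB)^{-1},\partial_y]\tw=(I-\BB)^{-1}\bigl((I-\BB)\partial_y w-\partial_y\tw\bigr)$. The inner term equals $-[\BB,\partial_y]w$, which Proposition \ref{propB2} bounds by $\lesssim_\alpha\|w\|$. Proposition \ref{propB1} with $s=0$, $q=2$ then gives $\|w\|\lesssim_\alpha\|\tw\|$.

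There is a subtlety: $\partial_y w$ need not lie in $H_{\curl}$, because $\partial_y$ destroys $\dive w=0$ only up to nothing, yet the zero-mean condition on $(\partial_y w)_3$ survives. I expect $\partial_y w\in H_{\curl}$ to hold directly, since $\dive\partial_y w=\partial_y\dive w=0$. By contrast, $\partial_y\tw$ is not in $\TIH_{\curl}$. For this reason I will not apply $(I-\BB)^{-1}$ to $\partial_y\tw$ literally. Instead I interpret the commutator as $\partial_y w-(I-\BB)^{-1}$ applied to the $\TIH_{\curl}$-projection, or I simply define it as $-(I-\BB)^{-1}$ of the $\TIH_{\curl}$-valued quantity $[\BB,\partial_y]w$. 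Concretely, the identity $\partial_y w=\partial_y\tw+\partial_y\BB w=\partial_y\tw+\BB\partial_y w-[\BB,\partial_y]w$ lets me estimate everything through $\BB$.

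The second-order commutator is handled the same way. One has $(I-\BB)\partial_y^2w-\partial_y^2\tw=-[\BB,\partial_y^2]w$, and \eqref{BDY3} bounds this by $\|w\|$. That estimate requires $w$ to satisfy the Navier boundary conditions. I expect this to be the main obstacle: I must check that $\tw$ satisfying the Dirichlet--Robin condition \eqref{LS4}$_4$ forces $w=(I-\BB)^{-1}\tw$ to satisfy \eqref{INTRO1}$_{3,4}$. The third component is immediate from $\tw_3=w_3$. For the horizontal part, $\tw_h|_{y=0,1}=0$ combined with $\BB w=\alpha(1-2y)u_h^\perp$ gives $w_h=\alpha u_h^\perp$ at $y=0$ and $w_h=-\alpha u_h^\perp$ at $y=1$, which are exactly the Navier conditions.

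Finally, \eqref{BDY9} follows from the same identity. The relation $\partial_y w=\partial_y\tw+\partial_y\BB w$ together with \eqref{BDY2}, $\|\partial_y\BB w\|\le\|[\BB,\partial_y]w\|+\|\BB\partial_y w\|\lesssim_\alpha\|w\|$, gives $\|\partial_y w\|\le\|\partial_y\tw\|+C_\alpha\|w\|$. The bound $\|w\|\lesssim_\alpha\|\tw\|$ from Proposition \ref{propB1} then yields the first inequality. The reverse inequality $\|\partial_y\tw\|\le\|\partial_y w\|+C_\alpha\|w\|$ is the same computation read backwards.
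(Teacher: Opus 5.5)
Your argument follows the paper's. Its proof is exactly the identity $(I-\BB)^{-1}\LL-\LL(I-\BB)^{-1}=(I-\BB)^{-1}[\BB,\LL](I-\BB)^{-1}$ combined with Propositions \ref{propB1} and \ref{propB2}. Your extra checks are correct and worth keeping:
\begin{itemize}
\item $\partial_x^m$ preserves $H_{\curl}$ and $\TIH_{\curl}$.
\item The Dirichlet--Robin condition on $\tw$ (the one in \eqref{LS4}$_2$) gives the Navier conditions for $w$. This is what allows you to use \eqref{BDY3}, and the paper leaves it implicit.
\item Your direct proof of \eqref{BDY9}, from $\|\partial_y\BB w\|\lesssim_\alpha\|w\|$ and Proposition \ref{propB1}, is complete.
\end{itemize}

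One claim is wrong, however: $[\BB,\partial_y]w$ is not $\TIH_{\curl}$-valued. Since $\partial_y w\in H_{\curl}$, we have $(I-\BB)\partial_y w\in\TIH_{\curl}$, whereas $\partial_y\tw\notin\TIH_{\curl}$. Hence $[\BB,\partial_y]w=\partial_y\tw-(I-\BB)\partial_y w$ cannot lie in $\TIH_{\curl}$. Concretely, from $\dive\BB v=-\alpha(1-2y)v_3$ one gets $([\BB,\partial_y]w)_3=0$ but $\dive[\BB,\partial_y]w=-2\alpha w_3$. Similarly $\dive[\BB,\partial_y^2]w=-4\alpha\partial_y w_3$.

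So the step you actually write is not covered by Proposition \ref{propB1}. That step applies $(I-\BB)^{-1}$ to $(I-\BB)\partial_y w-\partial_y\tw$ and bounds it via Proposition \ref{propB1}, but that proposition only controls $(I-\BB)^{-1}$ on $\TIH_{\curl}$. The paper's one-line proof takes the same formal step, and \eqref{BDY8} itself involves $(I-\BB)^{-1}\partial_y\tw$, so it needs an interpretation.

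Your first suggestion is the one that works. Let $Q$ be the $L^2$-orthogonal projection onto the closed subspace $\TIH_{\curl}$, and read $(I-\BB)^{-1}$ as $(I-\BB)^{-1}Q$, which extends it. Since $Q(I-\BB)\partial_y^j w=(I-\BB)\partial_y^j w$, you get $[(I-\BB)^{-1}Q,\partial_y^j]\tw=(I-\BB)^{-1}Q[\BB,\partial_y^j]w$ for $j=1,2$. Then $\|Q\|\le1$, Proposition \ref{propB1}, \eqref{BDY2} and \eqref{BDY3} give \eqref{BDY8}. None of this is needed for \eqref{BDY9}, which is the only part of the corollary used later (Appendix \ref{Duhamel}).
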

    \begin{proof}
        This follows from the operator identity 
        \[(I-\BB)^{-1}\LL-\LL(I-\BB)^{-1}=(I-\BB)^{-1}[\BB,\LL](I-\BB)^{-1}\]
        and Propositions \ref{propB1} and \ref{propB2}.
    \end{proof}

    \section{Proof of Corollary \ref{NDGWPcoro1}}\label{Duhamel}
    In this section, we prove Corollary~\ref{NDGWPcoro1}. Although the argument is standard, we include the details in order to keep track of the dependence on $\kappa$ in the a priori estimates.

    \noindent\textbf{Step I.} Let $w$ be a solution of \eqref{NDGWP38} with $f=0$, and $\tw:=(I-\BB)w$. Since $\nabla_x$ commutes with the harmonic extension operator $\HH$ defined in \eqref{SL6-1-1}, by applying Proposition \ref{GWPprop3} and the relations 
    \begin{align}
        \label{Duhamel0}\|\tw\|_{m,0}\sim_\alpha \|w\|_{m,0},\qquad \|\nabla_x\tw\|_{m,0}\sim_\alpha \|\nabla_xw\|_{m,0},
    \end{align}
    it follows
    \begin{align}\label{Duhamel1}
        \|\tw(t)\|_{m,0}^2\le C_\alpha e^{-c_\alpha \nu_\kappa t}\|\tw_{\ini}\|_{m,0}^2
    \end{align}
    for any positive integer $m$. Since $\tw$ solves \eqref{NDGWP35}, by applying the energy method used in Step~I\!I of the proof of Proposition \ref{GWPprop3}, it follows
    \begin{align}\label{Duhamel1-1}
        \frac{\ddd}{\ddd t}\|\tw\|_{m,0}^2+\nu_\kappa\|\nabla \tw\|_{m,0}^2\lesssim_\alpha \nu_{\kappa}\|\tw\|_{m,0}^2,
    \end{align}
    which, together with \eqref{Duhamel1}, gives
    \begin{align*}
        \|\tw(t)\|^2_{m,0}+\nu_\kappa\int_0^t\|\nabla \tw\|_{m,0}^2\ddd s\lesssim_\alpha \|w_{\ini}\|^2_{m,0}.
    \end{align*}
    Therefore, by using \eqref{BDY9}, \eqref{Duhamel0}, and \eqref{Duhamel1}, one gets
    \begin{align}
        \label{Duhamel2}\notag\sup_{t\le T}\|w(t)\|^2_{m,0}+\nu_\kappa\int_0^T(\|\nabla w\|_{m,0}^2+\|w\|_{m,0}^2)\ddd s&\lesssim_\alpha \|w_{\ini}\|^2_{m,0}+\nu_\kappa\int_0^T\|\tw\|_{m,0}^2\ddd s\\&\lesssim_\alpha \|w_{\ini}\|^2_{m,0}.
    \end{align}

    \noindent\textbf{Step I\!I.} In this step, we derive a smoothing estimate for $\tw$. Notice that 
    \begin{align*}
        \frac{\ddd}{\ddd t}(t\|\tw\|^2_{1,0})&=\|\tw\|^2_{1,0}+2t\langle \tw,\nu_\kappa \Delta \tw-\nu_\kappa [\BB,\partial_y^2]w+\beta_\kappa(I-\BB)\nabla\HH[n_3w_3]\rangle_{1,0}\\&\le \|\tw\|^2_{1,0}-2t\nu_{\kappa}(\|\nabla \tw\|^2_{1,0}+\alpha\|\tw_3\|^2_{H^1(\partial D)})\\&\quad+t\nu_\kappa C_\alpha\|\tw\|_{1,0}\left(\|\tw\|_{1,0}+\|\nabla \tw\|_{1,0}\right)\\&\le \|\tw\|^2_{1,0}+t\nu_\kappa C_\alpha\|\tw\|_{1,0}^2.
    \end{align*}
    For $t\in(0,\nu_{\kappa}^{-1})$, this implies  
    \begin{align*}
        t\|\tw\|^2_{1,0}\lesssim_\alpha \int_0^t(\|\nabla_x\tw\|^2+\|\tw\|^2)\ddd s\lesssim_\alpha\frac{1}{\nu_\kappa}\|\tw_{\ini}\|^2.
    \end{align*}
    As $\partial_x^k\tw$ solves \eqref{NDGWP35} with the initial datum $\partial_x^k \tw_{\ini}$, for any positive integer $m$, one has 
    \begin{align}\label{Duhamel4}
        \|\tw(t)\|^2_{m,0}\lesssim_\alpha \frac{1}{\nu_{\kappa}t}\|\tw_{\ini}\|^2_{m-1,0},\qquad t\in(0,\nu_\kappa^{-1}).
    \end{align}
    Combining this with \eqref{Duhamel1}, one obtains
    \begin{align}\label{Duhamel5}
         \|\tw(t)\|^2_{m,0}\lesssim_\alpha  e^{-c_\alpha\nu_\kappa (t-\nu_{\kappa}^{-1})}\|\tw(\nu_{\kappa}^{-1})\|_{m,0}^2\lesssim_\alpha   e^{-c_\alpha \nu_\kappa t}\|\tw_{\ini}\|_{m-1,0}^2,\qquad t \ge \nu_\kappa^{-1}.
    \end{align}
    
    \noindent\textbf{Step I\!I\!I.} Let $\hw$ be a solution of \eqref{NDGWP38} with zero initial data. The Duhamel formula gives 
    \[\hw(t)=\int_0^te^{(t-s)\LLL_{\eff}}f(s)\ddd s.\]
    Applying \eqref{Duhamel4} and \eqref{Duhamel5} combined with \eqref{Duhamel0}, one has 
    \[\|e^{(t-s)\LLL_{\eff}}f(s)\|_{m,0}\lesssim_\alpha \left(\frac{\I_{\{t-s<\nu_\kappa^{-1}\}}}{\sqrt{\nu_\kappa (t-s)}}+\I_{\{t-s\ge \nu_\kappa^{-1}\}}e^{-c_\alpha\nu_\kappa(t-s)}\right)\|f(s)\|_{m-1,0},\]
    which, together with the Young inequality, implies
    \begin{align}\label{Duhamel6}
        \int_0^T\|\hw(t)\|^2_{m,0}\ddd t&\lesssim_\alpha \left(\int_0^{\nu_\kappa^{-1}}\frac{1}{\sqrt{\nu_\kappa t}}\ddd t+\int_{\nu_{\kappa}^{-1}}^\infty e^{-c_\alpha\nu_\kappa t}\ddd t\right)^2\int_0^T\|f(t)\|^2_{m-1,0}\ddd t\notag\\&\lesssim_\alpha \frac{1}{\nu_\kappa^2}\int_0^T\|f(t)\|^2_{m-1,0}\ddd t.
    \end{align}
    Now let $\thw:=(I-\BB)\hw$. Since $\thw$ solves \eqref{NDGWP35} with zero initial data and an additional forcing $f$, by repeating the derivation of \eqref{Duhamel1-1}, one has 
    \begin{align*}
        \frac{\ddd}{\ddd t}\|\thw\|_{m,0}^2+\nu_\kappa\|\nabla \thw\|_{m,0}^2\lesssim_\alpha \nu_{\kappa}  \|\thw\|_{m,0}^2+\frac{1}{\nu_\kappa}\|f\|^2_{m-1,0},
    \end{align*}
    which, together with \eqref{Duhamel6}, yields
    \begin{align}\label{Duhamel7}
        \|\thw(t)\|_{m,0}^2+\nu_\kappa\int_0^t\|\nabla \thw\|_{m,0}^2\ddd s\lesssim_\alpha \frac{1}{\nu_\kappa}\int_0^t\|f\|^2_{m-1,0}\ddd s.
    \end{align}
    Therefore, by using \eqref{BDY9}, \eqref{Duhamel0}, and \eqref{Duhamel6}, one gets
    \begin{align*}\sup_{t\le T}\|\hw(t)\|^2_{m,0}&+\nu_\kappa\int_0^T(\|\nabla \hw\|_{m,0}^2+\| \hw\|_{m,0}^2)\ddd s\\&\quad\lesssim_\alpha \frac{1}{\nu_\kappa}\int_0^T\|f\|^2_{m-1,0}\ddd t+\nu_\kappa\int_0^T\| \hw\|_{m,0}^2\ddd s\lesssim_\alpha \frac{1}{\nu_\kappa}\int_0^T\|f\|^2_{m-1,0}\ddd t.
    \end{align*}
    This, together with \eqref{Duhamel2} and the principle of superposition, completes the proof.

\bibliographystyle{alpha}
\bibliography{reference}

@article{FL21,
 author = {Flandoli, Franco and Luo, Dejun},
 title = {High mode transport noise improves vorticity blow-up control in {3D} {Navier}-{Stokes} equations},
 fjournal = {Probability Theory and Related Fields},
 journal = {Probab. Theory Relat. Fields},
 issn = {0178-8051},
 volume = {180},
 number = {1-2},
 pages = {309--363},
 year = {2021}
}

@article{Agr25,
 author = {Agresti, Antonio},
 title = {On anomalous dissipation induced by transport noise},
 fjournal = {Mathematische Annalen},
 journal = {Math. Ann.},
 volume = {393},
 number = {3-4},
 pages = {3141--3190},
 year = {2025},
}

@article{VNVW12,
 author = {Van Neerven, Jan and Veraar, Mark and Weis, Lutz},
 title = {Stochastic maximal {{\(L^{p}\)}}-regularity},
 fjournal = {The Annals of Probability},
 journal = {Ann. Probab.},
 volume = {40},
 number = {2},
 pages = {788--812},
 year = {2012},
}

@article{DtER17,
 author = {Disser, Karoline and ter Elst, A. F. M. and Rehberg, Joachim},
 title = {On maximal parabolic regularity for non-autonomous parabolic operators},
 fjournal = {Journal of Differential Equations},
 journal = {J. Differ. Equations},
 volume = {262},
 number = {3},
 pages = {2039--2072},
 year = {2017},
}

@article{FG95,
 author = {Flandoli, Franco and Gatarek, Dariusz},
 title = {Martingale and stationary solutions for stochastic {Navier}-{Stokes} equations},
 fjournal = {Probability Theory and Related Fields},
 journal = {Probab. Theory Relat. Fields},
 issn = {0178-8051},
 volume = {102},
 number = {3},
 pages = {367--391},
 year = {1995}
}

@article{HSV24,
 author = {Helffer, Bernard and Sj{\"o}strand, Johannes and Viola, Joe},
 title = {Discussing semigroup bounds with resolvent estimates},
 fjournal = {Integral Equations and Operator Theory},
 journal = {Integral Equations Oper. Theory},
 issn = {0378-620X},
 volume = {96},
 number = {1},
 pages = {35},
 note = {Id/No 5},
 year = {2024},
 language = {English},
 doi = {10.1007/s00020-024-02754-x},
 zbMATH = {7812578}
}

@article{Gal20,
 author = {Galeati, Lucio},
 title = {On the convergence of stochastic transport equations to a deterministic parabolic one},
 fjournal = {Stochastics and Partial Differential Equations. Analysis and Computations},
 journal = {Stoch. Partial Differ. Equ., Anal. Comput.},
 issn = {2194-0401},
 volume = {8},
 number = {4},
 pages = {833--868},
 year = {2020},
}

@article{FGL21,
 author = {Flandoli, Franco and Galeati, Lucio and Luo, Dejun},
 title = {Delayed blow-up by transport noise},
 fjournal = {Communications in Partial Differential Equations},
 journal = {Commun. Partial Differ. Equations},
 issn = {0360-5302},
 volume = {46},
 number = {9},
 pages = {1757--1788},
 year = {2021},
}

@article{Agr26,
  author        = {Agresti, Antonio},
  title         = {{Global smooth solutions by transport noise of 3D Navier--Stokes equations with small hyperviscosity}},
  journal       = {Ann. Probab.},
  note          = {To appear},
  year          = {2026},
  eprint        = {2406.09267},
  archivePrefix = {arXiv},
  primaryClass  = {math.PR},
}

@article{FGL22,
  author  = {Flandoli, Franco and Galeati, Lucio and Luo, Dejun},
  title   = {Eddy heat exchange at the boundary under white noise turbulence},
  journal = {Phil. Trans. R. Soc. A},
  fjoural = {Philosophical Transactions of the Royal Society A},
  volume  = {380},
  number  = {2219},
  pages   = {20210096},
  year    = {2022},
}

@article{FL22,
  author  = {Flandoli, Franco and Luongo, Eliseo},
  title   = {Heat diffusion in a channel under white noise modeling of turbulence},
  journal = {Mathematics in Engineering},
  volume  = {4},
  number  = {4},
  pages   = {034},
  year    = {2022},
}

@article{Row24,
 author = {Rowan, Keefer},
 title = {On anomalous diffusion in the {Kraichnan} model and correlated-in-time variants},
 fjournal = {Archive for Rational Mechanics and Analysis},
 journal = {Arch. Ration. Mech. Anal.},
 volume = {248},
 number = {5},
 pages = {47},
 year = {2024},
}

@article{GL25,
 author = {Galeati, Lucio and Luo, Dejun},
 title = {Weak well-posedness by transport noise for a class of {2D} fluid dynamics equations},
 fjournal = {Journal of Functional Analysis},
 journal = {J. Funct. Anal.},
 volume = {289},
 number = {12},
 pages = {59},
 year = {2025},
}

@article{BGM25,
 author = {Bagnara, Marco and Galeati, Lucio and Maurelli, Mario},
 title = {Regularization by rough {Kraichnan} noise for the generalised {SQG} equations},
 fjournal = {Mathematische Annalen},
 journal = {Math. Ann.},
 volume = {392},
 number = {4},
 pages = {4773--4830},
 year = {2025},
}

@misc{CM23,
  author        = {Coghi, Michele and Maurelli, Mario},
  title         = {Existence and uniqueness by {Kraichnan} noise for {2D} {Euler} equations with unbounded vorticity},
  year          = {2023},
  eprint        = {2308.03216},
  archivePrefix = {arXiv},
  note          = {arXiv:2308.03216},
  primaryClass  = {math.PR}
}

@article{GGM26,
  author  = {Galeati, Lucio and Grotto, Francesco and Maurelli, Mario},
  title   = {Anomalous regularization in {Kraichnan}'s passive scalar model},
  fjournal = {Probability Theory and Related Fields},
  journal = {Probab. Theory Relat. Fields},
  year    = {2026},
}

@article{AV24,
 author = {Agresti, Antonio and Veraar, Mark},
 title = {Stochastic {Navier}--{Stokes} equations for turbulent flows in critical spaces},
 fjournal = {Communications in Mathematical Physics},
 journal = {Commun. Math. Phys.},
 volume = {405},
 number = {2},
 pages = {57},
 year = {2024},
}

@misc{AKX25,
  author        = {Aydin, Mehmet Salih and Kukavica, Igor and Xu, Fanhui},
  title         = {Almost global existence for the stochastic {Navier--Stokes} equations with small {$H^{1/2}$} data},
  year          = {2025},
  eprint        = {2501.10331},
  note   = {arXiv:2501.10331},
  archivePrefix = {arXiv},
  primaryClass  = {math.PR}
}

@misc{KX24,
  author        = {Kukavica, Igor and Xu, Fanhui},
  title         = {Global existence of the stochastic {Navier--Stokes} equations in {$L^3$} with small data},
  year          = {2024},
  note   = {arXiv:2410.02919},
  eprint        = {2410.02919},
  archivePrefix = {arXiv},
  primaryClass  = {math.PR}
}

\end{document}